\newtheorem{theorem}{Theorem}[section]
\newtheorem{definition}{Definition}[section]
\newtheorem{lemma}{Lemma}[section]
\newtheorem{remark}{Remark}[section]
\newtheorem{proposition}{Proposition}[section]
\newtheorem{corollary}{Corollary}[section]
\numberwithin{equation}{section}
\newcommand{\R}{{\mathbb R}}
\newcommand{\rr}{\mathbf{r}}
\begin{document}
\title[Steady Supersonic Exothermically Reacting Euler flow past Bending Walls]
{Two-Dimensional Steady Supersonic Exothermically Reacting Euler Flow past Lipschitz Bending Walls}
\author{Gui-Qiang G. Chen}
\address{Mathematical Institute,\
 University of Oxford, Oxford, OX2 6GG, UK}
\email{\tt Gui-Qiang.Chen@maths.ox.ac.uk}
\author{Jie Kuang}
\address{School of Mathematical Sciences, \
 Fudan University, Shanghai 200433;
Institute of Applied Mathematics,
Academy of Mathematics and Systems Science,
Chinese Academy of Sciences, Beijing 100190, P. R. China}
\email{\tt jkuang12@fudan.edu.cn; jkuang@amss.ac.cn}
\author{Yongqian Zhang}
\address{School of Mathematical Sciences, \
 Fudan University, Shanghai 200433, P. R. China}
\email{\tt yongqianz@fudan.edu.cn}

\keywords{Steady, supersonic, reacting Euler flow, rarefaction wave, wave-front tracking scheme,
Lispchitz bending wall, entropy solutions, asymptotic behavior, stability.}
\subjclass[2010]{35B07, 35B20, 35D30, 76J20, 76L99, 76N10}
\date{\today}

\begin{abstract}
We are concerned with the two-dimensional steady supersonic reacting Euler flow past
Lipschitz bending walls that are small perturbations of a convex one,
and establish the existence of global entropy solutions when the total variation of
both the initial data and the slope of the boundary is sufficiently small.
The flow is governed by an ideal polytropic gas and undergoes a one-step exothermic
chemical reaction under the reaction rate function that is Lipschtiz and
has a positive lower bound.
The heat released by the reaction may cause the total variation
of the solution to increase along the flow direction.
We employ the modified wave-front tracking scheme to construct approximate solutions
and develop a Glimm-type functional by incorporating the approximate
strong rarefaction waves and
Lipschitz bending walls to obtain the uniform bound on the total variation
of the approximate solutions.
Then we employ this bound to prove the convergence of the approximate solutions to a global entropy solution
that contains a strong rarefaction wave generated by the Lipschitz bending wall.
In addition, the asymptotic behavior of the entropy solution in the flow direction
is also analyzed.
\end{abstract}

\maketitle
\section{Introduction }\setcounter{equation}{0}
We are concerned with the problem of the two-dimensional steady  supersonic
exothermically reacting Euler flow past Lipschitz bending walls that are small perturbations
of a convex one (see Fig. 1.1).
The governing system for steady exothermically reacting flow consists of the Euler equations
with the following form:
\begin{eqnarray}\label{eq:1.1}
\left\{
\begin{array}{llll}
     \partial_x(\rho u)+\partial_y(\rho v)=0, \\[5pt]
     \partial_{x}(\rho u^{2}+p)+\partial_{y}(\rho uv)=0, \\[5pt]
   \partial_{x}(\rho uv)+\partial_{y}(\rho v^{2}+p)=0, \\[5pt]
     \partial_{x}\big((\rho E+p)u\big)+\partial_{y}\big((\rho E+p)v\big)=0,\\[5pt]
     \partial_{x}(\rho u Z)+\partial_{y}(\rho v Z)=-\rho Z\phi(T),
     \end{array}
     \right.
\end{eqnarray}
where $(u,v), p, \rho, Z$, and $\phi(T)$  stand for the velocity, pressure, density,
fraction of unburned gas in the mixture, and reaction rate, respectively, and
\begin{equation*}
E=\frac{1}{2}(u^2+v^2)+e+\tilde{q}Z
\end{equation*}
denotes the specific total energy with the specific internal energy $e$, and
$\tilde{q}$ is the specific binding energy of unburned gas.
The other two thermodynamic variables are the temperature $T$
and entropy $S$ which are defined through thermodynamical relations:
\begin{equation}
TdS=de-\frac{p}{\rho^{2}}d\rho.\label{eq:1.2}
\end{equation}
Then the pressure $p$ and internal energy $e$ can be regarded as functions of $(\rho, S)$:
$$
p=p(\rho, S), \qquad e=e(\rho, S).
$$
In particular, $\partial_{\rho}p(\rho, S)>0$ and $\partial_{\rho}e(\rho, S)>0$ for $\rho>0$,
and $c=\sqrt{\partial_{\rho}p(\rho, S)}$ is called the local sound speed.

For an ideal polytropic gas, the constitutive relations are given by
\begin{equation}
p=R\rho T,\quad e=c_vT,\quad \gamma=1+\frac {R}{c_v}>1,\label{eq:1.3}
\end{equation}
and
\begin{equation}
p=p(\rho, S)=\kappa\rho^{\gamma}e^{S/c_v}, \quad
e(\rho,S)=\frac{\kappa}{\gamma-1}\rho^{\gamma-1}e^{S/c_v}=\frac{RT}{\gamma-1},\label{eq:1.4}
\end{equation}
where $R,\ \kappa$, $c_v$, and $\gamma>1$ are all positive constants.
Then the sonic speed is given by
$c=\sqrt{\frac{\gamma p}{\rho}}$.
\vspace{10pt}
\begin{center}
\begin{tikzpicture}[scale=1.8]
\draw [thick][<->] (0,1.9) --(0,0)-- (2.2,0);
\draw [thick][->] (-1.6,1.2)to [out=30, in=-150](-0.3,1.2);
\draw [thick][->] (-1.6,0.8)to [out=30, in=-160](-0.3,0.8);
\draw [thick][->] (-1.6,0.4)to [out=30, in=-140](-0.3,0.4);
\draw [thick][<-] (-0.1,-0.9) --(1,-0.4);
\draw [thick][->] (2.3,0.1) to[out=-30, in=130](3.4,-0.35);
\draw [line width=0.1cm](-1.5,0)to [out=0, in=0](0, 0)to [out=0,in=145](2.6,-1.1);
\draw [thick](0,0)to [out=45, in=200](1.5, 1.8);
\draw  [ thick](0.6,-0.06)to [out=45, in=200](2, 1.5);
\draw  [ thick](1.1,-0.18)to [out=45, in=200](2.7, 1.3);
\draw  [ thick](1.6,-0.45)to [out=45, in=200](3.3, 1);
\node at (2.2,-0.15) {$x$};
\node at (-0.15, 1.85) {$y$};
\node at (-0.1, -0.15) {$O$};
\node at (-1, 1.5) {$\bar{U}(y)$};
\node at (3.0, 0.1) {$U(x,y)$};
\node at (0.1, -1) {$y=g(x)$};
\node [below] at (0.3, -1.4)
{Fig. 1.1. Supersonic reacting flow past a Lipschitz bending wall};
\end{tikzpicture}
\end{center}

In addition, inadmissible discontinuous solutions are eliminated by
requiring the following entropy condition for the solutions:
\begin{equation}\label{eq:entropy}
\partial_x(\rho uS)+ \partial_y(\rho vS)\ge \frac{\tilde{q}\rho Z\phi(T)}{T}
\end{equation}
in the distributional sense.

System \eqref{eq:1.1} for the exothermically reacting steady Euler flow
can be written in the following general form:
\begin{eqnarray}\label{eq:2.1}
W(U)_{x}+H(U)_{y}=G(U)
\end{eqnarray}
with $U=(u,v,p,\rho,Z)^{\top}$, where
\begin{eqnarray*}
\begin{split}
 W(U)&=\Big(\rho u, \rho u^2+p, \rho uv, \rho u(\tilde{h}+\frac{u^2+v^2}{2}), \rho uZ\Big)^{\top},\\
H(U)&=\Big(\rho v, \rho uv,\rho v^2+p, \rho v(\tilde{h}+\frac{u^2+v^2}{2}), \rho vZ\Big)^{\top},\\
G(U)&=\Big(0, 0, 0, \tilde{q}\rho \phi(T)Z, -\rho \phi(T)Z\Big)^{\top},
\end{split}
\end{eqnarray*}
with $\tilde{h}=\frac {\gamma p}{(\gamma -1)\rho}$.
When $\rho>0$ and $u>c$, $U$ can also be represented by $W$, that is, $U=U(W)$,
by the implicit function theorem since the Jacobian does not vanish:
$$
{\rm det}\big(\nabla_U W(U)\big)=-\frac{\rho u^2}{\gamma-1}(u^2-c^2)\ne 0.
$$

Throughout this paper, we assume the following:
\begin{enumerate}
\item[$\mathbf{(H_{1})}$]  The Lipschitz continuous bending wall $y=g(x)$  is a small perturbation of the
convex wall $y=g_{*}(x)$ for $x\ge 0$ with
\begin{equation*}
g(x)=0 \,\,\,\,\,\mbox{for $x\leq 0$}, \qquad   g'_{+}(x)\in \textit{BV}(\mathbb{R}_{+};\mathbb{R}),
\end{equation*}
and, for some small $\varepsilon>0$,
\begin{eqnarray*}
  \|g'_+(x)-g'_*(x)\|_{BV(\R_+)}\le \varepsilon,
\end{eqnarray*}
where
\begin{equation*}
g'_{+}(x)=g'(x+)=\lim_{\hat{x}\rightarrow x+}\frac{g(\hat{x})-g(x)}{\hat{x}-x};
\end{equation*}

\item[$\mathbf{(H_{2})}$]
The incoming flow $\bar{U}(y)=(\bar{u}, \bar{v}, \bar{p}, \bar{\rho}, \bar{Z})(y)$ at $x=0$
with bounded total variation
is a small perturbation of the constant state
$U_{\infty}=(u_{\infty}, 0, p_{\infty}, \rho_{\infty}, 0)$  and satisfies
\begin{equation}\label{eq:1.5}
\bar{u}^{2}+\bar{v}^{2}>\bar{c}^{2}, \qquad
0\leq\bar{Z}\leq 1,\qquad \bar{Z}(\infty)=0,
\end{equation}
where $u_{\infty}>c_{\infty}=\sqrt{\frac{\gamma p_{\infty}}{\rho_{\infty}}}$.
In addition, there exists a positive constant $T_{*}$ such that
\begin{equation}
 \bar{T}(y)>T_{*}>0. \label{eq:1.7}
\end{equation}
\end{enumerate}

Assumption \eqref{eq:1.7} in $\mathbf{(H_{2})}$  is to make sure that the reaction rate function $\phi(T)$
has a positive minimum value
$L_{*}=\phi(T_{*})$
which never vanishes.
In a sense, this is a very realistic condition. Typically, $\phi (T)$ has the Arrhenius form as in \cite{cd}:
\begin{equation}
\phi(T)=T^{\alpha}e^{-\frac{\mathcal{E}}{RT}},\label{eq:1.8a}
\end{equation}
which vanishes only at absolutely zero temperature, where
$\mathcal{E}$ is the action energy and $\alpha$ is a positive constant.

For the given bending wall $y=g(x)$, the domain and its boundary are defined as
$$
\Omega=\{(x,y):x>0,\ y>g(x)\},\qquad \Gamma=\{(x,y): x>0,\ y=g(x)\},
$$
and
$$
\textbf{n}=\textbf{n}(x, g(x))=\frac{(g'(x),-1)}{\sqrt{1+(g'(x))^2}}
$$
is the outer normal vector to $\Gamma$ at point $x$ except the non-differential point.
Regarding $x$ as a time-like variable, the planar flow problem can be formulated as the
initial-boundary problem for system \eqref{eq:1.1} with

\medskip
$\mathbf{Cauchy}$ $\mathbf{condition}$:
\begin{equation}
U\big|_{x=0}=\bar{U}(y);   \label{eq:1.8}
\end{equation}
\par$\mathbf{Boundary}$ $\mathbf{condition}$:
\begin{equation}
(u,v)\cdot\textbf{n}\big|_{\Gamma}=0. \label{eq:1.9}
\end{equation}

\begin{definition}[Entropy solutions]\label{def:1.1}
A function $U\in BV_{loc}(\Omega)$ is called an entropy solution to
the initial-boundary value problem \eqref{eq:1.1} and \eqref{eq:1.8}--\eqref{eq:1.9} in
$\Omega\subset \R^2_+$  provided that,
for any convex entropy pair $(\eta,q)$ with respect to $W=W(U)$ of \eqref{eq:1.1}, that is,
$\nabla^2\eta(W)\ge 0$ and $\nabla q(W)=\nabla\eta(W)\nabla H(U(W))$,
the following entropy inequality holds{\rm :} For any $\psi \in C_0^{\infty}(\R^2)$ with $\psi\ge 0$,
\begin{eqnarray}\label{eq:1.11}
&&\iint_{\Omega}\big(\eta(W(U))\psi_x+q(W(U))\psi_y+\nabla_W\eta(W(U))G(U)\psi\big)dxdy\nonumber\\
&& +\int_{0}^{\infty}\eta(W(\bar{U}(y)))\psi(0,y)\, dy\ge 0.
\end{eqnarray}
\end{definition}

The entropy inequality \eqref{eq:1.11} directly implies that $U=U(x,y)$ is a weak solution of system \eqref{eq:1.1} with \eqref{eq:1.8}--\eqref{eq:1.9}:
\begin{eqnarray}
\iint_{\Omega}
\big(W(U)\phi_x+H(U)\phi_y+G(U)\phi\big) dxdy+\int_{0}^{\infty}W(\bar{U}(y))\phi(0,y) dy= 0 
\label{eq:1.10}
\end{eqnarray}
for any $\phi \in C_0^{\infty}(\R^2)$.  This can be seen by choosing $\eta(W)=\pm W$.

Moreover, $\eta(W)= -\rho uS$ is an entropy which is convex with respect to
$W$, while $q(W) = -\rho vS$ is the corresponding entropy flux, when $\rho>0$ and $u>c$,
so that the entropy inequality \eqref{eq:1.11} also implies
the physical entropy condition \eqref{eq:entropy}.

\medskip
For the non-reacting steady Euler system with supersonic state and certain physical boundaries,
some analysis on the shocks and rarefaction waves has been made.
For example, as described in \cite{courant},
when a supersonic flow hits a sharp body or moves around a sharp corner,
a supersonic shock is formed and attached to the body,
or a rarefaction wave is generated by the corner.
Such a physical phenomenon has been extensively studied
for the Lipschitz boundaries or smooth boundaries that are
small perturbations of a straight one.
For instance, Zhang \cite{zh1}--\cite{zh4}
considered the two-dimensional steady supersonic potential flow past Lipschitz wedges
or over bending walls
and obtained the global existence and asymptotic behavior of entropy solutions in $BV$
which contain a strong shock  with large vertex angle or a strong rarefaction wave.
Later, similar results have been obtained for the full Euler equations
for entropy solutions in $BV$ which contain a strong shock in Chen-Zhang-Zhu \cite{czz}.
Moreover, based on \cite{czz}, the stability and uniqueness of entropy
solutions containing a strong shock by the wave-front tracking algorithm
were established in Chen-Li \cite{cl}.
For the space dimension higher than two, the global existence of
weak solutions for steady supersonic conical flow
has been analyzed first in Lien-Liu \cite{lien} for isentropic Euler flow
under the assumptions that the symmetrical cone has a small opening angle and
the initial strength of the relatively strong shock is sufficiently
weak, and has been studied then in Wang-Zhang  \cite{wz} for steady potential flow past
the cone with an arbitrary opening angle that is less than a critical value.
On the other hand, the local/global existence
for steady supersonic flow past cones or sharp corners
with smooth boundaries has also been studied extensively
in \cite{cy,chen,chenli,cxy,wy} and the references cited therein.

For the exothermically reacting Euler equations,
the global existence of entropy solutions for the Cauchy problem
was first established for
the one-dimensional case that the reaction rate function is Lipschtiz
and has a lower bound
in Chen-Wagner \cite{cd} when the total variation of
the initial data is bounded by
a constant proportional to a parameter $\frac{1}{\gamma-1}$,
by developing the Glimm-type fractional-step scheme.
In Chen-Xiao-Zhang \cite{cxz},
the Cauchy problem for the two-dimensional steady case has first been
studied, and the initial-boundary value problem for supersonic reacting flow
past a Lipschitz wedge
with large or small angle has then been analyzed:
When the total variation of both the initial data and the slope of
the wedge boundary is suitably small,
the global existence and asymptotic behavior of the entropy
solutions have been established.
For the multidimensional case,
we also refer the reader to \cite{cjlw} for the details.

When the reaction rate function is discontinuous,
we refer to \cite{lya,sz,tcl,Wagner,yt,zz} and the references therein
for the Riemann problem for the one-dimensional case.
For further information on this topic and related combustion theories,
we refer the reader to \cite{Fickett, Neu, rm, Williams}.

In this paper, we establish the global existence and asymptotic behavior of
entropy solutions for two-dimensional steady supersonic inviscid
reacting flow over a Lipschitz bending wall.
Our problem is different from \cite{zh1} that has been done for the potential flow.
The flow here is described by five equations with the reaction source terms, which
may be viewed as the full Euler equations coupled with a
nonhomogeneous transport equation
owing to the reacting process.
Thus the problem we consider can be formulated as an initial-boundary value problem
for a hyperbolic system of balance laws, which involves a strong rarefaction wave.
One of our main motivations is the mathematical difficulty that the heat released
by the reaction may cause the total variation of the solution to increase along
the flow direction, even in the one-dimensional case ({\it cf.} \cite{BMR,cd,Er,FW,LSS,OS}).
One of our main results in this paper is to prove that
the increase in the total variation of the solution is eventually bounded
as a result of the uniform and exponential decay of the reactant along the flow direction.

To treat this nonlinear problem with the strong rarefaction wave and reaction source terms,
one natural way is to combine the wave-front tracking algorithm with
the fractional-step technique, since it is more convenient
to determine the position and
control the strength of the strong rarefaction wave in the construction
of approximate solutions.
More precisely, to achieve this, we proceed in the following two steps:

\smallskip
We first study the homogeneous system by approximating the Lipschitz boundary with polylines
and employing the ideas developed in Amadori \cite{amadori} and Bressan \cite{bressan}
to construct approximate solutions $U^{\nu, h}$ of the initial-boundary value problem
in each interval $((k-1)h,kh),\ k\in\mathbb{N}_{+}$.
In this construction, our key observation is that the discontinuities of the 5th component $Z(x,y)$
propagate only along the $4$--contact discontinuities and is unchanged when they cross the
other families of waves, especially for the non-physical waves.
In addition, in dealing with the change in the strength of weak waves
after they interact with strong rarefaction fronts,
we impose the weights $W(\alpha_{i}, x, -), 1\leq i\leq4$,
and $W(\epsilon, x, -)$ for weak waves and introduce
a functional $F_{1}(U; x)$ (see \S 5 below).

The second step is to consider the reaction process that is 
related by \eqref{eq:4.27-1}--\eqref{eq:4.27-5}
with the reaction step $h$.  To do this, we employ the accurate Riemann solver
to solve the Riemann problem with the Riemann data
on the physical waves for the reaction step,
and let the non-physical waves across line $x=kh$ directly when the reaction
step occurs on it.
We also remark that the orders of strong rarefaction fronts
and the non-physical fronts are unchanged.
However, in the proof of the convergence and consistency of the solutions,
we first fix $h$ to take the limits for $\nu\rightarrow \infty$
and then let $h\rightarrow 0$.
We do this in order to avoid
the wave-fronts that may increase owing to the reaction process.
Similar ideas have also been used in Amadori-Gosse-Guerra \cite{agg}.
Finally, with the estimates on the wave interactions,
we can first obtain the {\it a priori} bound
on the total variation of the approximate solutions
and then, by carrying out the steps as in \cite{agg,glimm},
we obtain the global existence of entropy solutions for problem
\eqref{eq:1.1} and \eqref{eq:1.8}--\eqref{eq:1.9}.

\smallskip
For the asymptotic behavior of entropy solutions,
we need further estimates on the approximate solutions $U^{\nu,h}$.
The key step for the estimates required here is when
the $1$--generalized characteristics intersect with the boundary
after they cross the rarefaction area.
This is different from the wedge case that has been handled in \cite{cxz}.
Then, by applying the Glimm-Lax theory \cite{gl},
one can derive the asymptotic behavior of entropy solutions.

Before concluding this section, we remark in passing that
the global existence for the Cauchy problem of
the one-dimensional hyperbolic systems of conservation laws
with the initial data containing strong rarefaction waves
has been considered by Lewicka in \cite{Le4, Le5}.
The problem we are considering here
is different from \cite{Le4,Le5},
since our problem is of initial-boundary value type for
the hyperbolic systems of balance laws including
the reaction source terms.
For the existence and $L^{1}$--stability for the Cauchy problem
containing weak elementary waves ({\it i.e.},
shocks, rarefaction waves, and contact discontinuities)
for one-dimensional hyperbolic systems, we refer the reader
to \cite{Le3,sch} and the references therein.
Also see Lewicka-Trivisa \cite{LT} for the $L^1$ well-posedness
of the one-dimensional hyperbolic systems of conservation laws
near solutions containing two large shocks.

The rest of this paper is organized as follows:
In \S 2, we present the nonlinear waves and the Riemann solutions
for the homogeneous equation \eqref{eq:2.1}
in the supersonic region.
In \S 3, we consider the background solution for the homogeneous system
as supersonic Euler flow ({\it i.e.}, $Z=0$)
past the convex Lipschitz bending wall $y=g_{*}(x)$.
In \S 4, the approximate solutions
for the initial-boundary value problem
\eqref{eq:1.1} and \eqref{eq:1.8}--\eqref{eq:1.9}
are constructed by developing a fractional-step wave-front tracking algorithm.
In \S 5, some weights are introduced to construct the Glimm-type functional.
In \S 6, we consider various types of wave interactions between weak waves
and strong rarefaction wave-fronts.
Then we establish the uniform bound on the total variation
of the approximate solutions for the homogeneous system ({\it i.e.},  \eqref{eq:2.2}).
In \S 7, we study the $BV$--stability of the approximate solutions
for the reacting step and establish
the uniform bound on the total variation of the approximate solutions.
In \S 8, the convergence and consistency of the approximate solutions are established
and then the existence theorem is stated.
In \S 9, we study the asymptotic behavior of entropy solutions.
Finally, we give a detail proof of Lemma \ref{lem:2.1} in \S 10.

We also remark that, throughout this paper, $O(1)$ stands for the bounded
quantities that depend only on the system.

\section{Riemann Problem}
\setcounter{equation}{0}

In this section, we present some basic properties
of the homogeneous system of \eqref{eq:1.1}.

\subsection{Homogeneous system}

In the case when $G(U)$ is identically zero, system \eqref{eq:2.1} becomes
\begin{eqnarray}\label{eq:2.2}
W(U)_x+H(U)_y=0.
\end{eqnarray}
With $x$ as the time-like variable, system \eqref{eq:2.2} is hyperbolic for $u>c$.
This system has five eigenvalues:
\begin{eqnarray} \label{eq:2.3}
&&\lambda_j=\frac{uv+(-1)^{\frac{j+3}{4}} c\sqrt{u^2+v^2-c^2}}{u^2-c^2}, \qquad j=1,5,\\[4pt]
&&\lambda_i=\frac{v}{u}, \qquad i=2, 3, 4,
\end{eqnarray}
and corresponding five linearly independent eigenvectors:
\begin{eqnarray}\label{eq:2.4}
&&\tilde{\rr}_j=(-\lambda_j, 1, \rho(\lambda_j u-v), \frac{\rho (\lambda_ju-v)}{c^2},  0)^\top, \qquad j=1,5,\\[4pt]
&&\tilde{\rr}_2=(u,v,0,0,0)^\top, \quad \tilde{\rr}_3=(0,0,0,1,0)^\top, \quad \tilde{\rr}_4=(0,0,0,0,1)^\top.
\end{eqnarray}

Let
\begin{equation*}
q=\sqrt{u^{2}+v^{2}},\quad  \theta=\arctan(\frac{v}{u}),\quad
\theta_{\rm ma}=\arctan(\frac{c}{\sqrt{q^{2}-c^{2}}}),
\end{equation*}
where $\theta_{\rm ma}$ is the Mach angle.
Then
\begin{eqnarray*}
\lambda_{j}=\tan(\theta+(-1)^{\frac{j+3}{4}}\theta_{\rm ma}), \,\,\,\, j=1, 5; \qquad
\lambda_{i}=\tan\theta, \,\,\,\, i=2, 3, 4.
\end{eqnarray*}
With the state variables $(q, \theta, p, \rho, Z)^\top$, we can rewrite the eigenvectors as
\begin{eqnarray*}
&&\tilde{\rr}_{1}=\big(-\tan(\theta-\theta_{\rm ma}), 1, -\rho q\sec(\theta-\theta_{\rm ma})\sin(\theta_{\rm ma}),
 -\frac{\rho q}{c^{2}}\sec(\theta-\theta_{\rm ma})\sin(\theta_{\rm ma}), 0\big)^{\top},\\[4pt]
&&\tilde{\rr}_{5}=\big(-\tan(\theta+\theta_{\rm ma}), 1, \rho q\sec(\theta+\theta_{\rm ma})\sin(\theta_{\rm ma}),
 \frac{\rho q}{c^{2}}\sec(\theta+\theta_{\rm ma})\sin(\theta_{\rm ma}), 0\big)^{\top},\\[4pt]
&&\tilde{\rr}_{2}=\big(\cos\theta, \sin\theta, 0, 0, 0 )^{\top},\qquad
\tilde{\rr}_{3}=\big(0, 0, 0, 1, 0\big)^{\top},\qquad
\tilde{\rr}_{4}=\big(0, 0, 0, 0,1\big)^{\top}.
\end{eqnarray*}
Moreover, we have the following properties whose proofs are given in \S 10.
\begin{lemma}\label{lem:2.1}
For $u>c$,
\begin{equation}\label{eq:2.5}
\nabla_{U}\lambda_{j}\cdot\tilde{\rr}_j>0, \,\,\,\, j=1, 5; \qquad
\nabla_{U}\lambda_{i}\cdot\tilde{\rr}_i=0, \,\,\,\, i=2, 3, 4.
\end{equation}
\end{lemma}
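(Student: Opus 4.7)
The plan is to verify \eqref{eq:2.5} by direct computation, treating the three linearly degenerate families $i=2,3,4$ and the two acoustic families $j=1,5$ separately. I will work throughout in the state variables $U=(u,v,p,\rho,Z)^\top$ so that the eigenvector formula $\tilde{\rr}_j=(-\lambda_j,1,\rho(\lambda_j u-v),\rho(\lambda_j u-v)/c^2,0)^\top$ for $j=1,5$ applies directly.

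For $i=3,4$, the claim is visible: $\lambda_i=v/u$ depends only on $(u,v)$, while $\tilde{\rr}_3=(0,0,0,1,0)^\top$ and $\tilde{\rr}_4=(0,0,0,0,1)^\top$ have vanishing first two components, so the inner product vanishes. For $i=2$, a direct computation gives $\nabla_U(v/u)\cdot(u,v,0,0,0)^\top=-v/u+v/u=0$.

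For the acoustic families, I will exploit the compact form $\lambda_j=\tan(\t+(-1)^{(j+3)/4}\t_{\rm ma})$ together with $\t=\arctan(v/u)$, $\sin\t_{\rm ma}=c/q$, and $c^2=\gamma p/\rho$. The chain rule yields
\[
\partial_\xi\lambda_j=\sec^2\!\big(\t+(-1)^{(j+3)/4}\t_{\rm ma}\big)\,\big(\partial_\xi\t+(-1)^{(j+3)/4}\partial_\xi\t_{\rm ma}\big),\qquad \xi\in\{u,v,p,\rho\},
\]
and the tangent addition/subtraction formulas produce the two key identities
\[
\lambda_j v+u=q\cos\t_{\rm ma}\sec\!\big(\t+(-1)^{(j+3)/4}\t_{\rm ma}\big),\quad \lambda_j u-v=(-1)^{(j+3)/4}q\sin\t_{\rm ma}\sec\!\big(\t+(-1)^{(j+3)/4}\t_{\rm ma}\big).
\]
Substituting these into $\nabla_U\lambda_j\cdot\tilde{\rr}_j$ and collecting terms expresses the sum entirely in closed form in $\t$, $\t_{\rm ma}$, and $\gamma$.

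The main obstacle is the algebraic reduction at this last step. I expect two decisive cancellations: the $u$-- and $v$--derivative contributions combine via $(q^2-c^2)+c^2=q^2$ to yield a single term proportional to $\sec(\t+(-1)^{(j+3)/4}\t_{\rm ma})/\sqrt{q^2-c^2}$, while the $p$-- and $\rho$--derivative contributions merge through $c^2=\gamma p/\rho$ to produce a factor of $\gamma-1$. Adding these should give
\[
\nabla_U\lambda_j\cdot\tilde{\rr}_j=\frac{(\gamma+1)\sec^3\!\big(\t+(-1)^{(j+3)/4}\t_{\rm ma}\big)}{2\sqrt{q^2-c^2}},
\]
which is strictly positive since $u>c$ implies both $q^2\ge u^2>c^2$ and $|\t|<\pi/2-\t_{\rm ma}$, the latter guaranteeing $\t+(-1)^{(j+3)/4}\t_{\rm ma}\in(-\pi/2,\pi/2)$ and hence $\sec(\t+(-1)^{(j+3)/4}\t_{\rm ma})>0$.
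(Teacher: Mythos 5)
Your proposal is correct and follows essentially the same route as the paper's own proof in the appendix (Lemmas \ref{lem:10.1}--\ref{lem:10.3}): differentiate $\lambda_j=\tan(\theta+(-1)^{(j+3)/4}\theta_{\rm ma})$ by the chain rule through $\theta$ and $\theta_{\rm ma}$, rewrite the eigenvector components via $\lambda_j u-v=(-1)^{(j+3)/4}q\sin\theta_{\rm ma}\sec(\theta+(-1)^{(j+3)/4}\theta_{\rm ma})$, and conclude positivity from $\cos(\theta+(-1)^{(j+3)/4}\theta_{\rm ma})>0$, while the degenerate families $i=2,3,4$ are immediate. Your closed form $\frac{(\gamma+1)}{2\sqrt{q^{2}-c^{2}}}\sec^{3}(\theta+(-1)^{(j+3)/4}\theta_{\rm ma})$ is in fact the correct value for both $j=1$ and $j=5$; the paper's stated constants ($\gamma+1$ without the factor $\tfrac12$ for $j=1$, and $\gamma-1$ for $j=5$) contain minor slips that do not affect the sign conclusion.
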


Lemma \ref{lem:2.1} implies that the $j$-th characteristic fields are genuinely nonlinear for $j=1, 5$,
while the $i$-th characteristic fields, $i=2,3,4$, are linearly degenerate.
Therefore, we can choose $\rr_j=\kappa_j\tilde{r}_j$ with $\kappa_j=\frac{1}{\nabla_{U}\lambda_{j}\cdot\tilde{\rr}_j}$, $j=1,5$,
and $\rr_i=\tilde{\rr}_i, i=2,3,4$, such that
\begin{eqnarray*}
\nabla_{U}\lambda_j\cdot \rr_j=1,\qquad j=1, 5.
\end{eqnarray*}

\subsection{Nonlinear waves for system \eqref{eq:2.2}}
In this subsection, we discuss nonlinear wave curves in the phase-space
for system \eqref{eq:2.2}.
As indicated in \S 2.1, system \eqref{eq:2.2}
admits five elementary waves that belong to the corresponding characteristic families for $u>c$.

The Rankine-Hugoniot conditions for system \eqref{eq:2.2} with the discontinuity speed $s$ are
\begin{eqnarray}\label{eq:2.8}
s[W(U)]=[H(U)],
\end{eqnarray}
where $[W(U)]=W(U_{+})-W(U_{-})$, and $U_{+}$ and $U_{-}$ are the {\it above} and {\it below}
states of the discontinuity, respectively.

The $i$-contact wave curves $C_{i}(U_{0})$, $i=2,3,4$, through $U_{0}$
are
\begin{eqnarray}\label{eq:2.7}
\begin{split}
C_{i}(U_{0}):\,\, p=p_{0},\  s_i=\frac{v}{u}=\frac{v_{0}}{u_{0}}.
\end{split}
\end{eqnarray}
That is,
we can parameterize $C_{i}(U_{0})$ as
\begin{eqnarray}\label{eq:2.7a}
&&C_{2}(U_{0}): \,\, u=u_0e^{\sigma_2},\ v=v_0e^{\sigma_2},\ p=p_{0},\ \rho=\rho_0,\ Z=Z_{0},\\[4pt]
&&C_{3}(U_{0}): \,\, u=u_0,\ v=v_0,\ p=p_{0},\ \rho=\rho_0+\sigma_3,\ Z=Z_{0},\\[4pt]
&&C_{4}(U_{0}): \,\, u=u_0,\ v=v_0,\ p=p_{0},\ \rho=\rho_0,\ Z=Z_{0}+\sigma_4,
\end{eqnarray}
where $C_2(U_0)$ corresponds the compressible vortex sheet, $C_3(U_0)$ the entropy wave,
and $C_4(U_0)$ the reactant wave.

\smallskip
The $j$-th shock wave curves $S_{j}(U_0),  j=1,5$, through $U_{0}$ are
\begin{eqnarray}\label{eq:2.9}
S^{-}_j(U_0):\, \, [p]=\frac{c_0^2}{\tilde{b}}[\rho],\ [u]=-s_j[v], \ [p]=\rho_0(s_ju_0-v_0)[v],\, [Z]=0,\quad  \rho>\rho_0,
\end{eqnarray}
where $\tilde{b}=\frac{\gamma+1}{2}-\frac{\gamma-1}{2}\frac{\rho}{\rho_0}$,\
$\tilde{c}^2=\frac{c_0^2}{\tilde{b}}\frac{\rho}{\rho_0}$, and
\begin{eqnarray}\label{eq:2.10}
\begin{split}
s_j=\frac{u_0v_0+(-1)^{\frac{j+3}{4}}\tilde{c}\sqrt{u_0^2+v_0^2-\tilde{c}^2}}{u_0^2-\tilde{c}^2},\qquad j=1,5.
\end{split}
\end{eqnarray}

The rarefaction wave curves $R_{j}(U_{0}), j=1, 5$, in the phase-space through $U_{0}$ are given by
\begin{eqnarray}\label{eq:2.11}
\begin{split}
 R^{+}_j(U_0):\,\, dp=c^2d\rho,\,\, du=-\lambda_jdv,\,\, \rho(\lambda_ju-v)dv=dp,\,\, dZ=0,\qquad  \rho<\rho_0.
\end{split}
\end{eqnarray}
In the physical states $(q, \theta, p, \rho, Z)^\top$, $R_{j}(U_{0}), j=1,5$, can also be expressed as
\begin{eqnarray}
&&R^{+}_1(U_0)\,:\,J(q,\mathcal{B})-\theta=J(q_{0},\mathcal{B}_0 )-\theta_{0},\, \frac{q^{2}}{2}+\frac{c^{2}}{\gamma-1}
=\frac{q^{2}_{0}}{2}+\frac{c^{2}_{0}}{\gamma-1},\, \frac{p}{\rho^{\gamma}}=\frac{p_{0}}{\rho^{\gamma}_{0}},\, Z=Z_{0},\nonumber\\
 &&        \label{eq:2.12}\\
&&R^{+}_5(U_0)\,:\,J(q,\mathcal{B})+\theta=J(q_{0},\mathcal{B}_0  )+\theta_{0}, \,\frac{q^{2}}{2}+\frac{c^{2}}{\gamma-1}
=\frac{q^{2}_{0}}{2}+\frac{c^{2}_{0}}{\gamma-1},\, \frac{p}{\rho^{\gamma}}=\frac{p_{0}}{\rho^{\gamma}_{0}},\, Z=Z_{0}, \nonumber\\
&& \label{eq:2.13}
\end{eqnarray}
where $U=(u, v, p, \rho, Z)^\top$,\ $U_{0}=(u_{0}, v_{0}, p_{0}, \rho_{0}, Z_{0})^\top$,\
$q_{0}=\sqrt{u^{2}_{0}+v^{2}_{0}},\  \theta_{0}=\arctan(\frac{v_{0}}{u_{0}})$, and
\begin{eqnarray*}
J(q,\mathcal{B}):=\int^{q}\frac{\sqrt{(\gamma+1)\mu^{2}-2(\gamma-1)\mathcal{B}}}{(\gamma-1)\mu\sqrt{2\mathcal{B}-\mu^{2}}}\ d\mu,
\end{eqnarray*}
with
\begin{equation*}
\mathcal{B}:= \frac{q^{2}}{2}+\frac{c^{2}}{\gamma-1}, \qquad \mathcal{B}_0:=\frac{q^{2}_{0}}{2}+\frac{c^{2}_{0}}{\gamma-1}.
\end{equation*}
Note that the shock wave curve $S^{-}_{j}(U_0)$ contacts with $R^{+}_{j}(U_0)$ at $U_0$ up to second order for each $j=1,5$.

\subsection{Riemann problem for the homogeneous system \eqref{eq:2.2}}
For $\delta_{0}>0$, define
\begin{equation}\label{eq:2.14}
D_{\delta_{0}}(U_{\infty})=\left\{U:
\begin{array}{l}
\big|J(q,\frac{q^{2}}{2}+\frac{c^{2}}{\gamma-1})+\theta
-J(q_{\infty},\frac{q^{2}_{\infty}}{2}+\frac{c^{2}_{\infty}}{\gamma-1})\big|<\delta_{0}\\[6pt]
\big|\frac{q^{2}}{2}+\frac{c^{2}}{\gamma-1}-\frac{q^{2}_{\infty}}{2}-\frac{c^{2}_{\infty}}{\gamma-1}\big|<\delta_{0},
\,\,\, \Big|\frac{p}{\rho^{\gamma}}-\frac{p_{\infty}}{\rho^{\gamma}_{\infty}}\Big|<\delta_{0}\\ [6pt]
0\leq Z<\delta_{0}, \,\,\, \delta_{0}+\theta_{\rm crit}<\theta<\delta_{0}
\end{array}
\right \},
\end{equation}
where
\begin{eqnarray}\label{eq:2.16}
\theta_{\rm crit}:=\inf_{\theta}\big\{\theta : U\in R^{+}_{5}(U_{\infty}),\ u>c_{*},\ q<q_{*}\big\}
\end{eqnarray}
with the critical sonic speed $c_{*}$ and critical speed $q_{*}$ given, respectively, by
\begin{equation}\label{eq:2.17}
c_{*}=\sqrt{\frac{(\gamma-1)u^{2}_{\infty}}{\gamma+1}+\frac{2c^{2}_{\infty}}{\gamma+1}},\qquad
q_{*}=\sqrt{u^{2}_{\infty}+\frac{2c^{2}_{\infty}}{\gamma-1}},
\end{equation}
which depend only on the unperturbed initial state.

In what follows, $\delta_{0}$ is always chosen small enough so that
$\overline{D_{\delta_{0}}(U_{0})}\subset \{U: u>c\}$.

\smallskip
We now consider the Riemann problem for \eqref{eq:2.2} in $D_{\delta_{0}}(U_{\infty})$ with
\begin{equation}\label{eq:2.19}
\left.U\right|_{x=\hat{x}}=\left\{
\begin{array}{llll}
      U_a, \quad  &y>\hat{y},\\[1mm]
      U_b, \quad  &y<\hat{y},
      \end{array}
 \right.
\end{equation}
where the constant states $U_a$ and $U_b$ denote the $above$ state and $below$
state with respect to line $y=\hat{y}$, respectively.

When $|U_a-U_b|$ is sufficiently small,  following Lax \cite{lax},
we can parameterize any physically admissible wave curve in a neighborhood of
$D_{\delta_{0}}(U_{\infty})$ by
$$
\alpha_{j}\mapsto\Phi_{j}(\alpha_{j}, U_b)
$$
with $\Phi_{j}\in C^{2}\big((-\delta_{0}, \delta_{0})\times D_{\delta_{0}}(U_{\infty})\big)$ and
$$
\left. \Phi_{j}\right|_{\alpha_{j}=0}= U_b,\qquad
\left. \frac{\partial\Phi_{j}}{\partial\alpha_{j}}\right|_{\alpha_{j}=0}=\rr_{j}(U_b).
$$
Then $\alpha_{j}>0$ along $\emph{R}^{+}_{j}(U_b)$,
while $\alpha_{j}<0$ for $\emph{S}^{-}_{j}(U_b)$ with $1\leq j\leq 5$.

For simplicity, let
\begin{equation}\label{eq:2.20}
\Phi(\alpha_{5},\alpha_{4},\alpha_{3},\alpha_{2}, \alpha_{1}; U_b)
:=\Phi_{5}(\alpha_{5},\Phi_{4}(\alpha_{4},\Phi_{3}(\alpha_{3}, \Phi_{2}(\alpha_{2}, \Phi_{1}(\alpha_{1}, U_b))))).
\end{equation}
Then we have

\begin{lemma}\label{lem:2.2}
There exists $0<\delta_{1}\ll 1$ such that, for any states
$U_a, U_b\in D_{\delta_{0}}(U_{\infty})$ with $|U_a-U_b|<\delta_{1}$,
problem \eqref{eq:2.2} and \eqref{eq:2.19} admits a unique admissible solution consisting of five elementary waves.
In addition, state $U_a$ can be solved by
\begin{eqnarray*}
U_a=\Phi(\alpha_{5},\alpha_{4}, \alpha_{3}, \alpha_{2}, \alpha_{1};U_b),
\end{eqnarray*}
with $\left. \Phi\right|_{\alpha_{1}=\alpha_{2}=\alpha_{3}=\alpha_{4}=\alpha_{5}=0}= U_b$ and
$\left.\frac{\partial\Phi}{\partial\alpha_{j}} \right|_{\alpha_{1}=\alpha_{2}=\alpha_{3}=\alpha_{4}=\alpha_{5}=0}=\rr_{j}(U_b),\ 1\le j\le 5$.
\end{lemma}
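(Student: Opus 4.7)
The plan is to reduce Lemma~\ref{lem:2.2} to the classical Lax construction \cite{lax}, adapted to the five-wave structure of \eqref{eq:2.2}, since all the structural ingredients are already assembled in \S 2.1--\S 2.2. I will first construct each individual wave map $\alpha_j \mapsto \Phi_j(\alpha_j, U_b)$, then form the composite $\Phi$ given by \eqref{eq:2.20}, and finally apply the implicit function theorem at $\alpha = 0$ to invert $\Phi(\cdot; U_b) = U_a$.

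For the genuinely nonlinear fields $j = 1, 5$ (Lemma~\ref{lem:2.1}), I will concatenate the rarefaction branch $R_j^+(U_b)$ for $\alpha_j \geq 0$ with the shock branch $S_j^-(U_b)$ for $\alpha_j \leq 0$, parameterizing the rarefaction piece so that $\nabla_U \lambda_j \cdot r_j = 1$. The second-order contact between $S_j^-(U_b)$ and $R_j^+(U_b)$ at $U_b$, noted after \eqref{eq:2.13}, ensures that the joined curve is of class $C^2$ with $\partial_{\alpha_j}\Phi_j|_{\alpha_j = 0} = r_j(U_b)$. For the linearly degenerate fields $j = 2, 3, 4$, the explicit parameterizations in \eqref{eq:2.7a} are smooth and satisfy the analogous tangency condition by inspection. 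The composite map $\Phi$ is then $C^2$ in $(\alpha_1, \ldots, \alpha_5)$ in a neighborhood of the origin, uniformly for $U_b$ in compact subsets of $D_{\delta_0}(U_\infty)$.

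The core computation is the Jacobian of $\Phi$ at $\alpha = 0$. Since each inner factor reduces to $U_b$ when its parameter vanishes, a direct application of the chain rule gives
\begin{equation*}
\left.\frac{\partial \Phi}{\partial \alpha_j}\right|_{\alpha = 0} = r_j(U_b), \qquad 1 \leq j \leq 5.
\end{equation*}
The rescaled eigenvectors $r_1(U_b), \ldots, r_5(U_b)$ from \S 2.1 are linearly independent on $\{u > c\}$, so the Jacobian is nonsingular at every $U_b \in D_{\delta_0}(U_\infty)$. The implicit function theorem then yields $\delta_1 \in (0, \delta_0)$, depending only on the system and $\delta_0$, such that for every $U_a, U_b \in D_{\delta_0}(U_\infty)$ with $|U_a - U_b| < \delta_1$ there is a unique $\alpha = (\alpha_1, \ldots, \alpha_5)$ with $\Phi(\alpha; U_b) = U_a$ and $|\alpha| = O(1)\,|U_a - U_b|$, satisfying the asserted tangency properties.

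The remaining task is to check that this algebraic solution indeed corresponds to a genuine admissible self-similar wave fan. Because the eigenvalues in $D_{\delta_0}(U_\infty)$ satisfy $\lambda_1 < \lambda_2 = \lambda_3 = \lambda_4 < \lambda_5$, the five pieces are separated in speed, and the sign convention ($\alpha_j > 0$ for $R_j^+$, $\alpha_j < 0$ for $S_j^-$) makes Lax's entropy condition automatic across the shocks. The main subtlety I anticipate is precisely the threefold degeneracy of the middle eigenvalue $v/u$: one must verify that the contact piece $\Phi_4 \circ \Phi_3 \circ \Phi_2$, built as three successive single-parameter contacts in a prescribed order, defines a single well-posed jump at the common speed $v/u$ rather than a genuinely ordered fan. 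This follows because each $\Phi_i$, $i = 2, 3, 4$, preserves both $p$ and $v/u$ by \eqref{eq:2.7}, so all intermediate states share the same characteristic speed and the composite collapses to one contact discontinuity in the wave pattern, completing the construction.
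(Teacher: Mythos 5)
Your proposal is correct and follows exactly the route the paper intends: the paper states Lemma \ref{lem:2.2} without proof, simply invoking Lax's construction after setting up the $C^2$ wave-curve parametrizations $\Phi_j$, and your argument (second-order contact of $S_j^-$ and $R_j^+$, Jacobian of $\Phi$ at $\alpha=0$ equal to the matrix of independent eigenvectors $\rr_j(U_b)$, implicit function theorem, and the collapse of the three linearly degenerate fields into a single contact at speed $v/u$) is precisely that standard proof. No discrepancy to report.
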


\begin{remark}\label{rem:2.1}
For any state
$ U_b\in D_{\delta_{0}}(U_{\infty})$,
we can also parameterize the whole curve $R_{5}(U_b)$
by solving the equation{\rm :}
\begin{eqnarray*}
\frac{dU_{\rm Ra}(\sigma, U_b)}{d\sigma}=\rr_{5}(U_{\rm Ra}(\sigma, U_b))
\end{eqnarray*}
with $\left. \Phi\right|_{\sigma=0}= U_b$. Then $\Phi_{5}$ can be extended by $\Phi_{5}(\sigma, U_b)=U_{\rm Ra}(\sigma, U_b)$
for $\sigma\geq 0$.
\end{remark}

\section{Strong Rarefaction Waves for Euler Flow}
In this section, we investigate the background solution
for the Euler equations for steady supersonic Euler flow with $Z=0$ for \eqref{eq:2.2}
over a convex Lipschitz wall $g_*(x)$ and the constant incoming flow
$U_{\infty}=(u_{\infty}, 0, p_{\infty}, \rho_{\infty}, 0)^{\top}$.
We call system \eqref{eq:2.2} with $Z=0$ simply as the Euler equations.
We first consider a special case $g_{*}(x)=g_{*\Delta}(x)$ with
\begin{equation*}
 g_{*\Delta}(x)=\left\{
\begin{array}{ll}
  0, & x\leq 0,\\ [1mm]
  g_{*}(x^{*}_{k})+ (x-x^{*}_{k})\tan\theta^{*}_{k}, & x\in(x^{*}_{k}, x^{*}_{k+1}),\ k\geq0,
 \end{array} \right.
 \end{equation*}
where $\{x^{*}_{k}\}^{\infty}_{k=0}$ is a sequence of points with
$x^{*}_{0}=0,\ 0<x^{*}_{k}<x^{*}_{k+1}, k\geq1,\
\Delta x^{*}=x^{*}_{k+1}-x^{*}_{k}$, and
\begin{eqnarray*}
\theta^{*}_{k}=\arctan(\frac{g_{*}(x^{*}_{k+1})-g_{*}(x^{*}_{k})}{\Delta x^{*}})
\end{eqnarray*}
satisfies $\theta_{\rm crit}<\theta^{*}_{k+1}<\theta^{*}_{k}<0$ for $k\geq 0$.

When the Euler flow passes the convex Lipschitz wall $g_{*\Delta}(x)$, some rarefaction waves are generated by the
corner points of the wall.
\vspace{20pt}
\begin{center}
\begin{tikzpicture}[scale=2.0]
\draw [ultra thick] (-3.2,0)--(-1.4,0)-- (0.5,-0.4);
\draw [thin](-1.4,0) --(-0.6,1.2);\draw [thin] (-1.4,0) --(-0.9,1.2);\draw [thin] (-1.4,0) --(-1.2,1.2);
\draw [thick][->](-2.6, 0.5) --(-1.6, 0.5);
\draw [thick][->](-0.6, 0.5) --(0.2, 0.4);
\draw [thin][<-](-0.9,-0.8) --(0, -0.5);
\node at (-2, 0.7) {$U^{*}_{k}$};
\node at (-0.1, 0.7) {$U^{*}_{k+1}$};
\node at (-1.5, -0.4) {$(x^{*}_{k},g_{*\Delta}(x^{*}_{k}))$};
\node at (-1, -1.0) {$y=g_{*\Delta}(x)$};
\node [below] at (-0.5, -1.2)
{Fig. 3.1. Steady supersonic Euler flow past a piecewise convex wall};
\end{tikzpicture}
\end{center}
Then we can employ the methods that have been given in \cite{courant, wy} to construct
the solution. We summary this as the following theorem.

\begin{theorem}\label{thm:3.1}
Suppose that $g_{*}(x)\equiv g_{*\Delta}(x)$ and $\theta^{*}_{k}\in (\theta_{\rm crit}, 0), k\geq 0$.
Let $U^{*}_{k}=(u^{*}_{k},v^{*}_{k},p^{*}_{k},\rho^{*}_{k},0)$ be the incoming states that
depend on the initial data $U_{\infty}$.
Then there exists a unique solution $U^{*}_{\Delta}(x,y)$ for the Euler equations \eqref{eq:2.2}
over the convex wall $g_{*\Delta}(x)$
in the supersonic region $\{U: u>c_{*},\ q<q_{*}\}$
which consists of strong rarefaction waves that are generated by the
corner points $(x^{*}_{k}, g_{*\Delta}(x^{*}_{k})), k\geq0$,
where $\theta_{\rm crit}$, $c_{*}$, and $q_{*}$ are defined
as in \eqref{eq:2.16}--\eqref{eq:2.17}.
\end{theorem}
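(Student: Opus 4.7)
The plan is to construct $U^{*}_{\Delta}(x,y)$ inductively, corner by corner, by patching piecewise-constant states $U^{*}_{k}$ along the wall segments together with self-similar Prandtl--Meyer rarefaction fans centered at each corner. In the wedge adjacent to the segment $(x^{*}_{k},x^{*}_{k+1})$ I place the constant state $U^{*}_{k}$, and at the $k$-th corner I insert a centered $5$-rarefaction fan connecting $U^{*}_{k-1}$ (with $U^{*}_{-1}:=U_{\infty}$) to $U^{*}_{k}$.

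At each corner $(x^{*}_{k},g_{*\Delta}(x^{*}_{k}))$ I would solve a boundary-Riemann problem: given $U^{*}_{k-1}$ in the region just above the incoming wall segment together with the new wall angle $\theta^{*}_{k}$, determine the downstream constant state $U^{*}_{k}$ on the $5$-rarefaction curve $R^{+}_{5}(U^{*}_{k-1})$ of \eqref{eq:2.13} subject to the tangency condition $\theta=\theta^{*}_{k}$. This reduces to the scalar equation
\begin{equation*}
J(q^{*}_{k},\mathcal{B}^{*}_{k})+\theta^{*}_{k}=J(q^{*}_{k-1},\mathcal{B}^{*}_{k-1})+\theta^{*}_{k-1},
\end{equation*}
together with $\mathcal{B}^{*}_{k}=\mathcal{B}^{*}_{k-1}$ and entropy conservation $p/\rho^{\gamma}=\mathrm{const}$. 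Since $\partial_{q}J(q,\mathcal{B})>0$ in the supersonic range and $\theta^{*}_{k}<\theta^{*}_{k-1}$, this system admits a unique solution with $q^{*}_{k}>q^{*}_{k-1}$; the hypothesis $\theta^{*}_{k}>\theta_{\rm crit}$ combined with the definition \eqref{eq:2.16} of $\theta_{\rm crit}$ then forces $U^{*}_{k}\in\{u>c_{*},\,q<q_{*}\}$. The fan itself is written down explicitly as the self-similar solution of \eqref{eq:2.2} in polar coordinates about the corner, interpolating along $R^{+}_{5}(U^{*}_{k-1})$ between $U^{*}_{k-1}$ and $U^{*}_{k}$ and bounded by the two $5$-characteristics with slopes $\tan(\theta^{*}_{k-1}+\theta_{{\rm ma},k-1})$ and $\tan(\theta^{*}_{k}+\theta_{{\rm ma},k})$, where $\theta_{{\rm ma},j}$ denotes the Mach angle evaluated at $U^{*}_{j}$.

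The main obstacle is to verify that successive fans do not interact, so that the piecewise patching actually defines a single-valued solution in the whole supersonic region. The trailing $5$-characteristic of the $k$-th fan, emitted from $(x^{*}_{k},g_{*\Delta}(x^{*}_{k}))$, and the leading $5$-characteristic of the $(k{+}1)$-th fan, emitted from $(x^{*}_{k+1},g_{*\Delta}(x^{*}_{k+1}))$, both carry the intermediate state $U^{*}_{k}$ and hence have the same slope $\tan(\theta^{*}_{k}+\theta_{{\rm ma},k})$. Being parallel rays issuing from distinct points with $x^{*}_{k+1}>x^{*}_{k}$, they are disjoint; together with the wall segment $[x^{*}_{k},x^{*}_{k+1}]$ they bound a region that is consistently filled by the constant state $U^{*}_{k}$. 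The convexity condition $\theta^{*}_{k+1}<\theta^{*}_{k}$, together with the monotonicity along $R^{+}_{5}$ (namely $q$ strictly increases and $\theta_{\rm ma}$ strictly decreases through every fan), further shows that the trailing-characteristic slopes decrease with $k$, so the fans spread downstream and remain pairwise disjoint for all $k\ge 0$.

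Uniqueness at each step follows from the uniqueness of the Prandtl--Meyer expansion determined by the incoming state and the prescribed wall direction, within the admissible framework of \S 2. Concatenating the resulting local pieces over all $k\ge 0$ produces the desired $U^{*}_{\Delta}(x,y)$, proving Theorem \ref{thm:3.1}; since the Euler system is obtained from \eqref{eq:2.2} by setting $Z\equiv 0$, the reactant component decouples and the construction specializes the classical arguments in \cite{courant,wy}.
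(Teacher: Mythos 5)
Your proposal is correct and follows exactly the route the paper intends: the paper gives no written proof of Theorem \ref{thm:3.1} and simply invokes the classical Prandtl--Meyer construction of \cite{courant,wy}, which is precisely the corner-by-corner patching of centered $5$-fans that you carry out, including the key points (unique solvability of $J(q,\mathcal{B})+\theta=\mathrm{const}$ via monotonicity of $J$, containment in $\{u>c_*,\,q<q_*\}$ from $\theta^*_k>\theta_{\rm crit}$, and disjointness of successive fans from the parallelism of the shared-state characteristics and the monotone decrease of $\tan(\theta+\theta_{\rm ma})$ along the expansion). The only cosmetic caveat is the orientation of the curve: in the paper's parametrization of \S 4 the downstream state is reached by the backward extension $U^*_k=U_{\rm Ra}(-\varepsilon_5,U^*_{k-1})$ (equivalently $U^*_{k-1}\in R^+_5(U^*_k)$), but since \eqref{eq:2.13} consists of equalities of Riemann invariants this does not affect your argument.
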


\vspace{10pt}
\begin{center}
\begin{tikzpicture}[scale=1.8]
\draw [thick][<->] (0,1.9) --(0,0)-- (2.2,0);
\draw[thick] (0,0) circle (0.5);
\draw [thick](0,0) circle (1.5);
\draw [thick][->] (0,0) -- (0.4, 0.3);
\draw [thick][->] (0,0) -- (-0.5, 1.414);
\draw [thick][->] (0,0) -- (1, -0.76);
\draw [thick](0.4, 0.3)to [out=0,in=125](1,0)to [out=-55,in=-10](-0.25,-1.48);
\draw[thick](0.1, 0)to [out=100,in=35](0.07,-0.05);
\node at (1.2,0.15) {$(u_{\infty}, 0)$};
\node at (2.2,-0.1) {$u$};
\node at (-0.1, 1.9) {$v$};
\node at (-0.1, -0.1) {$O$};
\node at (-0.6, 0.9) {$q=q_{*}$};
\node at (0.5, 0.5) {$q=c_{*}$};
\node at (0.22, -0.07) {$\theta$};
\node [below] at (0.2, -1.8)
{Fig. 3.2. The epicycloid issues from $(u_{\infty}, 0)$ in the $(u, v)$--plane};
\end{tikzpicture}
\end{center}

As a corollary, we can prove a similar result for the general case of Theorem \ref{thm:3.1}.

\begin{corollary}\label{cor:3.1}
Suppose that $\arctan(g'_{*}(x))\in (\theta_{\rm crit}, 0)$,
and $g'_{*}(\cdot)$ is monotone decreasing.
If $TV.(g'_{*}(\cdot))<\infty$,
then there exists a unique solution $U^{*}(x,y)$ in the supersonic
region $\{U: u>c_{*},\ q<q_{*}\}$
which connects $U_{\infty}$ by a strong rarefaction wave $R^{+}_{5}(U_{\infty})$
for the Euler equations \eqref{eq:2.2} over the convex wall
$y=g_{*}(x)$,
where $\theta_{\rm crit}$, $c_{*}$, and $q_{*}$
are defined as in \eqref{eq:2.16}--\eqref{eq:2.17}.
\end{corollary}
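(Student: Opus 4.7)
The plan is to approximate the convex wall $y=g_{*}(x)$ by piecewise linear convex walls to which Theorem \ref{thm:3.1} applies, and then pass to the limit using the BV bound on $g'_{*}$ together with the simple-wave structure of the approximations. Concretely, I would pick a sequence of partitions $0=x_{0}^{*,n}<x_{1}^{*,n}<\cdots$ with mesh $\Delta x^{*,n}\to 0$, set $g_{*,n}$ to be the piecewise linear interpolant of $g_{*}$ at these nodes, and define $\theta^{*,n}_{k}=\arctan\!\bigl((g_{*}(x_{k+1}^{*,n})-g_{*}(x_{k}^{*,n}))/\Delta x^{*,n}\bigr)$. The assumption that $\arctan(g'_{*})\in(\theta_{\rm crit},0)$ together with monotonicity of $g'_{*}$ yields $\theta_{\rm crit}<\theta^{*,n}_{k+1}<\theta^{*,n}_{k}<0$, so Theorem \ref{thm:3.1} provides solutions $U^{*}_{n}(x,y)$ consisting of centered $5$-rarefaction fans issuing from the corners $(x_{k}^{*,n},g_{*,n}(x_{k}^{*,n}))$.

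The crucial structural observation is that each $U^{*}_{n}$ is a simple $5$-wave: every state lies on the single fixed curve $R^{+}_{5}(U_{\infty})$ depicted in Fig.~3.2, and is therefore parameterized by the flow angle $\theta$ alone via the Riemann invariants in \eqref{eq:2.13}. Because $g'_{*}$ is monotone decreasing, successive rarefaction fans do not interact and the $5$-characteristics (Mach lines of slope $\tan(\theta+\theta_{\rm ma})$, which is monotone in $\theta$) emanating from distinct points of the wall never cross in the supersonic region $\{u>c_{*},\ q<q_{*}\}$. Hence a foot-map $(x,y)\mapsto x_{0}(x,y)$ is well defined in each approximation, and $U^{*}_{n}(x,y)$ equals the point on $R^{+}_{5}(U_{\infty})$ with angle $\arctan(g'_{*,n}(x_{0}))$. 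Monotonicity of $g'_{*}$ combined with $TV(g'_{*})<\infty$ gives pointwise convergence $g'_{*,n}\to g'_{*}$ at every continuity point, and then continuity of the foot-map under Hausdorff convergence of the walls yields a pointwise limit $U^{*}(x,y)$ lying on $R^{+}_{5}(U_{\infty})$.

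I would then verify that $U^{*}$ is a genuine entropy solution of \eqref{eq:2.2} with the slip condition \eqref{eq:1.9} on $\Gamma$. The uniform $L^{\infty}$ bound from confinement to the epicycloid and the uniform $BV$ bound inherited from $TV(g'_{*})<\infty$ allow passage to the limit in the weak formulation; the slip condition is preserved since each $U^{*}_{n}$ satisfies $(u,v)\cdot\mathbf{n}|_{g_{*,n}}=0$ and the flow angle at the wall in the limit is $\arctan(g'_{*})$ almost everywhere. Staying inside $\{u>c_{*},\ q<q_{*}\}$ is automatic from $\arctan(g'_{*})\in(\theta_{\rm crit},0)$ and the definition \eqref{eq:2.16} of $\theta_{\rm crit}$. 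Uniqueness in this class follows from the same characteristic argument: any admissible solution with simple $5$-wave structure must, along each $5$-characteristic traced back to the wall, coincide with the state on $R^{+}_{5}(U_{\infty})$ determined by the wall angle at the foot, and hence must equal $U^{*}$.

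The main obstacle I anticipate is the low regularity of the wall: $g'_{*}$ is only BV, so the foot-map is only defined off a countable set of wall corners, and the Mach line issuing from such a corner is actually a fan rather than a single curve. Handling this requires treating corners as points where the pointwise solution is set-valued (a centered fan on $R^{+}_{5}(U_{\infty})$) and showing that, after this identification, the pointwise convergence and the passage to the limit in the weak formulation both go through unambiguously; the monotonicity of $g'_{*}$ is what prevents these corner fans from overlapping and is the essential geometric input that makes the whole limit procedure work.
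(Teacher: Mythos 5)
The paper gives no written proof of Corollary \ref{cor:3.1}: it is simply asserted as the general-wall version of Theorem \ref{thm:3.1}, which is itself justified only by reference to the classical constructions in Courant--Friedrichs and Wang--Yin. Your polygonal-approximation-and-limit argument --- resting on the simple-wave structure (every state lies on $R^{+}_{5}(U_{\infty})$ and is parameterized by the flow angle $\theta$ alone) and on the non-crossing of the $5$-characteristics forced by the monotone decrease of $g'_{*}$ and the genuine nonlinearity of $\lambda_{5}$ --- is exactly the intended derivation, and it is correct.
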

\vspace{10pt}
\begin{center}
\begin{tikzpicture}[scale=1.6]
\draw [thick][->] (-1.6,1.2)--(-0.3,1.2);
\draw [thick][->] (-1.6,0.8)--(-0.3,0.8);
\draw [thick][->] (-1.6,0.4)--(-0.3,0.4);
\draw [thick][<-] (-0.1,-0.7) --(1,-0.4);
\draw [thick][->] (2.6,-0.1)--(3.5,-0.5);
\draw [line width=0.1cm](-1.8,0)to [out=0, in=0](0, 0)to [out=0,in=145](2.8,-1.1);
\draw [thick](0,0)--(1.5, 1.8);
\draw  [thick](0.6,-0.06)--(2, 1.5);
\draw  [thick](1.1,-0.18)--(2.7, 1.3);
\draw  [thick](1.6,-0.4)--(3.3, 1);
\node at (-1, 1.5) {$U_{\infty}$};
\node at (3.3, -0.1) {$U^{*}(x,y)$};
\node at (0.1, -1) {$y=g_{*}(x)$};
\node at (2.5,1.7) {$R^{+}_{5}(U_{\infty})$};
\node [below] at (0.3, -1.4)
{Fig. 3.3. Supersonic Euler flow past a convex bending wall};
\end{tikzpicture}
\end{center}

\section{Construction of Approximate Solutions}\setcounter{equation}{0}

In this section, we construct a family of global approximate solutions to system \eqref{eq:1.1}
by employing a fractional-step scheme based on the wave-front tracking algorithm.
As a first step, we need to approximate domain $\Omega$ by $\Omega_{h}$ defined below.

Let $h=\Delta x>0$ be the mesh length in the $x$--direction. Then
we choose a set of points $\{A_{k}\}_{k=0}$ with $A_{k}=(x_k, g_k):=(kh, g(kh))$
on the boundary that connects $A_{k}$ to $A_{k+1}$ in order.
Denote
\begin{eqnarray*}
&&\theta_{0}=\arctan g_{0},\,\, \quad\theta_{k}=\arctan(\frac{g_{k+1}-g_{k}}{h}),\ k\ge 1,\\
&&\omega_{0}=\arctan(\frac{g(x_0)-g(0)}{x_{0}}),\,\,\quad  \omega_{k}=\theta_{k}-\theta_{k-1},\ k\ge 1,
\end{eqnarray*}
where $\omega_k$ represents the change of angle $\theta_{k-1}$ at the turning points $A_k, k\ge 1$.
Define
\begin{eqnarray*}
&&g_{h}(x)=g_k+(x-kh)\tan\theta_{k}\qquad \mbox{for any $x\in[kh, (k+1)h), k\geq0$},\\[2mm]
&&\Omega_{h, k}=\{(x,y)\,:\, kh\leq x< (k+1)h, \ y> g_{h}(x)\},\\[2mm]
&&\Gamma_{h, k}=\{(x,y)\,:\, kh\leq x< (k+1)h, \ y=g_{h}(x)\},\\[2mm]
&&\Omega_{h}=\bigcup_{k\geq0}\Omega_{ h, k}, \,\,\quad\Gamma_{h}=\bigcup_{k\geq0}\Gamma_{h, k}.
\end{eqnarray*}
Let $\textbf{n}_{k}$ be the outer unit normal vector to $\Gamma_{h, k}$:
\begin{equation*}
\textbf{n}_{k}=\frac{(g_{k+1}-g_{k},-x_{k+1}+x_{k})}{\sqrt{(g_{k+1}-g_{k})^2+(x_{k+1}-x_{k})^2}}=(\sin\theta_{k},-\cos\theta_{k}).
\end{equation*}

The approximate solutions for system \eqref{eq:2.2} are constructed by an induction procedure,
together with the fractional-step wave-front tracking scheme.
That is, if the solution has been constructed for $x\leq (k-1)h$,
then, for fixed $h$,
we solve the homogeneous system \eqref{eq:2.2} between $(k-1)h<x<kh$.
Finally, we solve the nonhomogeneous
problem for system \eqref{eq:2.2} from $x=kh-$ to $x=kh+$ with initial data taking at $x=kh-$.
The details of the construction can be seen in \S 4.1 below.
\vspace{10pt}
\begin{center}
\begin{tikzpicture}[scale=1.4]
\draw [dashed][ultra thick](-2.5,-1.5)--(-2.5,2);
\draw [dashed][ultra thick](0.5,-1.5)--(0.5,2);
\draw [thin](0.5,-1.2)--(1.5,-0.6);
\draw [ultra thick] (-4,-0.5)--(-2.5,-0.5)--(0.5,-1.2)--(1.9,-1);
\draw [thin](-1.7,0)--(-1.3,-0.8)--(-0.6,0)--(-1.3, 0.3);
\draw [thin](0.5,0.3)--(1.2, 1);
\draw [thin](0.5,0.3)--(1.5, 0.5);
\draw [thin](0.5,0.3)--(1.5, 0);
\draw [thin](-0.6,0)--(0.5, 0.3);
\draw [thin](-0.6,0)--(0.1, 0);
\draw [thin](-0.6,0)--(0.1, -0.3);
\draw [thin](-2.5,-0.5)--(-1.8,0.3);
\draw [thin](-2.5,1)--(-1.6,0.7);
\draw [thin](-2.5,1)--(-1.6,1.4);
\draw [thin](-2.5,1)--(-1.6,1.1);
\draw [thin](-1.5,1)--(0,1.5);
\draw [thin](-1.5,1.5)--(0,1);
\draw [thin](-0.75,1.25)--(-0.2,0.7);
\draw [dashed][thick](-0.4,-0.7)--(0.5,-0.6)--(1.4, -0.2);
\node at (-2.5, -1.7) {$x=(k-1)h$};
\node at (0.5, -1.7) {$x=kh$};
\node at (1.8, -1.3) {$y=g_{h}(x)$};
\node [below] at (-1.0, -2)
{Fig. 4.1.  Wave-front algorithm with the reacting steps};
\end{tikzpicture}
\end{center}

\subsection{Riemann solvers for the homogeneous system \eqref{eq:2.2}}

As mentioned in \S 2, the Riemann problem:
\begin{eqnarray}\label{eq:4.1}
W(U)_x+H(U)_y=0,\qquad
\left.U\right|_{x=\hat{x}}=\left\{
\begin{array}{llll}
      U_a, \quad  &y>\hat{y},\\[1mm]
      U_b, \quad  &y<\hat{y},
      \end{array}
 \right.
\end{eqnarray}
admits a unique self-similar solution given by at most four states separated
by shocks, contact discontinuities, or rarefaction waves. More precisely, the solution
is inductively defined by
\begin{eqnarray}\label{eq:4.2}
U_{0}=U_b, \qquad   U_{i}=\Phi_{i}(\alpha_{i},U_{i-1}),\   1\leq i\leq5,\, \qquad U_{5}=U_a.
\end{eqnarray}
Following \cite{amadori,bressan}, there are two procedures to define the approximate
solutions to the Riemann problem \eqref{eq:4.1}: The accurate Riemann solver
and the simplified Riemann solver.

\smallskip
$\mathbf{Accurate}$ $\mathbf{Riemann}$ $\mathbf{solver}$.
For any $\nu\in\mathbb{N}$, a $\nu$--approximate solution $U^{\nu}$ to the Riemann problem
at any jump point $(\hat{x}, \hat{y})$ is defined
by dividing every rarefaction wave
into $\nu$ parts.
That is, if $\alpha_{1}>0$, then set $U_{0,0}=U_b$, $U_{0,\nu}=U_b$, and
\begin{eqnarray*}
U_{0, k}=\Phi_{1}(\frac{k}{\nu}\alpha_{1}, U_{0, k-1}),
\quad y_{1,k}=\hat{y}+(x-\hat{x})\lambda_{1}(U_{0, k})\qquad
\mbox{for $1\leq k\leq\nu$.}
\end{eqnarray*}
In place of the $1$-rarefaction wave, define
\begin{eqnarray}\label{eq:4.3}
U^{\nu}_{A}(U_b, U_a)=\left\{
\begin{array}{lllll}
      U_{l}, \  &y<y_{1,1},\\[1mm]
      U_{0,k}, \  &y_{1,k}<y<y_{1,k+1},\\[1mm]
      U_{1},\  &y_{1,\nu}<y<\hat{y}+(x-\hat{x})\lambda^{*}_{1}
\end{array}
 \right.
\end{eqnarray}
for some $\lambda^{*}_{1}\in(\max\lambda_{1}, \min\lambda_{2(3,4)})$.
Then $U^{\nu}_{A}$
in $\{y: y <\hat{y}+(x-\hat{x})\lambda^{*}_{1}\}$ given by \eqref{eq:4.3}
is called an approximate $1$-rarefaction wave,
which stands for the accurate $\nu$-Riemann solver.
For simplicity, we still use $\alpha_{1}$ to denote both the wave given by \eqref{eq:4.3}
and its magnitude.
The discontinuities in \eqref{eq:4.3} are
called $1$-rarefaction fronts, and the magnitude of each front is $\frac{\alpha_{1}}{\nu}$.

In the same way, we can define
the approximate $5$-rarefaction wave $U^{\nu}_{A}$  with magnitude $\alpha_{5}$
in domain $\{y: y > \hat{y}+(x-\hat{x})\lambda^{*}_{5}\}$ for some $\lambda^{*}_{5}\in(\max\lambda_{2(3,4)}, \min\lambda_{5})$.
It also contains $\nu$ $5$-rarefaction fronts, and the magnitude of each front is $\frac{\alpha_{5}}{\nu}$.

In this construction, the $i$-shocks, $i=1,5$, and $j$-contact discontinuities, $j=2,3,4$, are not modified at all.

\medskip
$\mathbf{Simplified}$ $\mathbf{Riemann}$ $\mathbf{solver}$.
As described in \cite{amadori,bressan}, the simplified Riemann solver puts together all the new waves
in a single non-physical front which travels faster than all the characteristic speeds.
This is defined for the following two cases:

\smallskip
\par $\mathbf{Case\ 1.}$ Let $1\leq i\leq j\leq 5$ be the families of the two incoming wave-fronts
interacting at $(\hat{x}, \hat{y})$.
Assume that the below, middle, and above states $\{U_b, U_{m}, U_a\}$ before interaction
are connected by
\begin{eqnarray}\label{eq:4.4}
U_{m}=\Phi_{j}(\beta, U_b), \qquad U_a=\Phi_{i}(\alpha, U_{m}).
\end{eqnarray}
Define the auxiliary above state:
\begin{eqnarray}\label{eq:4.5}
U'_a=
\begin{cases}
      \Phi_{j}(\beta,\Phi_{i}(\alpha, U_b)), \ &j>i,\\[3pt]
      \Phi_{j}(\alpha+\beta, U_b), \ &j=i.
\end{cases}
\end{eqnarray}
Choose a constant $\hat{\lambda}$ such that
\begin{eqnarray*}
\hat{\lambda}>\underset{i, U}{\sup}\{\lambda_{i}(U)\,:\, U \in D_{\delta_0}(U_{\infty}),\ 1\leq i\leq 5\}.
\end{eqnarray*}
Then, in a forward neighbourhood of point $(\hat{x}, \hat{y})$,
we define the approximate solution $U_{S}(U_b, U_{m}, U_a)$ to problem \eqref{eq:4.1} as follows:
\begin{eqnarray}\label{eq:4.6}
U_{S}(U_b, U_{m}, U_a)=\left\{
\begin{array}{lll}
     U^{\nu}_{A}(U_b,U'_a), \ &y-\hat{y}<\hat{\lambda}(x-\hat{x}),\\[3pt]
      U_a, \ &y-\hat{y}>\hat{\lambda}(x-\hat{x}),
\end{array}
 \right.
\end{eqnarray}
where $U^{\nu}_{A}(U_b,U'_a)$ is the accurate Riemann solver constructed as in \eqref{eq:4.3}
which contains at most two wave-fronts. The part
\begin{eqnarray}\label{eq:4.7}
U_{\rm np}=\left\{
\begin{array}{lll}
     U'_a, \ &y-\hat{y}<\hat{\lambda}(x-\hat{x}),\\[3pt]
      U_a, \ &y-\hat{y}>\hat{\lambda}(x-\hat{x})
\end{array}
 \right.
\end{eqnarray}
is called a non-physical wave-front whose strength is defined to be $|U_a-U'_a|$.

\smallskip
$\mathbf{Case\ 2}.$  A non-physical wave-front with strength $\epsilon$ hits  a wave-front
of the $i$-characteristic family at $(\hat{x},\hat{y})$ from the below for some $1\leq i\leq 5$.

Suppose that the below, middle, and above states $\{U_b, U_{m}, U_a\}$ before the interaction
are connected by
\begin{eqnarray*}
|U_{m}-U_b|=\epsilon,\quad  U_a=\Phi_{i}(\alpha, U_{m}).
\end{eqnarray*}
Then the approximate solution $U_{S}(U_b, U_{m}, U_a)$ to problem \eqref{eq:4.1}
is defined as follows:
\begin{eqnarray}\label{eq:4.8}
U_{S}(U_b, U_{m}, U_a)=\left\{
\begin{array}{lll}
    U_b,\ &y-\hat{y}<\lambda(U_b)(x-\hat{x}),\\[3pt]
    \Phi_{i}(\alpha, U_b),\ &\lambda(U_b)(x-\hat{x})<y-\hat{y}<\hat{\lambda}(x-\hat{x}), \\[3pt]
      U_a,\ &y-\hat{y}>\hat{\lambda}(x-\hat{x}).
\end{array}
 \right.
\end{eqnarray}

\subsection{Initial-boundary value problems for the homogeneous system \eqref{eq:2.2}}
$\,$ To study the flow past a corner point, we consider the following initial-boundary value problem:
\begin{eqnarray}\label{eq:4.9}
\left\{
\begin{array}{llll}
      W(U)_x+H(U)_y=0 \quad  &\mbox{in}\ \Omega_{ h, k},\\[3pt]
     \left.U\right|_{x=x_{k-1}}=U_{k-1} ,\\[3pt]
     (u,v)\cdot \textbf{n}_{k}=0 \quad  & \mbox{on}\ \Gamma_{h, k},
     \end{array}
     \right.
\end{eqnarray}
where $U_{k}$ is a constant state.

To describe the interaction and reflection on the boundary, we first need the following lemmas.

\begin{lemma}\label{lem:4.1}
Let $\{U_{1}, U_{2}\}$ be the two constant states related by
\begin{eqnarray}\label{eq:4.10}
U_{1}=\Phi_{5}(\alpha;U_{2}).
\end{eqnarray}
Then, for $|\varepsilon|\ll 1$ sufficiently small, there exists a $C^{2}$--function $\Psi$ such that
\begin{eqnarray}\label{eq:4.11}
U_{2}=\Psi(\alpha; U_{1}).
\end{eqnarray}
Moreover,
\begin{eqnarray}\label{eq:4.12}
\left.\frac{\partial \Psi}{\partial \alpha}\right|_{\alpha=0}=-\left.\frac{\partial\Phi_{5}}{\partial \alpha}\right|_{\alpha=0}=-\rr_{5}(U_{1}).
\end{eqnarray}
\end{lemma}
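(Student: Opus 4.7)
The plan is to invoke the implicit function theorem on the equation $U_{1}-\Phi_{5}(\alpha,U_{2})=0$, viewing $U_{2}$ as the unknown to be solved in terms of $(\alpha,U_{1})$. I would carry this out in two steps: first produce the function $\Psi$ with the claimed regularity, then differentiate the defining identity at $\alpha=0$ to extract the derivative formula.

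\textbf{Step 1 (existence and regularity of $\Psi$).} Define
\[
F(\alpha,U_{2};U_{1}) := U_{1}-\Phi_{5}(\alpha,U_{2}),
\]
which is a $C^{2}$ map on a neighborhood of $(0,U_{1};U_{1})$ because Lemma~\ref{lem:2.2} (and the extension in Remark~\ref{rem:2.1}) guarantees $\Phi_{5}\in C^{2}$. At $\alpha=0$ one has $\Phi_{5}(0,U_{2})\equiv U_{2}$, hence $F(0,U_{1};U_{1})=0$ and
\[
\frac{\partial F}{\partial U_{2}}\Big|_{(0,U_{1};U_{1})}=-\frac{\partial \Phi_{5}}{\partial U_{2}}\Big|_{\alpha=0}=-I,
\]
which is invertible. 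The implicit function theorem then supplies a $C^{2}$ function $\Psi$, defined on a neighborhood of $(0,U_{1})$, such that $F(\alpha,\Psi(\alpha,U_{1});U_{1})\equiv 0$, that is,
\[
U_{1}=\Phi_{5}(\alpha,\Psi(\alpha,U_{1})),
\]
with $\Psi(0,U_{1})=U_{1}$. Combined with the original relation $U_{1}=\Phi_{5}(\alpha,U_{2})$, uniqueness in the implicit function theorem yields $U_{2}=\Psi(\alpha,U_{1})$, as claimed.

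\textbf{Step 2 (the derivative at $\alpha=0$).} Differentiate the identity $U_{1}=\Phi_{5}(\alpha,\Psi(\alpha,U_{1}))$ with respect to $\alpha$ at fixed $U_{1}$:
\[
0=\frac{\partial \Phi_{5}}{\partial \alpha}(\alpha,\Psi(\alpha,U_{1}))+\frac{\partial \Phi_{5}}{\partial U_{2}}(\alpha,\Psi(\alpha,U_{1}))\,\frac{\partial \Psi}{\partial \alpha}(\alpha,U_{1}).
\]
Evaluate at $\alpha=0$, using $\Psi(0,U_{1})=U_{1}$, $\frac{\partial \Phi_{5}}{\partial U_{2}}|_{\alpha=0}=I$, and the normalization $\frac{\partial \Phi_{5}}{\partial \alpha}|_{\alpha=0}=\rr_{5}(U_{1})$ recorded after Lemma~\ref{lem:2.2}. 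This gives
\[
0=\rr_{5}(U_{1})+\frac{\partial \Psi}{\partial \alpha}\Big|_{\alpha=0},
\]
hence $\frac{\partial \Psi}{\partial \alpha}|_{\alpha=0}=-\rr_{5}(U_{1})$, which is exactly \eqref{eq:4.12}.

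There is essentially no obstacle here: the only point to be careful about is that the inversion is valid only for small $|\alpha|$ (the $\varepsilon$ in the statement), and that the domain on which the implicit function theorem applies is uniform in $U_{1}$ varying over a compact set inside $D_{\delta_{0}}(U_{\infty})$, which follows from the uniform nondegeneracy of $\partial_{U_{2}}\Phi_{5}$ near $\alpha=0$. No genuine wave-curve analysis is needed beyond the smoothness and normalization of $\Phi_{5}$ already established in Section~2.
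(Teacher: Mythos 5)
Your proof is correct and follows essentially the same route as the paper: both apply the implicit function theorem to $U_{1}-\Phi_{5}(\alpha,U_{2})=0$ using the nondegeneracy $\nabla_{U_{2}}\Phi_{5}(0;U_{2})=I$, and then differentiate the resulting identity $U_{1}=\Phi_{5}(\alpha,\Psi(\alpha,U_{1}))$ at $\alpha=0$ to obtain \eqref{eq:4.12}. Your write-up simply spells out the chain-rule computation and the uniformity remarks more explicitly than the paper does.
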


\begin{proof}
Since
\begin{eqnarray*}
\nabla_{U_{2}}\Phi_{5}(0; U_2)=I,
\end{eqnarray*}
then, by the implicit function theorem, for $|\alpha|$ sufficiently small,
we can find a $C^{2}$--function $\Psi$ such that
\begin{eqnarray*}
U_{2}=\Psi(\alpha; U_{1}) \qquad\,\, \mbox{for $|\alpha|\ll 1$}.
\end{eqnarray*}
Therefore, ${\eqref{eq:4.10}}$ can be reduce to
\begin{eqnarray*}
U_{1}=\Phi_{5}(\alpha;\Psi(\alpha; U_{1})),
\end{eqnarray*}
which leads to
\begin{eqnarray*}
\left.\frac{\partial \Psi}{\partial \alpha}\right|_{\alpha=0}
=-\left.\frac{\partial\Phi}{\partial \alpha}\right|_{\alpha=0}=-\rr_{5}(U_1).
\end{eqnarray*}
This completes the proof.
\end{proof}

Then we have
\begin{lemma}\label{lem:4.2}
Let $U_{k-1}=(u_{k-1},v_{k-1},p_{k-1},\rho_{k-1},Z_{k-1})$ be the constant state near the boundary $\Gamma_{h,k-1}$
with $U_{k-1}\cdot (\textbf{n}_{k-2}, 0,0,0)=0$.

\begin{enumerate}
\item[\rm (i)] If $|\omega_{k-1}|\ll1$, then there exists a unique solution $(\varepsilon_{5}, U_{k})$ that
solves the problem:
\begin{equation}\label{eq:4.13}
\begin{cases}
      \Phi_{5}(\varepsilon_{5}; U_{k})=U_{k-1},\\[3pt]
      U_{k}\cdot (\textbf{n}_{k-1},0,0,0)=0.
     \end{cases}
\end{equation}
Moreover, there exists $\tilde{K}_{\rm b}<0$ such that
\begin{eqnarray}\label{eq:4.14}
\varepsilon_{5}=\tilde{K}_{\rm b}\omega_{k-1},
\end{eqnarray}
where the bound of $\tilde{K}_{\rm b}$ depends only on the system.
\vspace{25pt}
\begin{center}
\begin{tikzpicture}[scale=1.8]
\draw [ultra thick] (-2.4,0)--(-0.6,0)-- (1.2,0.6);
\draw [thick](-0.6,0) --(0.2,1.2);
\draw [thick][->](-1.8,0.6) --(-1,0.6);
\draw [thick][->](0,0.4) --(1,0.8);
\draw [thick][->](-1.5,0) --(-1.5,-0.5);
\draw [thick][->](0.5,0.35) --(0.8,-0.25);
\draw [dashed][ultra thick](-0.6,1.4)--(-0.6,-0.6);
\node at (-1.2, -0.3) {$\textbf{n}_{k-2}$};
\node at (1, 0) {$\textbf{n}_{k-1}$};
\node at (-1.2, 0.9) {$U_{k-1}$};
\node at (0.6, 0.9) {$U_{k}$};
\node at (0.3, 1.3) {$\varepsilon_{5}$};
\node at (-0.6, -0.7) {$x=(k-1)h$};
\node [below] at (-0.8, -0.9)
{\rm Fig. 4.2.  Small waves generated by the corner points};
\end{tikzpicture}
\end{center}

\item[\rm (ii)] If $\omega_{k-1}\in (\theta_{\rm crit}+\theta_{k-1}, 0)$ with
$\theta_{k-1}=\sum^{k-1}_{i=0}|\omega_{i}|$ and $\theta_{k-1}<-\theta_{\rm crit}$,
then there exists a unique solution $(\varepsilon_{5}, U_{k})$ that
consists of a strong rarefaction wave generated by the
corner point $(x_{k-1}, g_{k-1})$ and satisfies the equations:
\begin{eqnarray}
&&\theta(U_{\rm Ra}(-\varepsilon_{5}, U_{k-1}))-\theta(U_{k-1})=\omega_{k-1},\label{eq:4.15-1} \\[3pt]
&&\big(J(q, \frac{q^{2}}{2}+\frac{c^{2}}{\gamma-1})+\theta\big)(U_{\rm Ra}(-\varepsilon_{5}, U_{k-1}))
     =\big(J(q,\frac{q^{2}}{2}+\frac{c^{2}}{\gamma-1})+\theta\big)(U_{k-1}),\quad \label{eq:4.15-2} \\[3pt]
&&(\frac{q^{2}}{2}+\frac{c^{2}}{\gamma-1})(U_{\rm Ra}(-\varepsilon_{5}, U_{k-1}))
    =\big(\frac{q^{2}}{2}+\frac{c^{2}}{\gamma-1}\big)(U_{k-1}),\label{eq:4.15-3} \\[3pt]
&&\big(\frac{p}{\rho^{\gamma}}\big)(U_{\rm Ra}(-\varepsilon_{5}, U_{k-1}))=\big(\frac{p}{\rho^{\gamma}}\big)(U_{k-1}),\label{eq:4.15-4}\\[3pt]
&& Z_{k}=Z_{k-1}, \label{eq:4.15-5}
\end{eqnarray}
and
\begin{eqnarray}\label{eq:4.16}
U_{k}=U_{\rm Ra}(-\varepsilon_{5}, U_{k-1}),
\end{eqnarray}
where $U_{\rm Ra}(-\varepsilon_{5}, U_{k-1})$ is the parametrization of $R^{+}_{5}(U_{k-1})$ given in Remark {\rm \ref{rem:2.1}}.
Moreover,
\begin{eqnarray}\label{eq:4.17}
\varepsilon_{5}=K'_{\rm b}\omega_{k-1},
\end{eqnarray}
where the bound of $K'_{\rm b}$ depends only on the system.
\end{enumerate}
\end{lemma}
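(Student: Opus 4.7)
The plan is to reduce both parts to the solvability of a scalar monotone equation expressing that the flow angle along the $5$-wave curve through $U_{k-1}$ matches the new wall slope; part (i) is then a local implicit function theorem, while part (ii) is a global monotonicity argument along the integral curve of $\rr_{5}$ in the Prandtl--Meyer regime. The common computational input is the identity, obtained from $\theta=\arctan(v/u)$ and the explicit form of $\tilde{\rr}_{5}$ in \S 2.1,
$$\nabla_{U}\theta\cdot\tilde{\rr}_{5}=\frac{\cos\theta_{\rm ma}}{q\cos(\theta+\theta_{\rm ma})}>0,$$
which holds uniformly on $D_{\delta_{0}}(U_{\infty})$ and certifies that the flow angle is a strictly monotone parameter along $R^{+}_{5}(U_{k-1})$.

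For part (i), I will invoke Lemma \ref{lem:4.1} to rewrite $\Phi_{5}(\varepsilon_{5};U_{k})=U_{k-1}$ as $U_{k}=\Psi(\varepsilon_{5};U_{k-1})$ with $\partial_{\varepsilon_{5}}\Psi|_{0}=-\rr_{5}(U_{k-1})$. The incoming slip condition $U_{k-1}\cdot(\textbf{n}_{k-2},0,0,0)=0$ yields $\theta(U_{k-1})=\theta_{k-2}$, and the target slip $U_{k}\cdot(\textbf{n}_{k-1},0,0,0)=0$ reads $\theta(U_{k})=\theta_{k-1}$. Setting
$$F(\varepsilon_{5}):=\theta\bigl(\Psi(\varepsilon_{5};U_{k-1})\bigr)-\theta_{k-1},$$
one has $F(0)=\theta_{k-2}-\theta_{k-1}=-\omega_{k-1}$ and $F'(0)=-\nabla_{U}\theta(U_{k-1})\cdot\rr_{5}(U_{k-1})<0$ with a uniform bound in $D_{\delta_{0}}(U_{\infty})$. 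The implicit function theorem then produces a unique small $\varepsilon_{5}$ solving $F(\varepsilon_{5})=0$, and the mean value theorem delivers $\varepsilon_{5}=\omega_{k-1}/F'(\xi)=\tilde{K}_{\rm b}\omega_{k-1}$ with $\tilde{K}_{\rm b}<0$ depending only on the system.

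For part (ii), the local IFT is no longer sufficient because $\omega_{k-1}$ is not required to be small. Instead I will parameterize $R^{+}_{5}(U_{k-1})$ globally by the integral curve $\sigma\mapsto U_{\rm Ra}(\sigma,U_{k-1})$ from Remark \ref{rem:2.1} and solve \eqref{eq:4.15-1} in the form
$$G(\varepsilon_{5}):=\theta\bigl(U_{\rm Ra}(-\varepsilon_{5},U_{k-1})\bigr)-\theta(U_{k-1})-\omega_{k-1}=0.$$
Since $G(0)=-\omega_{k-1}>0$ and $G'(\varepsilon_{5})=-\nabla_{U}\theta\cdot\rr_{5}\bigl(U_{\rm Ra}(-\varepsilon_{5},U_{k-1})\bigr)<0$ as long as the state stays inside $\{u>c_{*},\,q<q_{*}\}$, the map $G$ is strictly decreasing. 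By \eqref{eq:2.16}--\eqref{eq:2.17}, the total angular turn available along $R^{+}_{5}$ before reaching the critical sonic arc is exactly $|\theta_{\rm crit}|$ minus the turn already consumed in passing from $U_{\infty}$ to $U_{k-1}$; the hypothesis $\omega_{k-1}>\theta_{\rm crit}+\theta_{k-1}$ together with $\theta_{k-1}<-\theta_{\rm crit}$ is precisely what certifies that the required turn $\omega_{k-1}$ still falls strictly within this remaining budget, so a unique $\varepsilon_{5}>0$ solves $G(\varepsilon_{5})=0$. The identities \eqref{eq:4.15-2}--\eqref{eq:4.15-5} then follow automatically from the constancy of the $5$-family Riemann invariants along the integral curve of $\rr_{5}$, and $U_{k}:=U_{\rm Ra}(-\varepsilon_{5},U_{k-1})$ satisfies the wall condition by construction. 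The quantitative bound \eqref{eq:4.17} comes from a mean value argument applied to $G$, namely $\omega_{k-1}=G'(\xi)\varepsilon_{5}$, so $K'_{\rm b}=1/G'(\xi)$.

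I expect the main obstacle to be the uniform quantitative control of $K'_{\rm b}$: one must verify that, as $\varepsilon_{5}$ runs over $(0,\varepsilon_{5}^{*}]$, the state $U_{\rm Ra}(-\varepsilon_{5},U_{k-1})$ stays strictly inside $\{u>c_{*},\,q<q_{*}\}$ with $-G'$ bounded both above and away from zero. This reduces to a standard geometric analysis of the Prandtl--Meyer epicycloid issuing from $U_{\infty}$ (cf. Fig. 3.2): the angle $\theta$ is smooth and strictly monotone in $q$ along the epicycloid with bounded derivatives until the arc $q=c_{*}$ is hit, and the admissibility condition on $\omega_{k-1}$ is exactly the condition that keeps us away from that arc.
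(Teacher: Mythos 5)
Your proposal is correct and follows essentially the same route as the paper: both parts reduce to a scalar monotone equation for the flow angle along the $5$-wave curve through $U_{k-1}$, with the key input $\nabla_{U}\theta\cdot\rr_{5}>0$, Lemma \ref{lem:4.1} and the implicit function theorem for part (i), and strict monotonicity of $\theta$ along the integral curve $U_{\rm Ra}(-\varepsilon_{5},U_{k-1})$ together with a mean-value (integral) formula for the coefficients $\tilde{K}_{\rm b}$ and $K'_{\rm b}$ in part (ii). Your explicit ``angular budget'' justification that the monotone map actually attains the target turning angle $\omega_{k-1}$ before the state reaches the sonic arc $q=c_{*}$ is a point the paper leaves implicit (it only records the sign of $\partial\theta/\partial\varepsilon_{5}$ and the resulting formula for $\varepsilon_{5}$), so this is a welcome elaboration rather than a divergence.
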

\vspace{5pt}
\begin{center}
\begin{tikzpicture}[scale=2.0]
\draw [ultra thick] (-2.4,0)--(-0.6,0)-- (1.2,-0.4);
\draw [thick](-0.6,0) --(0.4,0.8);
\draw [thick](-0.6,0) --(0.2,1.2);
\draw [thick][->](-1.8,0.6) --(-0.8,0.6);
\draw [thick][->](0.1,0.2) --(1.0,-0.05);
\draw [thick][->](-1.5,0) --(-1.5,-0.5);
\draw [thick][->](0.7,-0.3)--(0.5,-0.75);
\draw [dashed][ultra thick](-0.6,1.4)--(-0.6,-0.6);
\node at (-1.2, -0.3) {$\textbf{n}_{k-2}$};
\node at (0.9, -0.6) {$\textbf{n}_{k-1}$};
\node at (-1.2, 0.9) {$U_{k-1}$};
\node at (0.6, 0.3) {$U_{k}$};
\node at (0.6, 1.4) {$R^{+}_{5}(U_{k-1})$};
\node at (-0.6, -0.7) {$x=(k-1)h$};
\node [below] at (-1.0, -0.9)
{Fig. 4.3.  Strong rarefaction waves generated by the corner points};
\end{tikzpicture}
\end{center}

\begin{proof} We divide the proof into two steps.

\smallskip
{\bf 1.} By Lemma \ref{lem:4.1},
there exists a $C^{2}$--function $\Psi$ such that
\begin{eqnarray*}
&&U_{k}=\Psi(\varepsilon_{5}; U_{k-1}) \qquad\,\,\mbox{for $|\varepsilon_{5}|\ll 1$},\\[2mm]
&&\left.\frac{\partial \Psi}{\partial \varepsilon_{5}}\right|_{\varepsilon_{5}=0}
=-\left.\frac{\partial\Phi}{\partial \varepsilon_{5}}\right|_{\varepsilon_{5}=0}=-\mathbf{r}_{5}(U_{k-1}).
\end{eqnarray*}
Then \eqref{eq:4.13}
can be reduced to
\begin{eqnarray}\label{eq:4.18}
\Psi(\varepsilon_{5}; U_{k-1})\cdot (\textbf{n}_{k-1},0,0,0)=0.
\end{eqnarray}
Note that $U_{k-1}\cdot (\textbf{n}_{k-2}, 0,0,0)=0$. Then
\begin{eqnarray*}
\arctan(\frac{v_{k-1}}{u_{k-1}})=\theta_{k-2},\qquad \lambda_{5}(U_{k-1})=\tan(\theta_{k-2}+\theta_{\rm ma}(U_{k-1})).
\end{eqnarray*}
Therefore, using Lemma \ref{lem:2.1}, we have
\begin{eqnarray*}
&&\left.\frac{\partial}{\partial \varepsilon_{5}}\Big(\Psi(\varepsilon_{5}; U_{k-1})
\cdot (\textbf{n}_{k-1},0,0,0)\Big)\right|_{\varepsilon_{5}=\omega_{k-1}=0}\\[1mm]
&&\,\,\,=-\mathbf{r}_{5}(U_{k-1})\cdot (\textbf{n}_{k-2},0,0,0)
=-\frac{\kappa_{5}(U_{k-1})\cos(\theta_{\rm ma}(U_{k-1}))}{\cos(\theta_{k-2}+\theta_{\rm ma}(U_{k-1}))}
<0.
\end{eqnarray*}
Then, by the implicit function theorem, $\varepsilon_{5}$ can be solved as a $C^{2}$--function of $(\omega_{k-1}, U_{k-1})$ with
\begin{eqnarray*}
\varepsilon_{5}=\varepsilon_{5}(\omega_{k-1}, U_{k-1}),
\end{eqnarray*}
which leads to the existence of solution $(\varepsilon_{5}, U_{k})$.

Finally, to establish estimate \eqref{eq:4.14},
we need to compute $\left.\frac{\partial\varepsilon_{5} }{\partial \omega_{k-1}}\right|_{\omega_{k-1}=0}$.
To this end, differentiating \eqref{eq:4.13} with respect to $\omega_{k-1}$ and letting $\omega_{k-1}=0$
yield
\begin{eqnarray*}
-\mathbf{r}_{5}(U_{k})\cdot (\textbf{n}_{k-1},0,0,0)\left.\frac{\partial\varepsilon_{5} }{\partial \omega_{k-1}}\right|_{\omega_{k-1}=0}
+U_{k}\cdot (\cos \theta_{k-2}, \sin\theta_{k-2},0,0,0)=0.
\end{eqnarray*}
Therefore,
\begin{eqnarray*}
\left.\frac{\partial\varepsilon_{5} }{\partial \omega_{k-1}}\right|_{\omega_{k-1}=0}
=-\frac{\cos(\theta_{k-2}+\theta_{\rm ma}(U_{k-1}))}{\kappa_{5}(U_{k-1})q(U_{k-1})\cos(\theta_{\rm ma}(U_{k-1}))}<0.
\end{eqnarray*}

\smallskip
{\bf 2.} Since
\begin{eqnarray*}
&& \left.\nabla_{U}\theta(U)\right|_{U=U_{\rm Ra}(-\varepsilon_{5},U_{k-1})}
=(-\frac{\sin(\theta(U_{\rm Ra}(-\varepsilon_{5}, U_{k-1})))}{q(U_{\rm Ra}(-\varepsilon_{5},U_{k-1}))},
\frac{\cos(\theta(U_{\rm Ra}(-\varepsilon_{5}, U_{k-1})))}{q(U_{\rm Ra}(-\varepsilon_{5},U_{k-1}))} , 0,0,0),\qquad\\[1mm]
&& \left.\frac{\partial U}{\partial\varepsilon_{5}}\right|_{U=U_{\rm Ra}(-\varepsilon_{5}, U_{k-1})}
=-\mathbf{r}_{5}(U_{\rm Ra}(-\varepsilon_{5},U_{k-1})).
\end{eqnarray*}
Then
\begin{eqnarray*}
\left.\frac{\partial\theta(U)}{\partial\varepsilon_{5}}\right|_{U=U_{\rm Ra}(-\varepsilon_{5}, U_{k-1})}
&=&\left.\Big(\nabla_{U}\theta(U)\cdot\frac{\partial U}{\partial\varepsilon_{5}}\Big)\right|_{U=U_{\rm Ra}(-\varepsilon_{5},U_{k-1})}\\[2pt]
&=&-\left.\frac{\kappa_{5}(U)\cos\theta_{\rm ma}(U)}{q(U)\cos(\theta(U)+\theta_{\rm ma}(U))}\right|_{U=U_{\rm Ra}(-\varepsilon_{5},U_{k-1})}<0
\end{eqnarray*}
for any $U_{k}\in D_{\delta_{0}}(U_{\infty})$,
which implies that $\eqref{eq:4.15-1}$ has a unique solution $\varepsilon_{5}$
with
\begin{eqnarray*}
\varepsilon_{5}=\varepsilon_{5}(\omega_{k-1},U_{k-1}).
\end{eqnarray*}
Moreover, we have
\begin{eqnarray*}
\varepsilon_{5}=\Big(\int^{1}_{0}\left.\frac{\partial\theta(U)}{\partial\varepsilon_{5}}\right|_{U=U_{\rm Ra}(-\sigma, U_{k-1})}d\sigma\Big)^{-1}\omega_{k}.
\end{eqnarray*}
Hence, from equations $\eqref{eq:4.15-2}$--$\eqref{eq:4.15-5}$,
we can obtain the solution to problem \eqref{eq:4.13} for $\omega_{k}<0$.
This completes the proof.
\end{proof}

Next, we consider some waves that interact the boundary and then reflect.
Assume that a $1$-wave $\alpha_{1}$  hits $\Gamma_{h,k-1}$ at the non-corner
point $A(\hat{x}, \hat{y})$ with $(k-1)h<\hat{x}<kh$ for some $k>0$, and let
\begin{eqnarray}\label{eq:4.19}
U_a=\Phi_{1}(\alpha_1; U_b), \qquad U_b\cdot(\textbf{n}_{k-1},0,0,0)=0.
\end{eqnarray}

We consider the problem:
\begin{eqnarray}\label{eq:4.20}
\left\{
\begin{array}{llll}
      W(U)_x+H(U)_y=0 \quad  &\mbox{for $x>\hat{x},\ y>\hat{y}$},\\[3pt]
     \left.U\right|_{x=\hat{x}}=U_a, \\[3pt]
     (u,v)\cdot \textbf{n}_{k-1}=0 \quad  &\mbox{on}\ \Gamma_{h,k-1}.
     \end{array}
     \right.
\end{eqnarray}

\begin{lemma}\label{lem:4.3}
Let $\{U_b, U_a\}$ be the constant states given as above and satisfy \eqref{eq:4.19}.
Then problem \eqref{eq:4.20} admits a unique solution $(\varepsilon_{5}, U'_a)$ that
solves the following problem{\rm :}
\begin{eqnarray}\label{eq:4.21}
\left\{
\begin{array}{llll}
      \Phi_{5}(\varepsilon_{5}; U'_a)=U_a,\\[3pt]
      U'_a\cdot (\textbf{n}_{k-1},0,0,0)=0.
     \end{array}
     \right.
\end{eqnarray}
Moreover,
\begin{eqnarray}\label{eq:4.22}
\varepsilon_{5}=K_{b1}\alpha_{1},
\end{eqnarray}
with $\left.K_{b1}\right|_{\alpha_{1}=0}=1$, and
  the bound of $K_{b1}$ depends only on the system.
\end{lemma}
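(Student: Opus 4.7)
The plan is to mimic the proof of Lemma \ref{lem:4.2}(i): use Lemma \ref{lem:4.1} to invert $\Phi_5$, reduce \eqref{eq:4.21} to a scalar equation in $\varepsilon_5$, and then apply the implicit function theorem. Concretely, by Lemma \ref{lem:4.1}, for $|\varepsilon_5|$ sufficiently small, $\Phi_5(\varepsilon_5; U'_a)=U_a$ is equivalent to $U'_a=\Psi(\varepsilon_5; U_a)$ with $\partial_{\varepsilon_5}\Psi|_{\varepsilon_5=0}=-\rr_5(U_a)$. Substituting this together with $U_a=\Phi_1(\alpha_1;U_b)$ into the tangency condition in \eqref{eq:4.21} collapses the system to the scalar equation
\begin{equation*}
F(\varepsilon_5,\alpha_1;U_b) := \Psi\bigl(\varepsilon_5;\Phi_1(\alpha_1;U_b)\bigr)\cdot(\textbf{n}_{k-1},0,0,0) = 0,
\end{equation*}
with $\alpha_1$ and $U_b$ as parameters.

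Next, I would verify the IFT hypotheses at $(\varepsilon_5,\alpha_1)=(0,0)$. When $\alpha_1=0$, $U_a=U_b$, so by \eqref{eq:4.19} we have $F(0,0;U_b)=0$. A computation analogous to the one after \eqref{eq:4.18}, using Lemma \ref{lem:2.1}, the explicit form of $\tilde{\rr}_5$, and the tangency identity $\theta(U_b)=\theta_{k-1}$, yields
\begin{equation*}
\frac{\partial F}{\partial\varepsilon_5}\Big|_{(0,0)} = -\rr_5(U_b)\cdot(\textbf{n}_{k-1},0,0,0) = \frac{\kappa_5(U_b)\cos\theta_{\rm ma}(U_b)}{\cos(\theta_{k-1}+\theta_{\rm ma}(U_b))}\neq 0.
\end{equation*}
The implicit function theorem therefore delivers a unique $C^2$ function $\varepsilon_5=\varepsilon_5(\alpha_1,U_b)$ near $(0,0)$ with $\varepsilon_5(0,U_b)=0$. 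Defining $K_{b1}(\alpha_1,U_b):=\int_0^1\partial_{\alpha_1}\varepsilon_5(s\alpha_1,U_b)\,ds$ yields the factorization \eqref{eq:4.22}, and boundedness of $K_{b1}$ on the closure of the admissible parameter range follows by continuity.

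The final step, which I expect to be the main obstacle, is to establish $K_{b1}|_{\alpha_1=0}=1$. Implicit differentiation of $F=0$ at $\alpha_1=0$, together with $\partial_{\alpha_1}\Phi_1|_{\alpha_1=0}=\rr_1(U_b)$, gives
\begin{equation*}
\partial_{\alpha_1}\varepsilon_5\big|_{\alpha_1=0} = \frac{\rr_1(U_b)\cdot(\textbf{n}_{k-1},0,0,0)}{\rr_5(U_b)\cdot(\textbf{n}_{k-1},0,0,0)}.
\end{equation*}
Using the closed-form expressions for $\tilde{\rr}_1$ and $\tilde{\rr}_5$ in $(q,\theta,p,\rho,Z)$-coordinates and the tangency $\theta(U_b)=\theta_{k-1}$, the numerator and denominator take the mirror-symmetric forms $-\kappa_1\cos\theta_{\rm ma}/\cos(\theta_{k-1}-\theta_{\rm ma})$ and $-\kappa_5\cos\theta_{\rm ma}/\cos(\theta_{k-1}+\theta_{\rm ma})$ respectively, so the ratio reduces to $(\kappa_1/\kappa_5)\cdot\cos(\theta_{k-1}+\theta_{\rm ma})/\cos(\theta_{k-1}-\theta_{\rm ma})$. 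The delicate point is then to verify that the normalizations $\kappa_j=1/(\nabla_U\lambda_j\cdot\tilde{\rr}_j)$ for $j=1,5$, which by Lemma \ref{lem:2.1} depend on $\pm\theta_{\rm ma}$ in the same symmetric way, cancel the remaining trigonometric factor exactly. Physically this says an infinitesimally weak $1$-wave reflects off the straight segment $\Gamma_{h,k-1}$ into a $5$-wave of equal strength to leading order, which is the content of $K_{b1}|_{\alpha_1=0}=1$.
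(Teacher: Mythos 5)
Your reduction of \eqref{eq:4.21} to the scalar equation $\Psi(\varepsilon_5;\Phi_1(\alpha_1;U_b))\cdot(\textbf{n}_{k-1},0,0,0)=0$ is exactly the paper's proof, and your implicit-function-theorem verification (non-vanishing of $\partial F/\partial\varepsilon_5$ at the origin, the Hadamard-type factorization defining $K_{b1}$, boundedness by continuity) correctly fills in what the paper dispatches with ``in a similar way as in Lemma \ref{lem:4.2}''. Your derivative formula
\begin{equation*}
\partial_{\alpha_1}\varepsilon_5\big|_{\alpha_1=0}
=\frac{\rr_1(U_b)\cdot(\textbf{n}_{k-1},0,0,0)}{\rr_5(U_b)\cdot(\textbf{n}_{k-1},0,0,0)}
=\frac{\kappa_1(U_b)}{\kappa_5(U_b)}\cdot\frac{\cos(\theta_{k-1}+\theta_{\rm ma})}{\cos(\theta_{k-1}-\theta_{\rm ma})}
\end{equation*}
is also correct, using $\theta(U_b)=\theta_{k-1}$ and $\tilde{\rr}_j\cdot(\textbf{n}_{k-1},0,0,0)=-\cos\theta_{\rm ma}/\cos(\theta_{k-1}\mp\theta_{\rm ma})$ for $j=1,5$.

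The gap is in the final step, which you rightly flagged as the delicate one: the cancellation you hope for does not occur. Redoing the appendix computation carefully gives $\nabla_U\lambda_j\cdot\tilde{\rr}_j=\frac{\gamma+1}{2\sqrt{q^2-c^2}}\sec^3(\theta\mp\theta_{\rm ma})$ for $j=1,5$ respectively (the displayed $\gamma-1$ for $j=5$ and the missing factor $\tfrac12$ in \S 10 are typos), so $\kappa_1/\kappa_5=\cos^3(\theta_{k-1}-\theta_{\rm ma})/\cos^3(\theta_{k-1}+\theta_{\rm ma})$ and hence
\begin{equation*}
K_{b1}\big|_{\alpha_1=0}=\Big(\frac{\cos(\theta_{k-1}-\theta_{\rm ma}(U_b))}{\cos(\theta_{k-1}+\theta_{\rm ma}(U_b))}\Big)^{2},
\end{equation*}
which equals $1$ only when $\theta_{k-1}=0$. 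A cross-check: rotating coordinates so that $\Gamma_{h,k-1}$ is horizontal makes the reflection coefficient exactly $1$ by the symmetry $v\mapsto-v$, but the normalization $\nabla_U\lambda_j\cdot\rr_j=1$ is not rotation-invariant, and undoing the rotation reproduces precisely the squared cosine ratio above. So the identity $K_{b1}|_{\alpha_1=0}=1$ as literally stated holds only at the unperturbed horizontal wall; in general one gets $K_{b1}=1+O(1)|\theta_{k-1}|$ with $K_{b1}$ bounded. This is an imprecision of the lemma itself rather than a defect peculiar to your argument --- the paper never verifies the identity either --- and the only property used downstream (Proposition \ref{prop:6.1}, Case 2) is boundedness of $K_{b1}$, which both your argument and the paper's do establish; but as written, the step ``verify that the normalizations cancel the remaining trigonometric factor exactly'' cannot be completed.
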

\vspace{10pt}
\begin{center}
\begin{tikzpicture}[scale=2.0]
\draw [ultra thick] (-3,0.5)--(0,0);
\draw [thin](-2.6,1.2) --(-1.5,0.25)--(-0.4,0.8);
\draw [thick][->](-3,0.8) --(-2.2,0.7);
\draw [thick][->](-0.6,0.4) --(0,0.3);
\draw [thick][->](-1,0.15)--(-1.15,-0.25);
\node at (-2.6, 0.9) {$U_b$};
\node at (-1.5, 0.9) {$U_a$};
\node at (-0.2, 0.5) {$U'_a$};
\node at (-2.6, 1.3) {$\alpha_{1}$};
\node at (-0.3, 0.9) {$\varepsilon_{5}$};
\node at (-0.75, -0.15) {$\textbf{n}_{k-1}$};
\node [below] at (-1.0, -0.4)
{Fig. 4.4. Weak waves reflected and physical wave emerged at the boundary};
\end{tikzpicture}
\end{center}

\begin{proof}
By Lemma \ref{lem:4.1},
 there exists a $C^{2}$--function $\Psi$ such that
\begin{eqnarray*}
U'_a=\Psi(\varepsilon_{5}; U_a)  \qquad\,\, \mbox{for $|\varepsilon_{5}|\ll 1$}.
\end{eqnarray*}
Then problem \eqref{eq:4.20} can be reduced to solving the equation:
\begin{eqnarray*}
\Psi\big(\varepsilon_{5};\Phi_{1}(\alpha_1; U_b)\big)\cdot (\textbf{n}_{k-1},0,0,0)=0.
\end{eqnarray*}
Thus, in a similar way as in Lemma \ref{lem:4.2}, we obtain the existence of $(\varepsilon_{5}, U'_a)$
and estimate \eqref{eq:4.22}. This completes the proof.
\end{proof}

\subsection{Wave-front tracking algorithm with fractional-step}
In this subsection, we construct the approximate solutions to system \eqref{eq:1.1}
by combining the fractional-step technique with the wave-front tracking algorithm.

For given initial data $\bar{U}$, we define a sequence of piecewise constant functions $\bar{U}^{\nu}$
such that
$$
\|\bar{U}^{\nu}-\bar{U}\|_{L^{1}(R_{+})}<2^{-\nu}
\qquad \mbox{for $\nu \in\mathbb{N_{+}}$ and the mesh length $h>0$ defined above},
$$
and approximate the wedge boundary as in \S 4.
Then the $(\nu, h)$--approximate solution $U^{\nu,h}(x,y)$ is constructed as follows:

\smallskip
When $x=0$, all the Riemann solutions in $\bar{U}^{\nu}$ are obtained
by the accurate Riemann solver, in which the number of wave-fronts is finite.
Then, for fixed $h$, we construct the approximate solution $U^{\nu,h}$ in strip $0<x<h$,
according to the ways as in \S 4.1--4.2 by solving the initial-boundary value problem.
After that, we need to solve the nonhomogeneous problem on line $x=h$
for $k=1$.
Inductively, assuming that the approximate solution $U^{\nu,h}(x,y)$ has been
constructed for $0<x\leq (k-1)h$ and contains the jumps of shock fronts,
weak rarefaction fronts, strong rarefaction fronts, and non-physical fronts,
which are denoted by
$\mathcal{J}=\mathcal{S}\cup\mathcal{R}\cup\mathcal{R}_{\rm b}\cup\mathcal{NP}$,
and the number of wave-fronts is finite in each interval $((m-1)h, mh)$ with $m\leq k-1$.
Now we construct the approximate solution $U^{\nu,h}(x,y)$ for $(k-1)h< x\leq kh$.
In $((k-1)h,kh)$, we
solve the homogeneous problem \eqref{eq:2.2} with initial data $U^{\nu,h}((k-1)h+,y)$.
Since $U^{\nu,h}((k-1)h+,y)$ is piecewise constant and may
be discontinuous on line $x=(k-1)h+$,
then we solve the initial Riemann problem \eqref{eq:4.1}
and the lateral Riemann problem \eqref{eq:4.9} with the initial data $U^{\nu,h}((k-1)h+,\cdot)$.
The approximate solution $U^{\nu,h}(x,y)$ consists of a finite number
of wave-fronts whose proof will be given in \S 6,
for which some may interact at $x=\tau$ for $(k-1)h<\tau <kh$, and
the corresponding Riemann problem is solved when two wave-fronts interact,
or
a wave-front hits the boundary.
Then, owing to the reaction process, we solve the nonhomogeneous problem
\eqref{eq:2.2} from $x=kh-$ to $x=kh+$ with initial data $U^{\nu,h}(kh-,\cdot)$,
that is,
\begin{eqnarray}\label{eq:4.24}
W(U^{\nu,h}(kh+,y))=W(U^{\nu,h}(kh-,y))+G(U^{\nu,h}(kh-,y))h.
\end{eqnarray}
In each step of construction, we can change the speed of a single wave-front
at a point by a quantity less than $2^{-\nu}$, so that no more than two wave-fronts
interact, and that only one wave-front hits the boundary $\Gamma_{h, k-1}$
at the non-corner point, or only one wave is generated by the corner point and
also only one wave crosses line $x=kh$.
In order to avoid the number of wave-fronts to be infinite in a finite time
for solving the homogeneous system \eqref{eq:2.2},
we also need to assign a generation order for each wave-front as stated in \cite{bressan}.
The outgoing wave-fronts are constructed and the generation orders are defined
according to the following:

\smallskip
\par\rm (i) All wave-fronts generated by the corner points can be constructed
in the same way as in \S 4.2 and have order one.

\smallskip
\par\rm (ii) A wave-front of order $k$ hits the boundary at the non-corner
point $(\tau, g_{h}(\tau))$.
Then the
generated
lateral Riemann problem
is solved by the accurate $\nu$-Riemann solver
as in \S 4.2 and the generation order of the new wave
from $(\tau, g_{h}(\tau))$ is set to be $k+1$.

\smallskip
\par\rm (iii) The $i_{1}$-wave-front $\alpha_{i_{1}}$ and $i_{2}$-wave
front $\beta_{i_{2}}$, with order $n_{1}$ and $n_{2}$, respectively,
interact at $(\tau, y_{\tau})$ for some $y_{\tau}$.
Assume that $\alpha_{i_{1}}$ lies below $\beta_{i_{2}}$.
Then the new wave-fronts and their generation orders are given as follows:

\par$\quad$ $\rm (iii)_{1}$  For the case $n_{1}, n_{2}<\nu$,
the outgoing wave-fronts generated from $(\tau, y_{\tau})$
are constructed by the accurate Riemann solver, and
the generation order of the $j$-th wave is given by
\begin{eqnarray}\label{eq:4.26}
\left\{
\begin{array}{llll}
     \max\{n_{1}, n_{2}\}+1,\quad &j\neq i_{1},\ i_{2}, \\[1mm]
\min\{n_{1}, n_{2}\}, \quad &j= i_{1}= i_{2},\\[1mm]
n_{1},\quad &j= i_{1}\neq i_{2}, \\[1mm]
  n_{2}, \quad &j\neq i_{1}= i_{2}.
     \end{array}
     \right.
\end{eqnarray}

\par$\quad$ $\rm (iii)_{2}$ If $\max\{n_{1}, n_{2}\}=\nu$, the outgoing wave-fronts
generated from $(\tau, y_{\tau})$ are given by the simplified Riemann solver,
and the outgoing $i_{1}$-wave and $i_{2}$-wave are defined according to \eqref{eq:4.6}.
The generation order of the outgoing non-physical wave-front is $\nu+1$.

\par$\quad$ $\rm (iii)_{3}$ If $n_{1}=\nu+1$ and $n_{2}\leq\nu$, then $\alpha_{i_{1}}$ 
is a non-physical wave-front.
We use the simplified Riemann solver to construct the outgoing wave-fronts from $(\tau, y_{\tau})$.
The generation order of the outgoing non-physical wave-front is $\nu+1$,
while the outgoing physical wave-front has the same generation order with  $n_{2}$.

\smallskip
\par $\rm (iv)$ If the change of the angle of boundary is larger than $\frac{1}{\nu}$
and the weak wave is physical, then we employ the accurate Riemann solver to
solve the Riemann problem. If the change of the angle of the boundary is
less than $\frac{1}{\nu}$, then we ignore this perturbation.

\smallskip
\par $\rm (v)$  When a wave-front $\alpha$ of order $n$ hits line $x=kh$,
then the new wave-fronts and their generation orders are defined as follows:

\par$\quad$ $\rm (v)_{1}$  When $n\leq\nu$, the outgoing wave-fronts generated from the points
on line $x=kh+$ are constructed by the accurate Riemann solver, and
the generation order is defined as in \eqref{eq:4.26}.

\par$\quad$ $\rm (v)_{2}$ When the incoming wave-front $\alpha$ is non-physical,
then let it cross line $x=kh$ with the generation order unchanged.

\par$\quad$ $\rm (v)_{3}$ For the strong rarefaction fronts cross line $x=kh$,
its generation order is unchanged with order $1$.

This completes the construction of the approximate solution.

\medskip
\par In each step of construction, we have the following property for $Z^{\nu,h}(x,y)$:

\begin{lemma}\label{lem:4.4}
If $0\leq Z^{\nu,h}((k-1)h+,y)\leq 1$, then $Z^{\nu,h}(x,y)$ is unchanged
when crossing the non-physical waves so that
\begin{eqnarray}\label{eq:4.23}
0\leq Z^{\nu,h}(x,y)\leq 1
\end{eqnarray}
for all $k\in \mathbb{N}_{+}$ and $(x, y)\in((k-1)h, kh)\times(g_h(\tau), \infty)$.
\end{lemma}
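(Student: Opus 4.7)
The plan is to prove the lemma by structural induction over the discrete events (initial Riemann problems at $x=(k-1)h+$, interior wave interactions, reflections at $\Gamma_{h,k-1}$, and waves generated at corner points) that occur in the strip $(k-1)h<x<kh$. At each event I shall verify two facts: (a) any non-physical front appearing in the output carries no jump in $Z$, and (b) every $Z$-value in the output lies in $\{Z(U_b),Z(U_a)\}$ for the below/above states $U_b,U_a$ of the event. Combined with the hypothesis $Z^{\nu,h}((k-1)h+,\cdot)\in[0,1]$, property (b) will then propagate the bound to the whole strip.

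The key structural observation is read off from the wave-curve formulas of \S 2.2: by \eqref{eq:2.7a}, \eqref{eq:2.9}, and \eqref{eq:2.11}, the parameterization $\Phi_i(\sigma_i,\cdot)$ leaves $Z$ invariant for $i\in\{1,2,3,5\}$, while $\Phi_4$ acts additively as $Z\mapsto Z+\sigma_4$. Hence, in any accurate Riemann resolution $U_0=U_b$, $U_l=\Phi_l(\alpha_l,U_{l-1})$, $U_5=U_a$, the intermediate states satisfy $Z(U_1)=Z(U_2)=Z(U_3)=Z(U_b)$ and $Z(U_4)=Z(U_b)+\alpha_4=Z(U_a)$, so only the two values $Z(U_b)$ and $Z(U_a)$ appear in the fan. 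At the boundary, Lemmas \ref{lem:4.2} and \ref{lem:4.3} produce only a $5$-front $\varepsilon_5$; then \eqref{eq:4.15-5} (and the $Z$-row of \eqref{eq:2.11}) gives $Z_k=Z_{k-1}$, so $Z$ is preserved there as well. This establishes (b) for every event that uses the accurate solver, and it settles (a) vacuously for such events since no non-physical front is produced.

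It remains to handle the simplified Riemann solver. In Case 1, the outgoing non-physical front connects $U'_a$ (below) to $U_a$ (above), where $U_a=\Phi_i(\alpha,\Phi_j(\beta,U_b))$ and $U'_a$ is given by \eqref{eq:4.5}. A short case analysis on whether $i$ or $j$ equals $4$ shows that both $Z(U_a)-Z(U_b)$ and $Z(U'_a)-Z(U_b)$ equal $\alpha\,\mathbf{1}_{\{i=4\}}+\beta\,\mathbf{1}_{\{j=4\}}$ (with the convention $\alpha+\beta$ when $i=j=4$); hence $Z(U_a)=Z(U'_a)$, so the non-physical front carries no $Z$-jump, and the single middle state $\Phi_i(\alpha,U_b)$ of the outgoing fan has $Z$-value in $\{Z(U_b),Z(U_a)\}$. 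In Case 2, the incoming non-physical front satisfies $Z(U_m)=Z(U_b)$ by the inductive hypothesis on previously constructed fronts; then $Z(\Phi_i(\alpha,U_b))=Z(U_b)+\alpha\,\mathbf{1}_{\{i=4\}}=Z(U_m)+\alpha\,\mathbf{1}_{\{i=4\}}=Z(\Phi_i(\alpha,U_m))=Z(U_a)$, so the outgoing non-physical front is again $Z$-preserving and the only intermediate state $\Phi_i(\alpha,U_b)$ has $Z$-value in $\{Z(U_b),Z(U_a)\}$. Combined with the accurate-solver analysis, this completes the induction.

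The main technical point, and essentially the only one that requires attention, is the Case 1 verification: the asymmetry between the two compositions defining $U_a$ and $U'_a$ could in principle allow a $Z$-jump across the non-physical front. What rescues the argument is precisely that the only family affecting $Z$ is the linearly degenerate $4$-field, on which $\Phi_4$ acts as a pure translation commuting, in the $Z$-slot, with all the other $\Phi_l$'s, so that the composition order on the $Z$-component becomes irrelevant.
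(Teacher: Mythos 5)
Your proposal is correct and follows essentially the same route as the paper: both rest on the observation that $Z$ is invariant under $\Phi_i$ for $i\neq 4$ and shifts additively under $\Phi_4$, so that $Z$ propagates only along $4$-contact discontinuities and the non-physical front carries no $Z$-jump. Your write-up is somewhat more systematic — the paper verifies the simplified-solver claim only on one representative interaction (a $2(3,4)$-front meeting a $5$-shock) and leaves the other cases implicit, whereas you treat both simplified-solver cases and the boundary events explicitly — but the underlying argument is identical.
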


\begin{proof}
We consider the Riemann problem with $ Z^{\nu,h}(kh,y)$ as its initial data.
In solving the accurate Riemann problem, we
notice that $Z^{\nu,h}$ is unchanged when it crosses the $j$-wave-fronts, $j=1,5$,
and $i$-contact discontinuities, $i=2,3$.
The discontinuities of $Z^{\nu,h}$ propagate only
along the $4$-contact discontinuities.  Thus, $0\leq Z^{\nu,h}(x,y)\leq 1$.

We now consider the simplified Riemann solution. Without loss of generality,
suppose that $\alpha_{2(3,4)}$, which connects $\{U_{m}, U_a\}$,
interacts with $5$-shock $\alpha_5$  with $\{U_b, U_{m}\}$
as its below and above states at $x=\tau$.
Then it is solved by the simplified Riemann solver.
\vspace{5pt}
\begin{center}\label{fig7}
\begin{tikzpicture}[scale=1.4]
\draw [ thick] (-3,1)--(1,-0.5);
\draw [thick](-3,0)--(1.5, 0.5);
\draw [dashed][thick](-0.95,0.25)--(1.05,1.3);
\node at (-1.3, -0.4) {$U_b$};
\node at (-2.3, 0.4) {$U_{m}$};
\node at (-1, 1) {$U_a$};
\node at (0.9, 0.8) {$U'_a$};
\node at (1, 0) {$U'_{m}$};
\node at (-3.1, -0.1) {$\alpha_{5}$};
\node at (-3, 1.2) {$\alpha_{2(3,4)}$};
\node at (1.7, 0.6) {$\alpha_{5}$};
\node at (1.5, -0.5) {$\alpha_{2(3,4)}$};
\node at (1.2, 1.4) {$\epsilon$};
\node [below] at (-1.0, -0.8)
{Fig. 4.5};
\end{tikzpicture}
\end{center}
Assume that $Z_a$ is the 5th component of $U_a$,
and also $Z_m,\ Z_b,\ Z'_{m},\ Z'_b$ etc. denote the same meaning.
Then, by the wave curves defined in \S 2, we have
\begin{eqnarray*}
Z_{m}=Z_b,\,\,\, Z_a=Z_m+\alpha_{4},\qquad\,\, Z'_{m}=Z_b+\alpha_{4},\,\,\,  Z'_a=Z'_m,
\end{eqnarray*}
which leads to $Z_a=Z'_a=Z_b+\alpha_{4}$.
This implies that $Z^{\nu+, h}(\tau, \cdot)$ is unchanged when it crosses the non-physical waves.
Therefore,
\begin{eqnarray*}
0\leq Z^{\nu,h}(\tau+,y)\leq 1 \qquad\,\, \mbox{for}\,\,  (\tau, y)\in((k-1)h, kh)\times(g_h(\tau), \infty).
\end{eqnarray*}
This completes the proof.
\end{proof}

At $x=kh$, we use \eqref{eq:4.24} to obtain
\begin{eqnarray}
&&\rho_{+}^{\nu, h} u_{+}^{\nu,h}=\rho_{-}^{\nu,h}u_{-}^{\nu,h},\label{eq:4.27-1} \\[3pt]
&&\rho_{+}^{\nu,h} (u_{+}^{\nu,h})^{2}+p_+^{\nu,h}=\rho_{-}^{\nu,h} (u_{-}^{\nu,h})^{2}+p_-^{\nu,h},\label{eq:4.27-2} \\[5pt]
&&\rho_{+}^{\nu,h} u_{+}^{\nu,h} v_{+}^{\nu,h}=\rho_{-}^{\nu,h} u_{-}^{\nu,h}v_{-}^{\nu,h},\label{eq:4.27-3}\\[5pt]
&&\rho_{+}^{\nu,h}u_{+}^{\nu,h}\Big(\tilde{h}_{+}^{\nu,h}+
\frac{(u_{+}^{\nu,h})^2+(v_{+}^{\nu,h})^2}{2}\Big) \nonumber\\
&&\qquad =\rho_{-}^{\nu,h}u_{-}^{\nu,h}\Big(\tilde{h}_{-}^{\nu,h}+
\frac{(u_{-}^{\nu,h})^2+(v_{-}^{\nu,h})^2}{2}\Big)
+\tilde{q}\rho_-^{\nu,h}Z_{-}^{\nu,h}\phi(T_{-}^{\nu,h})h,\label{eq:4.27-4}
\\[5pt]
&&\rho_{+}^{\nu,h}u_{+}^{\nu,h}Z_{+}^{\nu,h}=\rho_{-}^{\nu,h}u_{-}^{\nu,h}Z_{-}^{\nu,h}
-\rho_{-}^{\nu,h}Z_{-}^{\nu,h}\phi(T_{-}^{\nu,h})h,\label{eq:4.27-5}
\end{eqnarray}
where $U_{+}^{\nu,h}$ and $U_{-}^{\nu,h}$ denote $U^{\nu,h}(x,y)$ taking values at $x=kh\pm$, respectively.

For equations \eqref{eq:4.27-1}--\eqref{eq:4.27-5}, we have the following properties
that indicate the change in $U^{\nu,h}$ due to the reaction process.

\begin{lemma}\label{lem:4.5}
For $h>0$ sufficiently small,
there exist positive constants $L$ and $\bar{K}$, independent of $(\nu,h)$, such that
\begin{eqnarray}\label{eq:4.28}
\begin{split}
 &T^{\nu,h}(kh+,y)\ge T^{\nu,h}(kh-,y) \ge \bar{K}>0,\\[1mm]
 &u^{\nu,h}(kh+,y)-u^{\nu,h}(kh-,y) =O(1)\|\bar{Z}\|_{\infty}e^{-L kh}h ,\\[1mm]
 &v^{\nu,h}(kh+,y)-v^{\nu,h}(kh-,y) =0,\\[5pt]
 &p^{\nu,h}(kh+,y)-p^{\nu,h}(kh-,y) =O(1)\|\bar{Z}\|_{\infty}e^{-L kh}h,\\[1mm]
 &\rho^{\nu,h}(kh+,y)-\rho^{\nu,h}(kh-,y) =O(1)\|\bar{Z}\|_{\infty}e^{-L kh}h,\\[1mm]
 &Z^{\nu,h}(kh+,y)- Z^{\nu,h}(kh-,y)=O(1)\|\bar{Z}\|_{\infty}e^{-L kh}h.
\end{split}
\end{eqnarray}
In addition, for $k\in \mathbf{N}_{+}$ and $y\geq g_{h}(kh)$,
\begin{eqnarray}\label{eq:4.29}
\begin{split}
&0 \leq Z^{\nu,h}(kh+,y)\leq Z^{\nu,h}(kh-,y)e^{-Lh},\\[1mm]
&0 \leq Z^{\nu,h}(kh+,y)\le O(1)\|\bar{Z}\|_{\infty}e^{-L kh}h\leq1.
\end{split}
\end{eqnarray}
\end{lemma}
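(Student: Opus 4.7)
The plan is to regard \eqref{eq:4.27-1}--\eqref{eq:4.27-5} as a smooth map sending $U^{\nu,h}_- := U^{\nu,h}(kh-,y)$ to $U^{\nu,h}_+ := U^{\nu,h}(kh+,y)$, parametrized by the heat-release quantity
\[
Q := \frac{\tilde{q}\,Z_-^{\nu,h}\,\phi(T_-^{\nu,h})\,h}{u_-^{\nu,h}}.
\]
When $Q=0$ the map is the identity, so the smallness of $|Q|$---which the induction will show is $O(\|\bar{Z}\|_\infty e^{-Lkh}h)$---should propagate to each component of $U_+-U_-$ via the implicit function theorem, and the supersonic structure will simultaneously force $T_+\ge T_-$ and the exponential decay of $Z$.

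First I would extract the trivial identities: dividing \eqref{eq:4.27-3} by \eqref{eq:4.27-1} gives $v_+^{\nu,h}=v_-^{\nu,h}$, and introducing the mass flux $m:=\rho^{\nu,h}_- u^{\nu,h}_-=\rho^{\nu,h}_+u^{\nu,h}_+>0$ rewrites \eqref{eq:4.27-2} as $p_+-p_-=-m(u_+-u_-)$. With these, equation \eqref{eq:4.27-4} collapses, after using $\tilde{h}=\gamma p/((\gamma-1)\rho)$, to
\[
\tilde{h}_+ + \tfrac12 u_+^2 = \tilde{h}_- + \tfrac12 u_-^2 + Q.
\]
Substituting $\rho_+=m/u_+$ and $p_+=p_-+m(u_--u_+)$ converts this into a single quadratic $F(u_+,Q)=0$ with $F(u_-,0)=0$ and
\[
\partial_{u_+}F\big|_{(u_-,0)}=\frac{c_-^2-u_-^2}{(\gamma-1)u_-}\neq 0
\]
by the supersonic condition $u_->c_-$. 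The implicit function theorem delivers a unique nearby root $u_+=u_-+O(Q)$, and the linear relations then yield $p_+-p_-$ and $\rho_+-\rho_-$ of the same order.

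For the $Z$--estimate, dividing \eqref{eq:4.27-5} by $\rho_+ u_+$ and using \eqref{eq:4.27-1} gives
\[
Z_+^{\nu,h} = Z_-^{\nu,h}\,\frac{u_-^{\nu,h}-\phi(T_-^{\nu,h})\,h}{u_+^{\nu,h}}.
\]
Combining $\phi(T_-)\ge L_*=\phi(\bar{K})>0$ (from the inductive lower bound on $T_-$), a uniform upper bound on $u_\pm$, and $u_+-u_-=O(Q)=O(Z_-^{\nu,h}h)$, one obtains $Z_+^{\nu,h}\le Z_-^{\nu,h}\,e^{-Lh}$ for a suitable $L>0$ and $h$ small. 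Iterating this and invoking Lemma \ref{lem:4.4} (so that $\max_y Z^{\nu,h}$ does not increase during the intervening homogeneous evolution, where $Z$--discontinuities merely travel along $4$-contact fronts) yields $Z^{\nu,h}(kh+,y)\le\|\bar{Z}\|_\infty e^{-Lkh}$, and feeding this back into $Q$ installs the exponential factor in all the bounds \eqref{eq:4.28}--\eqref{eq:4.29}.

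The monotonicity $T_+\ge T_-$ is recovered by writing $T=(\gamma-1)\tilde{h}/(\gamma R)$ and expanding $u_+^2-u_-^2=2u_-(u_+-u_-)+O(Q^2)$ in the reduced energy relation, which gives
\[
T_+-T_- = \frac{(\gamma-1)(\gamma u_-^2-c_-^2)}{\gamma R\,(u_-^2-c_-^2)}\,Q + O(Q^2);
\]
the coefficient is strictly positive by $\gamma>1$ and supersonicity. The hard part will be the uniform lower bound $T^{\nu,h}_-\ge\bar{K}$, which has to be carried through the induction together with the $BV$--bounds established in \S 5--\S 7: temperature can in principle drop across weak shocks during the intervening homogeneous evolution, so one must combine the smallness of the total variation with the initial lower bound $\bar{T}\ge T_*$ from \eqref{eq:1.7} to exhibit a $\bar{K}>0$ independent of $(\nu,h,k)$. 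Once this is in place, the supersonicity $u^{\nu,h}_+>c^{\nu,h}_+$ is preserved at each reaction step by taking $h$ small, and the induction closes.
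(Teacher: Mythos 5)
Your proposal follows essentially the same route as the paper's proof: induction on $k$; the trivial identity $v_+^{\nu,h}=v_-^{\nu,h}$ and the reduction of \eqref{eq:4.27-1}--\eqref{eq:4.27-4} to a single quadratic solved perturbatively in the heat-release term (the paper applies the quadratic formula to $1/\rho_+^{\nu,h}$ and Taylor-expands where you invoke the implicit function theorem on a quadratic in $u_+^{\nu,h}$ --- an immaterial difference); the identical positive coefficient $\frac{(\gamma-1)(\gamma u_-^2-c_-^2)}{\gamma R\,(u_-^2-c_-^2)}$ for $T_+-T_-$; and the same $Z$--recursion combined with Lemma \ref{lem:4.4} and iteration to install the factor $e^{-Lkh}$. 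The only slip is that, since $\rho_+^{\nu,h}u_+^{\nu,h}=\rho_-^{\nu,h}u_-^{\nu,h}$, the exact recursion is $Z_+^{\nu,h}=Z_-^{\nu,h}\bigl(1-\phi(T_-^{\nu,h})h/u_-^{\nu,h}\bigr)$ rather than $Z_-^{\nu,h}\bigl(u_-^{\nu,h}-\phi(T_-^{\nu,h})h\bigr)/u_+^{\nu,h}$ (the discrepancy is higher order and harmless), and the uniform lower bound $T_-^{\nu,h}\ge\bar K$ that you flag as the hard point is treated in the paper at exactly the same level of detail, namely by appeal to the induction hypothesis together with the monotonicity across reaction steps.
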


\begin{proof} We carry out the proof by induction on $k$.

For $k=0$, by $\mathbf{(H_{2})}$, we have
\begin{eqnarray*}
T^{\nu,h}(0+,y)=\bar{T}>T^{*}>0,\qquad
0\leq Z^{\nu,h}(0+,y)=\bar{Z}\leq 1.
\end{eqnarray*}

Suppose that \eqref{eq:4.28}--\eqref{eq:4.29} hold for $k=l-1$.
We now prove \eqref{eq:4.28}--\eqref{eq:4.29} for $k=l$.
To see this, for $h>0$ sufficiently small, we first have
\begin{eqnarray*}
&&T^{\nu,h}(lh+,y)-T^{\nu,h}(lh-,y)\\[3pt]
&&=\frac{(\gamma -1)\tilde{q}\big((u^{\nu,h})^2-\frac{c^{2}(T^{\nu,h})}{\gamma}\big)(lh-,y)}
{R\rho^{\nu,h}u^{\nu,h}\big((u^{\nu,h})^2-c^{2}(T^{\nu,h})\big)(lh-,y)}
\big(\rho^{\nu,h}Z^{\nu,h}\phi(T^{\nu,h})\big)(lh-,y)h+O(h^{2}).
\end{eqnarray*}
Since $\big(u^{\nu,h}\big)^2(kh-,y)>c^{2}(T^{\nu,h}(kh-,y))$ and $\gamma>1$, then
\begin{eqnarray*}
T^{\nu,h}(kh+,y)\ge T^{\nu,h}(kh-,y),
\end{eqnarray*}
which shows that the temperature $T^{\nu,h}$ does not decrease owing to the reaction process.
Then, by the induction hypothesis, there exists a constant $\bar{K}$
independent of $(\nu,h)$ such that the first estimate in $\eqref{eq:4.28}$ holds.
\end{proof}

By $\eqref{eq:4.27-1}$ and $\eqref{eq:4.27-5}$, we have
\begin{eqnarray*}
Z^{\nu,h}(lh+,y)=\Big(1-\frac {\phi(T^{\nu,h})(lh-,y)}{u^{\nu,h}(lh-,y)}h\Big)Z^{\nu,h}(lh-,y).
\end{eqnarray*}
Since $\phi(T)$ is assumed to be Lipschitz continuous, nonnegative, and increasing, then
there exists a constant $L >0$ such that
\begin{eqnarray*}
Z^{\nu,h}(lh+,y)-Z^{\nu,h}(lh-,y)\leq -Z^{\nu,h}(lh-,y)Lh,
\end{eqnarray*}
which implies that
\begin{eqnarray*}
Z^h(lh+,y)\leq Z^{\nu,h}(lh-,y)(1-Lh)\leq Z^{\nu,h}(lh-,y)e^{-Lh}.
\end{eqnarray*}
On the other hand, by Lemma \ref{lem:4.4}, we have
\begin{eqnarray*}
Z^{\nu,h}(lh-,y)\le Z^h((l-1)h+,y).
\end{eqnarray*}
Then, by induction,
\begin{eqnarray*}
Z^h(lh-,y)\le \|\bar{Z}\|_{\infty}e^{-L(l-1)h}O(h).
\end{eqnarray*}
Thus, for $h$ sufficiently small, we have
$$
Z^h(lh+,y)\leq Z^{\nu,h}(lh-,y)e^{-Lh}
\leq Z^{\nu,h}((l-1)h+,y)e^{-Lh}
\leq \|\bar{Z}\|_{\infty}e^{-Llh}O(h),
$$
which leads to the last estimate in ${\eqref{eq:4.28}}$
and the esimates in \eqref{eq:4.29}.

To prove the remaining part of \eqref{eq:4.28},
let $U_{+}^{\nu,h}=(u_{+}^{\nu,h}, v_{+}^{\nu,h}, p_{+}^{\nu,h}, \rho_{+}^{\nu,h}, Z_{+}^{\nu,h})$
and $U_{-}^{\nu,h}=(u_{-}^{\nu,h}, v_{-}^{\nu,h}, p_{-}^{\nu,h}, \rho_{-}^{\nu,h}, Z_{-}^{\nu,h})$
denote $U^{\nu,h}(x,y)$ taking values at $x=kh\pm$, respectively, and also $T_{\pm}^{\nu,h}=T^{\nu,h}(lh\pm,y)$.
Then, by $\eqref{eq:4.27-1}$--$\eqref{eq:4.27-3}$, we have
$$
u_{+}^{\nu,h}=\frac{\rho_{-}^{\nu,h}u_{-}^{\nu,h}}{\rho_{+}^{\nu,h}},\quad v_{+}^{\nu,h}=v_{-}^{\nu,h},\quad
p_{+}^{\nu,h}=p_{-}^{\nu,h}+ \rho_{-}^{\nu,h}(u_{-}^{\nu,h})^2-\frac{\big(\rho_{-}^{\nu,h}u_{-}^{\nu,h}\big)^2}{\rho_{+}^{\nu,h}}.
$$

Substitute $(u_{+}^{\nu,h},v_{+}^{\nu,h}, p_{+}^{\nu,h})$  into $\eqref{eq:4.27-4}$ , we have
\begin{eqnarray*}
&&\frac{(\rho_{-}^{\nu,h} u_{-}^{\nu,h})^2}{(\rho_{+}^{\nu,h})^2}
-\frac{2\gamma\big(\rho_{-}^{\nu,h}(u_{-}^{\nu,h})^2+p_{-}^{\nu,h}\big)}{(\gamma+1)\rho_{+}^{\nu,h}}
+\frac{2\gamma+p_{-}^{\nu,h}+(\gamma-1)\rho_{-}^{\nu,h}(u_{-}^{\nu,h})^2}{(\gamma+1)\rho_{-}^{\nu,h}}\\[3pt]
&&\quad +\frac{2(\gamma-1)\tilde{q}Z_{-}^{\nu,h}\phi(T_{-}^{\nu,h})}{(\gamma+1)u_{-}^{\nu,h}}h=0.
\end{eqnarray*}
Therefore, we obtain
\begin{eqnarray*}
\frac{1}{\rho_{+}^{\nu,h}}=\frac{\gamma\big(\rho_{-}^{\nu,h}(u_{-}^{\nu,h})^2+p_{-}^{\nu,h}\big)
+\sqrt{\big(\rho_{-}^{\nu,h}(u_{-}^{\nu,h})^2-\gamma p_{-}^{\nu,h}\big)^2-2\tilde{q}(\gamma^2-1)(\rho_{-}^{\nu,h})^2u_{-}^{\nu,h}Z_{-}^{\nu,h}\phi(T_{-}^{\nu,h})}}
{(\gamma+1)(\rho_{-}^{\nu,h}u_{-}^{\nu,h})^2}.
\end{eqnarray*}
Using the Taylor expansion, we have
\begin{eqnarray*}
\frac{1}{\rho_{+}^{\nu,h}}=\frac{1}{\rho_{-}^{\nu,h}}+Z_{-}^{\nu,h}O(h).
\end{eqnarray*}
By \eqref{eq:4.29}, it follows that
\begin{eqnarray*}
\rho_{+}^{\nu,h}-\rho_{-}^{\nu,h}=\|\bar{Z}\|_{\infty}e^{-Llh}O(h).
\end{eqnarray*}
For $u_{+}^{\nu,h}$, we have
\begin{eqnarray*}
u_{+}^{\nu,h}-u_{-}^{\nu,h}=\Big(\frac{\rho_{-}^{\nu,h}}{\rho_{+}^{\nu,h}}-1\Big)u_{-}^{\nu,h},
\end{eqnarray*}
which leads to the desire result for $u_{+}^{\nu,h}$.

Similar arguments also apply to $p_{+}^{\nu,h}$. This completes the proof.

\section{ Glimm-Type Functional}\setcounter{equation}{0}

In this section, we denote $U^{\nu, h}(x,y)$ as the $(\nu, h)$--approximate solution constructed by
the fractional-step wave-front tracking method as in \S 3.
On every interval $((k-1)h, kh), k\geq1$, $U^{\nu, h}(x,y)$ consists of shocks, rarefaction fronts,
and non-physical fronts. We distinguish strong $5$-rarefaction wave-fronts from the others as follows:

\begin{definition}\label{def:5.1}
A wave-front $s$ in $U^{\nu, h}(x,y)$ is said to be a front of the strong rarefaction wave,
provided that $s$ is a $5$-rarefaction wave-front with order $1$.
If a wave $\alpha$ is a shock or $5$-rarefaction wave-front with the generation order $n\geq 2$,
or $\alpha$ is a non-physical front, then
$\alpha$ is called a weak wave.
\end{definition}

Let $y_{\alpha}(t)$ and $\alpha(t)$ be the location and magnitude for any weak wave $\alpha$ in $U^{\nu, h}(x,y)$
on line $x=t$ respectively.
For a front of the strong rarefaction wave, denote the magnitude
and location of $s$ on $x=t$ by $s(t)$ and $Y_s(t)$, respectively.
In addition, define
\begin{eqnarray*}
\Omega_{\rm Ra}(x)=\big\{\omega(A_{k})\,:\,\omega(A_{k})\leq0, A_{k}=(kh, g_h(kh)), kh\geq x\big\},
\end{eqnarray*}
where $\omega(A_{k})=\theta_{k}-\theta_{k-1}$ and {\it Ra} stands for the strong rarefaction wave.

\smallskip
We now introduce some weights for the weak waves as follows:
For any $i$-weak wave $\alpha_{i}, 1\leq i\leq4$,
and any non-physical wave $\epsilon$ at a non-interaction location $x$,
let
\begin{eqnarray*}
&&R(x,\alpha_{i},-)=\big\{s\,:\,\mbox{$s$ is a front of strong rarefaction wave with $Y_{s}(x)\leq y_{\alpha_{i}}(x)$}\big\},\\[1.5mm]
&&R(x,\epsilon,+)=\big\{s\,:\,\mbox{$s$ is a front of strong rarefaction wave with $Y_{s}(x)\geq y_{\epsilon}(x)$}\big\},
\end{eqnarray*}
and, for $K_{\rm b}>0,\ K_{\rm w}>0$, and $K_{\rm np}>0$, define
\begin{eqnarray*}
&&W(\alpha_{i},x,-)=\exp\Big(K_{\rm b}\sum\big\{|\omega|\,:\,\omega\in\Omega_{\rm Ra}(x)\big\}+K_{\rm w}\sum\big\{|s|\,:\, s\in R(x,\alpha_{i},-)\big\}\Big),\\[1mm]
&&W(\epsilon,x,+)=\exp\Big(K_{\rm np}\sum\big\{|s|\,:\, s\in R(x,\epsilon,+)\big\}\Big).
\end{eqnarray*}

\begin{definition}[Approaching waves]\label{def:5.2}
We say that two weak wave-fronts $\alpha$ and $\beta$, located at points $x_{\alpha}<x_{\beta}$
and belonging to the characteristic families $i_{\alpha}, i_{\beta}\in \{1,2, \cdots, 6\}$, respectively,
approach each other if either $i_{\alpha}> i_{\beta}$  or $i_{\alpha}=i_{\beta}$, and at least one of them is a shock.
In this case, we write $(\alpha,\beta)\in \mathcal{A}$.
\end{definition}

Then we have 
\begin{definition}\label{def:5.3}  
The strengths of weak waves in $U^{h}(x,y)$ are defined by 
\begin{eqnarray*}
\begin{split}
L^{j}_{\rm w}(U;x)&=\sum\{|\alpha_j|\,:\,\mbox{\rm $\alpha_{j}$ is a weak and physical $j$-wave}\},\quad 1\leq j\leq 5,\\[1mm]
L_{\rm np}(U;x)&=\sum\{|\epsilon|\,:\,\mbox{\rm $\epsilon$ is a non-physical wave}\},
\end{split}
\end{eqnarray*}
\end{definition}
and the wave potentials are defined by
\begin{eqnarray*}
\begin{split}
&Q_{0}(U; x)=\sum\{|\alpha_i||\beta_j|\,:\,\mbox{$\alpha_{i}$ and $\beta_j$ approach weak waves}\},\\[3pt]
&Q_{\rm Bi}(U;x)=\sum\{W(\alpha_{i},x,-)|\alpha_{i}|\,:\,\mbox{$\alpha$ is a weak $i$-wave}\},\quad 1\leq i\leq 4,\\[3pt]
&Q_{\rm B5}(U;x)=\sum\{|\alpha_{5}|\,:\,\mbox{$\alpha_{5}$ is a $5$-weak wave}\},\\[3pt]
&Q_{\rm BNP}(U; x)=\sum\{W(\epsilon,x,+)|\epsilon|\,:\,\mbox{$\epsilon$ is a non-physical wave}\},\\[3pt]
&Q_{\rm c}(U; x)=\sum\{|\omega(A_{k})|\,:\,\mbox{$A_{k}=(kh, g_h(kh))$ with $\omega(A_{k})>0$ for $kh>x$}\}.
\end{split}
\end{eqnarray*}
In order to measure the amounts of the $5$-rarefaction waves expanding before $x$, we introduce the following:

\begin{definition}\label{def:5.5}
For $x\notin \{kh:k\in\mathbf{N}_{+}\}$, define
\begin{eqnarray*}
\begin{split}
F_{1}(U;x)=\Big|TV.\{\theta(U(x,\cdot));\, [g_h(x),\infty)\}-\hat{\theta}(x)\Big|,
\end{split}
\end{eqnarray*}
where
\begin{eqnarray*}
\hat{\theta}(x)=\sum\{|\omega(A_{k})|\,:\,\mbox{\rm $A_{k}$ is a corner point $(kh, g_h(kh))$ with $\omega(A_{k})\leq0$ for $kh<x$}\}.
\end{eqnarray*}
\end{definition}

Finally, we turn to the construction of the Glimm-type functional at the non-interaction point $x$.

\begin{definition}\label{def:5.6} Define
\begin{eqnarray*}
&&L_{\rm w}(U;x)=\sum^{5}_{i=1}L^{i}_{\rm w}(U;x)+L_{\rm np}(U; x),\\
&& Q(U;x)=K_{0}Q_{0}(U; x)+K_{1}\sum^{4}_{i=1}Q_{\rm Bi}(U;x)+Q_{\rm B5}(U;x)+Q_{\rm BNP}(U;x)+K_{\rm c}Q_{\rm c}(U;x),\\
&&F_{0}(U;x)=L_{\rm w}(U;x)+KQ(U;x),\\[1mm]
&&F(U;x)=F_{1}(U;x)+C_{*}F_{0}(U;x),
\end{eqnarray*}
where $C_{*}$, $K, K_0, K_1$, and $K_{\rm c}$ are positive constants specified later.
\end{definition}

\section{Estimates of the non-reacting step}
In this section, our main task is to prove the decreasing of the Glimm-type functional
for the homogeneous system \eqref{eq:2.2} by the induction procedure.
To this end, let $U^{\nu,h}(x,y)$ be the approximate solution of system
\eqref{eq:2.2} constructed above. For any $(k-1)h<\tau<kh$, we make the following inductive assumptions:

\smallskip
\par $A_{1}(\tau-)$: Before $\tau$, the strength of every front is less than $\delta_{*}$;

\smallskip
\par $A_{2}(\tau-)$:  $\left.U^{\nu,h}(x,\cdot)\right|_{x<\tau}\in D_{\delta_{**}}(U_{\infty})$,

\smallskip
\noindent
where $\delta_{*}\in(0, \delta_{0})$ and $\delta_{**}\in(0, \min\{\delta_{0},\delta_{1}\})$
are well chosen such that the Riemann problems considered in \S 4 with the initial data satisfying $A_{1}(\tau-)$--$A_{2}(\tau-)$
are solvable.

\smallskip
\par $A_3(\tau-)$:  For any
$0<\delta_{0*}<\frac{1}{2}\big(\lim_{x\rightarrow \infty}g'_{*}(x+)-\theta_{\rm crit}\big)$,
\begin{eqnarray*}
\sum_{k}|\omega(A_{k})|\leq-\theta_{\rm crit}-\delta_{0*},
\end{eqnarray*}
where $\theta_{\rm crit}$ is a critical angle given in \eqref{eq:2.14}.

\par In order to obtain the global interaction estimates and the bound
on the amount of strong rarefaction wave-fronts,
we need some auxiliary lemmas whose proofs are similar to \cite{zh1}.

\begin{lemma}\label{lem:6.1}
There is $\delta'_{*}>0$ such that function $\theta(\Phi_{5}(\alpha_5, U))$ is a strictly increasing function of
$\alpha_5$ in $\{\alpha_5\,:\, \alpha_5 \geq-\delta'_{*},\ \Phi_{5}(\alpha_5, U)\in D_{\delta_{0}}(U_{\infty})\}$
for any $U\in D_{\delta_{0}}(U_{\infty})$. Moreover, there exist two positive constants $C_{1}$ and $C_{2}$ such that
\begin{eqnarray}\label{eq:6.1}
C_{1}|\alpha_5|\leq|\theta(\Phi_{5}(\alpha_5, U))- \theta(U)|\leq C_{2}|\alpha_5|.
\end{eqnarray}
\end{lemma}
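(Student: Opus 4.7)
The plan is to reduce the monotonicity claim to a positivity statement for $\nabla_U\theta\cdot \rr_5$ along the $5$-wave curve, and then integrate. First I would compute the derivative at $\alpha_5=0$. Since $\theta=\arctan(v/u)$, we have $\nabla_U\theta=(-\sin\theta/q,\cos\theta/q,0,0,0)$, and using the expression for $\tilde{\rr}_5$ in $(q,\theta,p,\rho,Z)$-variables listed in Section 2.1, a direct computation gives
\begin{equation*}
\nabla_U\theta\cdot\tilde{\rr}_5=\frac{\cos\theta_{\rm ma}}{q\cos(\theta+\theta_{\rm ma})}.
\end{equation*}
Multiplying by $\kappa_5>0$ yields $\nabla_U\theta\cdot\rr_5=\frac{\kappa_5\cos\theta_{\rm ma}}{q\cos(\theta+\theta_{\rm ma})}$. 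For $\delta_0$ small, every $U\in\overline{D_{\delta_0}(U_\infty)}$ satisfies $u>c$ and $\theta+\theta_{\rm ma}$ stays in a compact subinterval of $(-\pi/2,\pi/2)$, so this expression is positive and uniformly bounded from above and below by constants $c_1,c_2>0$ depending only on $U_\infty$ and $\delta_0$.

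Next I would handle the two branches of $\Phi_5$ separately. For $\alpha_5\ge 0$ (rarefaction branch), Remark 2.1 provides the ODE $\frac{d\Phi_5}{d\alpha_5}=\rr_5(\Phi_5)$, so
\begin{equation*}
\frac{d}{d\alpha_5}\theta(\Phi_5(\alpha_5,U))=\big(\nabla_U\theta\cdot\rr_5\big)\big(\Phi_5(\alpha_5,U)\big),
\end{equation*}
which, as long as $\Phi_5(\alpha_5,U)\in D_{\delta_0}(U_\infty)$, lies in $[c_1,c_2]$ by the computation above. Integrating in $\alpha_5$ gives $c_1\alpha_5\le \theta(\Phi_5(\alpha_5,U))-\theta(U)\le c_2\alpha_5$, proving strict monotonicity and the two-sided bound on this branch.

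For $\alpha_5\in(-\delta'_*,0)$ (shock branch $S_5^-$), I would invoke the Lax parametrization: $\Phi_5(\cdot,U)\in C^2$ in a neighbourhood of $\alpha_5=0$ with $\partial_{\alpha_5}\Phi_5|_{\alpha_5=0}=\rr_5(U)$; this is uniform in $U\in D_{\delta_0}(U_\infty)$ since $\Phi_5$ is constructed from smooth data. Hence $\alpha_5\mapsto \frac{d}{d\alpha_5}\theta(\Phi_5(\alpha_5,U))$ is continuous at $0$ with value $\nabla_U\theta\cdot\rr_5(U)\ge c_1>0$. By uniform continuity on the compact set $\overline{D_{\delta_0}(U_\infty)}$, there exists $\delta'_*>0$, independent of $U$, such that
\begin{equation*}
\tfrac{c_1}{2}\le \tfrac{d}{d\alpha_5}\theta(\Phi_5(\alpha_5,U))\le 2c_2\qquad\text{for }\alpha_5\in[-\delta'_*,0].
\end{equation*}
Integrating and combining with the rarefaction branch yields the estimate \eqref{eq:6.1} with $C_1=c_1/2$, $C_2=2c_2$, and strict monotonicity on the stated range.

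The main obstacle is the uniformity of $\delta'_*$ across $U\in D_{\delta_0}(U_\infty)$ on the shock branch, because the Lax parametrization there is only characterized implicitly through the Rankine--Hugoniot relations \eqref{eq:2.9}--\eqref{eq:2.10}. I would handle this by appealing to the fact that $\Phi_5$ is $C^2$-jointly in $(\alpha_5,U)$ on a neighbourhood of $\{0\}\times \overline{D_{\delta_0}(U_\infty)}$ (a standard consequence of the implicit function theorem applied to \eqref{eq:2.9}), which upgrades the pointwise continuity at $\alpha_5=0$ to the desired uniform bound. Once this is in place, the rest is just integration of a positive continuous derivative.
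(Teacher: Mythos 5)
Your proposal is correct and follows essentially the same route as the paper: both compute $\frac{d}{d\alpha_5}\theta(\Phi_5(\alpha_5,U))=\frac{\kappa_5\cos\theta_{\rm ma}}{q\cos(\theta+\theta_{\rm ma})}$ along the rarefaction branch via Remark \ref{rem:2.1} and Lemma \ref{lem:2.1}, deduce positivity from $\cos(\theta+\theta_{\rm ma})>0$ (Lemma \ref{lem:10.1}), obtain the two-sided bound \eqref{eq:6.1} from uniform upper and lower bounds of this quantity on $\overline{D_{\delta_0}(U_\infty)}$, and extend to $-\delta'_*\le\alpha_5<0$ by continuity of the derivative at $\alpha_5=0$. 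Your treatment of the shock branch merely makes explicit the uniformity-in-$U$ argument that the paper dispatches with ``a similar proof works for $-\delta'_*\le\alpha_5<0$.''
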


\begin{proof}
For $\alpha_{5}\geq0$, by Lemma \ref{lem:2.1} and Lemma \ref{lem:10.1} in the appendix, we have
\begin{eqnarray*}
\begin{split}
\left.\frac{\partial\theta(\tilde{U})}{\partial\alpha_{5}}\right|_{\tilde{U}=\Phi_{5}(\alpha_5, U)}
&=\left.\Big(\nabla_{\tilde{U}}\theta(\tilde{U})\cdot\frac{\partial \tilde{U}}{\partial\alpha_{5}}\Big)\right|_{\tilde{U}=\Phi_{5}(\alpha_5, U)}\\[4pt]
&=\left.\frac{\kappa_{5}(\tilde{U})\cos\theta_{\rm ma}(\tilde{U})}{q(\tilde{U})\cos(\theta(\tilde{U})+\theta_{\rm ma}(\tilde{U}))}\right|_{\tilde{U}=\Phi_{5}(\alpha_5, U)}>0
\end{split}
\end{eqnarray*}
for any $\Phi_{5}(\alpha_5, U)\in D_{\delta_{0}}(U_{\infty})$ and $U\in D_{\delta_{0}}(U_{\infty})$.
A similar proof works for $-\delta'_{*}\leq\alpha_{5}<0$.
This completes the proof.
\end{proof}

\begin{lemma}\label{lem:6.2}
Suppose that $U_{1}, U_{2}, U_{3}\in D_{\delta_{0}}(U_{\infty})$ are three constant states such that
$\{U_{1}, U_{2}\}$ are connected by $U_{2}=\Phi(\alpha_5, \alpha_4,\alpha_3,\alpha_2,\alpha_1; U_{1})$ and
$\{U_{2}, U_{3}\}$ are connected by a non-physical wave-front with strength $\epsilon$. Then
\begin{eqnarray}\label{eq:6.2}
\left\{
\begin{array}{llll}
\Big|\Big(J\big(q,\frac{q^{2}}{2}+\frac{c^{2}}{\gamma-1}\big)+\theta\Big)(U_3)
-\Big(J\big(q,\frac{q^{2}}{2}+\frac{c^{2}}{\gamma-1}\big)+\theta\Big)(U_1)\Big|\\
$\qquad\qquad\,\,$ =O(1)\Big(\sum_{i=1}^{4}|\alpha_{i}|+|\alpha^{-}_{5}|+|\epsilon|\Big), \\[8pt]
\Big|\Big(\frac{q^{2}}{2}+\frac{c^{2}}{\gamma-1}\Big)(U_3)
   -\Big(\frac{q^{2}}{2}+\frac{c^{2}}{\gamma-1}\Big)(U_1)\Big|
   =O(1)\Big(\sum^{4}_{i=1}|\alpha_{i}|+|\alpha^{-}_{5}|+|\epsilon|\Big), \\[8pt]
\big|\frac{p_3}{\rho^{\gamma}_{3}}-\frac{p_{1}}{\rho^{\gamma}_{1}}\big|=O(1)\Big(\sum^{4}_{i=1}|\alpha_{i}|+|\alpha^{-}_{5}|+|\epsilon|\Big),\\[8pt]
\big|Z_{3}-Z_{1}\big|=O(1)\Big(\sum^{4}_{i=1}|\alpha_{i}|+|\alpha^{-}_{5}|+|\epsilon|\Big),
     \end{array}
     \right.
\end{eqnarray}
where $\alpha^{-}_{5}=\min\{\alpha_5, 0\}$, 
and the bound of $O(1)$ is independent of $U_{k}, 1\leq k\leq 3$.
\end{lemma}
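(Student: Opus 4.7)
}
The plan is to decompose the total jump $U_3-U_1$ along the five wave curves that connect $U_1$ to $U_2$ and then across the non-physical front, and to bound the change in each of the four scalar functionals $\Psi\in\{J(q,\mathcal B)+\theta,\ \mathcal B,\ p/\rho^\gamma,\ Z\}$ separately on each segment. Concretely, introduce the intermediate states
\begin{equation*}
V_0=U_1,\qquad V_i=\Phi_i(\alpha_i,V_{i-1})\quad (1\le i\le 5),\qquad V_5=U_2,
\end{equation*}
so that, by telescoping,
\begin{equation*}
\Psi(U_3)-\Psi(U_1)=\sum_{i=1}^{5}\bigl(\Psi(V_i)-\Psi(V_{i-1})\bigr)+\bigl(\Psi(U_3)-\Psi(V_5)\bigr).
\end{equation*}
It then suffices to estimate each summand.

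For the linearly degenerate families $i=2,3,4$, I would use the explicit parametrizations in \eqref{eq:2.7a}: across a $2$-vortex sheet only $q$ varies (with $\theta,p,\rho,Z$ fixed), across a $3$-entropy wave only $\rho$ varies (with $u,v,p,Z$ fixed), and across a $4$-reactant wave only $Z$ varies. A direct Taylor expansion of each of the four functionals in the varying coordinate gives $|\Psi(V_i)-\Psi(V_{i-1})|=O(1)|\alpha_i|$, uniformly on $D_{\delta_0}(U_\infty)$. For the $1$-wave, both $S_1^-$ and $R_1^+$ change only $\theta$ (relative to the Riemann invariants $J-\theta$, $\mathcal B$, $p/\rho^\gamma$, $Z$), so again $|\Psi(V_1)-\Psi(V_0)|=O(1)|\alpha_1|$; for $S_1^-$ a little extra care is needed to verify smoothness of $\Psi\circ\Phi_1$ at $\alpha_1=0$, which follows from the second-order contact of $S_1^-$ with $R_1^+$ recalled after \eqref{eq:2.13}.

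The decisive observation is at the $5$-segment. If $\alpha_5\ge 0$, then $V_5=\Phi_5(\alpha_5,V_4)$ lies on $R_5^+(V_4)$, and by the very definition of $R_5^+$ in \eqref{eq:2.13} \emph{all four} of the functionals $J(q,\mathcal B)+\theta$, $\mathcal B$, $p/\rho^\gamma$, and $Z$ are exactly conserved, giving $\Psi(V_5)-\Psi(V_4)=0$. If $\alpha_5<0$, then $V_5\in S_5^-(V_4)$ and a direct Taylor expansion (smoothness again by the $C^2$ parametrization of $\Phi_5$) yields $|\Psi(V_5)-\Psi(V_4)|=O(1)|\alpha_5|=O(1)|\alpha_5^-|$. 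In either case the contribution is $O(1)|\alpha_5^-|$, which is where the asymmetric $\alpha_5^-$ appears in \eqref{eq:6.2}.

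Finally, the non-physical jump from $V_5=U_2$ to $U_3$ has strength $|\epsilon|=|U_3-V_5|$ by definition \eqref{eq:4.7}, so the Lipschitz continuity of the four functionals on $D_{\delta_0}(U_\infty)$ (all smooth in $U$ for $\rho,p,c>0$) gives $|\Psi(U_3)-\Psi(V_5)|=O(1)|\epsilon|$. Summing the six contributions yields \eqref{eq:6.2}. The only mildly technical step is the careful verification that $\Psi\circ\Phi_5$ has bounded derivative in $\alpha_5$ at $\alpha_5=0$ also from the shock side, which is exactly what the second-order contact between $S_5^-$ and $R_5^+$ (noted after \eqref{eq:2.13}) provides; everything else is routine Taylor expansion against the closed-form wave curves listed in \S 2.2.
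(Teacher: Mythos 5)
Your proposal is correct and is essentially the paper's own argument written out in full: the paper's proof consists of invoking the mean-value theorem for each wave segment together with the exact conservation of $J(q,\mathcal B)+\theta$, $\mathcal B$, $p/\rho^{\gamma}$, and $Z$ across $\Phi_5(\alpha_5,\cdot)$ for $\alpha_5>0$, which is precisely your key observation explaining why only $|\alpha_5^-|$ appears. Your telescoping through the intermediate states $V_i$ and the case split at the $5$-segment make explicit what the paper leaves implicit, so there is nothing substantively different to compare.
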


\begin{proof}
By the mean-value theorem and the fact that, for $\alpha_{5}>0$,
\begin{eqnarray*}
\begin{split}
&\Big(J\big(q,\frac{q^{2}}{2}+\frac{c^{2}}{\gamma-1}\big)+\theta\Big)\big(\Phi_{5}(\alpha_5, U)\big)
=\Big(J\big(q,\frac{q^{2}}{2}+\frac{c^{2}}{\gamma-1}\big)+\theta\Big)(U),\\[1mm]
&\Big(\frac{q^{2}}{2}+\frac{c^{2}}{\gamma-1}\Big)\big(\Phi_{5}(\alpha_5, U)\big)
   =\Big(\frac{q^{2}}{2}+\frac{c^{2}}{\gamma-1}\Big)(U),
\end{split}
\end{eqnarray*}
we conclude the proof.
\end{proof}

Let $\alpha_i$ and $\beta_j$, $1\leq i,j\leq 5$,
be the approaching weak waves, and let $\epsilon$ be a non-physical wave.
Let $s$ denote the strong wave-front and $\omega_{k-1}>0$ for some $k\geq1$.
Define
\begin{eqnarray}\label{eq:6.3}
E_{\nu, h}(\tau)=\left\{
\begin{array}{llll}
|\alpha_i||\beta_j|, &\quad (\alpha_i,\beta_j)\in \mathcal{A},1\leq i, j\leq5,\\[4pt]
|\alpha_i||\epsilon|, &\quad 1\leq i\leq5, \\[4pt]
|\alpha_i||s|, &\quad 1\leq i\leq5,\\[4pt]
|\alpha_1|,\\[4.5pt]
|\omega_{k-1}|, &\quad \omega_{k-1}>0, \   \  k\geq1,\\[4pt]
|\alpha_5|,
\end{array}\right.
\end{eqnarray}
where $\tau \in ((k-1)h, kh)$ act as the times 
when two waves interact
or a weak wave reflects at a non-corner point,
or weak waves generated by the corner points.
Here we assume that only one interaction occurs at $x=\tau$.
Then, for the non-reacting flow past a bending wall, we have the following proposition.

\begin{proposition}\label{prop:6.1}
Assume that the hypotheses $A_{1}(\tau-)$--$A_{2}(\tau-)$ hold for any $(k-1)h<\tau<kh$.
Then there exist positive constants $\delta_2, C_{*}, K_{0}, K_{1}, K_{2}$, and $K_{\rm c}$,
independent of $(\nu,\tau)$, such that, if
 $F(U^{\nu, h}; \tau-)<\delta_2$,
\begin{eqnarray}
&&F(U^{\nu, h}; \tau+)\leq F(U^{\nu, h}; \tau-)-\frac{1}{4}E_{\nu, h}(\tau),\label{eq:6.4}\\
&& Q(U^{\nu, h}; \tau+)\leq Q(U^{\nu, h}; \tau-)-\frac{1}{4}E_{\nu, h}(\tau).\label{eq:6.5}
\end{eqnarray}
Moreover, for $(k-1)h<x<kh$, the following hold{\rm :}
\begin{enumerate}
\item[\rm (i)] Before $\tau_1$, the strength of every weak wave-front is less than $\delta_{*}${\rm ;}

\smallskip
\item[\rm (ii)] There is a constant $C_{3}>0$ independent of $(\nu,h)$ such that
 \begin{eqnarray}
|s_{0}(x)|\leq\frac{C_{3}}{\nu}
\label{eq:6.6}
\end{eqnarray}
for any strong rarefaction front $s_0(x)${\rm ;}

\smallskip
\item[\rm (iii)] For given $\nu\in\mathbb{N}_{+}$ sufficiently large,
the number of wave-fronts in $U^{\nu,h}(x,y)$ is finite,
and the total amount of a non-physical wave $\epsilon_{0}(x)$
is of order $O(1)2^{-\nu}$, that is,
 there exists a constant $C_{\rm np}>0$ such that
\begin{eqnarray}
\sum_{\epsilon_{0}}|\epsilon_{0}(x)|\leq\frac{C_{\rm np}}{2^{\nu}}
\qquad \mbox{for $x<\tau_1$},
\label{eq:6.7}
\end{eqnarray}
where $\tau_1<kh$ is the point next to $\tau$ such that the interaction occurs on $x=\tau_1$.
\end{enumerate}
\end{proposition}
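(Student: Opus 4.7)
The plan is to prove Proposition \ref{prop:6.1} by induction on the interaction time $\tau$, reducing the claim to a case analysis at the unique event occurring on $x=\tau$. By the construction in \S 4.3, exactly one of the following happens: (a) two weak fronts interact in the interior of $\Omega_{h,k-1}$; (b) a weak front (physical or non-physical) crosses a strong rarefaction front; (c) a weak front reflects at the boundary at a non-corner point; (d) a corner point $A_k$ generates a new front (weak or, only for order one, a strong rarefaction). For every case I would compute the differences $\Delta L_{\rm w}$, $\Delta Q_0$, $\Delta Q_{{\rm B}i}$ ($1\le i\le 4$), $\Delta Q_{\rm B5}$, $\Delta Q_{\rm BNP}$, $\Delta Q_{\rm c}$, and $\Delta F_1$ using the classical Bressan--Amadori interaction estimates together with Lemma \ref{lem:4.2}, Lemma \ref{lem:4.3}, Lemma \ref{lem:6.1}, and Lemma \ref{lem:6.2}, and then choose the constants in the order $K_{\rm w},K_{\rm np}\gg 1$, then $K_1\gg K_0$, then $K_{\rm c}\gg 1$, then $K\gg 1$, and finally $C_{*}\gg 1$ to absorb all cross-terms.

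Cases (a) and (c) are essentially standard: the Glimm quadratic potential $Q_0$ drops by at least $|\alpha||\beta|$, controlling the $O(1)|\alpha||\beta|$ increase in $L_{\rm w}$ once $K_0$ is large; the boundary reflection estimate $|\varepsilon_5|=K_{b1}|\alpha_1|$ from Lemma \ref{lem:4.3} gives, after the choice of a suitable boundary weight absorbed into $Q_{{\rm B}i}$, a net decrease of at least $\tfrac14|\alpha_1|$. Contributions to $F_1$ are zero in these cases since the total $\theta$--variation is preserved across Riemann fans that contain no strong rarefaction.

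The heart of the argument is case (b). When a weak $i$-front $\alpha$ crosses a strong rarefaction front $s$, Lax's theorem yields outgoing weak fronts of strength $\alpha'=\alpha+O(1)|\alpha||s|$ in family $i$ and $O(1)|\alpha||s|$ in the other families, so $L_{\rm w}$ may grow. However, the set $R(x,\alpha,-)$ loses $s$ as $\alpha$ passes above it, so
\begin{equation*}
W(\alpha,\tau+,-)=W(\alpha,\tau-,-)\,e^{-K_{\rm w}|s|},
\end{equation*}
giving
\begin{equation*}
\Delta Q_{{\rm B}i}\le W(\alpha,\tau-,-)\bigl(e^{-K_{\rm w}|s|}(1+O(1)|s|)-1\bigr)|\alpha|\le -\tfrac12 K_{\rm w}|s||\alpha|
\end{equation*}
for $K_{\rm w}$ large and $|s|\le \delta_{*}$ small. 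Choosing $K_1$ large ensures that this drop dominates the growth of $L_{\rm w}$, of $Q_0$, and of the other $Q_{{\rm B}j}$ terms from the newly created fronts; the symmetric bookkeeping with $W(\epsilon,x,+)$ and $K_{\rm np}$ handles the crossing of a non-physical front with $s$. Case (d) is controlled via Lemma \ref{lem:4.2}: the emitted front has strength $|\varepsilon_5|\le |\tilde K_{\rm b}||\omega_{k-1}|$ (or $|K'_{\rm b}||\omega_{k-1}|$), so $L_{\rm w}$ grows by $O(1)|\omega_{k-1}|$ while $Q_{\rm c}$ loses exactly $|\omega_{k-1}|$ and, when $\omega_{k-1}<0$, the quantity $\hat\theta$ grows by $|\omega_{k-1}|$ so that $\Delta F_1\le -|\omega_{k-1}|$; choosing $K_{\rm c}$ large forces a net decrease of $\tfrac14 E_{\nu,h}(\tau)$ in $F$. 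Summing the contributions across all cases produces both \eqref{eq:6.4} and \eqref{eq:6.5}.

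Conclusions (i)--(iii) will then follow from the uniform bound on $F$: the monotonicity $F(U^{\nu,h};\tau+)\le F(U^{\nu,h};0)\le \delta_2$ together with $L_{\rm w}\le F_0\le F/C_{*}$ keeps every weak wave-front smaller than $\delta_{*}$, yielding (i); for (ii) we observe that each strong rarefaction is emitted once at a corner with order $1$ and is split into $\nu$ fronts of equal strength by the accurate $\nu$--Riemann solver, and Lemma \ref{lem:4.2} gives $|s_0(x)|\le O(1)|K'_{\rm b}||\omega_{k-1}|/\nu\le C_3/\nu$; and (iii) is the standard Bressan counting argument---non-physical fronts are produced only when $\max\{n_1,n_2\}=\nu$ in the simplified Riemann solver, and the cascade of generation orders contributes a total non-physical strength $O(1)2^{-\nu}$, with the finiteness of fronts following since each interaction strictly lowers the pair $(Q,\#\text{fronts})$ in lexicographic order. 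The main obstacle is case (b): the linear functional $L_{\rm w}$ genuinely grows there, and all of the delicacy lies in balancing the exponential drop of the weights against the $O(1)|\alpha||s|$ production, and then ordering the constants $K_{\rm w},K_{\rm np},K_1,K_{\rm c},K,C_{*}$ so that no later choice destroys an earlier inequality.
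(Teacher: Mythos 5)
Your overall strategy coincides with the paper's: the same case-by-case analysis at a single event, the same interaction estimates, and the same key mechanism in the crucial case (b) --- the weight $W(\alpha_i,x,-)$ loses a factor $e^{-K_{\rm w}|s|}$ when the weak front passes above the strong rarefaction front $s$, so that $\Delta Q_{{\rm B}i}\le -\tfrac{1}{2}K_{\rm w}|\alpha||s|$ dominates the $O(1)|\alpha||s|$ production --- with the constants ordered exactly as you describe.

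Two points need repair. First, your justification of (ii) stops at the emission time: Lemma \ref{lem:4.2} bounds $|s_0|$ by $O(1)|\omega_{k-1}|/\nu$ only when the front is created, but each subsequent crossing by a weak wave $\alpha$ multiplies $|s_0|$ by $1+O(1)|\alpha|$, and these factors accumulate over the entire future of the front. The paper controls this with the auxiliary functional $F_s(U^{\nu,h};x)=|s_0(x)|\,e^{L_s(U^{\nu,h};x)+K_2 Q(U^{\nu,h};x)}$, where $L_s$ is the total strength of weak waves still approaching $s_0$; this functional is non-increasing across interactions, which converts the emission bound into the uniform bound $C_3/\nu$. Without some such device your $O(1)$ is unjustified. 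Second, in case (d) with $\omega_{k-1}<0$ you claim $\Delta F_1\le -|\omega_{k-1}|$ because $\hat\theta$ grows; but $TV.\{\theta(U(x,\cdot))\}$ grows by essentially the same amount (the emitted strong rarefaction carries exactly that much turning), so $F_1$ is unchanged rather than decreased. This is harmless for the statement --- $E_{\nu,h}(\tau)$ records $|\omega_{k-1}|$ only when $\omega_{k-1}>0$, so only $\Delta F\le 0$ is required there, and that follows from the drop of the weights $W(\alpha_i,x,-)$ as $\omega(A_{k-1})$ leaves $\Omega_{\rm Ra}(x)$ --- but the mechanism you cite is not the one doing the work.
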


\begin{proof}
Without loss of generality, we assume that there is only one interaction of waves at $x=\tau$.
We now focus only on the following three cases, since the others are similar to \cite{zh1}.

\smallskip
\par $\mathbf{Case}\ 1.$ {\it Strong rarefaction wave and weak $2$-wave interaction}.
Suppose that a front of strong rarefaction wave $s_0$ with $\{U_b, U_m\}$ as its below and above states
interacts with a weak wave-front $\alpha_2$ that connects $\{U_m, U_a\}$ on line $x=\tau$.
Let the outgoing waves be denoted by $s'_{0}$ and $\gamma_{j}, 1\leq j\leq 4$,
and $\epsilon$ be the nonphysical wave, and let $\{U'_{m1}, U'_{m2}, U'_a\}$ be denoted as
the constant states between $\gamma_{1}$ and $\gamma_{2(3,4)}$, $\gamma_{2(3,4)}$ and $s'_0$, and $s'_0$ and $\epsilon$,
respectively.
Then, by the Glimm interaction estimates, we have
\begin{eqnarray*}
\begin{split}
&\gamma_2=\alpha_2+O(1)|\alpha_2||s_0|,\qquad \gamma_j=O(1)|\alpha_2||s_0|, \,\,\, j=1,3,4,\\[3pt]
&s'_0=s_0+O(1)|\alpha_2||s_0|, \qquad  \epsilon=O(1)|\alpha_2||s_0|,
\end{split}
\end{eqnarray*}
where the bound of $O(1)$ depends only on the system.
With the above estimates, we conclude
\begin{eqnarray*}
&&L_{\rm w}(U^{\nu, h};\tau+)-L_{\rm w}(U^{\nu, h};\tau-)=O(1)|\alpha_{2}||s_{0}|,\\[7pt]
&&K_{0}Q_0(U^{\nu, h};\tau+)+Q_{B5}(U^{\nu, h};\tau+)+Q_{BNP}(U^{\nu, h};\tau+)+K_{\rm c}Q_{\rm c}(U^{\nu, h};\tau+)\\[3pt]
&&\qquad\,\, -K_{0}Q_0(U^{\nu, h};\tau-)-Q_{B5}(U^{\nu, h};\tau-)-Q_{BNP}(U^{\nu, h};\tau-)-K_{\rm c}Q_{\rm c}(U^{\nu, h};\tau-)\\[3pt]
&&\quad =O(1)|\alpha_2||s_0|L_{\rm w}(U^{\nu, h};\tau-),
\end{eqnarray*}
and, for $K_{\rm w}$ sufficiently large and $L_{\rm w}(U^{\nu,h};\tau-)$ sufficiently small,
\begin{eqnarray*}
\begin{split}
&\sum_{1\leq i\leq 4}Q_{Bi}(U^{\nu, h};\tau+)-\sum_{1\leq i\leq 4}Q_{Bi}(U^{\nu, h};\tau-)\\
&=|\alpha_2|W(\alpha_2,\tau-,-)\Big(\big(1+O(1)|s_0|\big)e^{-K_{\rm w}|s_0|}-1\Big)\\[1mm]
&\,\, \, \,\,\, +\sum_{1\leq i\leq 4}\sum_{\beta_{i}\neq \gamma_i}|\beta_{i}|
W(\beta_{i},-, \tau-)\big(e^{O(1)K_{\rm w}|\alpha_2||s_0|}-1\big)\\
&\leq-\frac{K_{\rm w}}{2}|\alpha_2||s_0|,
\end{split}
\end{eqnarray*}
which leads to
\begin{eqnarray*}
\begin{split}
Q(U^{\nu, h};\tau+)-Q(U^{\nu, h};\tau-)\leq-\frac{K_{\rm w}}{2}|\alpha_2||s_0|.
\end{split}
\end{eqnarray*}
To estimate $F_{1}(U^{\nu, h}, \tau+)$, let
\begin{eqnarray*}
\begin{split}
\theta(\Phi(\alpha; U))=f(\alpha_{5},\alpha_{2(3,4)},\alpha_{1};U).
\end{split}
\end{eqnarray*}
Then we have
\begin{eqnarray*}
\begin{split}
& \theta(U'_a)-\theta(U'_{m2})-\theta(U_{m})+\theta(U_b)\\[3pt]
&=f(s'_0,\gamma_{2(3,4)},\gamma_{1};U_b)-f(0,\gamma_{2(3,4)},\gamma_{1};U_b)
-f(s_0,0,0;U_b)+f(0,0,0;U_b)\\[3pt]
&=f(s_0,\alpha_2,0;U_b)-f(0,\alpha_2,0;U_b)
-f(s_0,0,0;U_l)+f(0,0,0;U_b)+O(1)|\alpha_2||s_0|\\[3pt]
&=O(1)|\alpha_2||s_0|.
\end{split}
\end{eqnarray*}
Similarly,
\begin{eqnarray*}
\theta(U'_{m1})-\theta(U_b)-\theta(U_a)+\theta(U_{m})
=O(1)|\alpha_2||s_0|,\qquad
\theta(U_a)-\theta(U'_a)=O(1)|\alpha_2||s_0|.
\end{eqnarray*}
Then,  by Lemma \ref{lem:6.1}, we have
\begin{eqnarray*}
\begin{split}
&TV.\{\theta(U^{\nu,h}(\tau+,\cdot));\, [g_{h}(\tau+),\infty)\}-TV.\{\theta(U^{\nu,h}(\tau-,\cdot));\,[g_{h}(\tau-),\infty)\}\\[3pt]
&=|\theta(U_a)-\theta(U'_a)|+|\theta(U'_a)-\theta(U'_{m2})|+|\theta(U'_{m1})-\theta(U_b)|\\[3pt]
&\ \ \  -|\theta(U_a)-\theta(U_{m})|-|\theta(U_{m})-\theta(U_b)|\\[3pt]
&=O(1)|\alpha_2||s_0|,
\end{split}
\end{eqnarray*}
where we have used the fact that $\theta(U'_{m2})=\theta(U'_{m1})$. Therefore, we obtain
\begin{eqnarray*}
F_{1}(U^{\nu, h}; \tau+)-F_{1}(U^{\nu, h}; \tau-)=O(1)|\alpha_2||s_0|.
\end{eqnarray*}
Finally, with the estimates on $L_{\rm w}(U^{\nu,h};\tau+)$, $Q(U^{\nu,h};\tau+)$,
and $F_{1}(U^{\nu,h}; \tau+)$,
and choosing $K$ and $C_{*}$ sufficiently large, we conclude
\begin{eqnarray*}
F(U^{\nu, h}; \tau+)-F(U^{\nu, h}; \tau-)\leq-\frac{1}{4}|\alpha_2||s_0|.
\end{eqnarray*}

\smallskip
\par $\mathbf{Case}\ 2.$ {\it Reflection at the approximate boundary}. Let a weak wave $\alpha_1$
that hits the approximate boundary at a non-corner point $(\tau, g_h(\tau))$
be the outgoing wave. Then, by Lemma \ref{lem:4.3}, we have
\begin{eqnarray*}
\begin{split}
\varepsilon_5=O(1)\alpha_1,
\end{split}
\end{eqnarray*}
where the bound of $O(1)$ depends only on the system.
Moreover, for $L_{\rm w}(U^{\nu,h};\tau-)$ sufficiently small and $K_1$ sufficiently large, we have
\begin{eqnarray*}
&&L_{\rm w}(U^{\nu, h};\tau+)-L_{\rm w}(U^{\nu, h};\tau-)=-|\alpha_1|+O(1)|\alpha_1|,\\[1mm]
&&Q(U^{\nu, h};\tau+)-Q(U^{\nu, h};\tau-)\leq-\frac{1}{2}|\alpha_1|, \\[1mm]
&&F_1(U^{\nu, h};\tau+)-F_1(U^{\nu, h};\tau-)=O(1)|\alpha_1|.
\end{eqnarray*}
Therefore, we can choose $K$ and $C_{*}$ sufficiently large to obtain
\begin{eqnarray*}
\begin{split}
 F(U^{\nu, h};\tau+)-F(U^{\nu, h};\tau-)\leq-\frac{1}{4}|\alpha_1|.
\end{split}
\end{eqnarray*}

\smallskip
\par $\mathbf{Case}\ 3.$ {\it New waves generated by the corner points}.
We now consider the non-reaction flow past a corner point $A_{k-1}=((k-1)h, g_h((k-1)h))$ for some $k>0$,
that is, $\tau=(k-1)h$. 
Let $\varepsilon_5$ be the new wave,
and let $\omega_{k-1}=\omega(A_{k-1})$ be the change of angle.

\par If $\omega_{k-1}\leq 0$, then, by Lemma \ref{lem:4.2}(2), we can choose suitable $K_b$ such that
\begin{eqnarray*}
 Q(U^{\nu, h};\tau+)-Q(U^{\nu, h};\tau-)\leq-\frac{K_b}{2}|\omega_k|L_{\rm w}(U^{\nu, h};\tau-).
\end{eqnarray*}
In addition,
\begin{eqnarray*}
\begin{split}
 L_{\rm w}(U^{\nu, h};\tau+)-L_{\rm w}(U^{\nu, h};\tau-)=0, \quad  \ F_0(U^{\nu, h};\tau+) \leq F_0(U^{\nu, h};\tau-).
\end{split}
\end{eqnarray*}
Hence, for $K>0$ and $C_*>0$, it follows that
\begin{eqnarray*}
\begin{split}
 F(U^{\nu, h};\tau+)-F(U^{\nu, h};\tau-)\leq 0.
\end{split}
\end{eqnarray*}
\par If $\omega_{k-1}>0$, then, by Lemma \ref{lem:4.2}(i), for $L_{\rm w}(U^{\nu, h};\tau-)$ sufficiently small,
 we can choose suitable $K_{\rm c}$ such that
\begin{eqnarray*}
\begin{split}
 Q(U^{\nu, h};\tau+)-Q(U^{\nu, h};\tau-)\leq-\frac{1}{2}|\omega_{k-1}|.
\end{split}
\end{eqnarray*}
Moreover,
\begin{eqnarray*}
\begin{split}
 L_{\rm w}(U^{\nu, h};\tau+)-L_{\rm w}(U^{\nu, h};\tau-)=O(1)|\omega_{k-1}|, \  \ F_1(U^{\nu, h};\tau+)- F_1(U^{\nu, h};\tau-)=O(1)|\omega_{k-1}|.
\end{split}
\end{eqnarray*}
Thus we can choose $K$ and $C_*$ large enough such that
\begin{eqnarray*}
\begin{split}
 F(U^{\nu, h};\tau+)-F(U^{\nu, h};\tau-)\leq -\frac{1}{4}|\omega_{k-1}|.
\end{split}
\end{eqnarray*}

With the estimates in $\mathbf{Cases}$ $1$--$3$,
we can choose positive constants $\delta_2, C_{*}, K_{0}, K_{1}, K_{2}$, and $K_{\rm c}$
independent of $(\tau, \nu)$ such that, if
$F(U^{\nu, h}; \tau-)<\delta_2$, then \eqref{eq:6.5}--\eqref{eq:6.6} hold and, before $\tau_1$,
the strength of every weak wave-front is less than $\delta_*$.

\smallskip
To prove statement {\rm (ii)}, we introduce the following functional for any strong rarefaction front $s_0(x)$:
\begin{eqnarray*}
F_s(U^{\nu, h}; x)=|s_0(x)|e^{L_s(U^{\nu, h};x)+K_{2}Q(U^{\nu, h};x)},
\end{eqnarray*}
where
\begin{eqnarray*}
\begin{split}
L_s(U^{\nu, h};x)=\sum_{\alpha}\{|\alpha(x)|\,:\,\mbox{$\alpha$ is a weak wave approaching $s_0$}\},
\end{split}
\end{eqnarray*}
and $K_{2}$ is a positive constant. Then we can carry out the steps as in \cite{amadori, bressan}
to obtain \eqref{eq:6.6}.

\smallskip
Finally, we complete the proof of {\rm (iii)}. From estimate \eqref{eq:6.5},
we have known that the total interaction potential $Q(U^{\nu, h}; x)$ is decreasing
and bounded when it crosses an interaction time $x$.
Then we conclude that, when the orders of the incoming waves are less than $\nu$
or the change of the angle of the boundary is larger than $\frac{1}{\nu}$,
and $Q(U^{\nu, h}; x)$ decreases by at least $-\frac{2^{-\nu}}{4}$ in
these interactions.
Therefore, we can follow the argument in \cite{amadori, bressan} to see that
new physical waves can be only generated by this kind of interactions.
When the weak wave $\alpha$ of
$1$-family interacts with the boundary and reflects,
by solving the Riemann problem, we know that
there is only a reflected wave of $5$-family with the reflected coefficient $1$, so that
the number of the waves keeps the same, which implies that the
number of the waves is finite between $(k-1)h<x<kh$.
Since non-physical waves are generated only when physical waves
interact, the number of non-physical waves is also finite.
With the same procedure used in \cite{amadori,bressan},
it can be proved that there exists a constant $C_{\rm np}>0$
such that \eqref{eq:6.7} holds.
This completes the proof.
\end{proof}

To conclude the inductive hypothesis that $A_{2}(\tau-)$ holds and the total strength of the strong rarefaction
is finite, we need the following proposition.

\begin{proposition}\label{prop:6.2}
For any $\tau<x<\tau_{1}$ and sufficiently small $\delta_2$  satisfying $0<\delta_2<\frac{\delta_{**}}{4}$,
\begin{enumerate}
\item[\rm (i)]  There exists  $\eta_{0}=\eta(x)>0$ such that, for $y>\eta_{0}$,
\begin{eqnarray}\label{eq:6.8}
\Big|U^{\nu,h}(x, y)-U_{\infty}\Big|\leq \delta_2;
\end{eqnarray}
\item[\rm (ii)]  $\left.U^{\nu,h}(x,\cdot)\right|_{x<\tau_1}\in D_{\delta_{**}}(U_{\infty})$;
\item[\rm (iii)]  For any strong rarefaction wave-front $s_{0}(x)$,
\begin{eqnarray}\label{eq:6.9}
\sum_{s_0}\big|s_{0}(x)\big|\leq O(1)\Big(|\theta_{\rm crit}|+\delta_{0}+\delta_{2}+L_{\rm w}(U^{\nu,h};x)\Big),
\end{eqnarray}
where $\tau_1$ is the same as in Proposition {\rm \ref{prop:6.1}}.
\end{enumerate}
\end{proposition}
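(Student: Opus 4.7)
The plan is to prove the three claims in sequence, using Lemma \ref{lem:6.2} to control the Riemann invariants across the wave pattern, Lemma \ref{lem:6.1} to equate strong $5$-rarefaction strengths with changes in flow angle, and Proposition \ref{prop:6.1} (already established) to control the Glimm-type functional so that all auxiliary small-variation quantities remain of size $O(\delta_2)$.

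For statement (i), I would argue by finite propagation. Under hypothesis $(\mathbf{H}_2)$, one has $\bar U(y)\to U_{\infty}$ as $y\to\infty$ and $TV(\bar U)<\infty$, so for any prescribed $\delta_2>0$ one can pick $y_0$ with $|\bar U(y)-U_\infty|<\delta_2/2$ for $y>y_0$. Because all eigenvalues $\lambda_j(U)$ of system \eqref{eq:2.2} are uniformly bounded on $D_{\delta_0}(U_\infty)$ by the inductive hypothesis $A_2(\tau-)$, I can set $\eta_0(x):=y_0+\Lambda x$ with $\Lambda:=\sup\{|\lambda_j(U)|:U\in D_{\delta_0}(U_\infty),\,1\le j\le 5\}$ so that every wave-front reaching $(x,y)$ with $y>\eta_0(x)$ originates from $\bar U^\nu$ at a point with $\bar U^\nu\approx U_\infty$. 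Induction on the finitely many interaction times in $((k-1)h,\tau_1)$ (finiteness by Proposition \ref{prop:6.1}(iii)), together with the continuous dependence of the accurate and simplified Riemann solvers on the data at $U_\infty$, yields \eqref{eq:6.8}.

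For statement (ii), the plan is to bound each of the quantities that define $D_{\delta_{**}}(U_\infty)$ by using (i) as an ``anchor at infinity'' plus a total-variation estimate in $y$. Applying Lemma \ref{lem:6.2} inductively across every jump of $U^{\nu,h}(x,\cdot)$, I obtain
\begin{equation*}
TV_y\!\Bigl(J(q,\mathcal{B})+\theta\Bigr)+TV_y(\mathcal{B})+TV_y\!\Bigl(\tfrac{p}{\rho^\gamma}\Bigr)+TV_y(Z)
\le O(1)\Bigl(L_{\rm w}(U^{\nu,h};x)+\sum_{s_0}|s_0(x)|\,\delta_{\text{cancel}}\Bigr),
\end{equation*}
where the crucial point is that the contribution of each strong $5$-rarefaction front $s_0$ to these four invariants vanishes identically, by \eqref{eq:2.12}--\eqref{eq:2.13}. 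Thus the right-hand side is bounded by $O(1)F_0(U^{\nu,h};x)\le O(1)\delta_2$. For the angle $\theta$, Definition \ref{def:5.5} gives $TV_y(\theta)\le F_1+\hat\theta(x)$, and $\hat\theta(x)\le -\theta_{\rm crit}-\delta_{0*}$ by $A_3(\tau-)$, while by hypothesis $\theta$ at large $y$ is close to $0$. Combining these bounds with (i) yields, for any $y\ge g_h(x)$, that every defining quantity of $D_{\delta_{**}}(U_\infty)$ differs from its value at $U_\infty$ by at most $O(1)\delta_2$, hence lies in $D_{\delta_{**}}(U_\infty)$ once $\delta_2$ is chosen with $O(1)\delta_2<\delta_{**}/4$.

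For statement (iii), Lemma \ref{lem:6.1} gives $C_1|s_0|\le |\theta(U_{s_0+})-\theta(U_{s_0-})|\le C_2|s_0|$ across each strong $5$-rarefaction front, so summing over all such fronts at time $x$,
\begin{equation*}
\sum_{s_0}|s_0(x)|\le C_1^{-1}\,TV_y(\theta)\le C_1^{-1}\bigl(F_1(U^{\nu,h};x)+\hat\theta(x)\bigr)
\le O(1)\bigl(|\theta_{\rm crit}|+\delta_0+\delta_2+L_{\rm w}(U^{\nu,h};x)\bigr),
\end{equation*}
after using $F_1\le C_* F_0\le C_* \delta_2 $, $\hat\theta(x)\le -\theta_{\rm crit}$, and absorbing constants. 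The main obstacle in the whole argument is step (ii): one must verify that the cancellation in Lemma \ref{lem:6.2} truly eliminates the large quantity $\sum|s_0|$ from the estimates on $J+\theta$, $\mathcal{B}$, $p/\rho^\gamma$, and $Z$, so that only the small weak-wave potential $L_{\rm w}=O(\delta_2)$ survives; otherwise the strong rarefaction would itself push $U^{\nu,h}$ out of $D_{\delta_{**}}(U_\infty)$. This hinges on the identities \eqref{eq:2.12}--\eqref{eq:2.13}, together with the fact that the fractional step $\eqref{eq:4.27-1}$--$\eqref{eq:4.27-5}$ perturbs these invariants only by $O(\|\bar Z\|_\infty e^{-Lkh}h)$ (Lemma \ref{lem:4.5}), a quantity summable in $k$ and absorbable into $\delta_2$.
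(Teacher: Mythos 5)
Your proposal is correct and follows essentially the same route as the paper. The paper proves (i) by choosing $\eta^{*}_{0}$ above which no strong rarefaction front is present (which is exactly the content of your finite-propagation observation), bounding $TV\{U^{\nu,h}(x,\cdot);[\eta^{*}_{0},\infty)\}$ by the small weak-wave amount, and anchoring at $U^{\nu,h}(x,\infty)=\bar{U}(\infty)$; it then declares (ii) and (iii) to be direct consequences of Lemmas \ref{lem:6.1}--\ref{lem:6.2}, Proposition \ref{prop:6.1}, and (i), which is precisely what you spell out: the strong $5$-rarefaction drops out of the four Riemann-invariant estimates because only $\alpha^{-}_{5}=\min\{\alpha_5,0\}$ appears in Lemma \ref{lem:6.2}, and $\sum_{s_0}|s_0|$ is converted to $TV_y(\theta)$ via Lemma \ref{lem:6.1} together with $A_3(\tau-)$ and the bound on $F_1$. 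Three small slips, none fatal: your $\Lambda$ must also dominate the non-physical speed $\hat{\lambda}$, since non-physical fronts travel faster than every characteristic; the chain $F_1\le C_*F_0$ is not a valid inequality (both are summands of $F$) --- use $F_1\le F<\delta_2$ instead; and your closing claim in (ii) that every defining quantity of $D_{\delta_{**}}(U_{\infty})$ is within $O(1)\delta_2$ of its value at $U_{\infty}$ cannot hold for $\theta$ itself, which may be as negative as roughly $\theta_{\rm crit}$ --- the correct conclusion, which your own preceding sentences already support, is that $\theta$ lands in the asymmetric window $(\theta_{\rm crit}+\delta_{**},\,\delta_{**})$ thanks to $A_3(\tau-)$ and the smallness of $F_1$.
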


\begin{proof}
It suffices to prove \rm (i).
By Proposition \ref{prop:6.1}, we can choose $\eta^{*}_{0}=\eta^{*}(x)>0$ such
that there is no strong wave-front for $y>\eta^{*}_{0}$.
Then there exists $0<\varepsilon_{*}<\frac{\delta_2}{8}$  such that
\begin{eqnarray*}
TV. \big\{U^{\nu,h}(x,\cdot);\, [\eta^{*}_{0},\infty)\big\}\le O(1)\varepsilon_{*}.
\end{eqnarray*}
Then, by $\mathbf{(H_{2})}$, we can choose a constant $0<\delta_{2**}<\frac{\delta_2}{2}$ such that, for $y>\eta^{*}_{0}$,
\begin{eqnarray*}
\big|U^{\nu,h}(x,y)-U_{\infty}\big|&\leq& TV. \big\{U^{\nu,h}(x,\cdot);\, [\eta^{*}_{0},\infty)\big\}+\big|U^{\nu,h}(x,\infty)-U_{\infty}\big|\\[1mm]
&\leq& O(1)\varepsilon_{*}+\delta_{2**}<\delta_2.
\end{eqnarray*}
Facts (ii) and (iii) are the direct consequences of Lemma \ref{lem:6.1}--\ref{lem:6.2}, Proposition \ref{prop:6.1}, and \rm (i).
This completes the proof.
\end{proof}

\section{BV Bounds for the Reacting Step}\setcounter{equation}{0}

In this section, we study the BV stability of the approximate solutions $U^{\nu,h}$ for the reacting step.
As in \S 6, we have proved that the solution $U^{\nu, h}(x,y)$ is $BV$--stable for $x<kh,\ k\in\mathbf{N}_{+}$,
and the Glimm-type functional is decreasing and the total number of wave-fronts is finite for $(m-1)h<x<mh, m\leq k$.
Now we study the uniform bound on the total
variation of the approximate solutions $U^{\nu,h}(x,y)$ with respect to the mesh
length $h$ on line $x=kh$.
As a first step, we analyze the change of the wave strengths
before and after the reaction when the mesh length $h$ is sufficiently small.
Then we study the change of the Glimm-type functional for the reacting step to obtain
the uniform bound on the total variation of the approximate solutions
by employing the monotonicity of the Glimm-type functional that has been proved in \S 6.

\subsection{Local estimates on the reacting step}
We now study the change of the wave strengths at the reaction steps.
Arguing as in the construction of the approximation solutions, we assume
that there is no wave interaction on line $x=kh$.
The analysis is divided into four cases.

\smallskip
\par $\mathbf{Case}\ 1.$  {\it The change of $i$-weak wave $\tilde{\alpha}_{i}, 1\leq i\leq 5$,
after reaction} (see Fig. 7.1).
Let $\alpha_{i}$ be the $i$--weak incoming wave with order $k\leq\nu$
before reaction, which connects $\tilde{\alpha}_{i}$ to $\tilde{U}_a$ with
\begin{eqnarray*}
\tilde{U}_a=\Phi_{i}(\tilde{\alpha}_{i};\tilde{U}_b).
\end{eqnarray*}
Suppose that $\alpha_{i}, 1\leq j\leq5$, are the $j$--weak outgoing wave after reaction and related by
\begin{eqnarray*}
U_a=\Phi(\alpha; U_b).
\end{eqnarray*}
Then, by the construction of the approximate solutions and Lemma \ref{lem:4.5}, we have
\begin{eqnarray} \label{eq:7.1}
\Phi(\alpha;\tilde{U}_b+O(1)\|\bar{Z}\|_{\infty}e^{-Lkh}h)
=\Phi_{i}(\tilde{\alpha}_{i};\tilde{U}_b)+O(1)\|\bar{Z}\|_{\infty}e^{-Lkh}h.
\end{eqnarray}
\vspace{3pt}
\begin{center}\label{fig8}
\begin{tikzpicture}[scale=1.25]
\draw [ultra thick] (-1,1.8)--(-1,-1.5);
\draw [thin](-2.8,-0.8)--(-1, 0);
\draw [thin](-1,0)--(0.5,1.5);
\draw [thin](-1,0)--(1,0.5);
\draw [thin](-1,0)--(1,-0.5);
\node at (-1.8, -0.8) {$\tilde{U}_b$};
\node at (-2.1, 0) {$\tilde{U}_a$};
\node at (0.2, -0.8) {$U_b$};
\node at (-0.2, 1.3) {$U_a$};
\node at (-3, -0.7) {$\tilde{\alpha}_{i}$};
\node at (0.7, 1.7) {$\alpha_{5}$};
\node at (1.5, 0.5) {$\alpha_{2(3,4)}$};
\node at (1.3, -0.5) {$\alpha_{1}$};
\node at (-1.0, -1.7){$x=kh$};
\node [below] at (-1.0, -2.0)
{Fig. 7.1};
\end{tikzpicture}
\end{center}

\begin{lemma}\label{lem:7.1}
Equation \eqref{eq:7.1} admits a unique $C^{2}$--solution $\alpha=\alpha(\tilde{\alpha}_{i}; e^{-Lkh}h;\tilde{U}_b)$
in a neighbourhood of $(\tilde{\alpha}_{i}, h)=(0,0)$. Moreover,
\begin{eqnarray} \label{eq:7.2}
\alpha_{i}=\tilde{\alpha}_{i}+O(1)|\tilde{\alpha}_{i}|\|\bar{Z}\|_{\infty}e^{-Lkh}h,\quad
\alpha_{j}=O(1)|\tilde{\alpha}_{i}|\|\bar{Z}\|_{\infty}e^{-Lkh}h,\qquad  1\leq j \neq i\le 5.
\end{eqnarray}
\end{lemma}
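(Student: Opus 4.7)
The plan is to view \eqref{eq:7.1} as an implicit equation for $\alpha=(\alpha_{1},\ldots,\alpha_{5})$ with parameters $\tilde{\alpha}_{i}$ and $\eta:=\|\bar{Z}\|_{\infty}e^{-Lkh}h$, and then to exploit two degenerate cases of this implicit equation to upgrade a generic $C^{2}$ bound to the sharper multiplicative bound claimed in \eqref{eq:7.2}. Concretely, define
\[
\mathcal{F}(\alpha,\tilde{\alpha}_{i},\eta;\tilde{U}_{b})
:=\Phi\bigl(\alpha;\tilde{U}_{b}+\eta V_{b}(\tilde{U}_{b})\bigr)
-\Phi_{i}(\tilde{\alpha}_{i};\tilde{U}_{b})-\eta V_{a}\bigl(\tilde{U}_{b},\tilde{\alpha}_{i}\bigr),
\]
where $V_{b}$ and $V_{a}$ denote the bounded perturbation vectors produced by the reacting step of Lemma~\ref{lem:4.5}, applied respectively to $\tilde{U}_{b}$ and to $\tilde{U}_{a}=\Phi_{i}(\tilde{\alpha}_{i};\tilde{U}_{b})$. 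One checks that $\mathcal{F}=0$ at the origin, and the Jacobian
\[
D_{\alpha}\mathcal{F}\big|_{(0,0,0)}
=\bigl(\mathbf{r}_{1},\mathbf{r}_{2},\mathbf{r}_{3},\mathbf{r}_{4},\mathbf{r}_{5}\bigr)(\tilde{U}_{b})
\]
is invertible thanks to the linear independence of the eigenvectors in the supersonic regime. The implicit function theorem then produces a unique $C^{2}$ function $\alpha=\alpha(\tilde{\alpha}_{i},\eta;\tilde{U}_{b})$ on a neighbourhood of $(\tilde{\alpha}_{i},\eta)=(0,0)$.

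Next I would identify two special slices that pin down the structure of $\alpha$. When $\eta=0$ (no reaction in the step), \eqref{eq:7.1} reduces to $\Phi(\alpha;\tilde{U}_{b})=\Phi_{i}(\tilde{\alpha}_{i};\tilde{U}_{b})$, so the uniqueness in Lemma~\ref{lem:2.2} forces $\alpha_{j}(\tilde{\alpha}_{i},0)=\delta_{ij}\tilde{\alpha}_{i}$. When $\tilde{\alpha}_{i}=0$ (no incoming weak wave), we have $\tilde{U}_{a}=\tilde{U}_{b}$; the algebraic reaction relations \eqref{eq:4.27-1}--\eqref{eq:4.27-5} depend only on the pre-reaction local state, so applying them to coincident states produces coincident post-reaction states $U_{a}=U_{b}$, and the uniqueness of Lemma~\ref{lem:2.2} yields $\alpha(0,\eta)\equiv 0$ identically in $\eta$.

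Finally, the second identity lets me factor $\tilde{\alpha}_{i}$ out of $\alpha_{j}$ via
\[
\alpha_{j}(\tilde{\alpha}_{i},\eta)
=\tilde{\alpha}_{i}\int_{0}^{1}\partial_{\tilde{\alpha}_{i}}\alpha_{j}(s\tilde{\alpha}_{i},\eta)\,ds,
\]
and writing $\partial_{\tilde{\alpha}_{i}}\alpha_{j}(s\tilde{\alpha}_{i},\eta)=\partial_{\tilde{\alpha}_{i}}\alpha_{j}(s\tilde{\alpha}_{i},0)+O(\eta)$ and invoking the first identity gives
\[
\alpha_{j}(\tilde{\alpha}_{i},\eta)=\delta_{ij}\tilde{\alpha}_{i}+O(1)|\tilde{\alpha}_{i}|\eta,
\]
which is precisely \eqref{eq:7.2} after restoring $\eta=\|\bar{Z}\|_{\infty}e^{-Lkh}h$.

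The one delicate point is the second boundary identity $\alpha(0,\eta)\equiv 0$: one must be sure that the reaction map encoded by \eqref{eq:4.27-1}--\eqref{eq:4.27-5} is a pointwise function of the pre-reaction local state alone (independent of neighbouring wave data), so that equal pre-reaction states $\tilde{U}_{b}=\tilde{U}_{a}$ give equal post-reaction states $U_{b}=U_{a}$. This is essentially built into the fractional-step algorithm of \S4.3 and was used already in proving Lemma~\ref{lem:4.5}, and it is exactly the mechanism that lifts the generic $O(\eta)$ off-diagonal bound to the sharper $O(|\tilde{\alpha}_{i}|\eta)$ bound appearing in \eqref{eq:7.2}.
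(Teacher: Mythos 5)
Your proposal is correct and follows essentially the same route as the paper: invertibility of the eigenvector Jacobian plus the implicit function theorem for existence, then the two boundary slices $\alpha_j(\tilde{\alpha}_i,0)=\delta_{ij}\tilde{\alpha}_i$ and $\alpha(0,\eta)\equiv 0$ combined with $C^2$ regularity to extract the factor $|\tilde{\alpha}_i|\eta$ (the paper writes this via the identity $f(a,b)=f(a,0)+f(0,b)-f(0,0)+O(|a||b|)$ rather than your integral remainder, which is the same mixed-second-derivative estimate). Your explicit flagging of why $\alpha(0,\eta)\equiv 0$ requires the reaction map to be a pointwise function of the local state is a point the paper uses implicitly without comment.
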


\begin{proof} Notice that
\begin{eqnarray*}
&&\left.\det\Big( \frac{\partial\Phi\big(\alpha_5,\alpha_4,\alpha_3,\alpha_2,\alpha_1;U_b\big)}
{\partial(\alpha_5,\alpha_4,\alpha_3,\alpha_2,\alpha_1)}\Big)\right|_{\alpha_1=\alpha_2=\alpha_3=\alpha_4=\alpha_5=h=0}\\
&&=\det\big(\rr_5,\rr_4, \rr_3,\rr_2,\rr_1\big)(\tilde{U}_b)
\neq 0.
\end{eqnarray*}
Then, by the implicit function theorem, there exists $(\alpha_1,\alpha_2,\alpha_3, \alpha_4, \alpha_5)$
as a $C^{2}$--function of $(\tilde{\alpha}_{i}, e^{-Lkh}h)$ satisfying \eqref{eq:7.1}.

\par We now estimate $\alpha_{j}, 1\leq j\leq5$. For $j=i$, we have
 \begin{eqnarray*}
\alpha_{i}=\alpha_{i}(\tilde{\alpha},0;\tilde{U}_b)+\alpha_{i}(0,\|\bar{Z}\|_{\infty}e^{-Lkh}h;\tilde{U}_b)-\alpha_{i}(0,0;\tilde{U}_b)
+O(1)|\tilde{\alpha}| \|\bar{Z}\|_{\infty}e^{-Lkh}h.
\end{eqnarray*}
By \eqref{eq:7.1}, we have
\begin{eqnarray*}
\alpha_{i}(\tilde{\alpha},0;\tilde{U}_b)=\tilde{\alpha},\qquad
\alpha_{i}(0,\|\bar{Z}\|_{\infty}e^{-Lkh}h;\tilde{U}_b)=\alpha_{i}(0,0;\tilde{U}_b)=0.
\end{eqnarray*}
Notice that
\begin{eqnarray*}
\begin{split}
\alpha_{j}(\tilde{\alpha},0;\tilde{U}_b)=\alpha_{j}(0,\|\bar{Z}\|_{\infty}e^{-Lkh}h;\tilde{U}_b)=\alpha_{j}(0,0;\tilde{U}_b)=0.
\end{split}
\end{eqnarray*}
Then, in a similar way, we obtain the estimates for $\alpha_{j}, 1\leq j\neq i\leq5$.
This completes the proof.
\end{proof}

\par $\mathbf{Case}\ 2.$ {\it The change of $5$--strong rarefaction wave-fronts  after reaction} (see Fig. 7.2).
Let $\tilde{s}$ be the $5$-strong incoming rarefaction fronts with order $k=1$
before reaction, which connects $\tilde{U}_b$ to $\tilde{U}_a$ with
\begin{eqnarray*}
\tilde{U}_a=U_{\rm Ra}(\tilde{s};\tilde{U}_b).
\end{eqnarray*}
Let $s$ be the $5$-strong rarefaction wave-front after reaction
and related by
\begin{eqnarray*}
U_a=U_{\rm Ra}(s;\Phi(0,\alpha_{4},\alpha_{3},\alpha_{2},\alpha_{1}; U_b)).
\end{eqnarray*}
Then, by the construction of the approximate solutions and Lemma \ref{lem:4.5}, we have
\begin{eqnarray} \label{eq:7.3}
U_{\rm Ra}(s;\Phi(0,\alpha_{2,3, 4},\alpha_{1}; \tilde{U}_{l}+O(1)\|\bar{Z}\|_{\infty}e^{-Lkh}h))=U_{\rm Ra}(\tilde{s};\tilde{U}_b)+O(1)\|\bar{Z}\|_{\infty}e^{-Lkh}h.
\end{eqnarray}
\begin{center}\label{fig9}
\begin{tikzpicture}[scale=1.4]
\draw [ultra thick](-1,1.7)--(-1,-1.5);
\draw [ thick](-2.8,-0.8)--(-1, 0);
\draw [ thick](-1,0)--(0.5,1.5);
\draw [ thin](-1,0)--(1,0.5);
\draw [ thin](-1,0)--(1,-0.5);
\node at (-1.8, -0.8) {$\tilde{U}_b$};
\node at (-2.1, 0) {$\tilde{U}_a$};
\node at (-0.2, 1.3) {$U_a$};
\node at (0.2, -0.8) {$U_b$};
\node at (-3, -0.8) {$\tilde{s}$};
\node at (0.7, 1.7) {$s$};
\node at (1.5, 0.5) {$\alpha_{2(3,4)}$};
\node at (1.3, -0.5) {$\alpha_{1}$};
\node at (-1.0, -1.7){$x=kh$};
\node [below] at (-1.0, -2.0)
{Fig. 7.2};
\end{tikzpicture}
\end{center}

Similar to Lemma \ref{lem:7.1}, we obtain

\begin{lemma}\label{lem:7.2}
Let $\{\tilde{U}_b, \tilde{U}_a\}$ and $\{U_b, U_a\}$
be the constant states before and after reaction. Then
\begin{eqnarray} \label{eq:7.4}
\begin{split}
\alpha_{i}&=
O(1)|\tilde{s}|\|\bar{Z}\|_{\infty}e^{-Lkh}h,\,\, i\neq 5,\qquad
s=\tilde{s}+O(1)|\tilde{s}|\|\bar{Z}\|_{\infty}e^{-Lkh}h.
\end{split}
\end{eqnarray}
\end{lemma}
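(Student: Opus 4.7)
The plan is to treat the equation (7.3) as an implicit function problem in the unknowns $(s,\alpha_{1},\alpha_{2},\alpha_{3},\alpha_{4})$ with the two parameters $(\tilde{s},h)$ and to exploit the fact that each individual strong rarefaction front has magnitude $O(1/\nu)$ by Proposition \ref{prop:6.1}(ii), so $\tilde{s}$ may itself be taken as a small quantity. Reading Lemma \ref{lem:4.5}, the reaction step acts pointwise as a state-dependent map, so I would rewrite \eqref{eq:7.3} as
\begin{equation*}
\mathcal{F}(s,\alpha_{1},\alpha_{2},\alpha_{3},\alpha_{4};\tilde{s},h)
:=U_{\rm Ra}\bigl(s;\Phi(0,\alpha_{4},\alpha_{3},\alpha_{2},\alpha_{1};\tilde{U}_{b}+R(\tilde{U}_{b})h)\bigr)-U_{\rm Ra}(\tilde{s};\tilde{U}_{b})-R(\tilde{U}_{a})h=0,
\end{equation*}
where $R(U)$ denotes the $C^{1}$ state-dependent vector field induced by \eqref{eq:4.27-1}--\eqref{eq:4.27-5} with $\|R\|=O(\|\bar{Z}\|_{\infty}e^{-Lkh})$. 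Note that $\mathcal{F}$ is $C^{2}$ and vanishes at $(0,0,0,0,0;0,0)$.

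Next I would verify non-degeneracy of the Jacobian. At the base point $(\tilde{s},h)=(0,0)$ with $(s,\alpha)=(0,0)$, differentiation gives
\begin{equation*}
\frac{\partial\mathcal{F}}{\partial(s,\alpha_{1},\alpha_{2},\alpha_{3},\alpha_{4})}\Big|_{0}=\bigl[\rr_{5}(\tilde{U}_{b}),\rr_{1}(\tilde{U}_{b}),\rr_{2}(\tilde{U}_{b}),\rr_{3}(\tilde{U}_{b}),\rr_{4}(\tilde{U}_{b})\bigr],
\end{equation*}
which is invertible by hyperbolicity of \eqref{eq:2.2} (linear independence of the eigenvectors in \S 2.1). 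The implicit function theorem then yields a unique $C^{2}$ solution $(s,\alpha_{1},\alpha_{2},\alpha_{3},\alpha_{4})=(s,\alpha_{1},\alpha_{2},\alpha_{3},\alpha_{4})(\tilde{s},h;\tilde{U}_{b})$ for $(\tilde{s},h)$ in a neighborhood of $(0,0)$.

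The crucial observation, which is what produces the extra factor $|\tilde{s}|$ absent in Lemma \ref{lem:7.1}, is that the solution vanishes identically on both coordinate axes:
\begin{itemize}
\item When $h=0$: the equation reduces to $U_{\rm Ra}(s;\Phi(0,\alpha_{4},\alpha_{3},\alpha_{2},\alpha_{1};\tilde{U}_{b}))=U_{\rm Ra}(\tilde{s};\tilde{U}_{b})$, whose unique solution by Lemma \ref{lem:2.2} and Remark \ref{rem:2.1} is $s=\tilde{s},\ \alpha_{i}=0$.
\item When $\tilde{s}=0$: one has $\tilde{U}_{a}=\tilde{U}_{b}$, hence $R(\tilde{U}_{a})=R(\tilde{U}_{b})$, and substituting $s=0,\alpha_{i}=0$ gives $\tilde{U}_{b}+R(\tilde{U}_{b})h-\tilde{U}_{b}-R(\tilde{U}_{b})h=0$, so by uniqueness $s\equiv0,\ \alpha_{i}\equiv0$ for all $h$.
\end{itemize}

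Writing $\beta_{i}(\tilde{s},h):=\alpha_{i}(\tilde{s},h)$ and $\sigma(\tilde{s},h):=s(\tilde{s},h)-\tilde{s}$, each is a $C^{2}$ function of $(\tilde{s},h)$ that vanishes when either variable is zero. The standard two-variable Taylor identity $f(\tilde{s},h)=\tilde{s}h\int_{0}^{1}\!\!\int_{0}^{1}\partial_{\tilde{s}}\partial_{h}f(\mu\tilde{s},\lambda h)\,d\mu\,d\lambda$ then gives $\beta_{i},\sigma=O(1)|\tilde{s}|\,h\cdot\|R\|=O(1)|\tilde{s}|\,\|\bar{Z}\|_{\infty}e^{-Lkh}h$, which is precisely \eqref{eq:7.4}. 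The one non-routine checkpoint I expect, and hence the main obstacle, is justifying that the perturbation on the right of \eqref{eq:7.3} is indeed the value of the state-dependent vector field $R$ at $\tilde{U}_{a}$ rather than at some unrelated point; this follows from tracing Lemma \ref{lem:4.5} applied at the $y$-coordinate just above the rarefaction front, but the bookkeeping has to be made explicit so that the cancellation at $\tilde{s}=0$ is genuinely identical and not merely approximate.
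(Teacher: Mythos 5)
Your proposal is correct and follows essentially the same route as the paper: the paper proves Lemma \ref{lem:7.2} by the same argument as Lemma \ref{lem:7.1}, namely the implicit function theorem with the Jacobian given by the matrix of eigenvectors $\big(\rr_5,\ldots,\rr_1\big)(\tilde{U}_b)$, followed by the observation that the solution vanishes when either $\tilde{s}=0$ or $h=0$, so that only the mixed term $O(1)|\tilde{s}|\,\|\bar{Z}\|_{\infty}e^{-Lkh}h$ survives. Your explicit verification that the reaction step is a genuine state-dependent map (so the cancellation at $\tilde{s}=0$ is exact) and your appeal to Proposition \ref{prop:6.1}(ii) to keep $\tilde{s}=O(1/\nu)$ small are exactly the implicit ingredients the paper relies on.
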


\par $\mathbf{Case}\ 3.$ {\it The change of non-physical wave-fronts $\epsilon$ after reaction} (see Fig. 7.3).
Let $\tilde{\epsilon}$  be the incoming non-physical wave-front
with $\{\tilde{U}_b, \tilde{U}_a\}$ as its below and above states, respectively.
Let $\epsilon$ be the outgoing non-physical wave-fronts after reaction
and denote by  $\{U_b, U_a\}$ as its below and above states, respectively.
\vspace{3pt}
\begin{center}\label{fig10}
\begin{tikzpicture}[scale=1.3]
\draw [ultra thick] (-1,1.6)--(-1,-1.4);
\draw [dashed][thick](-2.5,-0.8)--(-1, 0);
\draw [dashed] [thick](-1,0)--(0.3,1.3);
\node at (-1.8, -0.8) {$\tilde{U}_b$};
\node at (-2.1, 0) {$\tilde{U}_a$};
\node at (-0.2, 1.3) {$U_a$};
\node at (0, 0.4) {$U_b$};
\node at (-2.7, -0.8) {$\tilde{\epsilon}$};
\node at (0.4, 1.4) {$\epsilon$};
\node at (-1.0, -1.7){$x=kh$};
\node [below] at (-1.0, -2.0)
{Fig. 7.3};
\end{tikzpicture}
\end{center}
Then we have the following lemma whose proof is similar to Lemma \ref{lem:7.1}.
\begin{lemma}\label{lem:7.3}
For $\tilde{\epsilon}$ and  $\epsilon$ defined above, we have
\begin{eqnarray} \label{eq:7.5}
\epsilon=\tilde{\epsilon}+O(1)|\tilde{\epsilon}|\|\bar{Z}\|_{\infty}e^{-Lkh}h.
\end{eqnarray}
\end{lemma}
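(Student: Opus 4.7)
The plan is to mirror the structure used for Lemma 7.1, treating the non-physical wave as a pair of states that are carried across the reaction line independently. By construction rule $\mathrm{(v)}_{2}$ in \S 4.3, a non-physical wave-front crosses $x=kh$ unchanged, so the outgoing states are obtained by applying the reaction map $\mathcal{R}_h$ determined by \eqref{eq:4.27-1}--\eqref{eq:4.27-5} separately to each side: $U_b=\mathcal{R}_h(\tilde U_b)$ and $U_a=\mathcal{R}_h(\tilde U_a)$. The strengths before and after are then $\tilde\epsilon=|\tilde U_a-\tilde U_b|$ and $\epsilon=|U_a-U_b|=|\mathcal{R}_h(\tilde U_a)-\mathcal{R}_h(\tilde U_b)|$.

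First I would record the quantitative regularity of $\mathcal{R}_h$ extracted from Lemma \ref{lem:4.5}: for any admissible $\tilde U$,
\begin{equation*}
\mathcal{R}_h(\tilde U)=\tilde U+\|\bar Z\|_{\infty}e^{-Lkh}h\,\mathcal{H}(\tilde U,h),
\end{equation*}
with $\mathcal{H}$ a $C^{2}$ function that is bounded uniformly in $(\tilde U,h)$; this follows from the same Taylor-expansion argument used to derive the componentwise bounds in Lemma \ref{lem:4.5}. In particular, $\mathcal{R}_h$ is a $C^{2}$ diffeomorphism close to the identity in a neighbourhood of $\tilde U_b$ for small $h$.

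Next, view $\epsilon$ as a smooth function $\epsilon=\epsilon(v,\eta;\tilde U_b)$, where $v=\tilde U_a-\tilde U_b$ encodes the incoming jump (so $|v|=\tilde\epsilon$) and $\eta=\|\bar Z\|_{\infty}e^{-Lkh}h$ is the reaction parameter. Two boundary values are immediate: when $\eta=0$ there is no reaction and $\mathcal{R}_h=\mathrm{Id}$, hence $\epsilon(v,0;\tilde U_b)=|v|=\tilde\epsilon$; when $v=0$ we have $\tilde U_a=\tilde U_b$, so $U_a=\mathcal{R}_h(\tilde U_b)=U_b$ and $\epsilon(0,\eta;\tilde U_b)=0$. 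Applying the bilinear Taylor identity $f(v,\eta)=f(v,0)+f(0,\eta)-f(0,0)+O(|v|\eta)$, exactly as in the proof of Lemma \ref{lem:7.1}, yields
\begin{equation*}
\epsilon=\tilde\epsilon+O(1)|\tilde\epsilon|\,\|\bar Z\|_{\infty}e^{-Lkh}h,
\end{equation*}
which is \eqref{eq:7.5}.

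The main technical point, and where the argument differs slightly from Lemmas \ref{lem:7.1}--\ref{lem:7.2}, is justifying the $C^{2}$ dependence of $\epsilon$ on $(v,\eta)$: here $v$ is a free perturbation vector rather than a scalar wave parameter, so one needs to observe that $\mathcal{R}_h(\tilde U_b+v)-\mathcal{R}_h(\tilde U_b)$ is $C^{2}$ in $v$ jointly with $\eta$. This follows because the right-hand sides of \eqref{eq:4.27-1}--\eqref{eq:4.27-5} are smooth in $U^-$, the Jacobian with respect to $U^+$ at $h=0$ is invertible (it reduces to $\nabla_{U}W(U)$), and the implicit function theorem therefore produces a $C^{2}$ parametrization on which the bilinear Taylor estimate is valid. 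No new interaction analysis is required.
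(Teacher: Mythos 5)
Your overall strategy---apply the pointwise reaction map $\mathcal{R}_h$ determined by \eqref{eq:4.27-1}--\eqref{eq:4.27-5} to each side of the non-physical front and expand in the jump and the reaction parameter---is indeed the intended one (the paper's own treatment is only the remark that the proof is ``similar to Lemma~\ref{lem:7.1}''). However, there is a genuine gap in the step where you write $\mathcal{R}_h(\tilde U)=\tilde U+\|\bar Z\|_{\infty}e^{-Lkh}h\,\mathcal{H}(\tilde U,h)$ with $\mathcal{H}$ uniformly bounded in $C^{2}$. From \eqref{eq:4.27-1}--\eqref{eq:4.27-5} one has $\mathcal{R}_h(\tilde U)-\tilde U=h\,\tilde Z\,\phi(\tilde T)\,\mathcal{H}_0(\tilde U,h)$ with $\mathcal{H}_0$ smooth and bounded; dividing by $\|\bar Z\|_{\infty}e^{-Lkh}h$ leaves the factor $\tilde Z\phi(\tilde T)/(\|\bar Z\|_{\infty}e^{-Lkh})$, which is bounded by \eqref{eq:4.29} but whose derivative in the $Z$--direction is of order $(\|\bar Z\|_{\infty}e^{-Lkh})^{-1}$, hence not uniformly bounded in $k$. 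Consequently the mixed term in your bilinear Taylor identity is not $O(|v|\eta)$ for a general jump $v=\tilde U_a-\tilde U_b$: if $v$ had a nonzero fifth component $\Delta Z$, then $\mathcal{R}_h(\tilde U_a)-\mathcal{R}_h(\tilde U_b)-v$ contains a term of size $O(h|\Delta Z|)$, which is only $O(h|\tilde\epsilon|)$ and lacks the decay factor $\|\bar Z\|_{\infty}e^{-Lkh}$ that is indispensable for the summation over $k$ in Lemma~\ref{lem:7.5} and Proposition~\ref{prop:7.1}.

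The missing ingredient is Lemma~\ref{lem:4.4}: $Z$ is unchanged across non-physical fronts, so the jump $v$ has vanishing fifth component. With $\Delta Z=0$ the segment $\tilde U_b+tv$ has constant $Z$--coordinate $\tilde Z_b$, and
$\mathcal{R}_h(\tilde U_a)-\mathcal{R}_h(\tilde U_b)-v=\int_0^1\bigl(D\mathcal{R}_h(\tilde U_b+tv)-I\bigr)v\,dt$
picks up only derivatives of $\mathcal{R}_h-\mathrm{Id}$ in directions orthogonal to $Z$, each of size $O(\tilde Z_b h)=O(\|\bar Z\|_{\infty}e^{-Lkh}h)$ by \eqref{eq:4.29}; this yields $|(U_a-U_b)-v|=O(1)|\tilde\epsilon|\,\|\bar Z\|_{\infty}e^{-Lkh}h$. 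Then \eqref{eq:7.5} follows from the elementary inequality $\bigl||w'|-|w|\bigr|\le|w'-w|$, which also sidesteps the secondary problem that the Euclidean norm defining $\epsilon$ is not $C^{2}$ at $v=0$, so the bilinear Taylor identity cannot be applied to $\epsilon(v,\eta)$ itself. Please add the appeal to Lemma~\ref{lem:4.4} explicitly and replace the $C^{2}$ claim on $\mathcal{H}$ by the directional estimate above.
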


\par$\mathbf{Case}\ 4.$ {\it New waves generated by the corner points after reaction} (see Fig. 7.4).
We consider the flow past a corner point $A_{k}=(kh,g_{h}(kh))$ after reaction for some $k\in \mathbf{N}_{+}$.
Denote by $\tilde{U}_a$ and $U_a$ the states before and after reaction.
Let $\varepsilon$ be $5$-waves generated from $A_{k}$ after reaction.
\vspace{5pt}
\begin{center}\label{fig11}
\begin{tikzpicture}[scale=1.8]
\draw [ultra thick] (-2.4,0)--(-0.6,0)-- (1.2,-0.4);
\draw [ultra  thick](-0.6,-0.8) --(-0.6,1.4);
\draw [thin](-0.6,0) --(0.6,0.8);
\draw [thin](-0.6,0) --(0.8,0.6);
\draw [thick][->](-1.8,0.6) --(-0.8,0.6);
\draw [thick][->](-0.4,0.6) --(0.1,0.6);
\draw [thick][->](0.4,0.2) --(1.0,-0.05);
\node at (-1.2, 0.9) {$\tilde{U}_a$};
\node at (-0.2, 0.8) {$U_a$};
\node at (-0.6, -1){$x=kh$};
\node [below] at (-0.8, -1.2)
{Fig. 7.4};
\end{tikzpicture}
\end{center}
Then we have
\begin{eqnarray} \label{eq:7.6}
U_a=\tilde{U}_a+O(1)\|\bar{Z}\|_{\infty}e^{-Lkh}h,\quad
\Phi_{5}(\varepsilon;U_{a})\cdot (\mathbf{n}_{k}, 0, 0,0)=0.
\end{eqnarray}
Using Lemmas \ref{lem:4.2}--\ref{lem:4.5} and by direct computation, we have

\begin{lemma} \label{lem:7.4}
Equation \eqref{eq:7.6} admits a unique solution $\varepsilon$ such that
\begin{equation} \label{eq:7.7}
\varepsilon=O(1)\omega_{k}+O(1)\|\bar{Z}\|_{\infty}e^{-Lkh}h.
\end{equation}
\end{lemma}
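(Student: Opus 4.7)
The plan is to treat Lemma \ref{lem:7.4} as a two-parameter perturbation of the argument in Lemma \ref{lem:4.2}, the two small parameters being the change $\omega_k$ of the boundary angle at the corner $A_k$ and the reaction-induced shift $U_a - \tilde{U}_a = O(1)\|\bar{Z}\|_{\infty} e^{-Lkh} h$ of the incoming state produced by the reacting step.

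First I would set up the scalar equation
\[
F(\varepsilon; U_a, \omega_k) := \Phi_5(\varepsilon; U_a) \cdot (\mathbf{n}_k, 0, 0, 0),
\]
and observe that at the reference point $(\varepsilon, U_a, \omega_k) = (0, \tilde{U}_a, 0)$ one has $\mathbf{n}_k = \mathbf{n}_{k-1}$ together with $\tilde{U}_a \cdot (\mathbf{n}_{k-1}, 0, 0, 0) = 0$, inherited from the inductive construction at the previous step. Hence $F$ vanishes there, and the transversality computation already carried out in the proof of Lemma \ref{lem:4.2}(i) yields
\[
\frac{\partial F}{\partial \varepsilon}\bigg|_{(\varepsilon, U_a, \omega_k) = (0, \tilde{U}_a, 0)} = -\frac{\kappa_5(\tilde{U}_a)\cos\theta_{\rm ma}(\tilde{U}_a)}{\cos(\theta_{k-1} + \theta_{\rm ma}(\tilde{U}_a))} \neq 0,
\]
so the implicit function theorem produces a unique $C^2$ solution $\varepsilon = \varepsilon(U_a, \omega_k)$ in a neighbourhood of the reference point, with $\varepsilon$ vanishing when both $\omega_k = 0$ and $U_a = \tilde{U}_a$.

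A first-order Taylor expansion of this solution then gives
\[
\varepsilon = \frac{\partial \varepsilon}{\partial \omega_k}\bigg|_{*} \omega_k + \nabla_{U_a}\varepsilon\bigg|_{*} \cdot (U_a - \tilde{U}_a) + O\big(\omega_k^2 + |U_a - \tilde{U}_a|^2\big),
\]
where $*$ denotes evaluation at $(\omega_k, U_a) = (0, \tilde{U}_a)$. The first coefficient is exactly the constant $\tilde{K}_{\rm b}$ identified in Lemma \ref{lem:4.2}(i), bounded by a quantity depending only on the system, and the second coefficient is a smooth, uniformly bounded vector on $D_{\delta_0}(U_{\infty})$, obtained by differentiating the implicit relation with respect to $U_a$. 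Inserting the bound $U_a - \tilde{U}_a = O(1)\|\bar{Z}\|_{\infty} e^{-Lkh} h$ from Lemma \ref{lem:4.5} (equivalently, the first equation of \eqref{eq:7.6}) then produces the desired estimate $\varepsilon = O(1)\omega_k + O(1)\|\bar{Z}\|_{\infty} e^{-Lkh} h$.

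The only step that deserves some care is verifying that the transversal derivative $\partial F/\partial \varepsilon$ is bounded away from zero uniformly on $D_{\delta_0}(U_{\infty})$, so that the constants in the two $O(1)$ terms are independent of $k$ and $h$; this uniformity is however a direct byproduct of the proof of Lemma \ref{lem:4.2}(i). Since $\|\bar{Z}\|_{\infty} e^{-Lkh} h$ is small by Lemma \ref{lem:4.5} and $|\omega_k|$ is small by hypothesis $(\mathbf{H_1})$, the linearisation is valid throughout the induction and no additional analytic subtlety appears.
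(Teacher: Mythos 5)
Your reduction of Lemma 7.4 to a perturbed version of Lemma 4.2 is the right idea, and for corners with $|\omega_k|\ll 1$ your argument is exactly what the paper's one-line justification (``using Lemmas 4.2--4.5 and by direct computation'') amounts to: apply the implicit function theorem to $F(\varepsilon;U_a,\omega_k)=\Phi_{5}(\varepsilon;U_a)\cdot(\mathbf{n}_k,0,0,0)$ at the reference point, Taylor-expand to first order in $(\omega_k,\,U_a-\tilde U_a)$, and insert the reaction bound $U_a-\tilde U_a=O(1)\|\bar Z\|_{\infty}e^{-Lkh}h$ from Lemma 4.5.

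The gap is your closing claim that ``$|\omega_k|$ is small by hypothesis $(\mathbf{H_1})$''. That hypothesis only makes the \emph{perturbation} $g'-g'_*$ small in $BV$; the convex background wall $g_*$ may itself have corners at which $g'_*$ jumps down by a non-small amount (anything up to $|\theta_{\rm crit}|$), and these survive in the discretization as mesh points $A_k$ with $\omega_k<0$ of order one. Those are precisely the corners generating the strong rarefaction fronts that the paper is built around, and they sit at $x=kh$, i.e.\ exactly where the reacting step is performed, so Case 4 must cover them; for such $\omega_k$ a local implicit function theorem at $\omega_k=0$ does not apply. You need the analogue of Lemma 4.2(ii): parametrize the whole curve $R^{+}_{5}(U_a)$ by $U_{\rm Ra}(-\varepsilon,U_a)$ as in Remark 2.1, solve $\theta(U_{\rm Ra}(-\varepsilon,U_a))-\theta(U_a)=\omega_k$ using the fact (Lemma 6.1, or the derivative computation in the proof of Lemma 4.2(ii)) that $\partial_{\varepsilon}\theta(U_{\rm Ra}(-\varepsilon,U_a))$ is bounded away from zero uniformly along the curve inside $D_{\delta_0}(U_{\infty})$; this yields existence, uniqueness, and $\varepsilon=K'_{\rm b}\omega_k$ with $K'_{\rm b}$ bounded, and the reaction shift then enters through the smooth dependence of this solution on the base state $U_a$, contributing the extra $O(1)\|\bar Z\|_{\infty}e^{-Lkh}h$. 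Combining that branch with your weak-corner branch gives \eqref{eq:7.7} in full. (A cosmetic point: your transversal derivative carries the minus sign copied from the paper's $\Psi$ in Lemma 4.2, whereas your $F$ is built from $\Phi_5$, whose $\varepsilon$-derivative is $+\mathbf{r}_5(\tilde U_a)$; this does not affect the non-vanishing, which is all you use.)
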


\subsection{Global estimates on the reacting step}
In this subsection, we study the uniform bound
on the total variation of the approximate solutions for the nonhomogeneous system \eqref{eq:1.1}.
To do this, we first establish the relation of two functionals before and after reaction.
To begin with, denote
$$
\omega^{+}_{k}={\rm max}\{\omega_{k}, 0\}  \,\,\qquad \mbox{for $\omega_{k}=\omega_{k}(A_{k})$}.
$$
Then we have
\begin{lemma}\label{lem:7.5}
For sufficiently small  $h$  and sufficiently large $ K_{\rm c}$ and $K$,
there exists a constant $\hat{K}>0$ depending only on
the system such that
\begin{eqnarray} \label{eq:7.8}
\begin{split}
&F_0(U^{\nu, h}; kh+)-F_0(U^{\nu,h};kh-)\\[2pt]
&\leq O(1)\|\bar{Z}\|_{\infty}e^{-Lkh}h
\Big(L_{\rm w}(U^{\nu,h};kh-)+1\Big)\Big(L_{\rm w}(U^{\nu,h};kh-)+3\Big)-\hat{K}|\omega^{+}_{k}|.
\end{split}
\end{eqnarray}
\end{lemma}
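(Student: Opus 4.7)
The plan is to sum the local single-front estimates of Lemmas 7.1--7.4 over all fronts present on the line $x=kh$ and to track the change in each piece of $F_0 = L_{\rm w} + KQ$. The change across $kh$ decomposes into a reaction-driven part, proportional to $\|\bar{Z}\|_{\infty}e^{-Lkh}h$, and a corner-driven part, associated with the new front generated at $A_k$ when $\omega_k\neq 0$; the lemma's right-hand side precisely reflects this decomposition.

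First I would bound $\Delta L_{\rm w}$. Lemmas 7.1 and 7.3 give that each weak or non-physical incoming front of strength $|\tilde{\alpha}|$ contributes at most $O(1)|\tilde{\alpha}|\|\bar{Z}\|_{\infty}e^{-Lkh}h$ of new mass to $L_{\rm w}(kh+)$ (accounting for both the self-perturbation $|\alpha_i-\tilde{\alpha}_i|$ and the four cross-family fronts $\alpha_j$, $j\neq i$, that the source produces). Summing over all such fronts yields $O(1)\|\bar{Z}\|_{\infty}e^{-Lkh}h\cdot L_{\rm w}(kh-)$. Lemma 7.2 gives an analogous per-front bound for the strong rarefaction fronts, which, summed using the a priori bound $\sum_{s_0}|s_0(kh-)|=O(1)$ from Proposition 6.2(iii), contributes another $O(1)\|\bar{Z}\|_{\infty}e^{-Lkh}h$. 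Finally Lemma 7.4 adds $O(1)|\omega_k^{+}| + O(1)\|\bar{Z}\|_{\infty}e^{-Lkh}h$ from the corner (for $\omega_k\leq 0$ the newly generated front is a strong rarefaction and so does not enter $L_{\rm w}$). Putting these together gives
\begin{equation*}
L_{\rm w}(kh+) - L_{\rm w}(kh-) \leq O(1)\|\bar{Z}\|_{\infty}e^{-Lkh}h\bigl(L_{\rm w}(kh-)+1\bigr) + O(1)|\omega_k^{+}|.
\end{equation*}

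Next I would bound $\Delta Q$. For the quadratic interaction potential $Q_0=\sum|\alpha_i||\beta_j|$, the change is controlled by $L_{\rm w}(kh-)\cdot\Delta L_{\rm w}$ plus quadratic self-interaction, producing the $(L_{\rm w}+1)(L_{\rm w}+3)$ pattern. The weighted linear potentials $Q_{\rm Bi}$, $Q_{\rm B5}$, $Q_{\rm BNP}$ have weights $W(\alpha_i,x,-)$ and $W(\epsilon,x,+)$ that depend only on the total strong-rarefaction strengths and on the angle set $\Omega_{\rm Ra}$; by Lemma 7.2 the former change by only $O(1)\|\bar{Z}\|_{\infty}e^{-Lkh}h$ per strong front, while the latter is unchanged across reaction. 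Under the smallness hypotheses $A_1$--$A_2$, the weights remain uniformly bounded, so these weighted pieces change by at most $O(1)\|\bar{Z}\|_{\infty}e^{-Lkh}h(L_{\rm w}(kh-)+1) + O(1)|\omega_k^{+}|$. The decisive negative term comes from $Q_{\rm c}$: as soon as $x$ crosses $kh$, the corner $A_k$ leaves the index set $\{A_m:mh>x\}$, so $Q_{\rm c}(kh+)-Q_{\rm c}(kh-)=-|\omega_k^{+}|$.

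Combining these gives
\begin{equation*}
F_0(kh+)-F_0(kh-) \leq C_1\|\bar{Z}\|_{\infty}e^{-Lkh}h(L_{\rm w}(kh-)+1)(L_{\rm w}(kh-)+3) + (C_2 - KK_{\rm c})|\omega_k^{+}|
\end{equation*}
for suitable $O(1)$ constants $C_1,C_2$. Choosing $K$ and $K_{\rm c}$ large enough (consistently with the choices in Proposition 6.1) so that $KK_{\rm c}\geq 2C_2$ extracts the asserted $-\hat{K}|\omega_k^{+}|$ term with $\hat{K}=\tfrac12 KK_{\rm c}$. The main obstacle is the careful bookkeeping of the cross-family small fronts generated by the source together with the verification that the strong-rarefaction-dependent weights in $Q_{\rm Bi}$ and $Q_{\rm BNP}$ are not destabilized by the reaction perturbation of the strong fronts; this crucially uses the exponential decay $e^{-Lkh}$ from Lemma 4.5 to keep the cumulative reaction effect bounded in $k$, and it requires that $K_{\rm c}$ be compatible with the choices made in the homogeneous-step estimate of Proposition 6.1 so that both sets of inequalities hold simultaneously.
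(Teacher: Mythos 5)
Your proposal is correct and follows essentially the same route as the paper: decompose $F_0=L_{\rm w}+KQ$, sum the per-front reaction estimates of Lemmas 7.1--7.4 to control $\Delta L_{\rm w}$ and the quadratic/weighted potentials, extract the decisive $-|\omega_k^{+}|$ from $Q_{\rm c}$, and absorb the positive $O(1)|\omega_k^{+}|$ contributions by taking $K$ and $K_{\rm c}$ large. The bookkeeping details (the $(L_{\rm w}+1)(L_{\rm w}+3)$ pattern, the multiplicative perturbation of the weights $W(\cdot,x,\pm)$ by $e^{O(1)\|\bar{Z}\|_\infty e^{-Lkh}h}$) match the paper's computation.
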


\begin{proof}
The proof is based on Lemmas \ref{lem:7.1}--\ref{lem:7.4}.  First we have
\begin{eqnarray*}
&&L_{\rm w}(U^{\nu, h}; kh+)-L_{\rm w}(U^{\nu, h};kh-)\\[1mm]
&&=\sum_{1\leq i\leq5}L^{i}_{\rm w}(U^{\nu, h};kh+)+L_{\rm np}(U^{\nu, h};kh+)
-\sum_{1\leq i\leq5}L^{i}_{\rm w}(U^{\nu, h}; kh-)-L_{\rm np}(U^{\nu, h};kh-)\\
&&=O(1)\|\bar{Z}\|_{\infty}e^{-Lkh}h\sum_{1\leq i\leq5}\sum_{\alpha_{i}}|\alpha_{i}|
+O(1)\|\bar{Z}\|_{\infty}e^{-Lkh}h\sum_{\epsilon}|\epsilon|+O(1)\|\bar{Z}\|_{\infty}e^{-Lkh}h\\
&&\leq O(1)\|\bar{Z}\|_{\infty}e^{-Lkh}h\Big(L_{\rm w}(U^{\nu, h}; kh-)+1\Big)+O(1)|\omega^{+}_{k}|,
\end{eqnarray*}
and
\begin{eqnarray*}
&&Q_{0}(U^{\nu, h};kh+)-Q_{0}(U^{\nu, h};kh+)\\
&&=\sum_{(\alpha_{i},\beta_{j})\in\mathcal{A}}|\alpha_{i}||\beta_{j}|-
\sum_{(\tilde{\alpha}_{i},\tilde{\beta}_{j})\in\mathcal{A}}|\tilde{\alpha}_{i}||\tilde{\beta}_{j}|\\
&&=\sum_{i,j}\Big(|\alpha_{i}|(|\beta_{j}|-|\tilde{\beta_{j}}|)+|\tilde{\beta}_{j}|(|\alpha_{i}|-|\tilde{\alpha}_{i}|)\Big)\\
&&\leq\Big(L_{\rm w}(U^{\nu, h};kh+)-L_{\rm w}(U^{\nu, h};kh-)\Big)L_{\rm w}(U^{\nu, h};kh-)\\[5pt]
&&\leq O(1)\|\bar{Z}\|_{\infty}e^{-Lkh}h \Big(L_{\rm w}(U^{\nu, h};kh-)+1\Big)L_{\rm w}(U^{\nu, h};kh-)\\
&&\leq O(1)\|\bar{Z}\|_{\infty}e^{-Lkh}h \Big(L_{\rm w}(U^{\nu, h};kh-)+1\Big)^2+O(1)|\omega^{+}_{k}|.
\end{eqnarray*}

For $1\leq i\leq4$, we have
\begin{eqnarray*}
\begin{split}
&Q_{Bi}(U^{\nu, h};kh+)-Q_{Bi}(U^{\nu, h};kh-)\\[2pt]
&=\sum_{\alpha_{i}}|\alpha_{i}|W(\alpha_{i},kh+,-)-
\sum_{\tilde{\alpha}_{i}}|\tilde{\alpha}_{i}|W(\tilde{\alpha}_{i},kh-,-)\\
&=\sum_{\tilde{\alpha}_{i}}|\tilde{\alpha}_{i}|W(\tilde{\alpha}_{i},kh-,-)
\Big(\big(1+O(1)\|\bar{Z}\|_{\infty}e^{-Lkh}h\big)\exp\big(O(1)\|\bar{Z}\|_{\infty}e^{-Lkh}h\big)-1\Big)\\
&\leq O(1)\|\bar{Z}\|_{\infty}e^{-Lkh}hL_{\rm w}(U^{\nu, h};kh-).
\end{split}
\end{eqnarray*}
Similarly,
\begin{eqnarray*}
&&Q_{B5}(U^{\nu, h};kh+)-Q_{B5}(U^{\nu, h};kh-)\leq O(1)\|\bar{Z}\|_{\infty}e^{-Lkh}hL_{\rm w}(U^{\nu, h};kh-)+O(1)|\omega^{+}_{k}|,\\[1mm]
&&Q_{BNP}(U^{\nu, h};kh+)-Q_{BNP}(U^{\nu, h};kh-)\\[1mm]
&&\quad=\sum_{\epsilon}|\epsilon|W(\epsilon,kh+,+)-
\sum_{\tilde{\epsilon}}|\tilde{\epsilon}|W(\tilde{\epsilon},kh-,+)\\
&&\quad=\sum_{\tilde{\epsilon}}|\tilde{\epsilon}|W(\tilde{\epsilon},kh-,+)
 \Big(\big(1+O(1)\|\bar{Z}\|_{\infty}e^{-Lkh}h\big)
  \exp\big(O(1)\|\bar{Z}\|_{\infty}e^{-Lkh}h\big)-1\Big)\\
&&\quad\leq O(1)\|\bar{Z}\|_{\infty}e^{-Lkh}hL_{\rm w}(U^{\nu, h};kh-),\\[1.5mm]
&& Q_{\rm c}(U^{\nu, h};kh+)-Q_{\rm c}(U^{\nu, h};kh-)=-|\omega^{+}_{k}|.
\end{eqnarray*}
Therefore, it follows that
\begin{eqnarray*}
&&Q(U^{\nu, h};kh+)-Q(U^{\nu, h}; kh-)\\
&&\,\,\leq O(1)\|\bar{Z}\|_{\infty}e^{-Lkh}h \Big(L_{\rm w}(U^{\nu, h};kh-)+1\Big)^{2} +O(1)\|\bar{Z}\|_{\infty}e^{-Lkh}hL_{\rm w}(U^{\nu, h};kh-)\\
&&\,\,\leq O(1)\|\bar{Z}\|_{\infty}e^{-Lkh}h \Big(L_{\rm w}(U^{\nu, h};kh-)+1\Big)\Big(L_{\rm w}(U^{\nu, h};kh-)+2\Big)
-\big(K_{\rm c}-O(1)\big)|\omega^{+}_{k}|,
\end{eqnarray*}
and
\begin{eqnarray*}
&&F_{0}(U^{\nu, h};kh+)-F_{0}(U^{\nu, h};kh-)\\
&&=L_{\rm w}(U^{\nu, h};kh+)-L_{\rm w}(U^{\nu, h};kh-)+K\Big(Q(U^{\nu, h};kh+)-Q(U^{\nu, h};kh-)\Big)\\
&&\leq O(1)\|\bar{Z}\|_{\infty}e^{-Lkh}h\Big(L_{\rm w}(U^{\nu, h};kh-)+1\Big)\\
&&\ \ \ +O(1)e^{-Lkh}h \Big(L_{\rm w}(U^{\nu, h};kh+)+1\Big)\Big(L_{\rm w}(U^{\nu, h};kh+)+2\Big)\\
&&\ \ \ -K\big(K_{\rm c}-O(1)\big)|\omega^{+}_{k}|+O(1)|\omega^{+}_{k}|\\
&&\leq O(1)\|\bar{Z}\|_{\infty}e^{-Lkh}h\Big(L_{\rm w}(U^{\nu, h};kh-)+1\Big)\Big(L_{\rm w}(U^{\nu, h};kh-)+3\Big)-\hat{K}|\omega^{+}_{k}|,
\end{eqnarray*}
by choosing $K$ and $K_{\rm c}$ sufficiently large.
 This completes the proof.
\end{proof}

To obtain the bound on the functional, we also need to estimate $F_{1}(U^{\nu, h};kh)$.

\begin{lemma}\label{lem:7.6}
For the approximate solutions $U^{\nu, h}$,
\begin{eqnarray}
&& F_{1}(U^{\nu, h};kh+)-F_{1}(U^{\nu, h};kh-)\nonumber \\[2mm]
&&\leq O(1)\|\bar{Z}\|_{\infty}e^{-Lkh}h\big(L_{\rm w}(U^{\nu, h};kh-)
+1\big)+O(1)|\omega^{+}_{k}|.  \label{eq:7.9}
\end{eqnarray}
\end{lemma}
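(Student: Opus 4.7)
My plan is to decompose $TV\{\theta(U^{\nu,h}(kh\pm,\cdot))\}$ as a sum of $|\theta|$-jumps across every wave-front crossing the line $x=kh$ (plus the newly-generated corner wave at $kh+$), and then track each individual contribution across the reaction step using the local estimates of Lemmas \ref{lem:7.1}--\ref{lem:7.4}. A preliminary observation is that only genuinely-nonlinear fronts contribute to $TV(\theta)$: from \eqref{eq:2.7a} the $2$-vortex-sheet multiplies $(u,v)$ by a common factor and hence preserves $\theta=\arctan(v/u)$, while the $3$- and $4$-contacts leave $(u,v)$ unchanged. So the only contributors are the weak $1$- and $5$-waves, the strong rarefaction fronts, the non-physical fronts (which account for a small $TV(\theta)$-share), and the corner wave.

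For each incoming weak $i$-wave $\tilde\alpha_i$, Lemma \ref{lem:7.1} gives outgoing components with $\alpha_i-\tilde\alpha_i=O(1)|\tilde\alpha_i|\|\bar Z\|_\infty e^{-Lkh}h$ and $\alpha_j=O(1)|\tilde\alpha_i|\|\bar Z\|_\infty e^{-Lkh}h$ for $j\neq i$, so the $\theta$-jump it contributes changes by at most $O(1)|\tilde\alpha_i|\|\bar Z\|_\infty e^{-Lkh}h$. The same reasoning with Lemma \ref{lem:7.2} applies to each strong rarefaction front $\tilde s$, and Lemma \ref{lem:7.3} to each non-physical front. Summing over all existing fronts and using Proposition \ref{prop:6.2}(iii) together with \eqref{eq:6.7} to bound the total of strong rarefactions and non-physical waves by a universal constant, the total perturbation of $TV(\theta)$ due to pre-existing waves is bounded by
\begin{equation*}
O(1)\|\bar Z\|_\infty e^{-Lkh}h\,\bigl(L_{\rm w}(U^{\nu,h};kh-)+1\bigr).
\end{equation*}

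Next, I handle the wave $\varepsilon$ created at the corner $A_k$ after the reaction (Lemma \ref{lem:7.4}). Although its strength is only $O(1)\omega_k+O(1)\|\bar Z\|_\infty e^{-Lkh}h$, the $\theta$-jump it carries is \emph{exactly} $|\omega_k|$: by construction, the above-corner state $U_a$ satisfies $(u,v)\cdot\mathbf{n}_{k-1}=0$, hence $\theta(U_a)=\theta_{k-1}$, while $\Phi_5(\varepsilon;U_a)$ satisfies $(u,v)\cdot\mathbf{n}_k=0$, hence $\theta(\Phi_5(\varepsilon;U_a))=\theta_k$, giving the jump $|\theta_k-\theta_{k-1}|=|\omega_k|$ independently of any reaction perturbation. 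On the other hand, $\hat\theta(kh+)-\hat\theta(kh-)=|\omega_k|$ if $\omega_k\le 0$ and $0$ otherwise. Writing
\begin{equation*}
F_1(U^{\nu,h};kh+)-F_1(U^{\nu,h};kh-)\le\bigl|\bigl(TV(\theta)-\hat\theta\bigr)(kh+)-\bigl(TV(\theta)-\hat\theta\bigr)(kh-)\bigr|,
\end{equation*}
the $|\omega_k|$-contribution of $\varepsilon$ cancels the $|\omega_k|$-jump of $\hat\theta$ precisely when $\omega_k\le 0$, and survives as $\omega_k^+$ when $\omega_k>0$. Combining with the bound from the preceding paragraph yields \eqref{eq:7.9}.

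The main subtlety is not the individual estimates, which are direct consequences of Lemmas \ref{lem:7.1}--\ref{lem:7.4}, but rather the bookkeeping around the corner $A_k$: the key observation enabling \eqref{eq:7.9} (with only $\omega_k^+$ rather than $|\omega_k|$ on the right-hand side) is the exact cancellation between the $\theta$-jump of $\varepsilon$ and the step in $\hat\theta$ in the compressive case $\omega_k\le 0$. Verifying this cancellation requires that $\theta(\Phi_5(\varepsilon;U_a))=\theta_k$ is enforced to arbitrary order in $h$, which is built into the construction \eqref{eq:7.6} and is unaffected by the reaction-induced perturbation of $U_a$.
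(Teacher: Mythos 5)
Your argument is correct and is essentially the paper's own proof: both decompose $TV\{\theta\}$ into the $\theta$-jumps carried by the individual fronts crossing $x=kh$, track each jump through the reaction step via Lemmas \ref{lem:7.1}--\ref{lem:7.4} (using that the contact families carry no $\theta$-jump, the analogue of the paper's observation $\theta(U_{m2})=\theta(U_{m1})$), sum using the uniform bounds on $\sum|\tilde{s}|$ and $\sum|\tilde{\epsilon}|$, and handle the corner by the cancellation of the $\varepsilon$-jump against the step in $\hat{\theta}$ when $\omega_k\le 0$. The only quibble is your claim that the corner wave's $\theta$-jump is \emph{exactly} $|\omega_k|$: the reaction changes $u$ but not $v$, so $\theta(U_a)=\theta_{k-1}+O(1)\|\bar{Z}\|_{\infty}e^{-Lkh}h$ rather than exactly $\theta_{k-1}$, but this error is harmlessly absorbed into the first term of \eqref{eq:7.9}.
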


\begin{proof}
We complete the proof by combining  Lemmas \ref{lem:7.1}--\ref{lem:7.4} together.
Here we only give a detail proof for $\mathbf{Case}\ 1$, since the other cases can be treated in the same way.

Without loss of generality, consider a $1$-wave-front $\tilde{\alpha}_1$
that connects $\{\tilde{U}_b, \tilde{U}_a\}$ and
intersects on line $x=kh$.
Suppose that the new Riemann problem is solved by waves $\alpha_i, 1\leq i\leq 5$,
with  $\{U_b, U_{m1}, U_{m2}, U_a\}$ as its below, middle, and above states, respectively (see Fig. 7.5).
\begin{center}\label{fig12}
\begin{tikzpicture}[scale=1.2]
\draw[ultra  thick] (-1,1.8)--(-1,-1.5);
\draw [thin](-2.8,-0.8)--(-1, 0);
\draw [thin](-1,0)--(0.5,1.5);
\draw [thin](-1,0)--(1,0.5);
\draw [thin](-1,0)--(1,-0.5);
\node at (-1.8, -0.8) {$\tilde{U}_b$};
\node at (-2.1, 0) {$\tilde{U}_a$};
\node at (0.2, -0.8) {$U_b$};
\node at (0.6, 0) {$U_{m1}$};
\node at (0.6, 0.8) {$U_{m2}$};
\node at (-0.2, 1.3) {$U_a$};
\node at (-3, -0.7) {$\tilde{\alpha}_{1}$};
\node at (0.7, 1.7) {$\alpha_{5}$};
\node at (1.5, 0.5) {$\alpha_{2(3,4)}$};
\node at (1.3, -0.5) {$\alpha_{1}$};
\node at (-1.0, -1.7){$x=kh$};
\node [below] at (-1.0, -2.0)
{Fig. 7.5};
\end{tikzpicture}
\end{center}
Denote
\begin{eqnarray*}
\theta(\Phi(\alpha; U))=f(\alpha_{5},\alpha_{2(3,4)},\alpha_{1};U).
\end{eqnarray*}
Then,  by Lemma \ref{lem:7.1}, we have
\begin{eqnarray*}
\begin{split}
&\Big |\theta(U_{m1})-\theta(U_b)\Big|-\Big |\theta(\tilde{U}_a)-\theta(\tilde{U}_b)\Big|\\
&\leq\Big |\theta(U_{m1})-\theta(U_b)-\theta(\tilde{U}_a)+\theta(\tilde{U}_b)\Big|\\
&=\Big |f(0,0,\alpha_{1};U_b)-f(0,0,0;U_b)
-f(0,0,\tilde{\alpha}_1;\tilde{U}_b)+f(0,0,0;\tilde{U}_b)\Big|\\
&\le \Big|f(0,0,\tilde{\alpha}_{1};\tilde{U}_b+O(1)\|\bar{Z}\|_{\infty}e^{-Lkh}h)
-f(0,0,0;\tilde{U}_b+O(1)\|\bar{Z}\|_{\infty}e^{-Lkh}h)\Big|\\
&\  \  \ +\Big|f(0,0,\tilde{\alpha}_1;\tilde{U}_b)-f(0,0,0;\tilde{U}_b)\Big|+O(1)\|\bar{Z}\|_{\infty}|\tilde{\alpha}_{1}|e^{-Lkh}h\\
&=O(1)\|\bar{Z}\|_{\infty}|\tilde{\alpha}_{1}|e^{-Lkh}h,
\end{split}
\end{eqnarray*}
and
\begin{eqnarray*}
\begin{split}
\theta(U_a)-\theta(U_{m2})&=f(\alpha_5,0,0;U_{m2})-f(0,0,0;U_{m2})\\[1mm]
&=f(O(1)\|\bar{Z}\|_{\infty}|\tilde{\alpha}_{1}|e^{-Lkh}h,0,0;U_{m2})-f(0,0,0;U_{m2})\\[1mm]
&=O(1)\|\bar{Z}\|_{\infty}|\tilde{\alpha}_{1}|e^{-Lkh}h.
\end{split}
\end{eqnarray*}
Since $\theta(U_{m2})=\theta(U_{m1})$, we have
\begin{eqnarray*}
&&TV.\{\theta(U^{\nu,h}(\tau+,\cdot));\,[g_{h}(\tau+),\infty)\}-TV.\{\theta(U^{\nu,h}(\tau-,\cdot));\,[g_{h}(\tau-),\infty)\}\\[3pt]
&&=\sum_{\tilde{\alpha}_{1}}\Big(\big|\theta(U_a)-\theta(U_{m2})\big|
+\big|\theta(U_{m2})-\theta(U_{m1})\big|+\big|\theta(U_{m1})-\theta(U_b)\big|
-\big|\theta(\tilde{U}_a)-\theta(\tilde{U}_b)\big|\Big)\\[1mm]
&&=O(1)\|\bar{Z}\|_{\infty}\sum_{\tilde{\alpha}_{1}}|\tilde{\alpha}_{1}|e^{-Lkh}h=O(1)\|\bar{Z}\|_{\infty}e^{-Lkh}hL_{\rm w}(U^{\nu, h};kh-),
\end{eqnarray*}
which leads to
\begin{eqnarray*}
\begin{split}
F_{1}(U^{\nu, h}; \tau+)-F_{1}(U^{\nu, h}; \tau-)=O(1)\|\bar{Z}\|_{\infty}e^{-Lkh}hL_{\rm w}(U^{\nu, h};kh-).
\end{split}
\end{eqnarray*}
Similarly, by Lemma \ref{lem:6.1} and Proposition \ref{prop:6.2},  we find that, for $\mathbf{Cases}$ $2$--$4$,
\begin{eqnarray*}
\begin{split}
F_{1}(U^{\nu, h}; \tau+)-F_{1}(U^{\nu, h}; \tau-)&=O(1)\|\bar{Z}\|_{\infty}\sum_{\tilde{s}}|\tilde{s}|e^{-Lkh}h=O(1)\|\bar{Z}\|_{\infty}e^{-Lkh}h,\\
F_{1}(U^{\nu, h}; \tau+)-F_{1}(U^{\nu, h}; \tau-)&=O(1)\|\bar{Z}\|_{\infty}\sum_{\tilde{\epsilon}}|\tilde{\epsilon}|e^{-Lkh}h\\
&=O(1)\|\bar{Z}\|_{\infty}e^{-Lkh}hL_{\rm w}(U^{\nu, h};kh-),\\[2mm]
F_{1}(U^{\nu, h}; \tau+)-F_{1}(U^{\nu, h}; \tau-)&=O(1)\omega^{+}_{k}+O(1)\|\bar{Z}\|_{\infty}e^{-Lkh}h.
\end{split}
\end{eqnarray*}
Finally, combining the above cases together, we conclude \eqref{eq:7.9}.
\end{proof}

With Lemma \ref{lem:7.5}--\ref{lem:7.6},
we are now able to estimate on the change of functional $F(U^{\nu,h};x)$ at $x=kh$.

\begin{lemma}\label{lem:7.7}
For  sufficiently small $h$, there exist
$C_4$ independent of $(k, h)$
and $C_{*}$ sufficiently large
such that, on line $x=kh$,
\begin{eqnarray} \label{eq:7.10}
F(U^{\nu, h}; kh+)&\leq F(U^{\nu,h};kh-)+C_{4}\|\bar{Z}\|_{\infty}e^{-Lkh}h\Big(F(U^{\nu,h};kh-)+4\Big)^2.
\end{eqnarray}
\end{lemma}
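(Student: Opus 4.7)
The plan is to combine the bounds from Lemmas \ref{lem:7.5} and \ref{lem:7.6} directly, using the definition $F(U;x)=F_{1}(U;x)+C_{*}F_{0}(U;x)$, and then to tune the free constants so that the reflected contribution $|\omega_{k}^{+}|$ disappears and so that the remaining quadratic in $L_{\rm w}(U^{\nu,h};kh-)$ can be majorized by a quadratic in $F(U^{\nu,h};kh-)$.

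First I would add $C_{*}$ times the estimate of Lemma \ref{lem:7.5} to the estimate of Lemma \ref{lem:7.6}. This yields an inequality of the form
\begin{eqnarray*}
&&F(U^{\nu,h};kh+)-F(U^{\nu,h};kh-)\\
&&\leq O(1)\|\bar{Z}\|_{\infty}e^{-Lkh}h\bigl(L_{\rm w}(U^{\nu,h};kh-)+1\bigr)\\
&&\quad +C_{*}\,O(1)\|\bar{Z}\|_{\infty}e^{-Lkh}h\bigl(L_{\rm w}(U^{\nu,h};kh-)+1\bigr)\bigl(L_{\rm w}(U^{\nu,h};kh-)+3\bigr)\\
&&\quad +\bigl(O(1)-C_{*}\hat{K}\bigr)|\omega_{k}^{+}|.
\end{eqnarray*}
The second step is then to fix $C_{*}$ large enough (independently of $k$, $h$, and $\nu$) so that $C_{*}\hat{K}\ge O(1)$, where the $O(1)$ on the right is the absolute constant appearing in front of $|\omega_{k}^{+}|$ in Lemma \ref{lem:7.6}. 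This choice is compatible with the constraints on $C_{*}$ already imposed in \S 6 (one just takes the maximum). With this choice the $|\omega_{k}^{+}|$ term drops out of the estimate.

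The third step is to pass from $L_{\rm w}$ to $F$. Since $F_{0}=L_{\rm w}+KQ\ge L_{\rm w}$ and $F\ge C_{*}F_{0}$, one has $L_{\rm w}(U^{\nu,h};kh-)\le F(U^{\nu,h};kh-)/C_{*}\le F(U^{\nu,h};kh-)$ provided $C_{*}\ge 1$. Consequently,
\begin{eqnarray*}
&&\bigl(L_{\rm w}(U^{\nu,h};kh-)+1\bigr)\bigl(L_{\rm w}(U^{\nu,h};kh-)+3\bigr)\\
&&\le \bigl(F(U^{\nu,h};kh-)+1\bigr)\bigl(F(U^{\nu,h};kh-)+3\bigr)\le \bigl(F(U^{\nu,h};kh-)+4\bigr)^{2},
\end{eqnarray*}
and similarly $L_{\rm w}(U^{\nu,h};kh-)+1\le F(U^{\nu,h};kh-)+4\le \bigl(F(U^{\nu,h};kh-)+4\bigr)^{2}$ (the latter because $F+4\ge 4>1$). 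Absorbing the linear term into the quadratic one and collecting all prefactors into a single constant $C_{4}$ depending only on the system yields \eqref{eq:7.10}.

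There is no real obstacle beyond bookkeeping here: the main point is that Lemmas \ref{lem:7.5} and \ref{lem:7.6} are already structured so that the dangerous $|\omega_{k}^{+}|$ term in $F_{1}$ is dominated by the negative $-\hat{K}|\omega_{k}^{+}|$ coming from $Q_{\rm c}$ in $F_{0}$ once $C_{*}$ is taken large, and the quadratic-in-$L_{\rm w}$ part is harmless after replacing $L_{\rm w}$ by $F$. The only mild care needed is to ensure that the choice of $C_{*}$ in this lemma is consistent with the choices already made in Proposition \ref{prop:6.1}, so that the non-reacting step estimates of \S 6 remain valid; this is achieved by taking $C_{*}$ to be the maximum of the two required thresholds.
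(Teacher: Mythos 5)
Your proposal is correct and follows essentially the same route as the paper: combine Lemmas \ref{lem:7.5} and \ref{lem:7.6} according to $F=F_{1}+C_{*}F_{0}$, take $C_{*}$ large so that the $-C_{*}\hat{K}|\omega_{k}^{+}|$ term from $F_{0}$ absorbs the $O(1)|\omega_{k}^{+}|$ term from $F_{1}$, and then dominate the quadratic in $L_{\rm w}$ by $\bigl(F(U^{\nu,h};kh-)+4\bigr)^{2}$. Your bookkeeping (in particular the explicit justification that $L_{\rm w}\le F$ for $C_{*}\ge 1$ and the compatibility of $C_{*}$ with the choices of \S 6) is if anything slightly more careful than the paper's.
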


\begin{proof}
By Lemmas \ref{lem:7.5}--\ref{lem:7.6}, we have
\begin{eqnarray*}
\begin{split}
&F(U^{\nu, h};kh+)-F(U^{\nu, h};kh-)\\
&=F_{0}(U^{\nu, h};kh+)-F_{0}(U^{\nu, h};kh-)+C_{*}\Big(F_{1}(U^{\nu, h};kh+)-F_{1}(U^{\nu, h};kh-)\Big)\\
&\leq O(1)(1+C_{*})\|\bar{Z}\|_{\infty}e^{-Lkh}h\Big(L_{\rm w}(U^{\nu, h};kh-)+1\Big)+\big(O(1)-C_{*}\hat{K}\big)|\omega^{+}_{k}|\\
&\ \ \ +O(1)\|\bar{Z}\|_{\infty}e^{-Lkh}h\Big(L_{\rm w}(U^{\nu, h};kh-)+1\Big)\Big(L_{\rm w}(U^{\nu, h};kh-)+3\Big).
\end{split}
\end{eqnarray*}
Then we choose $C_{*}$ and $\hat{K}$ sufficiently large so that there exists $C_4$ such that
\begin{eqnarray*}
\begin{split}
 F(U^{\nu, h};kh+)-F(U^{\nu, h};kh-)&\leq C_4\|\bar{Z}\|_{\infty}e^{-Lkh}h\Big(F(U^{\nu, h};kh-)+4\Big)^{2}.
\end{split}
\end{eqnarray*}
\end{proof}

To obtain the uniform bound on the total variation of the approximate solutions $U^{\nu, h}$,
we introduce the following functional:
\begin{eqnarray}\label{eq:7.11}
 \mathcal{F}(U^{\nu, h};\tau)=F(U^{\nu, h};\tau)+\mathcal{K}\sum_{kh>\tau}\|\bar{Z}\|_{\infty}e^{-Lkh}h,
\end{eqnarray}
where $\mathcal{K}>0$ is a unknown constant to be determined later.

\begin{proposition}\label{prop:7.1}
Suppose that $U^{\nu, h}(x,y)$ is the approximate solution for the initial-boundary value problem \eqref{eq:1.1}
and \eqref{eq:1.8}--\eqref{eq:1.9}.
Then, for $h$ and $\delta_2$  sufficiently small,
there exists a positive constant $\mathcal{K}$ independent of $(h, \nu, \delta_2)$ such that, if
\begin{eqnarray}  \label{eq:7.12}
\begin{split}
\mathcal{F}(U^{\nu, h};\tau)\leq \delta_2,
\end{split}
\end{eqnarray}
then
\begin{eqnarray} \label{eq:7.13}
\mathcal{F}(U^{h};\tau_2)\leq \mathcal{F}(U^{h};\tau_1)\qquad \mbox{for any $\tau_2\geq\tau_1$}.
\end{eqnarray}
Moreover, on $x=kh+$,
\begin{enumerate}
\item[\rm (i)] $U^{\nu, h}(x,y)\in D_{\delta_{0}}(U_{\infty})${\rm ;}

\smallskip
\item[\rm (ii)] There exist constants $C_{6}, C_{7}>0$ independent of $(\nu,h)$ such that
 \begin{eqnarray}\label{eq:7.14}
\big|s\big|\leq\frac{C_{6}}{\nu},\qquad \sum_{s}\big|s\big|\leq C_{7},
\end{eqnarray}
for any strong rarefaction front $s(x)${\rm ;}

\smallskip
\item[\rm (iii)] There exists a constant $C_{9}>0$ depending only on the system such that
\begin{eqnarray}\label{eq:7.15}
\sum_{\epsilon}\big|\epsilon\big|\leq \frac{C_{9}}{2^{\nu}}
\end{eqnarray}
for any non-physical wave $\epsilon$ after reaction.
\end{enumerate}
\end{proposition}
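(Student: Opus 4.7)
The plan is to proceed by induction on the reacting index $k$, combining the non-reacting monotonicity from Proposition \ref{prop:6.1} on each open strip $((k-1)h, kh)$ with the one-step Lemma \ref{lem:7.7} across $x=kh$. The auxiliary term $\mathcal{K}\sum_{kh>\tau}\|\bar Z\|_{\infty}e^{-Lkh}h$ in $\mathcal{F}$ is a running reserve: it is \emph{constant} on each open interval between consecutive reaction lines, but drops by exactly $-\mathcal{K}\|\bar Z\|_{\infty}e^{-Lkh}h$ as $\tau$ crosses $kh$. The whole strategy is to choose $\mathcal{K}$ large enough that this drop dominates the estimated jump of $F$ at the reaction step.

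For $\tau\in((k-1)h,kh)$, the assumption $\mathcal{F}(U^{\nu,h};\tau)\leq \delta_2$ yields $F(U^{\nu,h};\tau)\leq \delta_2$, which is the smallness needed to apply Proposition \ref{prop:6.1}; hence $F(U^{\nu,h};\cdot)$ is non-increasing and drops by at least $\tfrac14 E_{\nu,h}$ at every interaction point. Since the tail sum is constant on this interval, $\mathcal{F}$ inherits the same monotonicity. At $\tau=kh$, Lemma \ref{lem:7.7} gives
\begin{equation*}
F(U^{\nu,h};kh+)-F(U^{\nu,h};kh-)\leq C_4\|\bar Z\|_{\infty}e^{-Lkh}h\bigl(F(U^{\nu,h};kh-)+4\bigr)^2,
\end{equation*}
so using $F(U^{\nu,h};kh-)\leq \delta_2$,
\begin{equation*}
\mathcal{F}(U^{\nu,h};kh+)-\mathcal{F}(U^{\nu,h};kh-)\leq \bigl(C_4(\delta_2+4)^2-\mathcal{K}\bigr)\|\bar Z\|_{\infty}e^{-Lkh}h.
\end{equation*}
Fixing $\mathcal{K}\geq C_4(\delta_2+4)^2$ makes this non-positive, closing the induction step and giving \eqref{eq:7.13}. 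Note that this choice of $\mathcal{K}$ is independent of $h$, $\nu$, and $\delta_2$ (once $\delta_2\leq \delta_0$), since it depends only on the system constants through $C_4$.

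For the three consequences, I would proceed as follows. For (i), combine Proposition \ref{prop:6.2}(ii), which gives $U^{\nu,h}(\cdot,y)\in D_{\delta_{**}}(U_\infty)$ on each open strip, with Lemma \ref{lem:4.5}, whose estimates show that the jump of $U^{\nu,h}$ across $x=kh$ is $O(1)\|\bar Z\|_\infty e^{-Lkh}h$; summing over $k$ this cumulative reaction-induced drift is controlled by $\sum_k \|\bar Z\|_\infty e^{-Lkh}h\leq \|\bar Z\|_\infty/L$, which can be absorbed by choosing $\|\bar Z\|_\infty$ small enough to land in $D_{\delta_0}(U_\infty)$. For (ii), Proposition \ref{prop:6.1}(ii) provides the bound $|s|\leq C_3/\nu$ inside each strip, and Lemma \ref{lem:7.2} multiplies the strength by $1+O(1)\|\bar Z\|_\infty e^{-Lkh}h$ at each reaction line; the telescoping product is bounded by $\exp\bigl(C\|\bar Z\|_\infty/L\bigr)$, yielding the constant $C_6$. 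The global bound $\sum_s|s|\leq C_7$ follows from \eqref{eq:6.9} together with the uniform $L_w(U^{\nu,h};\tau)\leq \mathcal{F}(U^{\nu,h};\tau)\leq \delta_2$. For (iii), the same device applies: Proposition \ref{prop:6.1}(iii) controls $\sum|\epsilon|$ on each strip, while Lemma \ref{lem:7.3} multiplies each non-physical strength by $1+O(h)\|\bar Z\|_\infty e^{-Lkh}$, and the product remains bounded.

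The main obstacle will be the choice of $\mathcal{K}$: it must be independent of $(h,\nu,\delta_2)$ while simultaneously absorbing the quadratic-in-$F$ error in Lemma \ref{lem:7.7} and the $-\hat K|\omega_k^+|$ gain at corner points after reaction. A secondary delicate point is to verify, at each $kh$, that the estimate of $F_1$ in Lemma \ref{lem:7.6} does not spoil the induction: strong rarefaction fronts can slightly shift in position and strength across the reaction line, and one must check from Lemma \ref{lem:7.2} that the total variation of $\theta$ on the post-reaction profile still matches $\hat\theta(kh+)$ up to an $O(\|\bar Z\|_\infty e^{-Lkh}h)$ error that is summable and therefore absorbed into the reserve $\mathcal{K}\sum_{kh>\tau}\|\bar Z\|_\infty e^{-Lkh}h$.
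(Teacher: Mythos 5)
Your proposal follows essentially the same route as the paper: the same two-case monotonicity argument (Proposition \ref{prop:6.1} inside each strip, Lemma \ref{lem:7.7} across $x=kh$, with $\mathcal{K}$ chosen so that the drop of the reserve term $\mathcal{K}\|\bar Z\|_{\infty}e^{-Lkh}h$ dominates the quadratic jump $C_4(\delta_2+4)^2\|\bar Z\|_{\infty}e^{-Lkh}h$), and the same telescoping products $\prod_k(1+O(1)\|\bar Z\|_{\infty}e^{-Lkh}h)\leq \exp(C/L)$ via Lemmas \ref{lem:7.2}--\ref{lem:7.3} for (ii) and (iii). Your treatment of (i) by accumulating the $O(1)\|\bar Z\|_{\infty}e^{-Lkh}h$ drift is slightly less explicit than the paper's direct estimate of the Riemann invariants against $U^{\nu,h}(kh+,\infty)=\bar U(\infty)$, but it rests on the same lemmas and is correct.
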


\begin{proof}
We divide the proof into two cases. Suppose that $\mathcal{F}(U^{h};\tau)<\delta_2$ for sufficiently small $\delta_2$.

\smallskip
{\bf 1.} If $(k-1)h\leq\tau_1\leq\tau_2<kh$, then, by Proposition \ref{prop:6.1}, we have
\begin{eqnarray*}
\mathcal{F}(U^{\nu,h};\tau_2)- \mathcal{F}(U^{h};\tau_1)
=F(U^{\nu, h};\tau_2)-F(U^{\nu, h};\tau_1)-\mathcal{K}(\tau_2-\tau_1)\|\bar{Z}\|_{\infty}e^{-Lkh}h
\leq 0.
\end{eqnarray*}

{\bf 2.} If $(k-1)h\leq\tau_1< kh<\tau_2<(k+1)h$, then, by Proposition \ref{prop:6.1} and Lemma \ref{lem:7.7},
we can choose $\mathcal{K}$ suitably large such that
\begin{eqnarray*}
\begin{split}
\mathcal{F}(U^{\nu,h};\tau_2)- \mathcal{F}(U^{h};\tau_1)&=F(U^{\nu, h};\tau_2)-F(U^{\nu, h};\tau_1)-\mathcal{K}(\tau_2-\tau_1)\|\bar{Z}\|_{\infty}e^{-Lkh}h\\[1mm]
&\leq F(U^{\nu, h};kh+)-F(U^{\nu, h};kh-)-\mathcal{K}(\tau_2-\tau_1)\|\bar{Z}\|_{\infty}e^{-Lkh}h\\
&\leq \Big(C_4\big(\delta_2+4\big)^{2}-\mathcal{K}(\tau_2-\tau_1)\Big)\|\bar{Z}\|_{\infty}e^{-Lkh}h\leq 0.
\end{split}
\end{eqnarray*}
Since $\bar{Z}(\infty)=0$, then $U^{\nu,h}(kh+,\infty)=U^{\nu,h}(kh-,\infty)=\bar{U}(\infty)$.
On the other hand, by $\mathbf{(H_{2})}$, we can choose suitably $\delta_2$ such that
\begin{eqnarray*}
\|\bar{U}-U_{\infty}\|_{\infty}<\delta_2.
\end{eqnarray*}
Therefore, by Lemma \ref{lem:6.2} and Proposition \ref{prop:7.1},
for $\delta_2$ sufficiently small, we have
\begin{eqnarray*}
\begin{split}
&\Big|\Big(J\big(q,\frac{q^{2}}{2}+\frac{c^{2}}{\gamma-1}\big)+\theta\Big)(U^{\nu,h}(kh+,y))
-J\big(q_{\infty}, \frac{q^{2}_{\infty}}{2}+\frac{c^{2}_{\infty}}{\gamma-1}\big)\Big|\\
& \leq\Big|\Big(J\big(q,\frac{q^{2}}{2}+\frac{c^{2}}{\gamma-1}\big)+\theta\Big)(U^{\nu,h}(kh+,y))
-\Big(J\big(q,\frac{q^{2}}{2}+\frac{c^{2}}{\gamma-1}\big)+\theta\Big)(U^{\nu,h}(kh+,\infty))\Big|\\
&\ \ \ +\Big|\Big(J\big(q,\frac{q^{2}}{2}+\frac{c^{2}}{\gamma-1}\big)+\theta\Big)(U^{\nu,h}(kh+,\infty))-J\big(q_{\infty}, \frac{q^{2}_{\infty}}{2}+\frac{c^{2}_{\infty}}{\gamma-1}\big)\Big|\\
& \leq O(1)\mathcal{F}(U^{\nu,h};kh+)+\Big|\Big(J\big(q,\frac{q^{2}}{2}+\frac{c^{2}}{\gamma-1}\big)+\theta\Big)(\bar{U}(\infty))-J\big(q_{\infty}, \frac{q^{2}_{\infty}}{2}+\frac{c^{2}_{\infty}}{\gamma-1}\big)\Big|\leq \delta_0.
\end{split}
\end{eqnarray*}
Similarly,
\begin{eqnarray*}
\begin{split}
&\Big|\Big(\frac{q^{2}}{2}+\frac{c^{2}}{\gamma-1}\Big)(U^{\nu,h}(kh+,y))
   -\frac{q^{2}_{\infty}}{2}-\frac{c^{2}_{\infty}}{\gamma-1}\Big|\leq \delta_0,\\
&\Big|\frac{p^{\nu,h}}{{(\rho^{\nu,h})}^{\gamma}}(kh+,y)
-\frac{p_{\infty}}{\rho^{\gamma}_{\infty}}\Big|\leq \delta_0,
\qquad 0\leq Z^{\nu,h}(kh+,y)\leq \delta_0.
\end{split}
\end{eqnarray*}

Notice that
\begin{eqnarray*}
\theta(U^{\nu,h}(kh+;y))=\big(\theta(U^{\nu,h}(kh+;y))-\theta(U^{\nu,h}(kh+,\infty))\big)
+\big(\theta(U^{\nu,h}(kh+,\infty))-\theta(U_{\infty})\big).
\end{eqnarray*}
Then, by Lemma \ref{lem:6.1} and Proposition \ref{prop:7.1}, for sufficiently small $\delta_2$, we have
\begin{eqnarray*}
\theta(U^{\nu,h}(kh+;y))\leq O(1)\mathcal{F}(U^{\nu,h};kh+)+O(1)\delta_2\leq \delta_0.
\end{eqnarray*}
On the other hand, we have
\begin{eqnarray*}
\begin{split}
\theta(U^{\nu,h}(kh+;y))
&=\big(\theta(U^{\nu,h}(kh+;y))-\theta(U^{\nu,h}(kh+,\infty))\big)
+\big(\theta(U^{\nu,h}(kh+,\infty))-\theta(U_{\infty})\big)\\[1mm]
&\geq - TV.\{\theta(U^{\nu,h}(kh+,,y))\}+\theta_{\rm crit}+O(1)\delta_{2}\\[1mm]
&\geq -F(U^{h};kh+)+\theta_{\rm crit}+O(1)\delta_{2}\geq \theta_{\rm crit}+\delta_{0}.
\end{split}
\end{eqnarray*}
Finally, by Lemma \ref{lem:7.2}, there exists a constant $C_{5}>0$ such that
\begin{eqnarray*}
\begin{split}
\big|s\big|&\leq\big(1+O(1)\|\bar{Z}\|_{\infty}e^{-Lkh}h\big)\big|\tilde{s}\big|
\leq\big(1+O(1)\|\bar{Z}\|_{\infty}e^{-Lkh}h\big)\frac{C_{3}}{\nu}\\
&\leq \frac{C_{3}\exp(C_5/L)}{\nu}.
\end{split}
\end{eqnarray*}
Then we can find a constant $C_{6}>C_{3}\exp(C_5/L)$ such that
\begin{eqnarray*}
\begin{split}
|s|&\leq\frac{C_{6}}{\nu}.
\end{split}
\end{eqnarray*}
Finally, by Proposition \ref{prop:6.2} and Lemma \ref{lem:7.2},
we can find $C_{7}>C_{4}\exp(C_5/L)$ such that
 \begin{eqnarray*}
\sum |s|&\leq \big(1+O(1)\|\bar{Z}\|_{\infty}e^{-Lkh}h\big)\sum |\tilde{s}|
\leq \exp\Big(C_5\int_{0}^{\infty}e^{-x}dx\Big)\sum |\tilde{s}|\leq C_{7}.
\end{eqnarray*}
Also, by Proposition \ref{prop:6.1} and Lemma \ref{lem:7.3},
we can find positive constants $C_{8}$ and $C_{9}$
such that
\begin{eqnarray*}
&&\sum |\epsilon|\leq \big(1+O(1)\|\bar{Z}\|_{\infty}e^{-Lkh}h\big)\sum |\tilde{\epsilon}|
   \leq \big(1+C_{8}\|\bar{Z}\|_{\infty}e^{-Lkh}h\big)\sum |\tilde{\epsilon}|\\
&&\leq \exp\Big(C_{\rm np}\int_{0}^{\infty}e^{-x}dx\Big)\sum |\tilde{\epsilon}|
\leq \frac{C_{9}}{2^{\nu}}.
\end{eqnarray*}
This completes the proof.
\end{proof}

\begin{corollary}\label{coro:7.1}
There exist positive constants $\varepsilon$ and $C_{0}$ independent of $(\nu,h)$ such that,
if
\begin{eqnarray}\label{eq:7.16}
\|\bar{U}-U_{\infty}\|_{\infty}+\big|g'_{+}(0)-g'_{*+}(0)\big|
+TV.\{\bar{U}(\cdot)\}+TV. \{(g'_{+}-g'_{*+})(\cdot)\}<\varepsilon,
\end{eqnarray}
then
\begin{eqnarray}\label{eq:7.17}
TV.\{U^{\nu,h}(x,\cdot)\}<C_{0}.
\end{eqnarray}
\end{corollary}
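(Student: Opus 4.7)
The plan is to estimate $\mathcal{F}(U^{\nu,h};0+)$ in terms of $\varepsilon$ from \eqref{eq:7.16}, propagate this smallness for every $x$ by the monotonicity established in Proposition \ref{prop:7.1}, and then convert the resulting bound on $F(U^{\nu,h};x)$ into a uniform bound on $TV.\{U^{\nu,h}(x,\cdot)\}$.

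For the initial estimate, note that $\bar{U}^{\nu}$ satisfies $TV.\{\bar{U}^{\nu}\}\le TV.\{\bar{U}\}+2^{-\nu}$ and $\|\bar{U}^{\nu}-U_{\infty}\|_{\infty}\le\|\bar{U}-U_{\infty}\|_{\infty}+2^{-\nu}$, so solving the initial Riemann fans with the accurate $\nu$-Riemann solver yields $L_{\rm w}(U^{\nu,h};0+)=O(\varepsilon)$ and $Q_{0}(U^{\nu,h};0+)=O(\varepsilon^{2})$. At $x=0+$ no strong rarefaction fronts are present in $U^{\nu,h}$, so the weights $W(\alpha_{i},0+,-)$ and $W(\epsilon,0+,+)$ reduce to $\exp(K_{\rm b}\sum\{|\omega|:\omega\in\Omega_{\rm Ra}(0+)\})$, which is bounded by $\exp(K_{\rm b}|\theta_{\rm crit}|)$ in view of the hypothesis $A_{3}$, giving $\sum_{1\le i\le4}Q_{\rm Bi}+Q_{\rm B5}+Q_{\rm BNP}=O(\varepsilon)$. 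For $Q_{\rm c}(U^{\nu,h};0+)=\sum_{\omega_{k}>0}|\omega_{k}|$, since $g'_{*+}$ is monotone decreasing by Corollary \ref{cor:3.1}, every positive jump of $g'_{+}$ must come from $g'_{+}-g'_{*+}$, so $Q_{\rm c}(U^{\nu,h};0+)\le TV.\{g'_{+}-g'_{*+}\}\le\varepsilon$. Since $\hat{\theta}(0+)=0$, one has $F_{1}(U^{\nu,h};0+)=TV.\{\theta(\bar{U}^{\nu})\}=O(\varepsilon)$. Finally, $\|\bar{Z}\|_{\infty}\le\|\bar{U}-U_{\infty}\|_{\infty}\le\varepsilon$ makes the tail $\sum_{kh>0}\|\bar{Z}\|_{\infty}e^{-Lkh}h\le\varepsilon/L$. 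Adding everything, $\mathcal{F}(U^{\nu,h};0+)\le C\varepsilon$ for some $C$ depending only on the system.

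Choosing $\varepsilon<\delta_{2}/C$ with $\delta_{2}$ as in Proposition \ref{prop:7.1} gives $\mathcal{F}(U^{\nu,h};0+)<\delta_{2}$. The monotonicity in Proposition \ref{prop:7.1}, combined with the monotonicity of $F$ on each non-reacting interval from Proposition \ref{prop:6.1}, then propagates the bound to $\mathcal{F}(U^{\nu,h};x)\le C\varepsilon$ for every $x\ge 0$. Extracting the pieces, $L_{\rm w}(U^{\nu,h};x)\le C\varepsilon/C_{*}$ and $TV.\{\theta(U^{\nu,h}(x,\cdot))\}\le F_{1}(U^{\nu,h};x)+\hat{\theta}(x)\le C\varepsilon+|\theta_{\rm crit}|$; together with $\sum_{s}|s(x)|\le C_{7}$ from Proposition \ref{prop:7.1}(ii) and the $BV$ bound on $Z^{\nu,h}$ coming from Lemma \ref{lem:4.5} and \eqref{eq:4.29}, these yield a uniform bound on each component of $U^{\nu,h}(x,\cdot)$, and hence $TV.\{U^{\nu,h}(x,\cdot)\}\le C_{0}$ with $C_{0}$ independent of $(\nu,h)$.

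The main obstacle is the estimate of $Q_{\rm c}(U^{\nu,h};0+)$: the total strength of the \emph{positive} angle jumps of $g'_{+}$ must be controlled by $\varepsilon$, although the full total variation of $g'_{+}$ may be of order one. This is precisely where the convexity of the reference wall enters, through the monotonicity of $g'_{*+}$ in Corollary \ref{cor:3.1}. A parallel observation is built into the functional $F_{1}$, which measures only the deviation of $TV.\{\theta(U(x,\cdot))\}$ from the order-one quantity $\hat{\theta}(x)$ rather than $TV.\{\theta\}$ itself, so that the smallness of $\varepsilon$ is sufficient to close the argument.
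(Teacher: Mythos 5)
Your proposal is correct and follows essentially the same route as the paper: bound $\mathcal{F}(U^{\nu,h};0+)$ by $O(1)\varepsilon$ (with the key point that only the \emph{positive} angle jumps $\omega_k^+$ enter $Q_{\rm c}$, and these are controlled by $TV.\{g'_+-g'_{*+}\}$ plus $|g'_+(0)-g'_{*+}(0)|$ thanks to the monotonicity of $g'_{*+}$), then invoke the monotonicity of $\mathcal{F}$ from Proposition \ref{prop:7.1} and unpack the functional into a total-variation bound. Your write-up is in fact more detailed than the paper's proof in itemizing the contributions of each term of the functional at $x=0+$.
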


\begin{proof}
By Proposition \ref{prop:7.1}, if $\mathcal{F}(U^{\nu, h};0+)$ sufficiently small, we deduce
\begin{eqnarray*}
\mathcal{F}(U^{\nu, h};x)<\mathcal{F}(U^{\nu, h};0+).
\end{eqnarray*}
Since
\begin{eqnarray*}
\mathcal{F}(U^{\nu, h};0+)&=&F(U^{\nu, h};0+)+ C_4\|\bar{Z}\|_{\infty}\sum^{l=k}_{l=0}e^{-Lkh}h\\
&\leq& F(U^{\nu, h};0+)+ C_4\|\bar{Z}\|_{\infty}\int^{\infty}_0 e^{-\tau}d\tau\\
&\leq& F(U^{\nu, h};0+)+ \frac{C_4}{L}\|\bar{Z}\|_{\infty}\\
&\leq& O(1)F(U^{\nu, h};0+),
\end{eqnarray*}
where we have used that $\bar{Z}(\infty)=0$.
On the other hand,
\begin{eqnarray*}
&&F(U^{\nu,h};0+)=O(1)\Big(\|\bar{U}-U_{\infty}\|_{\infty}
+TV.\{\bar{U}(\cdot)\}+\sum_{k\geq 0}\big|\omega^{+}_{k}\big|\Big),\\
&&\sum_{k\geq 0}\big|\omega^{+}_{k}\big|=O(1)\Big(TV.\{(g'_{+}-g'_{*+})(\cdot)\}+\big|g'_{+}(0)-g'_{*+}(0)\big|\Big).
\end{eqnarray*}
Thus, combining the above estimates, we obtain the desire result.
\end{proof}

\section{Convergence and Consistency}\setcounter{equation}{0}

In this section, we show that there exists a subsequence of the approximate solutions
which converges to an entropy solution
to the initial-boundary value problem \eqref{eq:1.8}--\eqref{eq:1.9} for system \eqref{eq:1.1}.
To see this, we proceed by two steps:

\smallskip
\par {\rm (a)} We establish the consistency of our algorithm by keeping $h>0$ fixed and
letting $\nu\to \infty$.
\par {\rm (b)} We employ the compactness theorem to show the convergence of a subsequence
of the approximate solutions as $h\rightarrow 0$.

\smallskip
\par First, if line $x=a>0$ intersects with $\Gamma_{h}$ at point $(a, g_h(a))$, we have

\begin{lemma}\label{lem:8.1}
For any $x',\ x''\in((k-1)h,kh), \ k\in\mathbb{N}_{+}$, there exists a constant $M_{0}>0$ independent of $(\nu, h)$ such that
\begin{eqnarray} \label{eq:8.1}
\int_{0}^{\infty}|U^{\nu,h}(x', y+g_h(x'))-U^{\nu, h}(x'', y+g_h(x''))|dy\leq M_{0}|x'-x''|.
\end{eqnarray}
\end{lemma}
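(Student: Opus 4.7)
The plan is to straighten the boundary and then apply the standard $L^1$--Lipschitz continuity estimate for wave-front tracking solutions. Without loss of generality assume $(k-1)h \le x' < x'' < kh$, so that no reaction step occurs between $x'$ and $x''$ and $U^{\nu,h}$ solves the homogeneous system \eqref{eq:2.2} on this strip. Since $g_h$ is affine on $((k-1)h,kh)$ with slope $\tan\theta_{k-1}$ bounded under assumption $\mathbf{(H_1)}$, the translated function
\begin{equation*}
V^{\nu,h}(x,y) := U^{\nu,h}(x,\, y + g_h(x)), \qquad y > 0,
\end{equation*}
is again a piecewise-constant wave-front tracking solution whose fronts are the original fronts shifted vertically by $-g_h(x)$; their propagation speeds become $\lambda_\alpha - \tan\theta_{k-1}$. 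By the a priori confinement $U^{\nu,h}(x,\cdot)\in D_{\delta_0}(U_\infty)$ from Proposition \ref{prop:7.1}, the eigenvalues $\lambda_j(U^{\nu,h})$, $1\le j\le 5$, are uniformly bounded, and the non-physical front speed $\hat{\lambda}$ chosen in \eqref{eq:4.6} is, by construction, a fixed upper bound for all characteristic speeds on $D_{\delta_0}(U_\infty)$. Hence every front of $V^{\nu,h}$ propagates with speed bounded in absolute value by a constant $\Lambda$ depending only on the system.

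Next, using the piecewise-constant structure and the bounded wave speeds, a standard argument (compare \cite{bressan}) yields, for $x',x''$ at which no interactions occur,
\begin{equation*}
\int_{0}^{\infty} |V^{\nu,h}(x', y) - V^{\nu,h}(x'', y)|\, dy \le \Lambda\,(x'' - x')\sum_{\alpha}|\alpha(x')|,
\end{equation*}
where the sum ranges over all wave-fronts in $V^{\nu,h}(x',\cdot)$. Heuristically, between $x'$ and $x''$ each front of strength $|\alpha|$ sweeps out a horizontal strip of width at most $\Lambda(x''-x')$ in the $y$-variable and contributes $|\alpha|\,|\lambda_\alpha - \tan\theta_{k-1}|\,(x''-x')$ to the integral; summation gives the stated bound. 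Generic finitely many interactions occurring strictly between $x'$ and $x''$ can only decrease $\sum_\alpha|\alpha|$ (up to a controlled error) by the Glimm-type monotonicity established in Proposition \ref{prop:6.1}, so the same estimate persists after integrating the triangle inequality across the interaction points.

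By Corollary \ref{coro:7.1}, $\sum_\alpha|\alpha(x)|\le O(1)\,TV\{U^{\nu,h}(x,\cdot)\}\le O(1)C_0$ uniformly in $(\nu,h,x)$, so we may take $M_0 := O(1)\,\Lambda\,C_0$ and the bound \eqref{eq:8.1} follows. The main subtlety is ensuring that $\Lambda$ is genuinely uniform: for weak shocks and weak rarefaction fronts this is immediate, for the strong rarefaction fronts it follows because $\lambda_5$ is continuous and bounded on $\overline{D_{\delta_0}(U_\infty)}$, and for non-physical fronts it follows directly from the definition of $\hat\lambda$ in \eqref{eq:4.6}. With these ingredients in place the argument is routine; the only bookkeeping item is to include the $y$-translation introduced by replacing $y$ with $y+g_h(x)$, which amounts to a uniformly bounded correction to each front speed and is absorbed into $\Lambda$.
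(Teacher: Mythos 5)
Your argument is correct and is exactly the route the paper takes: the authors simply remark that the estimate ``directly follows from the construction of the approximate solutions and the bound on its total variation,'' and your write-up fills in the standard details (uniformly bounded front speeds on $D_{\delta_0}(U_\infty)$ including $\hat{\lambda}$ for non-physical fronts, the affine boundary shift, summation over fronts, and the uniform $BV$ bound from Corollary \ref{coro:7.1}). No gaps; the only cosmetic point is that the jump $|U_+-U_-|$ across a front is $O(1)|\alpha|$ rather than $|\alpha|$ exactly, which is harmlessly absorbed into $M_0$.
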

The proof directly follows from the construction of the approximate solutions $U^{\nu,h}$
and the bound on its total variation, which leads to the uniform
Lipschitz continuity.

\begin{proposition}\label{prop:8.1}
Let $U^{\nu,h}$ be a family of the approximate solutions of problem \eqref{eq:1.8}--\eqref{eq:1.9} for system \eqref{eq:1.1}.
Then there exists a subsequence $\{\nu_{i}\}^{\infty}_{i=1}$ such that $U^{\nu_{i},h}$ converges
to a function $U^{h}$ in $\mathrm{L^{1}_{loc}}(\Omega_{h})$ as $i\rightarrow \infty$, and
the limit function $U^h(x,y)$ satisfies that,
for any $\psi \in C_0^{\infty}(\Omega_h)$ with $\psi\geq 0$ and for any convex entropy pair $(\eta,\ q)$,
\begin{eqnarray} \label{eq:8.15}
\begin{split}
\iint_{\Omega_{h}}
\Big(\eta(W(U^{h}))\psi_x+q(W(U^{h}))\psi_y\Big) dxdy+\int_{y>0}\eta(W(\bar{U}(y)))\psi(0,y) dy\\[5pt]
+\sum_{k\geq1}h\int_{y>g_{k}}\nabla_{W} \eta(W(U^{h}(kh-,y)))G(U^{h}(kh-,y))\psi(kh,y)dy\\[5pt]
+\int_{y>0}\eta(W(\bar{U}(y)))\psi(0,y) dy
+\sum_{k\geq1}h\int_{y>g_{k}}\mathcal{O}^\eta(h,kh-,y)\psi(kh,y)dy
\geq0,
\end{split}
\end{eqnarray}
where $\mathcal{O}^\eta(h,kh-,y)\rightarrow 0$ as $h\rightarrow 0$,
and $\mathcal{O}^{\eta}(h,kh-,y)\equiv 0$ when $\eta=\pm W$.
\end{proposition}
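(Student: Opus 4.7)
The plan is to execute two steps: first, extract a convergent subsequence by BV compactness; second, pass to the limit $\nu_i\to\infty$ in the entropy identity satisfied by $U^{\nu,h}$, both within each homogeneous strip $((k-1)h,kh)$ and across each reaction plane $x=kh$. For compactness, Corollary~\ref{coro:7.1} supplies the uniform bound $TV\{U^{\nu,h}(x,\cdot)\}<C_0$ in $y$ on each slice, while Lemma~\ref{lem:8.1} yields the uniform $L^1$-Lipschitz estimate in $x$ on each strip. A standard Helly-type selection produces a subsequence $\{\nu_i\}$ with $U^{\nu_i,h}\to U^h$ in $L^1_{\mathrm{loc}}(\Omega_h)$ and $U^h\in BV_{\mathrm{loc}}(\Omega_h)$. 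Since the fractional-step grid $\{kh\}$ is fixed, a slice-wise application of Helly's theorem also ensures strong $L^1$-convergence of the traces $U^{\nu_i,h}(kh\pm,\cdot)$, which is needed for the limit in the reaction source.

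For consistency inside each open strip, $U^{\nu,h}$ is a wave-front tracking solution of the homogeneous system \eqref{eq:2.2}. Testing any convex entropy pair $(\eta,q)$ against $\psi\in C_0^\infty(\Omega_h)$ with $\psi\ge 0$ and summing jump contributions across each wave-front yields
\[
\iint \bigl(\eta(W(U^{\nu,h}))\psi_x + q(W(U^{\nu,h}))\psi_y\bigr)\,dx\,dy + \mathcal{E}^\eta_{\nu,k,h} \ge 0,
\]
where $\mathcal{E}^\eta_{\nu,k,h}$ collects three error sources: (i) approximate rarefaction fronts, each of strength $O(1/\nu)$ by Proposition~\ref{prop:7.1}(ii), contribute $O(1/\nu)$ via the standard front-by-front entropy-deficit estimate exploiting convexity of $\eta$; (ii) non-physical fronts contribute $O(2^{-\nu})$ by Proposition~\ref{prop:7.1}(iii); (iii) boundary reflections at non-corner points of $\Gamma_h$ introduce no error since Lemma~\ref{lem:4.3} solves the lateral Riemann problem exactly and $\psi$ vanishes in a neighborhood of $\Gamma_h$. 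Each source vanishes as $\nu_i\to\infty$.

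At each reaction plane $x=kh$ the construction enforces $W(U^{\nu,h}(kh+,y))=W(U^{\nu,h}(kh-,y))+h\,G(U^{\nu,h}(kh-,y))$. Taylor expansion of $\eta$ yields
\[
\begin{aligned}
&\eta\bigl(W(U^{\nu,h}(kh+,y))\bigr)-\eta\bigl(W(U^{\nu,h}(kh-,y))\bigr) \\
&\quad = h\,\nabla_W\eta\bigl(W(U^{\nu,h}(kh-,y))\bigr)\,G(U^{\nu,h}(kh-,y)) + h\,R^\eta_{\nu,k,h}(y),
\end{aligned}
\]
with $R^\eta_{\nu,k,h}(y)=O(h)\|D^2\eta\|_\infty\|G\|_\infty^2$ and $R^\eta\equiv 0$ whenever $\eta(W)=\pm W$. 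Adding the homogeneous-strip identities to the telescoping sum $\sum_k\int\bigl[\eta(W(U^{\nu,h}(kh+,y)))-\eta(W(U^{\nu,h}(kh-,y)))\bigr]\psi(kh,y)\,dy$, incorporating the initial trace via $\bar U^\nu\to\bar U$ in $L^1$, and passing to the limit $\nu_i\to\infty$ using strong $L^1$-convergence of $U^{\nu_i,h}$ and of its traces together with continuity of $\eta$, $\nabla_W\eta$, and $G$, gives \eqref{eq:8.15} with $\mathcal{O}^\eta(h,kh-,y)$ defined as the limit of $R^\eta_{\nu_i,k,h}(y)$. The smallness $\mathcal{O}^\eta(h,\cdot)\to 0$ as $h\to 0$ follows from $R^\eta=O(h)$, and $\mathcal{O}^\eta\equiv 0$ for $\eta=\pm W$ by linearity.

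The main obstacle is the accounting of the strong rarefaction contribution to $\mathcal{E}^\eta_{\nu,k,h}$: each strong rarefaction front has strength $O(1/\nu)$ but total strength only bounded by $C_7$ (Proposition~\ref{prop:7.1}(ii)), so a naive linear-in-strength estimate gives $O(C_7)$, which is not small. The resolution is the standard second-order entropy-defect estimate for wave-front tracking --- the Rankine--Hugoniot-based entropy defect across an approximate rarefaction front of strength $\sigma$ is $O(\sigma^2)$ rather than $O(\sigma)$ --- so summing over $\nu$ fronts of size $O(1/\nu)$ produces $O(1/\nu)$. A secondary technicality is passing to the limit in the pointwise-in-$k$ source $G(U^{\nu_i,h}(kh-,\cdot))$, which is handled by the slice-wise Helly selection on the finitely many reaction planes intersecting $\mathrm{supp}\,\psi$.
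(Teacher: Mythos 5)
Your proposal is correct and follows essentially the same route as the paper: Helly compactness from the uniform $BV$ and $L^1$-Lipschitz bounds, the divergence-theorem/entropy identity on each homogeneous strip with errors coming from the approximate rarefaction fronts and the non-physical fronts, and a Taylor expansion of $\eta$ across each reaction plane $x=kh$ producing the source term plus the remainder $\mathcal{O}^\eta$ that vanishes as $h\to0$ and is identically zero for $\eta=\pm W$. Your quadratic accounting of the rarefaction-front entropy defect, summed via $\sum_\alpha\sigma_\alpha^2\le(\max_\alpha\sigma_\alpha)\sum_\alpha\sigma_\alpha=O(1/\nu)$, is in fact more explicit than the paper's stated per-front bound $|\sigma_\alpha|\,O(1)2^{-\nu}$, but both give the same vanishing error as $\nu_i\to\infty$.
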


\begin{proof}
We first denote
\begin{eqnarray}\label{eq:8.16}
\begin{split}
\mathcal{N}^{\nu, h}(\psi)&:=\iint_{\Omega_{h}}\Big(\eta(W(U^{\nu,h}))\psi_x
+q(W(U^{\nu,h}))\psi_y\Big)dxdy+\int_{y>0}\eta(W(\bar{U}^{\nu}(y)))\psi(0,y)dy\\[5pt]
&\ \ \  +\sum_{k\geq1}h\int_{y>g_{k}}\nabla_{W}\eta (W(U^{\nu,h}(kh-,y)))G(U^{\nu,h}(kh-,y))\psi(kh,y)dy\\[5pt]
&\ \ \  +\sum_{k\geq1}h\int_{y>g_{k}}\mathcal{O}_{\nu}^\eta(h,kh,y)\psi(kh,y)dy\\[5pt]
&=:\mathcal{N}_{1}^{\nu}(\psi)+\mathcal{N}_{2}^{\nu}(\psi),
\end{split}
\end{eqnarray}
where
\begin{eqnarray*}
\begin{split}
\mathcal{N}_{1}^{\nu}(\psi)&=\iint_{\Omega_{h}}\Big(\eta(W(U^{\nu,h}))\psi_x
+q(W(U^{\nu,h}))\psi_y\Big)dxdy+\int_{y>0}\eta(W(\bar{U}^{\nu}(y)))\psi(0,y)dy,\\[5pt]
\mathcal{N}_{2}^{\nu}(\psi)&=\sum_{k\geq1}h\int_{y>g_{k}}\nabla_{W}\eta (W(U^{\nu,h}(kh-,y)))G(U^{\nu,h}(kh-,y))\psi(kh,y)dy\\[5pt]
 &\ \ \ \ +\sum_{k\geq1}h\int_{y>g_{k}}\mathcal{O}_{\nu}^\eta(h,kh,y)\psi(kh,y)dy.
\end{split}
\end{eqnarray*}
Let
$\Xi(x,y)=\Big(\eta(W(U^{\nu,h}))\psi, q(W(U^{\nu,h}))\psi\Big)(x,y)$.
Then, by the divergence theorem, we have
\begin{eqnarray*}
\begin{split}
\mathcal{N}_{1}^{\nu,h}(\phi)&=\sum_{k}\iint_{\Omega_{h,k}}\Big(\eta(W(U^{\nu,h}))\psi_x
+q(W(U^{\nu,h}))\psi_y\Big)dxdy+\int_{y>0}\eta(W(\bar{U}^{\nu}(y)))\psi(0,y)dy\\[5pt]
&=\sum_{k, i}\int_{\Gamma^{i}_{h,k}}div\,\Xi(x,y)\cdot\textbf{n}_{k, i} dS
-\sum_{k}\iint_{\Omega_{h,k}}\Big(\eta(W(U^{\nu,h}))_x
+q(W(U^{\nu,h}))_y\Big)\psi dxdy\\[5pt]
&\ \ \  +\int_{y>0}\eta (W(\bar{U}(y)))\psi(0,y)dy.
\end{split}
\end{eqnarray*}
Since $U^{\nu,h}$ is an entropy solution for the homogeneous system $W(U)_{x}+H(U)_{y}=0$ in $\Omega_{h,k}$,
then
\begin{eqnarray*}
\begin{split}
\mathcal{N}_{1}^{\nu,h}(\phi)
&\geq\sum_{k}\int^{(k+1)h}_{kh}\sum_{\alpha\in\mathcal{J}}\mathcal{E}_{\nu, \alpha}(x)\,\psi(x,y_{\alpha}(x))dx\\[5pt]
&\ \ \ -\sum_{k\geq 1}\int_{y>g_{k}}\Big(\eta(W(U^{\nu,h}(kh+,y))-\eta(W(U^{\nu,h}(kh-,y)))\Big)\psi(kh,y)dy,
\end{split}
\end{eqnarray*}
where
\begin{eqnarray*}
\mathcal{E}_{\nu, \alpha}(x)=\dot{y}_{\alpha}(x)[\eta(W(U^{\nu,h}(x,y_{\alpha}(x))))]-[q(W(U^{\nu,h}(x,y_{\alpha}(x))))]
\end{eqnarray*}
with
$[f(U^{\nu,h}(x,y_{\alpha}(x)))]:= f(U^{\nu,h}(x,y_{\alpha}(x)+))-f(U^{\nu,h}(x,y_{\alpha}(x)-))$
for any function $f$.

Using the properties of the $(\nu, h)$-approximate solutions, then we have
\begin{eqnarray*}
\sum_{\alpha \in \mathcal{S}\cup\mathcal{R}\cup\mathcal{R}_{b}}\mathcal{E}_{\nu, \alpha}(x)
\geq -\sum_{\alpha \in \mathcal{S}\cup\mathcal{R}\cup\mathcal{R}_{b}}|\sigma_{\alpha}|\frac{O(1)}{2^{\nu}},\quad
\sum_{\alpha \in\mathcal{NP}}\mathcal{E}_{\nu, \alpha}(x)=-O(1)\sum_{\alpha \in\mathcal{NP}}|\sigma_{\alpha}|=\frac{O(1)}{2^{\nu}}.
\end{eqnarray*}
Thus it follows that
\begin{eqnarray*}
\sum_{\alpha \in \mathcal{J}}\mathcal{E}_{\nu, \alpha}(x)
=\sum_{\alpha \in \mathcal{S}\cup\mathcal{R}\cup\mathcal{R}_{b}}\mathcal{E}_{\nu, \alpha}(x)
+\sum_{\alpha \in\mathcal{NP}}\mathcal{E}_{\nu, \alpha}(x)
\geq-\frac{O(1)}{2^{\nu}}.
\end{eqnarray*}
On the other hand, by the Talyor formula, we also find
\begin{eqnarray*}
\begin{split}
&\eta(W(U^{\nu,h}(kh+,y)))-\eta(W(U^{\nu,h}(kh-,y)))\\[5pt]
&=\nabla_{W}\eta(W(U^{\nu,h}(kh-,y)))G(U^{\nu,h}(kh-,y))h+\mathcal{O}_{\nu}^\eta(h,kh-,y)h,
\end{split}
\end{eqnarray*}
where $\mathcal{O}_{\nu}^\eta(h,kh-,y)\rightarrow 0$ as $h\rightarrow 0$,
and $\mathcal{O}_{\nu}^\eta(h,kh-,y)\equiv 0$ when $\eta=\pm W$.

\par Finally, combining these estimates altogether, we deduce
\begin{eqnarray*}
\mathcal{N}_{1}^{\nu}(\phi)+\mathcal{N}_{2}^{\nu}(\phi)\geq -\frac{O(1)}{2^{\nu}}.
\end{eqnarray*}

Since the total variation of $U^{\nu,h}$ is uniformly bounded, then, by Lemma \ref{lem:8.1} and Helly's theorem, we can
choose a subsequence $U^{\nu_{i}}$ such that
\begin{eqnarray*}
U^{\nu_{i},h}(x,y)\rightarrow U^{h}(x,y), \quad U^{\nu_{i},h}(kh-,y)\rightarrow U^{h}(kh,y)
\qquad \text{a.e.  in $\Omega_h$ as $i\rightarrow \infty$},
\end{eqnarray*}
and define $\mathcal{O}^\eta(h,kh-,y)=\lim_{\nu_i\rightarrow \infty}\mathcal{O}_{\nu_i}^\eta(h,kh-,y)$,
which leads to the desired result.
\end{proof}

Now we are in position to establish an existence theorem for the initial-boundary
value problem \eqref{eq:1.1} and  \eqref{eq:1.8}--\eqref{eq:1.9}.
We can apply again the Helly compactness theorem to
obtain a further subsequence $U^{h_{i}}$
converging to some function $U$ in $\mathbf{L}^{1}_{loc}$
whose total variation in $y$ is uniformly bounded
for all $x\geq0$. Then we have the following proposition.

\begin{proposition}\setcounter{theorem}{0}\label{prop:8.2}
Let $U^{h}$ be a sequence of approximate solutions determined by \eqref{eq:8.15}
with uniformly bounded total variation.
Then there exists a subsequence $h_{i}$ such that $U^{h_i}\rightarrow U$
in $\mathbf{L}^{1}_{loc}$ as $h_{i}\rightarrow 0$, and $U$ is a weak solution to
the  initial-boundary value problem \eqref{eq:1.1} and
\eqref{eq:1.8}--\eqref{eq:1.9} satisfying the entropy inequality{\rm :}
For any convex entropy pair $(\eta, q)$,
\begin{eqnarray} \label{eq:8.17}
\begin{split}
\iint_{\Omega}
\Big(\eta(W(U))\psi_x+q(W(U))\psi_y\Big) dxdy&+\iint_{\Omega}\nabla_{W}\eta(W(U))G(U)\psi dxdy\\[5pt]
&+\int_{y>0}\eta(W(\bar{U}(y)))\psi(0,y) dy\geq0,
\end{split}
\end{eqnarray}
for any $\psi \in C_0^{\infty}(\Omega)$ with $\psi\geq 0$.
\end{proposition}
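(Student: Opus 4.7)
The plan is to pass to the limit $h\to 0$ in the entropy inequality \eqref{eq:8.15}, term by term, using the uniform $BV$ bound from Corollary~\ref{coro:7.1} together with the uniform Lipschitz-in-$x$ estimate from Lemma~\ref{lem:8.1}. First, since $\{U^h\}$ has uniformly bounded total variation in $y$ for every $x\ge 0$ and satisfies \eqref{eq:8.1}, the Helly-type compactness theorem of Bressan produces a subsequence $\{U^{h_i}\}$ and a function $U\in BV_{\rm loc}(\Omega)$ with $U^{h_i}\to U$ in $L^1_{\rm loc}(\Omega)$ and pointwise a.e.; moreover, $TV_y\{U(x,\cdot)\}\le C_0$ for a.e.\ $x$. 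Because $g_h\to g$ uniformly on compact $x$-intervals and the polyline boundaries $\Gamma_h$ approximate $\Gamma$, the domains $\Omega_h$ exhaust $\Omega$ in the sense that $\mathbf{1}_{\Omega_h}\to \mathbf{1}_{\Omega}$ in $L^1_{\rm loc}(\mathbb{R}^2_+)$.

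Next, I would pass to the limit in the individual terms of \eqref{eq:8.15} applied to a fixed $\psi\in C_0^\infty(\Omega)$ with $\psi\ge 0$. The volume term $\iint_{\Omega_h}(\eta(W(U^h))\psi_x+q(W(U^h))\psi_y)\,dxdy$ converges to $\iint_\Omega(\eta(W(U))\psi_x+q(W(U))\psi_y)\,dxdy$ by dominated convergence: $\eta(W(\cdot))$ and $q(W(\cdot))$ are continuous and uniformly bounded on the compact range $D_{\delta_0}(U_\infty)$, $U^{h_i}\to U$ a.e., and $\mathbf{1}_{\Omega_{h_i}}\to\mathbf{1}_\Omega$. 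The initial-data term is unchanged since $\bar U^\nu\to \bar U$ in $L^1$ and $\eta(W(\cdot))$ is continuous.

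The crux is the convergence of the reaction Riemann sum
\[
\Sigma_h(\psi):=\sum_{k\ge 1} h\int_{y>g_k}\nabla_W\eta(W(U^h(kh-,y)))\,G(U^h(kh-,y))\,\psi(kh,y)\,dy
\]
to $\iint_\Omega \nabla_W\eta(W(U))G(U)\psi\,dxdy$. The strategy is to compare $\Sigma_h(\psi)$ with the continuous integral
\[
I_h(\psi):=\iint_{\Omega_h}\nabla_W\eta(W(U^h))\,G(U^h)\,\psi\,dxdy,
\]
which converges to the desired limit by the same dominated-convergence argument as above (using boundedness of $G$ on $D_{\delta_0}(U_\infty)$ together with the uniform bound $|Z^{\nu,h}|\le 1$). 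To control $\Sigma_h(\psi)-I_h(\psi)$ on each slab $((k-1)h,kh)\times\{y>g_h(x)\}$, I would write
\[
\int_{(k-1)h}^{kh}\!\!\!\int_{y>g_h(x)}\!\!\!\bigl[\nabla_W\eta\cdot G\bigr](W(U^h(kh-,y)))\psi(kh,y)-\bigl[\nabla_W\eta\cdot G\bigr](W(U^h(x,y)))\psi(x,y)\,dydx,
\]
and bound it by $O(h)\,\|\psi\|_{C^1}$ times $(\text{Lip constant of }\nabla_W\eta\cdot G)$ times the $L^1$-modulus of continuity of $U^h$ in $x$, which is $O(h)$ by Lemma~\ref{lem:8.1}. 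Summing over $k$ within the compact support of $\psi$ gives $|\Sigma_h(\psi)-I_h(\psi)|=O(h)$, so $\Sigma_h(\psi)$ shares the same limit. The error term $\sum_{k\ge 1}h\int\mathcal{O}^\eta(h,kh-,y)\psi(kh,y)\,dy$ tends to zero because $\mathcal{O}^\eta(h,\cdot,\cdot)\to 0$ pointwise by Proposition~\ref{prop:8.1}, $|\mathcal{O}^\eta|$ is uniformly bounded by the remainder in Taylor's theorem applied to $\eta\circ W$, and the Riemann sum is supported in a compact set.

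Combining these four convergences in \eqref{eq:8.15} yields \eqref{eq:8.17}; taking $\eta(W)=\pm W$ and using $\mathcal{O}^{\pm W}\equiv 0$ recovers the weak formulation \eqref{eq:1.10}, so $U$ is a weak solution of the initial-boundary value problem \eqref{eq:1.1} and \eqref{eq:1.8}--\eqref{eq:1.9}. The main obstacle is the Riemann-sum comparison above: it is the only place where the nonhomogeneous fractional step really interacts with the limit procedure, and it requires simultaneously exploiting the $O(h)$ Lipschitz-in-$x$ estimate, the uniform $BV$ bound, and the Lipschitz continuity of $\nabla_W\eta\cdot G$ on the compact state-space $D_{\delta_0}(U_\infty)$; the remaining boundary- and domain-convergence issues are comparatively routine given the uniform convergence $g_h\to g$.
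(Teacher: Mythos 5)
Your proposal is correct and follows essentially the same route as the paper: extract a subsequence by Helly's theorem, and reduce the convergence of the fractional-step source term to a comparison between the Riemann sum and the continuous integral $\iint_{\Omega_h}\nabla_W\eta(W(U^h))G(U^h)\psi\,dxdy$, controlled slab by slab through the uniform $L^1$ Lipschitz-in-$x$ estimate of Lemma \ref{lem:8.1}, the boundedness of $G$ and $\nabla_W\eta$ on $D_{\delta_0}(U_\infty)$, and the smoothness of $\psi$, with the boundary-strip and $\Omega_h$-versus-$\Omega$ discrepancies absorbed as $O(h)$. The only (immaterial) difference is bookkeeping: you pair $U^h(kh-,\cdot)$ with the slab $((k-1)h,kh)$, which avoids the paper's extra term $I_2$ crossing the reaction jump at $x=kh$, whereas the paper pairs it with $(kh,(k+1)h)$ and controls that jump by the construction of the reacting step.
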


\begin{proof}
For
any $\psi \in C_0^{\infty}(\Omega_h)$ with $\psi\geq 0$, we have
 \begin{eqnarray} \label{eq:8.19}
\begin{split}
 &\ \ \iint_{\Omega_{h}}\Big(\eta(W(U^{h}))\psi_x+q(W(U^{h}))\psi_y\Big) dxdy
+\iint_{\Omega_{h}}\nabla_{W}\eta(W(U^{h}))G(U^{h})\psi dxdy\\[5pt]
&\ \ \ +\int_{y>0}\eta(W(\bar{U}(y)))\psi(0,y) dy
+\sum_{k\geq1}h\int_{y>g_{k}}\mathcal{O}^\eta(h,kh-,y)\psi(kh,y)dy\\[5pt]
& \geq \iint_{\Omega_{h}}\nabla_{W}\eta(W(U^{h}))G(U^{h})\psi dxdy\\[5pt]
&\ \ \ \ -\sum_{k\geq1}h\int_{y>g_{k}}\nabla_{W} \eta(W(U^{h}(kh-,y)))G(U^{h}(kh-,y))\psi(kh,y)dy\\[5pt]
&=\sum_{k\geq 0}\int^{(k+1)h}_{kh}\int^{\infty}_{g_h(x)}
\Big(\nabla_{W}\eta(W(U^{h}))G(U^{h})\psi\\
&\qquad\qquad\qquad\qquad\qquad
  -\nabla_{W}\eta(W(U^{h}(kh-,y))G(U^{h}(kh-,y))\psi(kh,y)\Big) dydx \\[5pt]
&\ \ \ \ +\sum_{k\geq0}\int^{(k+1)h}_{kh}\int^{g_h(x)}_{g_{k}}\nabla_{W}\eta(W(U^{h}(kh-,y))G(U^{h}(kh-,y))\psi(kh,y)dydx\\[5pt]
&\ \ \ \ -h\int^{\infty}_{0}\nabla_{W}\eta(W(U^{h}(0,y))G(U^{h}(0,y))\psi(0,y)dy\\[5pt]
&=:\Sigma_{1}+\Sigma_{2}-\Sigma_{3}.
 \end{split}
\end{eqnarray}

For $\Sigma_{2}-\Sigma_{3}$,
we have
 \begin{eqnarray*}
\begin{split}
\Big|\Sigma_{2}-\Sigma_{3}\Big|
&\leq\sum_{k\geq0}\Big|\int^{(k+1)h}_{kh}\int^{g_h(x)}_{g_{k}}
  |\nabla_W\eta(W(U^h(kh-,y)))|
   |G(U^{h}(kh-,y))||\psi(kh,y)|dydx\Big|\\[5pt]
&\ \ \  +h\int^{\infty}_{0} |\nabla_W\eta(W(U^h(0,y)))| |G(U^{h}(0,y))||\psi(0,y)|dy\\[5pt]
&\leq O(1)\Big(\frac{{\rm diam}({\rm supp}\, \psi)}{h}\Big)|g_h(x)-g_{k}|h+O(1)h\\[5pt]
&\leq O(1)\big(|\tan(\theta_{\rm crit})|+1\big)h \rightarrow 0\qquad \mbox{as $h\rightarrow 0$}.
\end{split}
\end{eqnarray*}

Now we estimate $\Sigma_{1}$ in \eqref{eq:8.19}. By direct computation, we have
\begin{eqnarray} \label{eq:8.21}
\begin{split}
\Sigma_{1}&=\sum_{k\geq 0}\int^{(k+1)h}_{kh}\int^{\infty}_{0}\Big(\nabla_{W}\eta(W(U^{h}(x,y+g_h(x)))) G(U^{h}(x,y+g_h(x)))\psi(x,y+g_h(x))\\[5pt]
&\ \ \ \ -\nabla_{W}\eta(W(U^{h}(kh-,y+g_h(x))))G(U^{h}(kh-,y+g_h(x)))\psi(kh,y+g_h(x))\Big)dydx\\[5pt]
&=\Sigma^{1}_{1}+\Sigma^{2}_{1},
\end{split}
\end{eqnarray}
where
\begin{eqnarray*}
\begin{split}
\Sigma^{1}_{1}&=\sum_{k\geq 0}\int^{(k+1)h}_{kh}\int^{\infty}_{0}\Big(\nabla_{W}\eta(W(U^{h}(x,y+g_h(x))))
  -\nabla_{W}\eta(W(U^{h}(kh-,y+g_h(x))))\Big)\\[5pt]
&\qquad\qquad\qquad\qquad\quad
\times G(U^{h}(x,y+g_h(x)))\psi(kh,y+g_h(x)) dydx,\\[5pt]
\Sigma^{2}_{1}&=\sum_{k\geq0}\int^{(k+1)h}_{kh}\int^{\infty}_{0}\nabla_{W}\eta(W(U^{h}(kh-,y+g_h(x))))\\[5pt]
&\qquad\qquad\qquad\qquad\,\,\, \times\Big(G(U^{h}(kh+,y+g_h(x)))\psi(x,y+g_h(x))\\[5pt]
&\qquad\qquad\qquad\qquad\quad \, \, \, \,\,\, -G(U^{h}(kh-,y+g_h(x)))\psi(kh,y+g_h(x))\Big) dydx.
\end{split}
\end{eqnarray*}

Now we consider $\Sigma^2_1$.
By direct computation, we have
\begin{eqnarray} \label{eq:8.7a}
\Sigma^2_1=
I_{1}+I_{2}+I_{3},
\end{eqnarray}
where
\begin{eqnarray*}
\begin{split}
&I_{1}=\sum_{k\geq 0}\int^{(k+1)h}_{kh}\int^{\infty}_{0}\nabla_{W}\eta(W(U^{h}(kh-,y+g_h(x))))\\
&\qquad\qquad\quad
\times \Big(G(U^{h}(x,y+g_h(x)))
-G(U^{h}(kh+,y+g_h(x)))\Big)\psi(kh,y+g_h(x)) dydx,\\[5pt]
&I_{2}=\sum_{k\geq 0}\int^{(k+1)h}_{kh}\int^{\infty}_{0}
\nabla_{W}\eta(W(U^{h}(kh-,y+g_h(x))))\\
&\qquad\qquad\quad
\times\Big(G(U^{h}(kh+,y+g_h(x)))
-G(U^{h}(kh-,y+g_h(x)))\Big)\psi(kh,y+g_h(x)) dydx,\\[5pt]
&I_{3}=\sum_{k\geq 0}\int^{(k+1)h}_{kh}\int^{\infty}_{0}
\nabla_{W}\eta(W(U^{h}(kh-,y+g_h(x))))\\
&\qquad\qquad\quad
  \times\Big(\psi(x,y+g_h(x))
-\psi(kh,y+g_h(x))\Big)G(U^{h}(x,y+g_h(x))) dydx.
\end{split}
\end{eqnarray*}
For $I_{1}$, in view of Lemma \ref{lem:8.1}, we have
\begin{eqnarray} \label{eq:8.8}
\begin{split}
\big|I_{1}\big|&\leq O(1)\sum_{0\le k\le K}\int^{(k+1)h}_{kh}\|\psi\|_{\infty}\|\nabla\eta\|_{\infty}\\
&\qquad\qquad\qquad\qquad
\times \Big(\int^{\infty}_{0}|U^{h}(x,y+g_h(x))
-U^{h}(kh+,y+g_h(x))|dy\Big)dx\\[5pt]
&\leq O(1)\sum_{0\le k\le K}\int^{(k+1)h}_{kh}  (x-kh)dx
\leq O(1){\rm diam}({\rm supp}\, \psi)\, h.
\end{split}
\end{eqnarray}
Therefore, $I_{1}\rightarrow 0$ as $h\rightarrow 0$.

\par From the construction of the approximate solutions, we have
\begin{eqnarray} \label{eq:8.9}
\begin{split}
\big|I_{2}\big|&\leq O(1)\sum_{k\geq 0}\int^{(k+1)h}_{kh_{i}}\Big(\int^{\infty}_{0}|U^{h}(kh+,y+g_h(x))
-U^{h}(kh-,y+g_h(x))|\, |\psi| dy\Big)dx\\[5pt]
&\leq O(1)\|\bar{Z}\|_{\infty}\|\psi\|_{\infty} h\sum_{0\le k\le K}\int^{(k+1)h}_{kh}e^{-Lkh}dx
\leq O(1)\|\bar{Z}\|_{\infty}\|\psi\|_{\infty}h\int^{\infty}_{0}e^{-Lx}dx.
\end{split}
\end{eqnarray}
Thus, $I_{2}\rightarrow 0$ as $h\rightarrow 0$.

\par Similarly, for $I_{3}$, we have
\begin{eqnarray} \label{eq:8.10}
\begin{split}
\big|I_{3}\big|&\leq O(1)\sum_{0\le k\le K}\int^{(k+1)h}_{kh}\int^{\infty}_{0}|\psi(x,y+g_h(x))
-\psi(kh,y+g_h(x))|\, dydx\\[5pt]
&\leq O(1)h\sum_{0\le k\le K}\int^{(k+1)h}_{kh}\|\partial_{x}\psi\|_{\infty}\, dx
\leq O(1){\rm diam}({\rm supp}\, \psi)\, h.
\end{split}
\end{eqnarray}
Hence, it follows that $I_{3}\rightarrow 0$ as $h\rightarrow 0$.
Therefore,
$\Sigma^{2}_{1}\rightarrow 0$ as $h\rightarrow 0$.

\par For $\Sigma^{1}_{1}$, we have
\begin{eqnarray} \label{eq:8.23}
\begin{split}
\big|\Sigma^{1}_{1}\big|&\leq O(1)\|\psi\|_{\infty}  \sum_{k\geq 0}\int^{(k+1)h}_{kh}\Big(\int^{\infty}_{0}|U^{h}(x,y+g_h(x))
-U^{h}(kh+,y+g_h(x))|dy\Big)dx\\[5pt]
&\leq O(1){\rm diam}({\rm supp}\, \psi)\,h.
\end{split}
\end{eqnarray}
Thus, $\Sigma^{1}_{1}\rightarrow 0$ as $h\rightarrow 0$.

\par Finally, we also have
\begin{eqnarray} \label{eq:8.24}
\begin{split}
&\ \  \ \Bigg|\iint_{\Omega}
\Big(\eta(W(U^{h}))\psi_x+q(W(U^{h}))\psi_y+\nabla_{W}\eta(W(U^{h})) G(U^{h})\psi\Big) dxdy\\[5pt]
&\quad\,\, -\iint_{\Omega_{h}}
\Big(\eta(W(U^{h}))\psi_x+q(W(U^{h}))\psi_y+\nabla_{W}\eta(W(U^{h})) G(U^{h})\psi\Big) dxdy\Bigg|\\[5pt]
&\leq \iint_{\{(\Omega \setminus\Omega_{h})\cup(\Omega_{h} \setminus\Omega)\}\cap\{x\geq0\}}
\Big(\big|\eta(W(U^{h}))\big|\big|\psi_x\big|+\big|q(W(U^{h}))\big|\big|\psi_y\big|\\
&\qquad\qquad\qquad\qquad\qquad \qquad\,\, +\big|\nabla_{W}\eta(W(U^{h})) G(U^{h})\big|\big|\psi\big|\Big) dxdy\\[5pt]
&\rightarrow 0 \qquad \mbox{as $h\rightarrow 0$}.
\end{split}
\end{eqnarray}
Moreover, by the Helly's compactness theorem, we can find a subsequence $h_{i}\rightarrow 0$ as $i\rightarrow \infty$
such that $U^{h_{i}}\rightarrow U$ in ${L}^{1}_{loc}$ as $h_{i}\rightarrow 0$.
Then, from \eqref{eq:8.19}--\eqref{eq:8.24},
it follows that $U$ satisfies the entropy inequality \eqref{eq:8.17}, which implies that
$U$ is an entropy solution of the initial-boundary value problem \eqref{eq:1.1} and
\eqref{eq:1.8}--\eqref{eq:1.9}.
 This completes the proof.
 \end{proof}

\begin{remark}\label{rem:8.1}
In particular, we choose $\eta(W)=\pm W$ in \eqref{eq:8.17} to conclude
that the limit function $U=U(x,y)$ is a weak solution
of the initial-boundary value problem \eqref{eq:1.1} and \eqref{eq:1.8}--\eqref{eq:1.9}.
\end{remark}

By Corollary \ref{coro:7.1} and Proposition \ref{prop:8.1},
we obtain the following existence theorem.

\begin{theorem}[Existence]\setcounter{theorem}{0}\label{thm:8.1}
Under assumptions $\mathbf{(H_{1})}$--$\mathbf{(H_{2})}$
with $\arctan(g'_{*}(x))\in (\theta_{\rm crit}, 0)$ for $x\in[0, \infty)$,
there exist positive constants $\varepsilon$, $\delta_0$, and $C$ such that, if
\begin{eqnarray}\label{eq:8.13}
\|\bar{U}-U_{\infty}\|_{\infty}+\big|g'_{+}(0)-g'_{*+}(0)\big|
+TV.\{\bar{U}(\cdot)\}+TV.\big\{(g'_{+}-g'_{*+})(\cdot)\big\}<\varepsilon,
\end{eqnarray}
then the initial-boundary value problem \eqref{eq:1.1} and \eqref{eq:1.8}--\eqref{eq:1.9}
admits a global entropy solution $U(x,y)$ that satisfies \eqref{eq:1.11} in the sense of distributions.
The solution is composed of a strong rarefaction wave that is a small perturbation of the complete reaction one.
In addition,
\begin{eqnarray}\label{eq:8.14}
U(x,y)\in BV_{loc}(\Omega)\cap D_{\delta_0}(U_{\infty}),
\end{eqnarray}
and
\begin{eqnarray}\label{eq:8.15a}
TV.\{U(x,\cdot);\, [g(x),\infty)\}\leq C \qquad \mbox{for any $x>0$},
\end{eqnarray}
where
$D_{\delta_0}(U_\infty)$
is  given by \eqref{eq:2.14},
and $\theta_{\rm crit}$ is a critical angle defined by \eqref{eq:2.16}.
\end{theorem}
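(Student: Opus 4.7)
The plan is to assemble the existence theorem by combining the uniform BV bounds of Corollary \ref{coro:7.1} with the two-step passage-to-the-limit results of Propositions \ref{prop:8.1}--\ref{prop:8.2}, so only bookkeeping and structural verifications remain. Given $\bar{U}$ and $g$ satisfying \eqref{eq:8.13}, I would fix $\varepsilon>0$ small enough that hypothesis \eqref{eq:7.16} is met, then run the fractional-step wave-front tracking algorithm of \S 4 to produce the family $\{U^{\nu,h}\}$. By Corollary \ref{coro:7.1}, these approximations satisfy $TV.\{U^{\nu,h}(x,\cdot)\}<C_0$ uniformly in $(\nu,h)$, and by Proposition \ref{prop:7.1}(i), $U^{\nu,h}(x,\cdot)\in D_{\delta_0}(U_\infty)$.

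Next I would invoke Proposition \ref{prop:8.1} (for fixed $h$, letting $\nu_i\to\infty$) to extract a limit $U^h\in L^1_{\rm loc}(\Omega_h)$ satisfying the approximate entropy inequality \eqref{eq:8.15}, and then Proposition \ref{prop:8.2} (letting $h_i\to 0$) to extract a further subsequence $U^{h_i}\to U$ in $L^1_{\rm loc}$ satisfying the entropy inequality \eqref{eq:8.17}. By Remark \ref{rem:8.1}, $U$ is a weak solution of \eqref{eq:1.1} and \eqref{eq:1.8}--\eqref{eq:1.9}, and by the choice $\eta(W)=-\rho uS$ (which is convex in $W$ when $\rho>0$ and $u>c$), the physical entropy condition \eqref{eq:entropy} holds. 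Thus \eqref{eq:1.11} in Definition \ref{def:1.1} is established.

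To extract \eqref{eq:8.14}--\eqref{eq:8.15a}, I would rely on two items. The inclusion $U(x,y)\in D_{\delta_0}(U_\infty)$ follows from the pointwise a.e.\ convergence and the closedness of $\overline{D_{\delta_0}(U_\infty)}$; $U\in BV_{\rm loc}(\Omega)$ and the uniform $TV$ bound \eqref{eq:8.15a} follow from $L^1$ lower semi-continuity of total variation applied to the bound $TV.\{U^{\nu,h}(x,\cdot);[g_h(x),\infty)\}<C_0$ from Corollary \ref{coro:7.1}, together with the uniform Lipschitz-in-$x$ estimate of Lemma \ref{lem:8.1} which guarantees that the boundary $y=g_h(x)$ converges uniformly on compacta to $y=g(x)$.

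The last structural claim — that the solution contains a strong rarefaction wave that is a small perturbation of the complete reaction one — is where the main work beyond a mechanical assembly lies. The approximate solutions $U^{\nu,h}$ contain tracked strong $5$-rarefaction fronts $\{s_j\}$ issuing from the corner points $A_k$ with the total magnitude bounded and individual fronts of size $O(1/\nu)$ (Proposition \ref{prop:7.1}(ii)); together with the weight $F_1$ in Definition \ref{def:5.5} controlling $|TV.\{\theta(U^{\nu,h}(x,\cdot))\}-\hat{\theta}(x)|$, this forces the corresponding $\theta$-variation to accumulate into a genuine $5$-rarefaction fan in the limit rather than being smeared by the non-physical fronts (which vanish at rate $O(2^{-\nu})$ by \eqref{eq:7.15}) or absorbed into shocks. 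The comparison with the background solution $U^*$ of Corollary \ref{cor:3.1} — which is constructed by the same convex-corner mechanism but without reaction — then identifies the strong wave as a perturbation of the complete-reaction rarefaction, with the smallness controlled by $\varepsilon$ via Lemma \ref{lem:7.5} and Proposition \ref{prop:7.1}. I expect this identification of the limiting strong wave structure, and the passage of the generalized characteristic accounting through two successive limits, to be the most delicate step; everything else is a direct application of the estimates already proven in \S 6--\S 8.
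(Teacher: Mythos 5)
Your proposal is correct and follows essentially the same route as the paper, which derives Theorem \ref{thm:8.1} directly by combining the uniform total-variation bound of Corollary \ref{coro:7.1} with the two-stage limit passage of Propositions \ref{prop:8.1}--\ref{prop:8.2} (first $\nu\to\infty$ at fixed $h$, then $h\to 0$), together with lower semicontinuity of the total variation and the closedness of $\overline{D_{\delta_0}(U_\infty)}$. Your additional remarks on identifying the limiting strong rarefaction wave go beyond what the paper writes out explicitly, but they are consistent with the estimates in \S 6--\S 7 and do not change the argument.
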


\section{Asymptotic Behavior of the Solution}\setcounter{equation}{0}

In this section, our main purpose is to investigate the asymptotic behavior of entropy solutions $U(x,y)$.
To achieve this, we need further estimates on $U^{\nu,h}(x,y)$.

\begin{lemma}\label{lem:9.1}
There exists a constant $M_{1}>0$, independent of $(\nu, h)$ and $U^{\nu,h}$, such that
\begin{equation}\label{eq:9.1}
\sum_{\tau>0}E_{ \nu,h}(\tau)<M_{1},
\end{equation}
where the summation is taken over all the interaction times,
and $E_{ \nu,h}(\tau)$ is defined as in \eqref{eq:6.3}.
\end{lemma}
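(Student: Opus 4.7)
The plan is to leverage the monotone decrease of the enlarged Glimm-type functional $\mathcal{F}(U^{\nu,h};\tau)$ defined in \eqref{eq:7.11}, combined with the quantitative decrement estimate of Proposition \ref{prop:6.1} at each interaction time. The lemma is essentially a corollary of this monotonicity, once the decrements at interior interaction times are seen to control $E_{\nu,h}(\tau)$.

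First, I would partition the set of discontinuity points of $\mathcal{F}(U^{\nu,h};\cdot)$ into two categories: (a) wave-wave interaction points, boundary reflection points, and corner points, all of which lie in the interiors of the strips $((k-1)h, kh)$; and (b) the reaction lines $x = kh$. At every point $\tau$ of category (a), the discrete sum $\mathcal{K}\sum_{jh>\tau}\|\bar{Z}\|_\infty e^{-Ljh}h$ is locally constant near $\tau$, so Proposition \ref{prop:6.1} yields
\begin{equation*}
\mathcal{F}(U^{\nu,h};\tau+) - \mathcal{F}(U^{\nu,h};\tau-) = F(U^{\nu,h};\tau+)-F(U^{\nu,h};\tau-) \leq -\tfrac14 E_{\nu,h}(\tau).
\end{equation*}
At every point of category (b), Proposition \ref{prop:7.1} provides $\mathcal{F}(kh+) \leq \mathcal{F}(kh-)$, since the reaction-induced increase in $F$ given by Lemma \ref{lem:7.7} is precisely absorbed by the drop in the tail sum $\mathcal{K}\sum_{jh>\tau}\|\bar{Z}\|_\infty e^{-Ljh}h$ for $\mathcal{K}$ chosen sufficiently large.

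Next, I would telescope over the finitely many jumps of $\mathcal{F}(U^{\nu,h};\cdot)$ on $(0,\infty)$, retaining only the favorable category-(a) decrements and discarding (as nonpositive) the category-(b) contributions. Using that $\mathcal{F}\ge 0$ and that the initial value $\mathcal{F}(U^{\nu,h};0+)\leq O(1)F(U^{\nu,h};0+)$ is controlled by the smallness parameter $\varepsilon$ in \eqref{eq:7.16} uniformly in $(\nu,h)$ (as in the proof of Corollary \ref{coro:7.1}), this gives
\begin{equation*}
\tfrac14 \sum_{\tau>0} E_{\nu,h}(\tau) \leq \mathcal{F}(U^{\nu,h};0+) - \liminf_{\tau\to\infty}\mathcal{F}(U^{\nu,h};\tau) \leq \mathcal{F}(U^{\nu,h};0+),
\end{equation*}
and one takes $M_1 := 4\,\mathcal{F}(U^{\nu,h};0+)$, which is bounded independently of $(\nu,h)$.

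The main step requiring care—rather than a genuine obstacle—is to verify that every family of terms appearing in $E_{\nu,h}(\tau)$ (pairwise products of approaching weak waves, products with non-physical or strong rarefaction fronts, the reflection term $|\alpha_1|$ at non-corner boundary points, the corner contribution $|\omega_{k-1}|$ with $\omega_{k-1}>0$, and the weak $5$-wave strength $|\alpha_5|$) is indeed dominated by the decrement $-(\mathcal{F}(\tau+)-\mathcal{F}(\tau-))$. This is exactly what the case analysis in the proof of Proposition \ref{prop:6.1} provides, so no new estimate is needed beyond combining that proposition with the tail-modified functional $\mathcal{F}$.
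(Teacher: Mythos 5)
Your proposal is correct and follows essentially the same route as the paper: both rest on the per-interaction decrement of Proposition \ref{prop:6.1} inside each strip and on Lemma \ref{lem:7.7} to control the jump at the reaction lines, followed by telescoping. The only (cosmetic) difference is that you telescope the monotone functional $\mathcal{F}$ of \eqref{eq:7.11}, which has the geometric tail $\mathcal{K}\sum_{kh>\tau}\|\bar{Z}\|_{\infty}e^{-Lkh}h$ built in, whereas the paper telescopes $F$ directly and adds the convergent series of reaction-step increases by hand.
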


\begin{proof}
First, by Proposition \ref{prop:6.1}, we  know that, for any $\tau \in[kh, (k+1)h),\ k\geq0$,
\begin{eqnarray*}
\begin{split}
F(U^{\nu, h};(k+1)h-)-F(U^{\nu, h};kh+)\leq-\frac{1}{4}\sum^{(k+1)h}_{kh}E_{\nu,h}(\tau).
\end{split}
\end{eqnarray*}
On the other hand, Lemma \ref{lem:7.7} implies that
\begin{eqnarray*}
F(U^{\nu, h};(k+1)h+)-F(U^{\nu, h};(k+1)h-)
\leq C_4\|\bar{Z}\|_{\infty}h\Big(F(U^{\nu, h};(k+1)h-)+4\Big)^2e^{-Lkh}.
\end{eqnarray*}
Then combining these two inequalities and taking the summation $k$ yield
\begin{eqnarray*}
\begin{split}
 \sum^{\infty}_{k=0}\sum^{(k+1)h}_{kh}E_{\nu,h}(\tau)
&\leq\sum^{\infty}_{k=0}\Big(F(U^{\nu,h};kh+)-F(U^{\nu, h};(k+1)h+)\Big)\\
&\ \  \ +C_4\|\bar{Z}\|_{\infty}h\sum^{\infty}_{k=0}\Big(F(U^{\nu, h};(k+1)h-)+4\Big)^{2}e^{-Lkh}\\
&\leq O(1)\big(F(U^{\nu,h};kh+)+C_4\big)<\infty.
\end{split}
\end{eqnarray*}
\end{proof}

Let $L_{\nu,h}\big(\Gamma_{h};[0, \infty)\big)$ be the summation of
the strengths of weak waves leaving the approximate boundary $\Gamma_{h}$.
Then, by Proposition \ref{prop:6.1} and Lemma \ref{lem:7.7}, we have

\begin{lemma}\label{lem:9.2}
There exists $M_{2}>0$, independent of $(\nu, h)$ and $U^{\nu,h}$, such that
\begin{equation}\label{eq:9.2}
L_{\nu,h}\big(\Gamma_{h}; [0,\infty)\big)\leq
M_{2}\Big(\sum_{\tau>0}E_{ \nu,h}(\tau)+\|\bar{Z}\|_{\infty}h\sum^{\infty}_{k=0}e^{-Lkh}\Big).
\end{equation}
\end{lemma}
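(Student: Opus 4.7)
The plan is to classify every weak wave that appears at the boundary $\Gamma_h$ in the $(\nu,h)$-approximate scheme according to the mechanism that creates it, estimate its strength via the local lemmas already proved (Lemmas \ref{lem:4.2}, \ref{lem:4.3}, \ref{lem:7.4}), and then sum these contributions against the two terms appearing on the right-hand side of \eqref{eq:9.2}.

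First I would observe that, in the construction of \S 4, a weak wave can leave $\Gamma_h$ only in one of the following three ways: (a) a weak $1$-wave hits the boundary at a non-corner point $\tau \in ((k-1)h, kh)$ and is reflected as a $5$-wave; (b) at a corner point $A_k = (kh, g_h(kh))$, the homogeneous lateral Riemann problem of Lemma \ref{lem:4.2}(i) produces a weak $5$-wave when $\omega_k$ is small (the case $\omega_k<0$ with $|\omega_k|$ not small yields a \emph{strong} rarefaction front by Lemma \ref{lem:4.2}(ii), which by Definition \ref{def:5.1} is excluded from $L_{\nu,h}$); (c) at the reaction step on $x=kh$, the boundary wave at the corner is updated according to Lemma \ref{lem:7.4}, introducing an additional contribution coming from the source term.

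Then, for (a), Lemma \ref{lem:4.3} gives a reflected wave of strength $|K_{b1}\alpha_1| = O(1)|\alpha_1|$; since the reflected quantity $|\alpha_1|$ is exactly one of the terms appearing in the definition \eqref{eq:6.3} of $E_{\nu,h}(\tau)$ at the reflection instant, the cumulative contribution from (a) is bounded by $O(1)\sum_{\tau>0}E_{\nu,h}(\tau)$. For (b), Lemma \ref{lem:4.2}(i) produces a weak wave of size $|\tilde K_b \omega_k|=O(1)|\omega_k|$; since $|\omega_k|$ (with $\omega_k>0$) is again one of the terms in \eqref{eq:6.3}, this too is controlled by $O(1)\sum_{\tau>0}E_{\nu,h}(\tau)$. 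For (c), Lemma \ref{lem:7.4} yields a generated wave of strength
\[
|\varepsilon| \le O(1)|\omega_k| + O(1)\|\bar Z\|_\infty e^{-Lkh}h.
\]
The $O(1)|\omega_k|$ part is already absorbed into (b) and hence into the $E_{\nu,h}$ sum, while the genuinely new reaction-induced part, summed over $k\ge 0$, contributes $O(1)\|\bar Z\|_\infty h\sum_{k\ge 0}e^{-Lkh}$.

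Adding the three contributions gives \eqref{eq:9.2} with $M_2$ determined only by the $O(1)$ constants in Lemmas \ref{lem:4.2}, \ref{lem:4.3}, and \ref{lem:7.4}, hence independent of $(\nu,h)$ and $U^{\nu,h}$. The main subtlety I anticipate is the bookkeeping needed to ensure the decomposition is exhaustive yet non-overlapping: one must verify that weak waves reaching the \emph{interior} from wave interactions are not counted here (by definition of $L_{\nu,h}$), that strong rarefaction fronts generated by corners with $\omega_k<0$ are correctly excluded, and that the reaction contribution at corner points in Lemma \ref{lem:7.4} is not double-counted with the homogeneous corner contribution of Lemma \ref{lem:4.2}. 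Once this classification is in place, the estimate itself is immediate from the cited lemmas.
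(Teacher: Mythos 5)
Your proposal is correct and is essentially the argument the paper intends: the paper gives no written proof beyond citing Proposition \ref{prop:6.1} and Lemma \ref{lem:7.7}, and your three-way classification (reflection at non-corner points, corner emission with $\omega_k>0$, reaction-step correction) together with the local estimates of Lemmas \ref{lem:4.2}, \ref{lem:4.3}, and \ref{lem:7.4} is precisely the content behind that citation, since the first two mechanisms are the $|\alpha_1|$ and $|\omega_{k-1}|$ entries of $E_{\nu,h}(\tau)$ in \eqref{eq:6.3} and the third is the $O(1)\|\bar Z\|_\infty e^{-Lkh}h$ term. One negligible imprecision: any corner with $\omega_k<0$ (not only ``not small'' ones) produces a $5$-rarefaction front of generation order one, hence a strong rarefaction front excluded from $L_{\nu,h}$ by Definition \ref{def:5.1}, which only strengthens your bound.
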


Define
\begin{equation}\label{eq:9.3}
B_{\nu,h}(x)=U^{\nu, h}(x,g_h(x)).
\end{equation}
Then, by Lemmas \ref{lem:9.1}--\ref{lem:9.2},
we have

\begin{lemma}\label{lem:9.3}
There exists a positive constant $M_{3}$ depending only on the system such that
\begin{equation}\label{eq:9.4}
TV.\big\{B_{\nu,h}(\cdot);\, [0, \infty)\big\}<M_{3},
\end{equation}
which yields a subsequence $({\nu_{i}}, {h_i})$ such that
\begin{eqnarray}\label{eq:9.5}
B_{\nu_i,h_i}(x)\rightarrow B(x)
\qquad \mbox{as $\nu_{i}\rightarrow\infty$ and $h_i\rightarrow 0$}
\end{eqnarray}
for any $x\in[0, \infty)$, where $B(x)\in BV([0, \infty))$ and
\begin{eqnarray} \label{eq:9.26}
\begin{split}
B(x)\cdot(g'(x), -1,0, 0, 0)=0.
\end{split}
\end{eqnarray}
\end{lemma}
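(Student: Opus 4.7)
The plan is to bound $TV\{B_{\nu,h}\}$ by a careful accounting of the discrete events that can change the boundary trace, then invoke Helly's selection theorem and pass the boundary condition to the limit. The function $B_{\nu,h}(x)$ is piecewise constant in $x$, changing values only at three kinds of events: (i) a weak wave-front reflects off the boundary at a non-corner point $(\tau, g_h(\tau))$; (ii) the corner point $A_k = (kh, g_k)$ is crossed; or (iii) a reaction step takes place on $x=kh$. Lemma \ref{lem:4.3} shows that the jump at an event of type (i) is $O(1)|\alpha_1|$ where $\alpha_1$ is the incoming weak wave. Lemma \ref{lem:4.2} gives a jump of order $|\omega_{k-1}|$ at a corner. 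Lemma \ref{lem:4.5} controls the jump at a reaction step by $O(1)\|\bar{Z}\|_\infty e^{-Lkh}h$.

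Summing these contributions, the total from (i) is at most $O(1)\,L_{\nu,h}(\Gamma_h;[0,\infty))$, and Lemma \ref{lem:9.2} together with Lemma \ref{lem:9.1} bounds this by a constant depending only on the system plus $O(1)\|\bar{Z}\|_\infty h\sum_{k\ge 0}e^{-Lkh} = O(1)\|\bar{Z}\|_\infty/L$. The total from (ii) is $\sum_k |\omega_k|$, which by hypothesis $A_3$ and $\mathbf{(H_1)}$ is bounded by $|\theta_{\mathrm{crit}}|+O(\varepsilon)$. The total from (iii) is again majorized by the geometric series $O(1)\|\bar{Z}\|_\infty/L$. Combining everything produces a uniform bound $TV\{B_{\nu,h};[0,\infty)\}<M_3$, independent of $(\nu,h)$.

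With this uniform $BV$-bound in hand, Helly's selection theorem extracts a subsequence, still denoted $B_{\nu_i,h_i}$, converging pointwise a.e.\ (and in $L^1_{\mathrm{loc}}$) to some $B\in BV([0,\infty))$, with the lower semicontinuity giving $TV\{B\}\le M_3$. To identify the limit boundary condition, note that on each segment $\Gamma_{h,k}$ the construction of the approximate solutions enforces $(u^{\nu,h},v^{\nu,h})\cdot \mathbf{n}_k=0$, i.e.\ $u^{\nu,h}\sin\theta_k - v^{\nu,h}\cos\theta_k=0$. Since $\tan\theta_k=(g_{k+1}-g_k)/h$ and $g_+'\in BV(\mathbb{R}_+)$, one has $\theta_k\to\arctan g'(x)$ at every Lebesgue point of $g_+'$ as $h\to 0$, and hence a.e.\ in $[0,\infty)$. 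Combining pointwise a.e.\ convergence of $B_{\nu_i,h_i}$ with this, one obtains $u(x)g'(x)-v(x)=0$ for a.e.\ $x\in[0,\infty)$, which is precisely \eqref{eq:9.26}.

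The step I expect to be the main obstacle is the passage of the discrete boundary relation to the pointwise a.e.\ limit, because $g'$ exists only almost everywhere and the mesh corners $\omega_k$ do not individually vanish. The key point will be to exploit $BV$-regularity of $g_+'$ (which guarantees convergence of the piecewise-constant $\arctan\theta_k$ to $g'$ a.e.\ and uniformly on suitable sets) and to align this with the Helly subsequence extraction so that the pointwise limits are taken at common Lebesgue points of both $g_+'$ and $B$.
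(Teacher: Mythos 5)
Your proposal is correct and follows essentially the same route the paper takes: the paper derives Lemma \ref{lem:9.3} directly from Lemmas \ref{lem:9.1}--\ref{lem:9.2} (together with the corner-angle bound and the exponentially decaying reaction contributions) and then applies Helly's theorem, which is exactly the event-by-event accounting you carry out. Your additional care about passing the discrete boundary relation to the a.e.\ limit via Lebesgue points of $g'_+$ is a reasonable filling-in of a step the paper leaves implicit.
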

Let
\begin{eqnarray*}
&&B_{\infty}=\lim_{x\rightarrow \infty}B(x+),\quad\,  g'_{\infty}=\lim_{x\rightarrow \infty}g'(x+),\\
&& B_{*\infty}=\lim_{x\rightarrow \infty}U^{*}(x,g_{*}(x)),\quad\  g'_{*\infty}=\lim_{x\rightarrow \infty}g'_{*}(x+)
\end{eqnarray*}
with
\begin{eqnarray}\label{eq:9.7}
\begin{split}
B_{*\infty}\cdot(g'_{*\infty}, -1,0, 0, 0)=0,
\end{split}
\end{eqnarray}
where $U^{*}(x,y)$ is the background solution stated in Corollary \ref{cor:3.1}, and $y=g_{*}(x)$ is the non-perturbed
Lipschitz convex wall. Then, owing to the properties of the background solution, we have

\begin{lemma}\label{lem:9.4}
There exists a constant $M_{4}>0$ depending only on the system such that
\begin{eqnarray}\label{eq:9.8}
\begin{split}
g'_{*\infty}-\lambda_{1}(B_{*\infty})>M_{4}.
\end{split}
\end{eqnarray}
\end{lemma}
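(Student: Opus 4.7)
The plan is to translate the slip condition at infinity into an angle identity, reduce the desired inequality to a lower bound on the Mach angle at $B_{*\infty}$, and then invoke the structural properties of the epicycloid provided by Corollary \ref{cor:3.1}.

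The first step would be to pass to the limit in the boundary condition. Relation \eqref{eq:9.7} gives $v(B_{*\infty})=g'_{*\infty}\,u(B_{*\infty})$, so
$$
\theta_\infty:=\theta(B_{*\infty})=\arctan g'_{*\infty}.
$$
Using the representation $\lambda_1=\tan(\theta-\theta_{\rm ma})$ from \eqref{eq:2.3} and setting $\theta_{{\rm ma},\infty}:=\theta_{\rm ma}(B_{*\infty})$, the subtraction formula for tangents yields
$$
g'_{*\infty}-\lambda_1(B_{*\infty})=\tan\theta_\infty-\tan(\theta_\infty-\theta_{{\rm ma},\infty})=\frac{\sin\theta_{{\rm ma},\infty}}{\cos\theta_\infty\,\cos(\theta_\infty-\theta_{{\rm ma},\infty})}.
$$
Thus the required bound will follow from a uniform positive lower bound on $\sin\theta_{{\rm ma},\infty}$ together with uniform positive lower bounds on $\cos\theta_\infty$ and $\cos(\theta_\infty-\theta_{{\rm ma},\infty})$.

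The second step is to control $\theta_{{\rm ma},\infty}$. By Corollary \ref{cor:3.1}, $B_{*\infty}$ lies on the branch of the epicycloid $R_5^+(U_\infty)$, and the hypothesis $\arctan g'_*(x)\in(\theta_{\rm crit},0)$, together with the monotonicity of $g'_*$ and the quantitative upper bound on $\sum_k|\omega(A_k)|$ carried over from $\mathbf{(H_1)}$ and $A_3(\tau-)$, ensures that $\theta_\infty\in(\theta_{\rm crit},0]$ with a uniform separation $\theta_\infty-\theta_{\rm crit}\ge\delta_{0*}>0$. Since $\theta_{\rm ma}(U)=\arctan(c/\sqrt{q^2-c^2})$ is a smooth strictly positive function of position along the epicycloid that vanishes only in the vacuum limit $q\to q_*$ (equivalently $\theta\to\theta_{\rm crit}$), this separation yields $\theta_{{\rm ma},\infty}\ge\vartheta_*>0$ for a constant $\vartheta_*$ depending only on $U_\infty$ and $\delta_{0*}$. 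The two cosines are bounded below by the same argument, since both $\theta_\infty$ and $\theta_\infty-\theta_{{\rm ma},\infty}$ remain in a compact subinterval of $(-\pi/2,\pi/2)$. Substituting these bounds into the identity above gives \eqref{eq:9.8} with a system-dependent $M_4$.

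I expect the delicate point to be this last positive lower bound on $\theta_{{\rm ma},\infty}$: the estimate would fail if $\theta_\infty$ were allowed to reach $\theta_{\rm crit}$, because $\theta_{\rm ma}$ degenerates to $0$ precisely at the vacuum end of the epicycloid. The rescue relies on the strict inequality in $\mathbf{(H_1)}$ combined with the quantitative bound of $A_3(\tau-)$ on the total turning of the wall, and it has to be checked that the small constant $\delta_{0*}$ can be chosen coherently with the other parameters fixed in earlier sections. Everything else reduces to continuity on the compact portion of the epicycloid swept out by the background state.
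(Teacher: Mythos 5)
Your proposal is correct and follows essentially the same route as the paper: the paper also writes $g'_{*\infty}=\tan\theta^{*}_{\infty}$, $\lambda_{1}(B_{*\infty})=\tan(\theta^{*}_{\infty}-\theta^{*}_{{\rm ma},\infty})$, applies the tangent subtraction identity (in the equivalent form $\frac{(1+\tan^{2}\theta^{*}_{\infty})\tan\theta^{*}_{{\rm ma},\infty}}{1+\tan\theta^{*}_{\infty}\tan\theta^{*}_{{\rm ma},\infty}}$, which equals your $\frac{\sin\theta_{{\rm ma},\infty}}{\cos\theta_{\infty}\cos(\theta_{\infty}-\theta_{{\rm ma},\infty})}$), checks positivity of the denominator, and reduces the bound to $\tan\theta^{*}_{{\rm ma},\infty}>0$ for $B_{*\infty}$ in the region $\{u>c_{*},\,q<q_{*}\}$. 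Your explicit discussion of why the Mach angle at infinity is uniformly bounded away from zero (separation of $\theta_{\infty}$ from $\theta_{\rm crit}$) is left implicit in the paper but is exactly the point that makes $M_{4}$ a genuine constant.
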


\begin{proof}
By Corollary \ref{cor:3.1} and \eqref{eq:9.7}, we know that
\begin{eqnarray*}
\begin{split}
g'_{*\infty}=\tan\theta^{*}_{\infty},
\quad \lambda_{1}(B_{*\infty})=\tan(\theta^{*}_{\infty}-\theta^{*}_{{\rm ma}, \infty}).
\end{split}
\end{eqnarray*}
Then
\begin{eqnarray*}
g'_{*\infty}-\lambda_{1}(B_{*\infty})=
\frac{(1+\tan^{2}\theta^{*}_{\infty})\tan\theta^{*}_{{\rm ma}, \infty}}{1+\tan\theta^{*}_{\infty}\tan\theta^{*}_{{\rm ma}, \infty}}.
\end{eqnarray*}
Since $0<\theta^{*}_{{\rm ma}, \infty}<\frac{\pi}{2}$
and $B_{*\infty}\in\{U: u>c_{*},\ q<q_{*}\}$, then
\begin{eqnarray*}
1+\tan\theta^{*}_{\infty}\tan\theta^{*}_{{\rm ma}, \infty}=
\frac{\cos(\theta^{*}_{\infty}-\theta^{*}_{{\rm ma}, \infty})}{\cos\theta^{*}_{\infty}\cos\theta^{*}_{{\rm ma}, \infty}}>0.
\end{eqnarray*}
Therefore, we have
\begin{eqnarray*}
g'_{*\infty}-\lambda_{1}(B_{*\infty})=
\frac{(1+\tan^{2}\theta^{*}_{\infty})\tan\theta^{*}_{{\rm ma}, \infty}}{1+\tan\theta^{*}_{\infty}\tan\theta^{*}_{{\rm ma}, \infty}}>\tan\theta^{*}_{{\rm ma}, \infty},
\end{eqnarray*}
where we have used $\theta_{\rm crit}<\theta^{*}_{\infty}<0$. This completes the proof.
\end{proof}

\begin{lemma}\label{lem:9.5}
For $\varepsilon$ given in Theorem {\rm \ref{thm:8.1}} sufficiently small,
\begin{eqnarray}\label{eq:9.9}
\begin{split}
g'_{\infty}-\lambda_{1}(B_{\infty})>\frac{M_{4}}{2},
\end{split}
\end{eqnarray}
where $M_{4}$ is the same as in \eqref{eq:9.8}.
\end{lemma}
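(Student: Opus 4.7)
The plan is to derive \eqref{eq:9.9} by a perturbation argument from the corresponding bound \eqref{eq:9.8} for the unperturbed background problem, using smallness of $\varepsilon$ to control both $|g'_\infty-g'_{*\infty}|$ and $|B_\infty-B_{*\infty}|$, together with continuity of $\lambda_1$.

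First, I would control $|g'_\infty - g'_{*\infty}|$ directly from hypothesis $\mathbf{(H_1)}$: since $\|g'_+ - g'_{*+}\|_{BV(\mathbb{R}_+)}\le \varepsilon$, both one-sided limits at infinity exist and $|g'_\infty - g'_{*\infty}|\le \varepsilon$. Next, I would show that $|B_\infty - B_{*\infty}|=O(\varepsilon)$. For this, I would compare the boundary traces $B_{\nu,h}(x)$ of the approximate solution with the boundary trace $B^*_{\nu,h}(x):=U^{*\nu,h}(x,g_{*h}(x))$ of an analogous approximation of the background solution built from the same scheme (with $Z\equiv 0$ and bending wall $g_{*h}$). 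By Lemma~\ref{lem:9.3} and an identical argument for the background, both have uniformly bounded total variation on $[0,\infty)$; in particular the limits $B_\infty$ and $B_{*\infty}$ exist. The two approximate traces differ at $x=0$ by $O(\varepsilon)$ (by \eqref{eq:8.13}), and the only new contributions to $|B_{\nu,h}-B^*_{\nu,h}|$ as $x$ increases come from (i) differences $\omega_k-\omega^*_k$ of the corner angles, whose total is $O(\varepsilon)$ by $\mathbf{(H_1)}$, (ii) weak waves leaving the wall, whose total strength is controlled by Lemma~\ref{lem:9.2} and hence by $\varepsilon+\|\bar Z\|_\infty h\sum_k e^{-Lkh}=O(\varepsilon)$ uniformly in $(\nu,h)$, and (iii) the reacting steps, whose cumulative contribution is bounded by $O(1)\|\bar Z\|_\infty h\sum_k e^{-Lkh}=O(\varepsilon)$ by Lemma~\ref{lem:4.5} together with the assumption that $\bar Z$ is small inside the neighborhood $D_{\delta_0}(U_\infty)$. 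Passing to the limit $\nu_i\to\infty,\, h_i\to 0$ and then $x\to\infty$ yields
\begin{equation*}
|B_\infty - B_{*\infty}|\le C\,\varepsilon
\end{equation*}
for a constant $C$ independent of $\varepsilon$.

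Finally, $\lambda_1(U)=\tan(\theta-\theta_{\rm ma})$ is a smooth function of $U$ on the supersonic region $D_{\delta_0}(U_\infty)$, so by the mean value theorem $|\lambda_1(B_\infty)-\lambda_1(B_{*\infty})|\le C'|B_\infty-B_{*\infty}|\le C'C\,\varepsilon$. Combining the three estimates,
\begin{equation*}
g'_\infty-\lambda_1(B_\infty)\ge\bigl(g'_{*\infty}-\lambda_1(B_{*\infty})\bigr)-|g'_\infty-g'_{*\infty}|-|\lambda_1(B_\infty)-\lambda_1(B_{*\infty})|\ge M_4-(1+C'C)\varepsilon,
\end{equation*}
so choosing $\varepsilon$ small enough that $(1+C'C)\varepsilon<M_4/2$ gives \eqref{eq:9.9}.

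The main obstacle is the second step: obtaining the $O(\varepsilon)$ stability of the boundary trace at $x=\infty$. One must show that none of the accumulated contributions along the wall (reflected weak waves, weak waves generated at corner differences, and reaction source corrections) grow out of control as $x\to\infty$. This is where the exponential decay $e^{-Lkh}$ from Lemma~\ref{lem:4.5} combined with the monotone Glimm-type functional $\mathcal{F}(U^{\nu,h};\cdot)$ from Proposition~\ref{prop:7.1} and the bound on $\sum_\tau E_{\nu,h}(\tau)$ from Lemma~\ref{lem:9.1} are all essential; without the exponential decay of $Z$, the reaction contributions on the boundary could in principle sum up and destroy the perturbative comparison with the background solution.
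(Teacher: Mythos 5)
Your overall skeleton agrees with the paper: perturb the background bound \eqref{eq:9.8} by showing $|g'_\infty-g'_{*\infty}|=O(\varepsilon)$ and $|B_\infty-B_{*\infty}|=O(\varepsilon)$, then use smoothness of $\lambda_1$. The first and third steps are fine and match the paper. The problem is your second step, which is both the heart of the lemma and the place where your argument has a genuine gap.

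You propose to control $|B_\infty-B_{*\infty}|$ by comparing the boundary traces of \emph{two different} approximate solutions (the perturbed one and a background one) as $x$ increases, claiming that the only contributions to $|B_{\nu,h}-B^{*}_{\nu,h}|$ are the corner-angle differences, the weak waves leaving the wall, and the reacting steps. None of the lemmas you cite supports this: Lemma \ref{lem:9.2} and Lemma \ref{lem:9.1} are single-solution estimates, not a stability estimate between two solutions with different boundaries. In particular, at each corner both traces jump by a strong rarefaction of magnitude $\sim|\omega_k|$, whose cumulative size is of order $|\theta_{\rm crit}|$, i.e.\ order one; to conclude that the \emph{difference} of the two traces stays $O(\varepsilon)$ through infinitely many such order-one jumps you would need a Gronwall-type two-solution comparison (essentially an $L^\infty$ stability theory in the presence of strong rarefactions), which is nowhere established in the paper and is substantially harder than the lemma itself. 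The paper avoids this entirely: by the tangency conditions \eqref{eq:9.26} and \eqref{eq:9.7}, $\theta(B_\infty)=\arctan(g'_\infty)$ and $\theta(B_{*\infty})=\arctan(g'_{*\infty})$, so $|\theta(B_\infty)-\theta(B_{*\infty})|=O(\varepsilon)$ directly from $\mathbf{(H_{1})}$; the remaining four coordinates are controlled through the generalized Riemann invariants $J+\theta$, $\tfrac{q^2}{2}+\tfrac{c^2}{\gamma-1}$, $\tfrac{p}{\rho^\gamma}$, and $Z$, which are exactly conserved across rarefaction waves (including the strong one) and, by Lemma \ref{lem:6.2} together with the uniform Glimm-functional bounds and $\|\bar Z\|_\infty=O(\varepsilon)$, differ between $B_\infty$ and $B_{*\infty}$ by $O(\varepsilon)$. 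Since $(\theta, J+\theta, \mathcal{B}, p/\rho^\gamma, Z)$ is a nondegenerate coordinate system on $D_{\delta_0}(U_\infty)$, this yields $|B_\infty-B_{*\infty}|=O(\varepsilon)$ without ever comparing the two evolutions. You should replace your trace-comparison argument by this invariant-based one; as written, your step 2 does not close.
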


\begin{proof}
By the assumption of Theorem \ref{thm:8.1}, we have
\begin{eqnarray*}
\begin{split}
g'_{\infty}-g'_{*\infty}=O(1)\varepsilon.
\end{split}
\end{eqnarray*}
On the other hand, by \eqref{eq:9.26}--\eqref{eq:9.7}, we have
\begin{eqnarray*}
\theta(B_{\infty})-\theta(B_{*\infty})=O(1)\Big(\arctan(g'_{\infty})-\arctan(g'_{*\infty})\Big)
=O(1)\big(g'_{\infty}-g'_{*\infty}\big)=O(1)\varepsilon.
\end{eqnarray*}
Since
\begin{eqnarray*}
 \Big|\Big(J\big(q,\frac{q^{2}}{2}+\frac{c^{2}}{\gamma-1}\big)+\theta\Big)(B_{\infty})
-\Big(J\big(q,\frac{q^{2}}{2}+\frac{c^{2}}{\gamma-1}\big)+\theta\Big)(B_{*\infty})\Big|=O(1)\varepsilon,
\end{eqnarray*}
and
\begin{eqnarray*}
\begin{split}
&\Big|\Big(\frac{q^{2}}{2}+\frac{c^{2}}{\gamma-1}\Big)(B_{\infty})
   -\Big(\frac{q^{2}}{2}+\frac{c^{2}}{\gamma-1}\Big)(B_{*\infty})\Big|=O(1)\varepsilon,\\[5pt]
&\Big|\frac{p}{{\rho}^{\gamma}}(B_{\infty})-\frac{p}{{\rho}^{\gamma}}(B_{*\infty})\Big|=O(1)\varepsilon,
\end{split}
\end{eqnarray*}
then
\begin{eqnarray*}
\begin{split}
B_{\infty}-B_{*\infty}=O(1)\varepsilon,
\end{split}
\end{eqnarray*}
which leads to
\begin{eqnarray*}
\begin{split}
\lambda_{1}(B_{\infty})-\lambda_{1}(B_{*\infty})=O(1)\varepsilon.
\end{split}
\end{eqnarray*}
Thus, for $\varepsilon$ sufficiently small and $M_{4}$ sufficiently large, we have
\begin{eqnarray*}
g'_{\infty}-\lambda_{1}(B_{\infty})=g'_{*\infty}-\lambda_{1}(B_{*\infty})+O(1)\varepsilon
>M_{4}+O(1)\varepsilon>\frac{M_{4}}{2}.
\end{eqnarray*}
This completes the proof.
\end{proof}

As a direct consequence of Lemma \ref{lem:9.1}, we have
\begin{corollary}\label{coro:9.1}
For any $0<\delta\ll \frac{g'_{\infty}-\lambda_{1}(B_{\infty})}{8}$ sufficiently small,
there exists $x_{\delta}>0$
independent of $(\nu, h)$ such that
\begin{eqnarray}\label{eq:9.10}
\sum_{\tau>x_{\delta}}E_{\nu,h}(\tau)<\delta, \qquad
L_{\nu,h}\big(\Gamma_{h};[x_{\delta},\infty)\big)<\delta,
\end{eqnarray}
where $\delta>0$ depending only on the system.
\end{corollary}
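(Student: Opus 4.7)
The plan is to derive \eqref{eq:9.10} directly from the tail version of the telescoping estimate that already appeared in the proof of Lemma \ref{lem:9.1}, combined with the exponentially decaying reaction contribution. The second inequality in \eqref{eq:9.10} will then follow by applying Lemma \ref{lem:9.2} on the half-line $[x_\delta,\infty)$ in place of $[0,\infty)$.

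First, I would recycle the argument in Lemma \ref{lem:9.1}, this time summing from $k=K$ to $\infty$ in the combined one-step estimates from Proposition \ref{prop:6.1} and Lemma \ref{lem:7.7}. After telescoping, this yields
\begin{equation*}
\tfrac14\sum_{\tau>Kh}E_{\nu,h}(\tau)\ \le\ F(U^{\nu,h};Kh+)-F(U^{\nu,h};\infty+)\ +\ C\|\bar Z\|_\infty\sum_{k>K}e^{-Lkh}h.
\end{equation*}
The reaction tail on the right is majorized by $e^{-LKh}/L$, which is smaller than $\delta/3$ once $Kh$ exceeds a threshold that depends only on $L$, $\|\bar Z\|_\infty$, and the system.

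For the $F$--difference I would appeal to the monotone functional $\mathcal F(U^{\nu,h};\cdot)$ from \eqref{eq:7.11}. By Proposition \ref{prop:7.1} this functional is non-increasing and uniformly bounded by $\mathcal F(U^{\nu,h};0+)\le O(1)F(U^{\nu,h};0+)$. Since $\mathcal F$ differs from $F$ only by the exponentially decaying correction $\mathcal K\sum_{kh>x}\|\bar Z\|_\infty e^{-Lkh}h$, the quantity $F(U^{\nu,h};Kh+)-F(U^{\nu,h};\infty+)$ agrees with $\mathcal F(U^{\nu,h};Kh+)-\mathcal F(U^{\nu,h};\infty)$ modulo a term of order $e^{-LKh}$. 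The total decrement of $\mathcal F$ past $Kh$ equals the sum of the non-reacting decrements $\tfrac14\sum_{\tau>Kh}E_{\nu,h}(\tau)$ plus the (non-positive) reacting contributions, which couples the two sides of the inequality above and, together with the uniform majorant $M_1$ from Lemma \ref{lem:9.1}, produces a uniform $x_\delta$ via a Cauchy argument on this common majorant.

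Once the first estimate is in hand, the second follows by applying Lemma \ref{lem:9.2} on $[x_\delta,\infty)$:
\begin{equation*}
L_{\nu,h}\big(\Gamma_h;[x_\delta,\infty)\big)\ \le\ M_2\Big(\sum_{\tau>x_\delta}E_{\nu,h}(\tau)+\|\bar Z\|_\infty h\!\!\sum_{kh\ge x_\delta}\!\!e^{-Lkh}\Big),
\end{equation*}
and after possibly enlarging $x_\delta$ both summands are bounded by $\delta/(2M_2)$. The only delicate point is extracting the uniform-in-$(\nu,h)$ smallness of $\mathcal F(U^{\nu,h};Kh+)-\mathcal F(U^{\nu,h};\infty)$: a monotone bounded sequence is a priori Cauchy only pointwise, so uniformity must be pulled out of the coupling between the interaction tail $\sum_{\tau>Kh}E_{\nu,h}(\tau)$ and the $\mathcal F$--decrement, using that the \emph{total} mass of that tail is uniformly controlled by Lemma \ref{lem:9.1} and that the reaction correction decays at a rate $L$ inherited from Lemma \ref{lem:4.5}.
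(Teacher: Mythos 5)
Your overall route---telescoping the one-step estimates of Proposition \ref{prop:6.1} and Lemma \ref{lem:7.7} over the tail $k\ge K$, absorbing the reaction contribution into a term of size $e^{-LKh}/L$, and then feeding the first inequality of \eqref{eq:9.10} into Lemma \ref{lem:9.2} to get the second---is the natural one, and the second half of your argument is fine. The gap is in how you control $F(U^{\nu,h};Kh+)-F(U^{\nu,h};\infty+)$. You propose to extract uniform-in-$(\nu,h)$ smallness of this difference ``via a Cauchy argument on the common majorant'' $M_1$ from Lemma \ref{lem:9.1}. That does not work: a uniform bound on the totals $\sum_{\tau>0}E_{\nu,h}(\tau)\le M_1$ gives no uniform rate of decay for the tails (think of a family of interaction measures each concentrating mass $M_1$ near $\tau=1/h$). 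Monotonicity of $\mathcal F$ makes each individual sequence Cauchy, but, as you yourself observe, only pointwise in $(\nu,h)$; the ``coupling'' you invoke cannot upgrade this to uniformity, because nothing forces the decrement of $\mathcal F$ past $Kh$ to be small at a $(\nu,h)$-independent rate.

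The estimate can nevertheless be closed within your framework, but only in the form the corollary actually asserts, namely for a single admissible $\delta$ ``depending only on the system,'' not for arbitrarily small $\delta$. Since $F\ge 0$ (every constituent of $F_0$ and $F_1$ is nonnegative), you may simply discard the term $-F(U^{\nu,h};\infty+)$ and write
\[
\tfrac14\sum_{\tau>Kh}E_{\nu,h}(\tau)\le F(U^{\nu,h};Kh+)+\tfrac{C}{L}\|\bar Z\|_{\infty}e^{-LKh}
\le\mathcal F(U^{\nu,h};0+)+\tfrac{C}{L}e^{-LKh}=O(1)\varepsilon+\tfrac{C}{L}e^{-LKh},
\]
using the monotonicity of $\mathcal F$ from Proposition \ref{prop:7.1} and $\mathcal F(U^{\nu,h};0+)=O(1)F(U^{\nu,h};0+)=O(1)\varepsilon$ from Corollary \ref{coro:7.1}. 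Choosing $x_\delta=Kh$ large kills the exponential term, and the residual $O(1)\varepsilon$ sits below the threshold $\delta\ll (g'_{\infty}-\lambda_{1}(B_{\infty}))/8$ once $\varepsilon$ is small; this bound is manifestly uniform in $(\nu,h)$. If you genuinely need an $x_\delta$ for every small $\delta$ (as is used later in Lemma \ref{lem:9.10}), the paper obtains that only along a subsequence, via weak-star convergence of the interaction measures $dE_{\nu_i,h_i}$---that compactness step, not a Cauchy argument on a common majorant, is what replaces the missing uniformity.
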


Let $y=\psi^{5}_{g, k_{i}}(x)$ be the approximate $5$-strong rarefaction front generated by the corner point
$A_{k_i}=(x_{k_{i}}, g_h(x_{k_{i}}))$, $k_{i}\geq [\frac{x_{\delta}}{h}]+1$,
which is denoted by $R^{b}_{5, k_i}$.
Let $\Omega_{k_{i}}$
be the region between $R^{b}_{5, k_i}$ and $R^{b}_{5,k_{i}+1}$; see Fig. 9.1.
\vspace{10pt}
\begin{center}\label{fig13}
\begin{tikzpicture}[scale=1.3]
\draw [ultra thick] (-7,0)--(-5.5,0)-- (-3.5,-0.5)--(-2.3,-1);
\draw [thick](-5.5,0) --(-4,1.8);
\draw [thick](-3.5,-0.5) --(-2,1.0);
\node at (-5.5, -0.2) {$A_{k_{i}}$};
\node at (-3.5, -0.7) {$A_{k_{i}+1}$};
\node at (-3.5, 0.8) {$\Omega_{k_{i}}$};
\node at (-1.7, 1.2) {$R^{b}_{5, k_{i}+1}$};
\node at (-3.9, 2) {$R^{b}_{5, k_i}$};
\node [below] at (-4.7, -0.8)
{Fig. 9.1};
\end{tikzpicture}
\end{center}
\hspace{10pt}
Let $L_{\nu, h}(\Omega_{k_{i}})$ be the total amount of the strengths of weak waves entering $\Omega_{k_{i}}$.
Then we have

\begin{lemma}\label{lem:9.6}
For $\varepsilon$ and $\delta$ given as in Theorem {\rm \ref{thm:8.1}} and Corollary {\rm \ref{coro:9.1}},
\begin{eqnarray}\label{eq:9.12}
L_{\nu, h}(\Omega_{k_{i}})=O(1)(\varepsilon+\delta).
\end{eqnarray}
\end{lemma}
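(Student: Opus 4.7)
\textbf{Proof plan for Lemma \ref{lem:9.6}.}

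The strategy is to decompose $L_{\nu,h}(\Omega_{k_i})$ according to the portions of $\partial\Omega_{k_i}$ through which weak waves may enter, and to bound each contribution using the smallness provided by Corollary \ref{coro:9.1} together with the initial-data estimate \eqref{eq:7.16}.

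\emph{Decomposition.} First I would split the waves counted by $L_{\nu,h}(\Omega_{k_i})$ into two classes. Class (A) consists of (i) the weak waves reflected from the wall segment $\Gamma_h\cap(A_{k_i},A_{k_i+1})$, together with (ii) the weak $5$-shock generated at the vertex $A_{k_i+1}$ when $\omega_{k_i+1}>0$. Class (B) consists of weak waves entering $\Omega_{k_i}$ through $R^b_{5,k_i}$ from the exterior. By the ordering $\lambda_j<\lambda_5$ for $1\le j\le 4$, no physical weak wave enters through $R^b_{5,k_i+1}$ from the right, while non-physical fronts have total strength $O(2^{-\nu})$ by Proposition \ref{prop:7.1}(iii) and may be absorbed into the $O(\delta)$ part.

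\emph{Bounding Class (A).} Since $k_ih\ge x_{\delta}$, both pieces of Class (A) arise beyond $x_{\delta}$, so Corollary \ref{coro:9.1} gives
\begin{equation*}
L_{\nu,h}\bigl(\Gamma_h;[x_{k_i},x_{k_i+1}]\bigr)\le L_{\nu,h}\bigl(\Gamma_h;[x_{\delta},\infty)\bigr)<\delta,
\qquad |\omega_{k_i+1}^+|\le\sum_{\tau>x_{\delta}}E_{\nu,h}(\tau)<\delta,
\end{equation*}
yielding a total $O(\delta)$ bound for Class (A).

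\emph{Bounding Class (B).} This is the core of the argument. Each weak wave $\alpha$ counted in Class (B) must traverse the $\nu$ approximate rarefaction fronts of $R^b_{5,k_i}$, and every one of these crossings contributes a term $|\alpha|\,|s|$ to some $E_{\nu,h}(\tau)$ at a time $\tau>x_{k_i}>x_{\delta}$. I would trace each such $\alpha$ back along its (generalized) characteristic to its source: either the initial data at $x=0$, or a wall reflection or corner-generated wave at some earlier time, or a weak--weak interaction product. By \eqref{eq:7.16} the initial-data contribution is $O(\varepsilon)$; by the monotonicity of the Glimm-type functional (Propositions \ref{prop:6.1}, \ref{prop:7.1} and Lemma \ref{lem:7.7}) the contributions of later sources are controlled by the summed interaction and reaction terms, which split as an $O(\varepsilon)$ piece coming from $[0,x_{\delta}]$ and an $O(\delta)$ piece from $(x_{\delta},\infty)$, the latter via Corollary \ref{coro:9.1}. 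Putting these together bounds Class (B) by $O(\varepsilon+\delta)$.

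\emph{Main obstacle.} The hardest point will be to handle a weak wave generated well before $x_{\delta}$ that nevertheless enters $\Omega_{k_i}$. One must show that the decreasing property of the Glimm-type functional $F$ across all reacting steps (Lemma \ref{lem:7.7}) propagates the initial $O(\varepsilon)$ smallness up to time $x_{k_i}$, and that the uniform total-strength bound $\sum_s|s|\le C_7$ from Proposition \ref{prop:7.1}(ii) prevents the cumulative "crossing cost" along $R^b_{5,k_i}$ from blowing up as $|R^b_{5,k_i}|\to 0$. Once these two points are secured, combining Classes (A) and (B) gives the claimed estimate $L_{\nu,h}(\Omega_{k_i})=O(1)(\varepsilon+\delta)$, uniformly in $i$, $\nu$ and $h$.
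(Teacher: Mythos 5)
Your proposal is correct and follows essentially the same route as the paper: the paper bounds $L_{\nu,h}(\Omega_{k_i})$ in one step by $O(1)\big(\sum_{\tau>x_{\delta}}E_{\nu,h}(\tau)+\|\bar{Z}\|_{\infty}h\sum_{k\geq[x_{\delta}/h]+1}e^{-Lkh}+L_{\nu,h}(\Gamma_h;[x_{\delta},\infty))+F(U^{\nu,h};x_{\delta}+)\big)$ and then invokes Theorem \ref{thm:8.1}, Lemma \ref{lem:9.2}, and Corollary \ref{coro:9.1}, which is exactly the accounting your Classes (A) and (B) carry out (wall and corner sources after $x_{\delta}$ give the $\delta$-terms; waves already present at $x_{\delta}$ are absorbed into $F(U^{\nu,h};x_{\delta}+)=O(\varepsilon)$ by monotonicity of $\mathcal{F}$; interaction products and reaction contributions after $x_{\delta}$ give the remaining small terms). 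Your "main obstacle" is not a real one: the cumulative crossing cost along $R^b_{5,k_i}$ is already counted in $\sum_{\tau>x_{\delta}}E_{\nu,h}(\tau)<\delta$ together with the uniform bound $\sum_s|s|\le C_7$, so nothing further is needed.
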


\begin{proof}
Using Proposition \ref{prop:6.1} and Lemma \ref{lem:7.7}, we have
\begin{eqnarray*}
\begin{split}
L_{\nu, h}(\Omega_{k_{i}})\leq O(1)\Big(&\sum_{\tau>x_{\delta}}E_{\nu,h}(\tau)
+\|\bar{Z}\|_{\infty}h\sum^{\infty}_{k\geq [\frac{x_{\delta}}{h}]+1}e^{-Lkh}\\
&+L_{\nu, h}\big(\Gamma_{h};[x_{\delta},\infty)\big)+F(U^{v,h};x_{\delta}+)\Big),
\end{split}
\end{eqnarray*}
which leads to the desired result by Theorem \ref{thm:8.1}, Lemma \ref{lem:9.2}, and Corollary \ref{coro:9.1}.
\end{proof}

For any positive constant $M>0$, let
\begin{eqnarray}\label{eq:9.13}
\begin{split}
M_{5}=\inf \Big\{M\,:\,\lambda_{5}(B_{\nu,h}(x))-\lambda_{1}(B_{\nu,h}(x))>M,\  B_{\nu,h}(x)\in D_{\delta_{0}}(U_{\infty})\Big\},
\end{split}
\end{eqnarray}
where $M_{5}>0$ is independent of $(\nu, h)$, and $B_{\nu,h}(x)$ is defined as in \eqref{eq:9.3}.
 Then

\begin{lemma}\label{lem:9.7}
For any points $(x_{j},y_{j})\in \Omega_{k_{i}}$, $j=1, 2$,
\begin{equation}\label{eq:9.14}
\lambda_5(U^{\nu,h}(x_1,y_1))-\lambda_1(U^{\nu,h}(x_2,y_2))>\frac{M_{5}}{2},
\end{equation}
where $M_{5}$ is defined as in \eqref{eq:9.13}.
\end{lemma}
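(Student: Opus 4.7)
The plan is to deduce the estimate from two ingredients already at hand: (i) the bound $L_{\nu,h}(\Omega_{k_i})=O(1)(\varepsilon+\delta)$ from Lemma~\ref{lem:9.6}, which controls all weak waves inside $\Omega_{k_i}$; and (ii) the uniform separation $\lambda_5(B_{\nu,h}(x))-\lambda_1(B_{\nu,h}(x))>M_5$ built into the definition \eqref{eq:9.13}. The key observation is that, since the two strong $5$-rarefaction fronts $R^b_{5,k_i}$ and $R^b_{5,k_i+1}$ form the lateral boundaries of $\Omega_{k_i}$ (together with a piece of $\Gamma_h$), any two points of $\Omega_{k_i}$ can be connected by a path inside $\Omega_{k_i}$ that crosses only weak wave-fronts; no strong rarefaction front is encountered along such a path.

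First, I fix an arbitrary boundary point $(x_0,g_h(x_0))$ with $x_0\in[x_{k_i},x_{k_i+1}]$ and set $B_0:=B_{\nu,h}(x_0)$. Given $(x_j,y_j)\in\Omega_{k_i}$, I choose a piecewise-vertical (in $y$) path inside $\Omega_{k_i}$ from $(x_0,g_h(x_0))$ to $(x_j,y_j)$ avoiding interaction points. Summing the jumps of $U^{\nu,h}$ across the weak fronts traversed and invoking Lemma~\ref{lem:9.6}, I obtain
\begin{equation*}
\bigl|U^{\nu,h}(x_j,y_j)-B_0\bigr|\le L_{\nu,h}(\Omega_{k_i})=O(1)(\varepsilon+\delta),\qquad j=1,2.
\end{equation*}

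Next, since $U^{\nu,h}(x,\cdot)\in D_{\delta_0}(U_\infty)$ by Proposition~\ref{prop:7.1}, both $\lambda_1$ and $\lambda_5$ are smooth in a neighbourhood of $B_0$, so there is a constant $C_\lambda$ depending only on the system with
\begin{equation*}
\bigl|\lambda_m(U^{\nu,h}(x_j,y_j))-\lambda_m(B_0)\bigr|\le C_\lambda\,O(1)(\varepsilon+\delta),\qquad m=1,5,\ j=1,2.
\end{equation*}
Combining with the definition \eqref{eq:9.13} of $M_5$ applied at $x_0$ yields
\begin{equation*}
\lambda_5(U^{\nu,h}(x_1,y_1))-\lambda_1(U^{\nu,h}(x_2,y_2))\ge \lambda_5(B_0)-\lambda_1(B_0)-2C_\lambda\,O(1)(\varepsilon+\delta)>M_5-\tilde{C}(\varepsilon+\delta).
\end{equation*}
Choosing $\varepsilon$ and $\delta$ small enough (consistently with Theorem~\ref{thm:8.1} and Corollary~\ref{coro:9.1}) so that $\tilde{C}(\varepsilon+\delta)<M_5/2$ produces the desired estimate \eqref{eq:9.14}.

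The main technical point to verify is that every path-connection from a boundary point to an interior point of $\Omega_{k_i}$ really can be chosen inside $\Omega_{k_i}$, so that the crossings are counted by $L_{\nu,h}(\Omega_{k_i})$ and no strong rarefaction front enters the estimate. This relies on the geometric fact (from the construction in \S4 and the ordering of generation indices) that the strong $5$-rarefaction fronts generated at distinct corner points $A_{k_i},A_{k_i+1}$ do not cross inside $\Omega_h$ and together with the boundary arc delimit the connected open set $\Omega_{k_i}$. Once this is noted, the rest is continuity of the eigenvalues plus the smallness parameter.
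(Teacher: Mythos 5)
Your proof is correct and follows essentially the same route as the paper: both arguments use Lemma \ref{lem:9.6} to bound the deviation of $\lambda_1$ and $\lambda_5$ at interior points of $\Omega_{k_i}$ from their values at a boundary state (the paper anchors at $B_{\nu,h}(x_{k_i})$, you at a generic $B_{\nu,h}(x_0)$, which is immaterial), then invoke the definition \eqref{eq:9.13} of $M_5$ and absorb the $O(1)(\varepsilon+\delta)$ error by taking $\varepsilon$ and $\delta$ small. Your explicit remark that paths within $\Omega_{k_i}$ cross only weak waves is exactly the geometric fact the paper uses implicitly when writing $\lambda_5(U^{\nu,h}(x_1,y_1))-\lambda_5(B_{\nu,h}(x_{k_i}))=O(1)L_{\nu,h}(\Omega_{k_i})$.
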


\begin{proof}
First, by Lemma \ref{lem:9.6}, we have
\begin{eqnarray*}
\lambda_5(U^{\nu,h}(x_1,y_1))-\lambda_5(B_{\nu,h}(x_{k_i}))
=O(1) L_{\nu,h}(\Omega_{k_{i}})
=O(1)(\varepsilon+\delta).
\end{eqnarray*}
Similarly,
\begin{eqnarray*}
\lambda_1(U^{\nu,h}(x_2,y_2))-\lambda_1(B_{\nu,h}(x_{k_i}))=O(1)(\varepsilon+\delta).
\end{eqnarray*}
On the other hand, by \eqref{eq:9.13},
\begin{eqnarray*}
\lambda_{5}(B_{\nu,h}(x))-\lambda_{1}(B_{\nu,h}(x))>M_{5}.
\end{eqnarray*}
Then combining above estimates together yields
\begin{eqnarray*}
\begin{split}
\lambda_5(U^{\nu,h}(x_1,y_1))-\lambda_1(U^{\nu,h}(x_2,y_2))&=\lambda_{5}(B_{\nu,h}(x))-\lambda_{1}(B_{\nu,h}(x))
+O(1)(\varepsilon+\delta)\\[3pt]
&=M_{5}+O(1)(\varepsilon+\delta),
\end{split}
\end{eqnarray*}
which leads to the desired result by choosing $\varepsilon$ and $\delta$ sufficiently small.
\end{proof}

Since $TV. \{g'(\cdot)\}<\infty$, then there exists $x_{*}>x_{\delta}$ such that
\begin{eqnarray}\label{eq:9.15}
g'(x+)=g'(\infty)+O(1)\delta \qquad \mbox{for $x>x_{*}$}.
\end{eqnarray}

Let $y=\chi^{5}_{\rm g}(x)$  be the maximal $5$-generalized characteristic
generated by point $(x_{*}, g(x^{*}))$ on the bending wall $y=g(x)$.
Then we have
\begin{lemma}\label{lem:9.8}
For any $x>x_{*}$,
\begin{equation}\label{eq:9.16}
\sup \Big\{\big|U(x,y)-B(x)\big|;\ \big( g(x), \ \chi^{5}_{\rm g}(x)\big) \Big\}=O(1)\big(\varepsilon+\delta \big).
\end{equation}
\end{lemma}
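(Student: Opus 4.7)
The plan is to argue at the approximate level for $U^{\nu,h}$, obtain the bound with constants independent of $(\nu,h)$, and then pass to the limit using Propositions \ref{prop:8.1}--\ref{prop:8.2}. Fix $(x,y)$ with $x>x_*$ and $g_h(x)<y<\psi^{5,\nu,h}_g(x)$, where $\psi^{5,\nu,h}_g$ denotes the approximate analogue of $\chi^5_g$ issuing from $(x_*,g_h(x_*))$. I would first trace a 5-generalized characteristic $\xi_5(\cdot)$, in the sense of Dafermos, backward from $(x,y)$ until it meets the wall. Since the backward slope of $\xi_5$ is $-\lambda_5<0$ while $-g'>0$ (the wall rises as $x$ decreases, by $\mathbf{(H_{1})}$), the vertical gap $\xi_5(s)-g_h(s)$ shrinks at rate $\lambda_5+|g'|$; Lemmas \ref{lem:9.5} and \ref{lem:9.7} supply uniform positive lower bounds on this rate. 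Consequently $\xi_5$ exits through the wall at some $(\tilde x, g_h(\tilde x))$ with $\tilde x\in[x_*,x]$, and the hypothesis $y<\psi^{5,\nu,h}_g(x)$, together with the fact that two backward 5-generalized characteristics cannot cross, confines $\xi_5$ to the strip and forces $\tilde x \ge x_*$.

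The second step is to control the oscillation of $U^{\nu,h}$ along $\xi_5$. The value is piecewise constant and jumps only when $\xi_5$ crosses a front of family $1,2,3,4$ or a non-physical front. Because $\xi_5$ is contained in the union of the regions $\Omega_{k_i}$ with $k_i \ge k_* := \lceil x_*/h \rceil$, and each such front meets $\xi_5$ at most once, the total jump strength is bounded, via Proposition \ref{prop:6.1}, Lemma \ref{lem:7.7}, and Corollary \ref{coro:9.1}, by the Glimm--Lax contributions after $x_*$, namely
\begin{equation*}
F(U^{\nu,h};x_*+) + L_{\nu,h}(\Gamma_h;[x_*,\infty)) + \|\bar{Z}\|_\infty h \sum_{k \ge k_*} e^{-Lkh}.
\end{equation*}
All three summands are $O(1)(\varepsilon+\delta)$ by Theorem \ref{thm:8.1}, Corollary \ref{coro:9.1}, and Lemma \ref{lem:9.2}, yielding $|U^{\nu,h}(x,y) - B_{\nu,h}(\tilde x)| = O(1)(\varepsilon+\delta)$.

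For the third step I would use Lemma \ref{lem:9.3} on the boundary trace: $TV.\{B_{\nu,h}(\cdot);\,[x_*,\infty)\}$ is majorised by $L_{\nu,h}(\Gamma_h;[x_*,\infty))$ plus the corner sum $\sum_{k_i\ge k_*}|\omega_{k_i}|$ and the reacting-step contribution, each of which is $O(\delta)$ by Corollary \ref{coro:9.1}. Hence $|B_{\nu,h}(\tilde x) - B_{\nu,h}(x)| = O(1)\delta$. Combining the two estimates and passing to the limit $(\nu_i, h_i) \to (\infty, 0)$ via Propositions \ref{prop:8.1}--\ref{prop:8.2} and Lemma \ref{lem:9.3} yields \eqref{eq:9.16}. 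The main obstacle will be step two: making the Glimm--Lax-style crossing count along the curvilinear path $\xi_5$ rigorous in the wave-front tracking framework with the fractional-step reacting correction, rather than along a vertical slice. The correct bookkeeping exploits that each weak or non-physical front in the strip meets $\xi_5$ at most once, so that the monotonicity estimates of Sections 6 and 7 apply without losing a factor growing in the number of reacting steps.
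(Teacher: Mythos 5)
Your proposal is correct and follows essentially the same route as the paper: the paper's proof bounds $\sup\{|U^{\nu_i,h_i}(x,y)-B_{\nu_i,h_i}(x)|\}$ over the strip by exactly the four quantities you identify, namely $\sum_{\tau>x_*}E_{\nu_i,h_i}(\tau)$, the reaction tail $\|\bar Z\|_\infty h_i\sum_{k\ge[x_*/h_i]+1}e^{-Lkh_i}$, $L_{\nu_i,h_i}(\Gamma_{h_i};[x_*,\infty))$, and $F(U^{\nu_i,h_i};x_*+)$, each $O(1)(\varepsilon+\delta)$, and then passes to the limit along $(\nu_i,h_i)$. Your backward-characteristic and crossing-count discussion simply makes explicit the Glimm--Lax bookkeeping that the paper leaves implicit.
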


\begin{proof}
According to the construction of the approximate solutions, there exists a subsequence
of approximate maximal $5$-generalized characteristics $\chi^{5}_{{\rm g}, \nu_i, h_i}(x)$
such that $\chi^{5}_{{\rm g}, \nu_i, h_i}(x)\to \chi^{5}_{\rm g}(x)$
uniformly on every bounded interval as $\nu_i\to \infty$ and $h_i\to 0$.
Then
\begin{eqnarray*}
\begin{split}
&\sup \Big\{\big|U^{\nu_i, h_i}(x,y)-B_{\nu_i, h_i}(x)\big|;\ \big(g_{h_i}(x), \chi^{5}_{g, \nu_i, h_i}(x)\big) \Big\}\\
&\leq O(1)\Big(\sum_{\tau>x_{*}}E_{\nu_i, h_i}(\tau)
+\|\bar{Z}\|_{\infty}h_{i}\sum^{\infty}_{k\geq [\frac{x_{*}}{h_i}]+1}e^{-Lkh_i}\\
&\qquad\qquad +L_{\nu_i, h_i}\big(\Gamma_{h_i};[x_{*},\infty)\big)+F(U^{\nu_i, h_i};x_{*}+)\Big)\\[3pt]
&=O(1)\big(\varepsilon+\delta \big).
\end{split}
\end{eqnarray*}
Then, passing to the limits for $(\nu_i, h_i)$ in the above,
we obtain the expected result.
\end{proof}

\begin{lemma}\label{lem:9.9}
For $g(x)\leq y\leq \chi^{5}_{\rm g}(x)$, when $\delta$ is sufficiently small,
\begin{eqnarray}\label{eq:9.17}
\lambda_{1}(U(x,y))-g'_{\infty}<-\frac{M_{4}}{8} \qquad\,\, \mbox{for $x>x_{*}$},
\end{eqnarray}
where $M_{4}$ is given by \eqref{eq:9.8}.
\end{lemma}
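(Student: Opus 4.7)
The plan is to decompose the quantity $\lambda_1(U(x,y))-g'_\infty$ into three contributions and control each by a result already established. Specifically, I will write
\begin{eqnarray*}
\lambda_1(U(x,y))-g'_\infty
&=& \big(\lambda_1(U(x,y))-\lambda_1(B(x))\big)\\
&& +\,\big(\lambda_1(B(x))-\lambda_1(B_\infty)\big)
 +\,\big(\lambda_1(B_\infty)-g'_\infty\big)
\end{eqnarray*}
and show that the last term carries a definite negative sign of order $-M_4/2$ coming from Lemma~\ref{lem:9.5}, while the first two terms are each $O(1)(\varepsilon+\delta)$, so can be absorbed by choosing $\varepsilon,\delta$ small.

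First, for the third term, Lemma~\ref{lem:9.5} gives directly
\[
\lambda_1(B_\infty)-g'_\infty<-\frac{M_4}{2}.
\]
Second, for the middle term, I would use that $B(x)\in BV([0,\infty);\mathbb{R}^5)$ by Lemma~\ref{lem:9.3} together with the choice of $x_*$ in \eqref{eq:9.15}: taking $x_*$ larger if necessary (allowed by Lemma~\ref{lem:9.1} and Corollary~\ref{coro:9.1}), the tail total variation of $B$ past $x_*$ can be made $O(1)\delta$, so $|B(x)-B_\infty|=O(1)\delta$ for $x>x_*$, hence
\[
\lambda_1(B(x))-\lambda_1(B_\infty)=O(1)\delta
\]
by the smoothness of $\lambda_1$ on $D_{\delta_0}(U_\infty)$. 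Third, for the first term, Lemma~\ref{lem:9.8} gives $|U(x,y)-B(x)|=O(1)(\varepsilon+\delta)$ uniformly in $y\in(g(x),\chi^5_g(x))$, and hence
\[
\lambda_1(U(x,y))-\lambda_1(B(x))=O(1)(\varepsilon+\delta).
\]

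Combining these three estimates, for $x>x_*$ and $g(x)\le y\le \chi^5_g(x)$,
\[
\lambda_1(U(x,y))-g'_\infty<-\frac{M_4}{2}+O(1)(\varepsilon+\delta).
\]
Choosing $\varepsilon$ and $\delta$ small enough that the $O(1)(\varepsilon+\delta)$ term is bounded by $3M_4/8$ (which is permitted since both are free parameters controlled by the hypotheses of Theorem~\ref{thm:8.1} and Corollary~\ref{coro:9.1}) yields the asserted bound $\lambda_1(U(x,y))-g'_\infty<-M_4/8$. The only point that requires care is the middle term: one must verify that the $BV$ norm of $B$ on $[x_*,\infty)$ can genuinely be made small simultaneously with the other tail quantities controlled in Corollary~\ref{coro:9.1}, but this follows by passing to the limit in Lemma~\ref{lem:9.2} and enlarging $x_*$ as needed. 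I do not anticipate a real obstacle; the statement is essentially a perturbation of Lemma~\ref{lem:9.5} using the local closeness provided by Lemma~\ref{lem:9.8}.
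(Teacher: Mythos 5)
Your proposal is correct and follows essentially the same route as the paper: the identical three-term decomposition through $\lambda_1(B(x))$ and $\lambda_1(B_\infty)$, with the definite negative contribution coming from Lemma \ref{lem:9.5}, the first term from Lemma \ref{lem:9.8}, and the middle term from tail smallness of the boundary trace (the paper phrases this as $O(1)L_{\nu,h}(\Gamma_h;[x_*,\infty))$ via Corollary \ref{coro:9.1}, working at the level of $U^{\nu,h}$ and passing to the limit at the end rather than arguing directly on $U$ and $B$ as you do). The difference is purely one of presentation, not of substance.
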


\begin{proof}
By Lemma \ref{lem:9.7} and the construction of the approximate solutions, we deduce
\begin{eqnarray*}
&&\lambda_{1}(U^{\nu, h}(x,y))-\lambda_{1}(B_{\nu, h}(x))=O(1)\big(\varepsilon+\delta \big),\\[2mm]
&&\lambda_{1}(B^{\nu, h}(x))-\lambda_{1}(B_{\infty})=O(1)L_{\nu, h}\big(\Gamma_{h};[x_{*},\infty)\big)
   =O(1)\big(\varepsilon+\delta \big).
\end{eqnarray*}
Then, by Lemma \ref{lem:9.5}, choosing $\delta$ sufficiently small, we have
\begin{eqnarray*}
\begin{split}
&\lambda_{1}(U^{\nu, h}(x))-\lambda_{1}(B_{\infty})\\
&=\big(\lambda_{1}(U^{\nu, h}(x))-\lambda_{1}(B^{\nu, h}(x))\big)
   +\big(\lambda_{1}(B^{\nu, h}(x))-\lambda_{1}(B_{\infty})\big)
   +\big(\lambda_{1}(B_{\infty})-g'_{\infty}\big)\\
&=\lambda_{1}(B_{\infty})-g'_{\infty}+O(1)\big(\varepsilon+\delta \big)
  <-\frac{g'_{\infty}-\lambda_{1}(B_{\infty})}{2}+O(1)\varepsilon\\
&<-\frac{M_{4}}{4}+O(1)\varepsilon<-\frac{M_{4}}{8}.
\end{split}
\end{eqnarray*}
Finally, passing to the limits for $U^{\nu,h}$ in the above by choosing a subsequence denoted by itself,
we conclude the proof.
\end{proof}

\begin{lemma}\label{lem:9.10}
$U(x,y)$ and $\chi^{5}_{\rm g}(x)$ satisfy
\begin{eqnarray}
\lim_{x \rightarrow \infty} TV.\Big\{(\frac{v}{u},p)(x,\cdot)\,; \, (g(x),\chi^{5}_{\rm g}(x))\Big\}=0. \label{eq:9.18}
\end{eqnarray}
\end{lemma}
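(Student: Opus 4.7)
The plan is to reduce the total variation of $(v/u,p)$ on $(g(x),\chi^{5}_{\rm g}(x))$ to a sum of strengths of $1$-wave and $5$-wave fronts in the approximate solutions $U^{\nu,h}$ and then use the decay estimates already established in Corollary \ref{coro:9.1} and Lemmas \ref{lem:9.1}--\ref{lem:9.9} to bound this sum uniformly by a small quantity. First I would observe that, by the parametrizations of $C_{2}$, $C_{3}$, $C_{4}$ in \S 2.2, both $v/u=\tan\theta$ and $p$ are constant across the linearly degenerate $2$-, $3$-, and $4$-wave fronts. Hence, writing $\Omega_{g,\nu,h}(x)=(g_{h}(x),\chi^{5}_{{\rm g},\nu,h}(x))$, one gets
\begin{equation*}
TV\{(v/u,p)(x,\cdot);\Omega_{g,\nu,h}(x)\}\le C\sum_{\alpha\in \mathcal{W}_{1,5}^{\nu,h}(x)}|\alpha|,
\end{equation*}
where $\mathcal{W}_{1,5}^{\nu,h}(x)$ collects all $1$- and $5$-wave fronts (weak, strong, and non-physical) lying in $\Omega_{g,\nu,h}(x)$ at $x$, and $C$ comes from Lemma \ref{lem:6.1}.

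Next I would classify the elements of $\mathcal{W}_{1,5}^{\nu,h}(x)$ by their origin, mirroring the bookkeeping in the proof of Lemma \ref{lem:9.6}. The list comprises: (i) $5$-fronts (weak and strong rarefaction) issued from corners $A_k$ with $x_{k}\in(x_{*},x)$, whose total strength is bounded by $\sum_{x_{k}>x_{*}}|\omega_{k}|$; (ii) reflected $5$-fronts produced when weak $1$-waves hit the wall at non-corner points in $(x_{*},x)$, controlled by $L_{\nu,h}(\Gamma_{h};[x_{*},\infty))$; (iii) waves produced by the reacting step on lines $x=kh$ with $kh>x_{*}$, controlled by $C_{4}\|\bar Z\|_{\infty}\sum_{kh>x_{*}}e^{-Lkh}h$ via Lemma \ref{lem:7.7}; (iv) waves produced by interior interactions, controlled by $\sum_{\tau>x_{*}}E_{\nu,h}(\tau)$ via Proposition \ref{prop:6.1}; and (v) $1$-wave fronts entering $\Omega_{g,\nu,h}$ by crossing $\chi^{5}_{{\rm g},\nu,h}$ from above. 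Invoking Corollary \ref{coro:9.1} for (i)--(ii) and (iv), the exponential decay for (iii), and a repetition of the strip argument of Lemma \ref{lem:9.6} in an enlarged region bounded by a slightly higher $5$-generalized characteristic for (v), each of these five contributions is $O(\varepsilon+\delta)$, uniformly in $(\nu,h)$ and in $x>x_{*}$. Thus
\begin{equation*}
\sum_{\alpha\in \mathcal{W}_{1,5}^{\nu,h}(x)}|\alpha|\le O(1)(\varepsilon+\delta).
\end{equation*}

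Passing to the limits $\nu\to\infty$ and $h\to 0$ through the subsequence supplied by Propositions \ref{prop:8.1}--\ref{prop:8.2}, and using lower semicontinuity of total variation in $L^{1}_{\rm loc}$, gives
\begin{equation*}
TV\{(v/u,p)(x,\cdot);(g(x),\chi^{5}_{\rm g}(x))\}\le O(1)(\varepsilon+\delta)\qquad\text{for all }x>x_{*}.
\end{equation*}
To conclude the stated limit, I would use that $\varepsilon$ in Theorem \ref{thm:8.1} can be taken arbitrarily small in the smallness hypothesis \eqref{eq:8.13}, and that, for any $\eta>0$, Corollary \ref{coro:9.1} furnishes $\delta$ and a corresponding $x_{\delta}$ so that setting $x_{*}=x_{\delta}$ makes the right-hand side less than $\eta$. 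Thus the limit superior as $x\to\infty$ is dominated by $\eta$, and since $\eta$ is arbitrary the limit is $0$, which is \eqref{eq:9.18}.

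\textbf{Main obstacle.} The most delicate step will be (v): controlling the $1$-wave fronts that cross $\chi^{5}_{\rm g}$ from above. Because $\lambda_{1}<\lambda_{5}$, such $1$-waves genuinely can pass into $\Omega_{g}$ from the rarefaction region lying above, and they do contribute to $TV(v/u,p)$. The cleanest way I see to bound them is to introduce an auxiliary maximal $5$-generalized characteristic starting slightly later and apply the region analysis of Lemma \ref{lem:9.6} there, thereby bounding the flux of $1$-fronts across $\chi^{5}_{\rm g}$ by interaction, boundary, and reaction quantities which are all $O(\varepsilon+\delta)$. A subsidiary technical point is that rarefaction fronts from corners $A_{k}$ with $x_{k}>x_{*}$ persist in $\Omega_{g}$ indefinitely; their contribution to $TV$ is controlled only because Corollary \ref{coro:9.1} gives $\sum_{x_{k}>x_{*}}|\omega_{k}|<\delta$, so one must be careful that the constants hidden in the $O(1)$'s above do not depend on $x$ when summing contributions up to arbitrarily large $x$.
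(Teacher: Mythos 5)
Your reduction to the $1$- and $5$-fronts via the invariance of $(v/u,p)$ across the contact families is exactly the paper's starting point, and items (i)--(iv) of your bookkeeping are sound: they are controlled by $\sum_{\tau>x_*}E_{\nu,h}(\tau)$, $L_{\nu,h}(\Gamma_h;[x_*,\infty))$, the reaction tail, and the tail of $TV\{g'\}$, all of which can be made small by pushing $x_*$ to the right. The gap is in item (v) together with your final limiting step. The $1$-fronts that enter $(g(x),\chi^{5}_{\rm g}(x))$ by crossing $\chi^{5}_{\rm g}$ from above include all the ``old'' $1$-waves originating from the initial data and from the part of the flow with $x<x_*$; a Lemma \ref{lem:9.6}-type entering-flux estimate necessarily carries the term $F(U^{\nu,h};x_*+)=O(1)\varepsilon$ for precisely this reason, so the uniform-in-$x$ bound you obtain is $O(1)(\varepsilon+\delta)$ and cannot be improved within that accounting. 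You then propose to conclude by ``taking $\varepsilon$ arbitrarily small,'' but $\varepsilon$ is fixed by the data of the particular solution $U$ whose large-$x$ behavior the lemma describes; it is not a free parameter in the limit $x\to\infty$. Compare Lemma \ref{lem:9.8}, which asserts only an $O(1)(\varepsilon+\delta)$ bound and, consistently, makes no vanishing claim. As written, your argument proves a statement of the strength of Lemma \ref{lem:9.8}, not \eqref{eq:9.18}.

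What is missing is the Glimm--Lax flushing argument that the paper uses. From the endpoints $X^{1}_{\hat\delta}=(x_{\hat\delta},\chi^{5}_{g,\nu_i,h_i}(x_{\hat\delta}))$ and $X^{5}_{\hat\delta}=(x_{\hat\delta},g_h(x_{\hat\delta}))$ of the section one launches a $1$- and a $5$-generalized characteristic, which traverse the strip and exit through $\Gamma$ and through $y=\chi^{5}_{\rm g}(x)$ at times $t^{1}_{\hat\delta}$ and $t^{5}_{\hat\delta}$ (the former exists by Lemma \ref{lem:9.9}). For $x>2(t^{1}_{\hat\delta}+t^{5}_{\hat\delta})$, every old $1$-front has either been absorbed in an interaction after $x_{\hat\delta}$ (controlled by $\sum_{\tau>x_{\hat\delta}}E_{\nu_i,h_i}(\tau)<\hat\delta$) or has reached, or will reach at a time $>x>x_{\hat\delta}$, the boundary, where it converts into a $5$-wave with reflection coefficient $K_{b1}\approx 1$ (Lemma \ref{lem:4.3}) and is therefore counted in $L_{\nu_i,h_i}(\Gamma_{h_i};[x_{\hat\delta},\infty))<\hat\delta$. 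Hence only waves generated after $x_{\hat\delta}$ survive in the section, and the bound becomes $C_8\hat\delta$ with no $\varepsilon$-term; the price is that the threshold in $x$ depends on $\hat\delta$ through $t^{1}_{\hat\delta},t^{5}_{\hat\delta}$, which is exactly what a limit statement permits. Your instinct to introduce an auxiliary generalized characteristic points in the right direction, but it must be paired with the observation that the old $1$-waves \emph{exit} the region across $\Gamma$ in finite time, not with another estimate of the flux entering across $\chi^{5}_{\rm g}$.
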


\begin{proof}
Let $U^{\nu_{i},h_{i}}$ be the sequence of approximate solutions stated
in Propositions \ref{prop:8.1}--\ref{prop:8.2},
and let the corresponding
term $E_{\nu_i, h_i}(\tau)$ be the quantity defined in \eqref{eq:6.3}.
As in \cite{gl}, we denote $dE_{\nu_i, h_i}(\tau)$ as the measures assigning
$E_{\nu_i, h_i}(\tau)$
at the interaction line $x=\tau$.
Then, by Lemma \ref{lem:9.1}, we can select a subsequence (still denoted by) $E_{\nu_i,h_i}(\tau)$ such that
\begin{eqnarray*}
dE_{ \nu_i,h_i}(\tau)\rightarrow  dE(\tau) \qquad \mbox{as $\nu_i\rightarrow \infty$ and $h_i\rightarrow 0$}
\end{eqnarray*}
with $E(\tau)<\infty$.
Therefore, for $\hat{\delta}<\delta$ sufficiently small, we can choose $x_{\hat{\delta}}>x_{*}$
independent of $U^{\nu_i, h_i}$ and $(\nu_i, h_i)$ such that
\begin{eqnarray*}
\sum _{\tau>x_{\hat{\delta}}}E_{\nu_i, h_i}(\tau)<\hat{\delta}.
\end{eqnarray*}
Let $X^{1}_{\hat{\delta}}=(x_{\hat{\delta}}, \chi^{5}_{g, \nu_{i}, h_i}(x_{\hat{\delta}}))$
and $X^{5}_{\hat{\delta}}=(x_{\hat{\delta}}, g_h(x_{\hat{\delta}}))$
be the two points lying in the approximate $5$-maximal characteristic
$y=\chi^{5}_{g,\nu_{i}, h_{i}}(x)$ and the approximate boundary $y=g_h(x)$, respectively.
Let $\chi^{j}_{\nu_{i}, h_{i}}$ be the approximate $j$-generalized characteristic
generated  from $X^{j}_{\hat{\delta}}$
for $j=1, 5$, respectively.
According to the construction of the approximate solutions,
there exist constants $\hat{M}_{j}>0, j=1,5$, independent of $U^{\nu_i,h_i}$ and $(\nu_i, h_i)$
such that
\begin{eqnarray*}
\big|\chi^{j}_{ \nu_{i}, h_{i}}(x_1)-\chi^{j}_{\nu_{i}, h_{i}}(x_2)\big|\leq \hat{M}_{j}
\Big(|x_1-x_2|+h_{i}+\frac{1}{2^{\nu_i}}\Big)\qquad \mbox{for $x_1, x_2>x_{\hat{\delta}}$}.
\end{eqnarray*}
Then we can choose a subsequence (still denoted by) $\{\nu_i\}$ and $\{h_i\}$ such that
\begin{eqnarray*}
\chi^{j}_{\nu_{i}, h_{i}}(x)\rightarrow \chi^{j}(x)\qquad \mbox{as $i\rightarrow \infty$}
\end{eqnarray*}
for some $\chi^{j}\in {\rm Lip}$ with $(\chi^{j})'$ bounded.
\vspace{10pt}
\begin{center}\label{fig14}
\begin{tikzpicture}[scale=1.15]
\draw [line width=0.1cm](-3.0, 0)to [out=0,in=145](3.0,-1.3);
\draw [ line width=0.05cm](-1.5,0)--(2.2, 3.2);
\draw [thick][dashed](0.2,-1.2)--(0.2, 3.0);
\draw [blue][ thick](0.2,1.47)to [out=-10, in=120](2.7, -1.1);
\draw [red][ thick](0.2,-0.14)to [out=20, in=-100](1.8, 2.85);
\node at (1.1, 3) {$(t^{5}_{\hat{\delta}}, \chi^{5}(t^{5}_{\hat{\delta}}))$};
\node at (3.7,-1.0) {$(t^{1}_{\hat{\delta}}, \chi^{1}(t^{1}_{\hat{\delta}}))$};
\node at (-0.4, -0.5) {$x=x_{\hat{\delta}}$};
\node at (1.5, -0.8) {$\Gamma$};
\node at (3, 3.4 ) {$y=\chi^{5}_{\rm g}(x)$};
\node at (2.4, 2.2 ) {$y=\chi^{5}(x)$};
\node at (2.8, 0.1 ) {$y=\chi^{1}(x)$};
\node [below] at (0.3, -1.4)
{Fig. 9.2. The characteristics $\chi^{1}$ and $\chi^{5}$
intersect with $\Gamma$ and $y=\chi^{5}_{\rm g}(x)$, respectively};
\end{tikzpicture}
\end{center}

According to Lemma \ref{lem:9.9}, $\chi^{1}$ can intersect with boundary $\Gamma$,
whose intersection point is denoted by $(t^{1}_{\hat{\delta}},\chi^{1}(t^{1}_{\hat{\delta}}))$.
Also, let $\chi^{5}$ intersect with $y=\chi^{5}_{\rm g}(x)$
at point $(t^{5}_{\hat{\delta}}, \chi^{5}(t^{5}_{\hat{\delta}}))$;
see Fig. 9.2.
Since
$\frac{v^{\nu_{i}, h_{i}}}{u^{\nu_{i}, h_{i}}}$
and
$p^{\nu_{i}, h_{i}}$
are invariant across the $k$-contact discontinuities for $k=2,3,4$.
Then we can do as in \cite{gl,l} by applying the approximate conservation laws,
we deduce that, for $x>2\big(t^{1}_{\hat{\delta}}+t^{5}_{\hat{\delta}}\big)$,
\begin{eqnarray*}
TV.\Big\{(\frac{v^{\nu_i, h_i}}{u^{\nu_i, h_i}}, p^{ \nu_i, h_i})(x,\cdot)\,; \ (g(x),\ \chi^{5}_{\rm g}(x))\Big\}
\leq C_{8}\hat{\delta},
\end{eqnarray*}
where the positive constant $C_{8}$ is independent of $(\hat{\delta}, \nu_i, h_i)$ and $U^{\nu_i, h_i}$.
Therefore, by Proposition \ref{prop:8.1}--\ref{prop:8.2} and
taking the limit as $\nu_i\rightarrow \infty$ and $h_i\rightarrow 0$,
it follows that
\begin{eqnarray*}
TV.\Big\{(\frac{v}{u},p)(x,\cdot);\, (g(x),\ \chi^{5}_{\rm g}(x))\Big\}\leq C_{8}\hat{\delta}
\qquad\,\, \mbox{for $x>2(t^{1}_{\hat{\delta}}+t^{5}_{\hat{\delta}})$}.
\end{eqnarray*}
\end{proof}

By similar arguments for $y>\chi^{5}_{\rm g}(x)$, we have
\begin{lemma}\label{lem:9.11}
The following hold{\rm :}
\begin{eqnarray}\label{eq:9.20}
\begin{split}
&\lim_{x \rightarrow \infty} TV.\Big\{\big(J(q, \frac{q^{2}}{2}+\frac{c^{2}}{\gamma-1})+\theta,
 \frac{q^{2}}{2}+\frac{c^{2}}{\gamma-1}\big)(U(x,\cdot));\,
    (\chi^{5}_{\rm g}(x), \infty )\Big\}=0,\\[2mm]
&\lim_{x \rightarrow \infty} TV.\Big\{(\frac{p}{\rho^{\gamma}}, Z)(x,\cdot);\ (\chi^{5}_{\rm g}(x),\ \infty )\Big\}=0.
\end{split}
\end{eqnarray}
\end{lemma}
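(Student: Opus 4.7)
The plan is to mirror the proof of Lemma \ref{lem:9.10}, but to work in the upper region $\{y>\chi^5_{\rm g}(x)\}$ rather than in the reflection layer between the wall and $\chi^5_{\rm g}$. The crucial observation is that all four scalar quantities appearing in \eqref{eq:9.20} are preserved across the strong $5$-rarefaction wave itself: $J(q,\mathcal{B})+\theta$ is exactly the $5$-Riemann invariant recorded in \eqref{eq:2.13}; $\mathcal{B}$ and $p/\rho^\gamma$ are preserved across both $1$- and $5$-rarefactions by \eqref{eq:2.12}--\eqref{eq:2.13}; and $Z$ is preserved across every wave family except the $4$-contact by the wave-curve formulas in \S 2. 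Hence the variation of these quantities in the upper region can only be produced by weak shocks, weak $1$-rarefactions, weak $2$- and $3$-contacts, weak $4$-contacts (for $Z$), non-physical fronts, and the reaction jumps at $x=kh$.

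First I would fix $\hat\delta<\delta$ and, using Corollary \ref{coro:9.1} together with the tail bound $\|\bar Z\|_\infty h_i\sum_{kh_i>x_{\hat\delta}}e^{-Lkh_i}\le \|\bar Z\|_\infty L^{-1}e^{-Lx_{\hat\delta}}$, choose $x_{\hat\delta}>x_*$ so large (uniformly in $(\nu_i,h_i)$) that the total interaction potential, the strength of waves leaving the approximate wall, and the accumulated reaction contribution after $x_{\hat\delta}$ are each bounded by $\hat\delta$. Then, for each approximate solution, I would introduce a $1$-generalized characteristic $\hat\chi^1_i$ issued from a high point $(x_{\hat\delta},Y_\ast)$ with $Y_\ast\gg \chi^5_{{\rm g},\nu_i,h_i}(x_{\hat\delta})$; since $\lambda_1<\lambda_5$ and the asymptotic slope of $\chi^5_{\rm g}$ is $\lambda_5(B_\infty)$, the characteristic $\hat\chi^1_i$ descends relative to $\chi^5_{\rm g}$ and meets it at a finite time $\hat t^1_{\hat\delta}$. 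The relevant domain is bounded below by $\chi^5_{\rm g}$, on the left by the segment of $x=x_{\hat\delta}$ from $\chi^5_{{\rm g},\nu_i,h_i}(x_{\hat\delta})$ up to $Y_\ast$, and above by $\hat\chi^1_i$.

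Inside this domain I would run the Glimm--Lax approximate conservation argument \cite{gl,l} exactly as in Lemma \ref{lem:9.10}, but applied to $J+\theta$, $\mathcal{B}$, $p/\rho^\gamma$, and $Z$. A weak front contributes to the variation only if it belongs to a family that does not preserve the quantity in question; the total strength of all such contributing fronts inside the domain is controlled by $\hat\delta$ plus the auxiliary term $TV.\{\bar U;[Y_\ast-Cx,\infty)\}$ coming from initial data advected from above through $\hat\chi^1_i$. The reaction jumps at $x=kh$ generate, via Lemma \ref{lem:4.5}, perturbations of order $\|\bar Z\|_\infty e^{-Lkh}h$ per step, whose cumulative contribution after $x_{\hat\delta}$ has already been absorbed into $\hat\delta$. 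Passing to the limit $\nu_i\to\infty$, $h_i\to 0$ through the subsequence given by Propositions \ref{prop:8.1}--\ref{prop:8.2}, and then letting $Y_\ast\to\infty$ (using $\bar U(\infty)=U_\infty$ to conclude $TV.\{\bar U;[Y,\infty)\}\to 0$ as $Y\to\infty$) would yield \eqref{eq:9.20} with an upper bound $C\hat\delta$ for all sufficiently large $x$; the arbitrariness of $\hat\delta$ then completes the argument.

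The principal obstacle I foresee is the combination of two effects that are absent in Lemma \ref{lem:9.10}: first, the upper region is unbounded in $y$, so a cut-off height $Y_\ast$ must be inserted and eventually sent to infinity while maintaining control uniform in the other parameters; second, the reaction step acts on every horizontal slice rather than producing localized fronts, so its contribution to the variation of $(J+\theta,\mathcal{B},p/\rho^\gamma,Z)$ must be tracked at every $x=kh$ across the whole slice $(\chi^5_{\rm g}(kh),Y_\ast)$. The exponential decay $e^{-Lkh}$ furnished by Lemma \ref{lem:4.5}, whose origin is the positive lower bound of $\phi(T)$ from hypothesis $\mathbf{(H_2)}$, is precisely what makes the accumulated reactive error absolutely summable and arbitrarily small beyond $x_{\hat\delta}$, and is therefore essential to the asymptotic decay.
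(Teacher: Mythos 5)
Your overall route is the one the paper intends: the published proof of Lemma \ref{lem:9.11} is literally the sentence ``by similar arguments for $y>\chi^{5}_{\rm g}(x)$'', and your two central observations --- that all four quantities in \eqref{eq:9.20} are invariant across $5$-rarefaction fronts (hence across the strong rarefaction that dominates the upper region), and that the remaining sources of variation are controlled by Corollary \ref{coro:9.1} together with the exponentially decaying reaction jumps --- are exactly what that sentence is meant to carry. Two steps in your execution, however, do not work as written.

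First, the geometry of your auxiliary characteristic is inverted. A $1$-generalized characteristic issued from $(x_{\hat\delta},Y_*)$ with $Y_*>\chi^{5}_{{\rm g},\nu_i,h_i}(x_{\hat\delta})$ starts \emph{above} $\chi^{5}_{\rm g}$ and, since $\lambda_1<\lambda_5$, is overtaken by it; the lens between the two curves closes in finite time, after which the interval $(\chi^{5}_{\rm g}(x),\infty)$ that you must control lies entirely outside your domain. Moreover, nothing is ``advected from above through $\hat\chi^{1}_i$'': a minimal-speed curve can only be crossed from below, by fronts of families $2$--$5$. The cutoff should instead be used as in Lemma \ref{lem:9.10}: choose $Y_*$ so that the tail variation of $U^{\nu_i,h_i}(x_{\hat\delta},\cdot)$ on $[Y_*,\infty)$ is below $\hat\delta$ uniformly in $(\nu_i,h_i)$ (by tracing back to $\bar U$ and the summable interaction potential); observe that every front of families $1$--$4$ lying below $Y_*$ at $x=x_{\hat\delta}$ exits the upper region through $\chi^{5}_{\rm g}$ before some finite time $T(Y_*)$, because of the uniform gap between $\lambda_5$ and the slower speeds (as in Lemma \ref{lem:9.7}); and only then send $x\to\infty$ with $Y_*$ fixed. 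The quantifier order --- $Y_*$ chosen as a function of $\hat\delta$ \emph{before} the limit in $x$ --- is essential and is blurred by your ``then letting $Y_*\to\infty$''.

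Second, you have not accounted for the weak $5$-shocks. They can never cross the $5$-characteristic $\chi^{5}_{\rm g}$, so those created by the initial data and by interactions before $x_{\hat\delta}$ remain in $(\chi^{5}_{\rm g}(x),\infty)$ for all later $x$, and each of them changes $J(q,\mathcal{B})+\theta$ and $p/\rho^{\gamma}$ by $O(|\alpha|^{3})$, since $S^{-}_{5}$ and $R^{+}_{5}$ osculate only to second order. (They leave $\mathcal{B}$ and $Z$ exactly invariant: $\mathcal{B}$ is the total enthalpy, conserved across any shock with nonzero mass flux, and $[Z]=0$ on $S^{-}_{5}$.) To conclude that the first limit in \eqref{eq:9.20} is zero you must show that the cubic sum over these persistent shocks tends to zero, which requires the Glimm--Lax cancellation and spreading mechanism for the genuinely nonlinear $5$-field (attenuation of weak $5$-shocks by the strong $5$-rarefaction of the same family), not merely the summability of $E_{\nu,h}(\tau)$. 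This is the one place where ``similar arguments'' hides something genuinely beyond Lemma \ref{lem:9.10}, and your sketch passes over it.
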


\begin{theorem}[Asymptotic behavior]\label{thm:9.1}\setcounter{theorem}{0}
Let $\chi^{5}_{\rm g}(x)$ be stated as above.
\begin{enumerate}
\item[\rm (i)] Let $g'_{\infty}=\lim_{x\rightarrow \infty}g'(x+)$. Then
\begin{eqnarray}\label{eq:9.21}
\begin{split}
\lim_{x\rightarrow \infty}\sup \Big\{\big|\frac{v}{u}(x,\cdot)-g'_{\infty}\big|;\ \big(g(x),\ \chi^{5}_{\rm g}(x)\big)\Big \}=0,
\end{split}
\end{eqnarray}
and there exists a constant $p_{+}$ such that
\begin{eqnarray}\label{eq:9.22}
\begin{split}
\lim_{x\rightarrow \infty}\sup \Big\{\big|p(x,\cdot)-p_{+}\big|;\ (g(x),\ \chi^{5}_{\rm g}(x))\Big \}=0.
\end{split}
\end{eqnarray}
\item[\rm (ii)]  There exists a constant state $U^{\infty}=(u^{\infty}, v^{\infty}, p^{\infty}, \rho^{\infty}, 0)$ such that
\begin{eqnarray}\label{eq:9.23}
\begin{split}
&\lim_{x \rightarrow \infty} \sup\Big\{\Big|\big(J(q, \frac{q^{2}}{2}+\frac{c^{2}}{\gamma-1})+\theta\big)(U(x,\cdot))
-\big(J(q, \frac{q^{2}}{2}+\frac{c^{2}}{\gamma-1})+\theta\big)(U^{\infty})\Big|;\  (\chi^{5}_{\rm g}(x),\infty )\Big\}\\
&\qquad =0,\\[1mm]
&\lim_{x \rightarrow \infty} \sup\Big\{\Big|\big(\frac{q^{2}}{2}+\frac{c^{2}}{\gamma-1}\big)(U(x,\cdot))
-\big(\frac{q^{2}}{2}+\frac{c^{2}}{\gamma-1}\big)(U^{\infty})\Big|;\ (\chi^{5}_{\rm g}(x),\ \infty )\Big\}=0,\\[1mm]
&\lim_{x \rightarrow \infty} \sup\Big\{\Big|(\frac{p}{\rho^{\gamma}})(x,\cdot)
-\frac{p^{\infty}}{(\rho^{\infty})^{\gamma}}\Big|
+\big|Z(x,\cdot)\big|;\  (\chi^{5}_{\rm g}(x),\  \infty )\Big\}=0.
\end{split}
\end{eqnarray}
\end{enumerate}

\end{theorem}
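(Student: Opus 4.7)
\medskip
\noindent\textbf{Proof plan for Theorem \ref{thm:9.1}.}
The plan is to reduce the asymptotic statements to the vanishing total-variation estimates already obtained in Lemmas \ref{lem:9.10}--\ref{lem:9.11}, combined with the boundary trace on $\Gamma$ and the asymptotic value as $y\to\infty$. The key observation is that, once $TV.$ of the relevant quantity on a vertical section tends to $0$, knowing its value at one fixed point of the section determines the full uniform limit; for the lower strip this anchor is the boundary $\Gamma$, and for the upper strip it is $y=\infty$, where the incoming data prescribes the state $U_{\infty}$.

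\smallskip
For part (i), I would first use the slip condition \eqref{eq:1.9}, which on $\Gamma$ reads $v/u=g'(x)$ almost everywhere in $x$. Combined with Lemma \ref{lem:9.10} and hypothesis $(H_{1})$, one gets
\begin{eqnarray*}
\sup_{y\in(g(x),\chi^{5}_{\rm g}(x))}\Big|\frac{v}{u}(x,y)-g'_{\infty}\Big|
\le TV.\Big\{\frac{v}{u}(x,\cdot);\, (g(x),\chi^{5}_{\rm g}(x))\Big\}+|g'(x+)-g'_{\infty}|\longrightarrow 0
\end{eqnarray*}
as $x\to\infty$. For the pressure, let $p^{b}(x)=p(B(x))$ denote the pressure trace on $\Gamma$ via the boundary function $B$ of Lemma \ref{lem:9.3}. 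Since $B(x)\to B_{\infty}$ in $BV$, the limit $p_{+}:=p(B_{\infty})$ exists, and
\begin{eqnarray*}
\sup_{y\in(g(x),\chi^{5}_{\rm g}(x))}|p(x,y)-p_{+}|
\le TV.\{p(x,\cdot);\,(g(x),\chi^{5}_{\rm g}(x))\}+|p^{b}(x)-p_{+}|\longrightarrow 0,
\end{eqnarray*}
again by Lemma \ref{lem:9.10} and the convergence $B(x)\to B_{\infty}$.

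\smallskip
For part (ii), I would take $U^{\infty}:=U_{\infty}$, so that $Z^{\infty}=0$ automatically by $(H_{2})$. By the construction of the approximate solutions and the uniform $BV$ bound of Corollary \ref{coro:7.1}, together with the decay estimate \eqref{eq:4.29} for the reactant, the far-field trace $U(x,\infty)=\bar{U}(\infty)=U_{\infty}$ is preserved along the flow direction: no weak or strong wave reaches $y=\infty$, and the reactant decays uniformly at rate $e^{-Lx}$. Consequently, for each Riemann-invariant-type quantity $\mathcal{I}\in\{J(q,\mathcal{B})+\theta,\ \mathcal{B},\ p/\rho^{\gamma}\}$ listed in \eqref{eq:9.23},
\begin{eqnarray*}
\sup_{y>\chi^{5}_{\rm g}(x)}|\mathcal{I}(U(x,y))-\mathcal{I}(U_{\infty})|
\le TV.\{\mathcal{I}(U(x,\cdot));\,(\chi^{5}_{\rm g}(x),\infty)\}+|\mathcal{I}(U(x,\infty))-\mathcal{I}(U_{\infty})|,
\end{eqnarray*}
where the first term vanishes as $x\to\infty$ by Lemma \ref{lem:9.11} and the second term is identically zero. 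For the reactant $Z$, Lemma \ref{lem:4.5} and passage to the limit in $(\nu,h)$ give $\|Z(x,\cdot)\|_{\infty}=O(1)\|\bar{Z}\|_{\infty}e^{-Lx}\to 0$, which handles the last estimate in \eqref{eq:9.23}.

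\smallskip
The main obstacle I anticipate is the part of (i) for the pressure: strictly speaking, the boundary trace $p(x,g(x)+)$ of the entropy solution must be identified with $p(B(x))$, where $B$ is obtained in Lemma \ref{lem:9.3} as the $L^{1}_{\mathrm{loc}}$-limit of $B_{\nu_{i},h_{i}}$. This identification requires controlling the boundary values along a suitable subsequence, using Lemma \ref{lem:8.1}, the uniform Lipschitz continuity in $x$ of the approximate solutions (via $L^{1}$), and the fact that the approximate boundary $\Gamma_{h}$ converges uniformly to $\Gamma$. Once this trace identification is in place, the rest is routine bookkeeping using the bounds already produced in Sections 6--9.
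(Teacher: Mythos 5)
Your overall strategy --- bounding the sup on each vertical section by the total variation on that section plus the deviation of an anchor value (the boundary trace below $\chi^{5}_{\rm g}$, the far-field value above it) --- is exactly the paper's strategy, and part (i) is essentially the argument given there. However, there is a genuine error in part (ii): you set $U^{\infty}:=U_{\infty}$ and assert that $U(x,\infty)=\bar{U}(\infty)=U_{\infty}$. Hypothesis $\mathbf{(H_{2})}$ only requires $\bar{U}$ to be a \emph{small perturbation} of $U_{\infty}$ with $\bar{Z}(\infty)=0$; it does not force $\bar{U}(\infty)=U_{\infty}$. The correct far-field anchor is $\bar{U}(\infty)$ (equivalently $U^{\infty}=\lim_{x\to\infty}U(x,\infty)$, which is the choice made in the paper), and with your choice the second term $|\mathcal{I}(U(x,\infty))-\mathcal{I}(U_{\infty})|$ is a fixed nonzero constant of size $O(\varepsilon)$ rather than zero, so the limits in \eqref{eq:9.23} would fail for $U^{\infty}=U_{\infty}$. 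Since the theorem is an existence statement for $U^{\infty}$, the repair is immediate, but as written the proof establishes convergence to the wrong state.

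A secondary point: the trace-identification issue you flag for the pressure applies equally to $v/u$. For the limit $BV$ solution the slip condition is only encoded weakly through the entropy inequality \eqref{eq:1.11}, so writing ``$v/u=g'(x)$ a.e.\ on $\Gamma$'' for $U$ itself is not free. The paper sidesteps both instances of this difficulty in the same way: all the sup estimates in (i) and (ii) are first carried out for the approximate solutions $U^{\nu_{i},h_{i}}$, where the boundary condition $v^{\nu_i,h_i}/u^{\nu_i,h_i}=g'_{h_i}(x)$ holds exactly on $\Gamma_{h_i}$ and Lemma \ref{lem:9.3} supplies the convergence of the boundary functions $B_{\nu_i,h_i}$, and only then does one pass to the limit via Propositions \ref{prop:8.1}--\ref{prop:8.2}. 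You should reorganize both halves of your argument along these lines rather than treating the trace issue as an afterthought confined to the pressure.
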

\begin{proof}
Let $U^{\nu_{i}, h_{i}}$  be the approximate solutions of problem \eqref{eq:1.1} and \eqref{eq:1.8}--\eqref{eq:1.9},
where $({\nu_{i}}, {h_{i}})$ is the sequence given as
in Propositions \ref{prop:8.1}--\ref{prop:8.2}.
From the construction of the approximate solutions, we have
\begin{eqnarray*}
\left.\frac{v^{\nu_{i}, h_{i}}}{u^{\nu_{i}, h_{i}}}\right|_{\Gamma_{h_i}}=g'_{h_i}(x)
\qquad\,\, \mbox{for some $x\in (kh_i, (k+1)h_i)$}.
\end{eqnarray*}
Then we can choose some $x_{\hat{\delta}}>x_{*}$ such that, for $x>2x_{\hat{\delta}}$,
\begin{eqnarray*}
\big|g'_{h_i}(x)-g'_{\infty}\big|<\frac{\hat{\delta}}{2}.
\end{eqnarray*}
Thus, by Lemma \ref{lem:9.10}, we have
\begin{eqnarray*}
\begin{split}
&\sup\big\{\big|\frac{v^{\nu_i, h_i}}{u^{\nu_i, h_i}}(x,y)
-g'_{\infty}\big|;\ (g_{h_i}(x),\ \chi^{5}_{g, \nu_{i}, h_i}(x))\big \}\\
&\leq TV.\{\frac{v^{ \nu_i, h_i}}{u^{ \nu_i, h_i}}(x,\cdot);\  (g_{h_i}(x),\ \chi^{5}_{g, \nu_{i}, h_i}(x))\}
+\big|g'_{h_i}(x)-g'_{\infty}\big|\leq \hat{\delta}\qquad\,\,\mbox{for $x>2x_{\hat{\delta}}$}.
\end{split}
\end{eqnarray*}
Then, letting $i\rightarrow \infty$
and by  Propositions \ref{prop:8.1}--\ref{prop:8.2},
we obtain \eqref{eq:9.21}.

To prove \eqref{eq:9.22}, let $p_{+}=\lim _{x\rightarrow \infty}p(x,g(x))$. Then
\begin{eqnarray*}
\begin{split}
&\sup \Big\{|p^{\nu_{i},h_i}(x,\cdot)-p_{+}|;\ (g_{h_i}(x),\ \chi^{5}_{g, \nu_{i}, h_i}(x))\Big \}\\
&\leq T.V.\Big\{p^{\nu_{i},h_i}(x,\cdot); (g_{h_i}(x),\chi^{5}_{g, \nu_{i}, h_i}(x) )\Big\}
+\big|p^{\nu_{i},h_i}(x,g_{h_i}(x))-p_{+}\big|\leq \hat{\delta}.
\end{split}
\end{eqnarray*}
Thus, for $x>2x_{\hat{\delta}}$, letting $i\rightarrow \infty$,
and by  Propositions \ref{prop:8.1}--\ref{prop:8.2},
we can obtain the desired result. 

\smallskip
Now we consider {\rm (ii)}.
Let $U^{\infty}=\lim_{x\rightarrow \infty}U(x,\infty)$.
Then
\begin{eqnarray*}
\begin{split}
&\sup\Big|\Big(J\big(q, \frac{q^{2}}{2}+\frac{c^{2}}{\gamma-1}\big)+\theta\Big)(U^{\nu_{i}, h_i}(x,\cdot))
-\Big(J\big(q, \frac{q^{2}}{2}+\frac{c^{2}}{\gamma-1}\big)+\theta\Big)(U^{\infty})\Big| \\
&\leq TV.\Big\{\Big(J\big(q, \frac{q^{2}}{2}+\frac{c^{2}}{\gamma-1}\big)+\theta\Big)(U^{\nu_{i}, h_i}(x,\cdot));\
  (\chi^{5}_{g, \nu_{i}, h_i}(x), \infty )\Big\}\\
&\ \ \  +\Big|\Big(J(q, \frac{q^{2}}{2}+\frac{c^{2}}{\gamma-1})+\theta\Big)(U^{\nu_{i}, h_i}(x,\infty))
-\Big(J\big(q, \frac{q^{2}}{2}+\frac{c^{2}}{\gamma-1}\big)+\theta \Big)(U^{\infty})\Big|\leq \hat{\delta}
\end{split}
\end{eqnarray*}
 for $x>2x_{\hat{\delta}}$.
 Then, letting $i\rightarrow \infty$,
 and by Propositions \ref{prop:8.1}--\ref{prop:8.2}, we obtain the desire result.
The proof of the others in \eqref{eq:9.22} is similar. This completes the proof.
\end{proof}

\section{Appendix}\setcounter{equation}{0}
In this section, we complete the proof of Lemma \ref{lem:2.1} by several lemmas.
 \begin{lemma}\label{lem:10.1}
For $u>c$,
 \begin{eqnarray}\label{eq:10.1}
\cos\big(\theta+(-1)^{\frac{j+3}{4}}\theta_{\rm ma}\big)>0 \qquad \mbox{for $j=1, 5$}.
\end{eqnarray}
\end{lemma}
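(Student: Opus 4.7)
The plan is to reduce the inequality to an algebraic positivity statement using the explicit trigonometric identities for $\theta$ and $\theta_{\rm ma}$. Since $u>c>0$ ensures $u>0$ and $q^{2}-c^{2}\ge u^{2}-c^{2}>0$, the Mach angle $\theta_{\rm ma}=\arctan\bigl(c/\sqrt{q^{2}-c^{2}}\bigr)$ lies in $(0,\pi/2)$, and $\theta=\arctan(v/u)\in(-\pi/2,\pi/2)$. In particular,
\begin{equation*}
\sin\theta=\tfrac{v}{q},\quad \cos\theta=\tfrac{u}{q},\qquad
\sin\theta_{\rm ma}=\tfrac{c}{q},\quad \cos\theta_{\rm ma}=\tfrac{\sqrt{q^{2}-c^{2}}}{q},
\end{equation*}
where the last two identities come from interpreting the arctangent as a right triangle with legs $c$ and $\sqrt{q^{2}-c^{2}}$ and hypotenuse $q$.

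Applying the angle-addition formula, I would rewrite
\begin{equation*}
\cos\bigl(\theta+(-1)^{\frac{j+3}{4}}\theta_{\rm ma}\bigr)
=\cos\theta\cos\theta_{\rm ma}-(-1)^{\frac{j+3}{4}}\sin\theta\sin\theta_{\rm ma}
=\frac{u\sqrt{q^{2}-c^{2}}-(-1)^{\frac{j+3}{4}}vc}{q^{2}}
\end{equation*}
for $j=1,5$. Hence the claim reduces to verifying
\begin{equation*}
u\sqrt{q^{2}-c^{2}}>|vc|,
\end{equation*}
which covers both sign choices simultaneously.

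To prove this last inequality, I would square both positive quantities and use $q^{2}=u^{2}+v^{2}$ to compute
\begin{equation*}
u^{2}(q^{2}-c^{2})-v^{2}c^{2}=u^{2}q^{2}-c^{2}(u^{2}+v^{2})=q^{2}(u^{2}-c^{2})>0,
\end{equation*}
where the final strict inequality uses the hypothesis $u>c$. This yields $u\sqrt{q^{2}-c^{2}}>|v|c\ge\pm vc$, completing the proof. There is no real obstacle here; the only thing to be careful about is the sign convention making $u>0$ (immediate from $u>c>0$) and the correct identification of $\sin\theta_{\rm ma},\cos\theta_{\rm ma}$, both of which are straightforward.
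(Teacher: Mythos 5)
Your proof is correct and follows essentially the same route as the paper: both reduce the claim to the identity $q^{2}\cos\bigl(\theta+(-1)^{\frac{j+3}{4}}\theta_{\rm ma}\bigr)=u\sqrt{q^{2}-c^{2}}-(-1)^{\frac{j+3}{4}}vc$ and then verify positivity via $\bigl(u\sqrt{q^{2}-c^{2}}\bigr)^{2}-(vc)^{2}=q^{2}(u^{2}-c^{2})>0$. Yours simply spells out the trigonometric identifications and the absolute-value reduction that the paper leaves implicit.
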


\begin{proof}
Since $\big(u\sqrt{q^{2}-c^{2}}\big)^{2}-(vc)^{2}=q^{2}(u^{2}-c^{2})>0$.
Then, by direct computation, we have
\begin{equation*}
 q^{2}\cos(\theta+(-1)^{\frac{j+3}{4}}\theta_{\rm ma})=u\sqrt{q^{2}-c^{2}}-(-1)^{\frac{j+3}{4}}vc>0, \qquad j=1, 5.
\end{equation*}
\end{proof}

\begin{lemma}\label{lem:10.2}
For $u>c$,
\begin{eqnarray*}
\begin{split}
&\nabla_{(u,v)}q=(\cos\theta,\sin\theta),\quad
  \nabla_{(u,v)}\theta=\frac{1}{q}(-\sin\theta,\cos\theta),\\
&\nabla_{(u,v)}\theta_{\rm ma}=-\frac{\tan(\theta_{\rm ma})}{q}(\cos\theta,\sin\theta),\quad
\nabla_{(p,\rho)}\theta_{\rm ma}
=\frac{\tan(\theta_{\rm ma})}{2\rho c^2}(\gamma,-c^2).
\end{split}
\end{eqnarray*}
\end{lemma}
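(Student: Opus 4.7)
The plan is to verify each identity by direct differentiation, exploiting the defining relations
$q=\sqrt{u^{2}+v^{2}}$, $\theta=\arctan(v/u)$, $\theta_{\rm ma}=\arctan(c/\sqrt{q^{2}-c^{2}})$, and $c^{2}=\gamma p/\rho$. The first two identities are immediate: differentiating $q^{2}=u^{2}+v^{2}$ gives $\partial_{u}q=u/q=\cos\theta$ and $\partial_{v}q=v/q=\sin\theta$, while $\partial_{u}\theta=-v/q^{2}=-\sin\theta/q$ and $\partial_{v}\theta=u/q^{2}=\cos\theta/q$.

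For $\nabla_{(u,v)}\theta_{\rm ma}$, I will first note that $c=\sqrt{\gamma p/\rho}$ is independent of $(u,v)$, so $\theta_{\rm ma}$ depends on $(u,v)$ only through $q$. From $\tan\theta_{\rm ma}=c/\sqrt{q^{2}-c^{2}}$, together with the identity $\cos^{2}\theta_{\rm ma}=(q^{2}-c^{2})/q^{2}$ (valid since $u>c$ ensures $q>c$), a short calculation gives $d\theta_{\rm ma}/dq=-c/(q\sqrt{q^{2}-c^{2}})=-\tan\theta_{\rm ma}/q$. Combining this with $\nabla_{(u,v)}q=(\cos\theta,\sin\theta)$ through the chain rule yields the third identity.

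For $\nabla_{(p,\rho)}\theta_{\rm ma}$, I hold $q$ fixed and differentiate $\tan^{2}\theta_{\rm ma}=c^{2}/(q^{2}-c^{2})$ with respect to $c^{2}$, obtaining after simplification (using again $\cos^{2}\theta_{\rm ma}=(q^{2}-c^{2})/q^{2}$) the clean identity $d\theta_{\rm ma}=\tfrac{\tan\theta_{\rm ma}}{2c^{2}}\,d(c^{2})$. Since $c^{2}=\gamma p/\rho$, we have $d(c^{2})=(\gamma/\rho)\,dp-(c^{2}/\rho)\,d\rho$, which substituted in gives precisely $\nabla_{(p,\rho)}\theta_{\rm ma}=\tfrac{\tan\theta_{\rm ma}}{2\rho c^{2}}(\gamma,-c^{2})$.

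There is no real obstacle; the lemma is a bookkeeping exercise in multivariable calculus, and the only place that might tempt an algebra slip is the reduction of $\sec^{2}\theta_{\rm ma}\,d\theta_{\rm ma}$ using the relation $\sec^{2}\theta_{\rm ma}=q^{2}/(q^{2}-c^{2})$, which is what makes all the factors of $(q^{2}-c^{2})^{1/2}$ cancel into the stated form.
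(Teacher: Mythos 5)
Your proposal is correct and follows essentially the same route as the paper: a direct chain-rule computation from the defining relations, using $\sec^{2}\theta_{\rm ma}=q^{2}/(q^{2}-c^{2})$ (equivalently $\cos^{2}\theta_{\rm ma}=(q^{2}-c^{2})/q^{2}$) to collapse the factors of $(q^{2}-c^{2})^{1/2}$, and $\partial_{p}c=\gamma/(2\rho c)$, $\partial_{\rho}c=-c/(2\rho)$ for the last identity. Routing the computation through the intermediate variables $q$ and $c^{2}$ rather than differentiating $\arctan\bigl(c/\sqrt{q^{2}-c^{2}}\bigr)$ in each variable separately, as the paper does, is only a cosmetic reorganization of the same calculation.
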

\begin{proof}
Since $\theta_{\rm ma}=\arctan(\frac{c}{\sqrt{q^{2}-c^{2}}})$,
then
\begin{eqnarray*}
\frac{\partial\theta_{\rm ma}}{\partial u}&=\frac{q^{2}-c^{2}}{q^{2}}\frac{\partial}{\partial u}\Big(\frac{c}{\sqrt{q^{2}-c^{2}}}\Big)
=-\frac{c}{q\sqrt{q^{2}-c^{2}}}\cos\theta
=-\frac{\cos\theta\tan(\theta_{\rm ma})}{q}.
\end{eqnarray*}
Similarly for $\frac{\partial\theta_{\rm ma}}{\partial v}$. Notice that $c^{2}=\frac{\gamma p}{\rho}$, then
$\frac{\partial c}{\partial p}=\frac{\gamma}{2\rho c}$ and $\frac{\partial c}{\partial\rho}=-\frac{c}{2\rho}$.
Then
\begin{eqnarray*}
&&\frac{\partial\theta_{\rm ma}}{\partial p}=\frac{q^{2}-c^{2}}{q^{2}}
\frac{\partial}{\partial p}\Big(\frac{c}{\sqrt{q^{2}-c^{2}}}\Big)
=\frac{q^{2}-c^{2}}{q^{2}}\frac{q^{2}}{(q^{2}-c^{2})^{\frac{3}{2}}}\frac{\partial c}{\partial p}
=\frac{\gamma}{2\rho c^{2}}\tan(\theta_{\rm ma}),\\[1mm]
&&\frac{\partial\theta_{\rm ma}}{\partial \rho}=\frac{q^{2}-c^{2}}{q^{2}}
\frac{\partial}{\partial \rho}\Big(\frac{c}{\sqrt{q^{2}-c^{2}}}\Big)
=\frac{q^{2}-c^{2}}{q^{2}}\frac{q^{2}}{(q^{2}-c^{2})^{\frac{3}{2}}}\frac{\partial c}{\partial \rho}
=-\frac{1}{2\rho}\tan(\theta_{\rm ma}).
\end{eqnarray*}
The others can be proved in the same way. This completes the proof.
\end{proof}

\begin{lemma}\label{lem:10.3}
For $u>c$,
\begin{eqnarray}
&&\nabla_{(u,v)}\lambda_{j}
  =\frac{\sec^{2}(\theta+(-1)^{\frac{j+3}{4}}\theta_{\rm ma})}{\sqrt{q^{2}-c^{2}}}
   (-\sin(\theta +(-1)^{\frac{j+3}{4}}\theta_{\rm ma}),
    \cos(\theta+(-1)^{\frac{j+3}{4}}\theta_{\rm ma})),\nonumber\\
    \label{eq:10.3}\\
&&\nabla_{(p,\rho)}\lambda_{j}
  =\frac{(-1)^{\frac{j+3}{4}}}{2\rho c^{2}}
   \sec^{2}(\theta+(-1)^{\frac{j+3}{4}}\theta_{\rm ma})\tan(\theta_{\rm ma})(\gamma, -c^2),\label{eq:10.4}\\[0.5mm]
&&\frac{\partial \lambda_{j}}{\partial Z}=0,\label{eq:10.6}
\end{eqnarray}
for $j=1,5$, and
\begin{eqnarray}
\nabla_{(u,v)}\lambda_{i}
=\frac{\sec^2\theta\sin\theta}{q}(-1,1),\quad
\frac{\partial \lambda_{i}}{\partial p}=\frac{\partial \lambda_{i}}{\partial \rho}
=\frac{\partial \lambda_{i}}{\partial Z}=0 \qquad\,\,\, \mbox{for $i=2, 3, 4$}.
\end{eqnarray}
\end{lemma}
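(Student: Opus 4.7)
My plan is to derive each formula by direct application of the chain rule, using the expressions for $\nabla q$, $\nabla\theta$, and $\nabla\theta_{\rm ma}$ supplied by Lemma \ref{lem:10.2}. Writing $\epsilon_j := (-1)^{(j+3)/4}$ for $j \in \{1, 5\}$, we have $\lambda_j = \tan(\theta + \epsilon_j\theta_{\rm ma})$ and $\lambda_i = \tan\theta$ for $i \in \{2, 3, 4\}$, so in both cases the derivatives reduce to $\sec^2(\cdot)$ times the gradient of the angle argument. Since $\theta = \arctan(v/u)$ depends only on $(u, v)$ and $\theta_{\rm ma}$ depends only on $(u, v, p, \rho)$, the $Z$--derivatives vanish immediately, yielding \eqref{eq:10.6} and its analogue for $i = 2, 3, 4$.

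For the $(p, \rho)$--derivatives of $\lambda_j$ with $j = 1, 5$, only the term $\epsilon_j\,\nabla_{(p,\rho)}\theta_{\rm ma}$ contributes, so substituting the formula $\nabla_{(p,\rho)}\theta_{\rm ma} = \frac{\tan\theta_{\rm ma}}{2\rho c^2}(\gamma, -c^2)$ from Lemma \ref{lem:10.2} yields \eqref{eq:10.4} directly. For $i = 2, 3, 4$, $\lambda_i = v/u$ has no $(p, \rho)$--dependence, so the corresponding derivatives vanish.

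The main computational step lies in the $(u, v)$--derivatives for $j = 1, 5$. The chain rule together with Lemma \ref{lem:10.2} gives
$$
\nabla_{(u, v)}\lambda_j \;=\; \frac{\sec^2(\theta + \epsilon_j\theta_{\rm ma})}{q} \Bigl( (-\sin\theta, \cos\theta) \;-\; \epsilon_j\tan\theta_{\rm ma}\,(\cos\theta, \sin\theta)\Bigr).
$$
The key manipulation is to pull out a factor of $1/\cos\theta_{\rm ma}$ and invoke the angle-addition identities
$$
-\sin\theta\cos\theta_{\rm ma} - \epsilon_j\cos\theta\sin\theta_{\rm ma} \;=\; -\sin(\theta + \epsilon_j\theta_{\rm ma}),
$$
$$
\cos\theta\cos\theta_{\rm ma} - \epsilon_j\sin\theta\sin\theta_{\rm ma} \;=\; \cos(\theta + \epsilon_j\theta_{\rm ma}),
$$
which are valid because $\epsilon_j \in \{+1, -1\}$ and $\sin$ is odd while $\cos$ is even. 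Finally, $\sin\theta_{\rm ma} = c/q$ by the definition of the Mach angle, hence $q\cos\theta_{\rm ma} = \sqrt{q^2 - c^2}$, converting the prefactor into the form displayed in \eqref{eq:10.3}. The analogous but simpler computation for $\lambda_i = \tan\theta$ with $i = 2, 3, 4$ uses only $\nabla_{(u,v)}\theta = (-\sin\theta, \cos\theta)/q$ and yields the stated formula. No step here presents a substantive obstacle; the only mild subtlety is recognising the angle-addition pattern and the identity $q\cos\theta_{\rm ma} = \sqrt{q^2 - c^2}$, which together convert the raw chain-rule expression into the geometric form claimed.
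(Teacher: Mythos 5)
Your proof is correct and follows essentially the same route as the paper's: the chain rule applied to $\lambda_j=\tan(\theta+\epsilon_j\theta_{\rm ma})$, substitution of the gradients from Lemma \ref{lem:10.2}, and the angle-addition identities together with $q\cos\theta_{\rm ma}=\sqrt{q^2-c^2}$; the only cosmetic difference is that you treat $j=1,5$ uniformly via the sign $\epsilon_j$ while the paper writes out $j=1$ componentwise and declares the remaining cases similar. One small caveat: for $i=2,3,4$ the chain rule actually yields $\frac{\sec^2\theta}{q}(-\sin\theta,\cos\theta)$ rather than the printed $\frac{\sec^2\theta\sin\theta}{q}(-1,1)$, so the displayed formula in the lemma contains a typo that your phrase ``yields the stated formula'' silently passes over (harmlessly, since the orthogonality $\nabla_U\lambda_i\cdot\tilde{\rr}_i=0$ needed downstream holds for the correct expression).
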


\begin{proof}
We only consider the case $j=1$, since $j=2$ and $i=2, 3, 4$ can be carried out in the same way.
By Lemma \ref{lem:10.2}, we have
\begin{eqnarray*}
\begin{split}
\frac{\partial \lambda_{1}}{\partial u}&=\frac{\partial \lambda_{1}}{\partial \theta}\frac{\partial \theta}{\partial u}
+\frac{\partial \lambda_{1}}{\partial\theta_{\rm ma} }\frac{\partial \theta_{\rm ma}}{\partial u}
=\sec^{2}(\theta-\theta_{\rm ma})\big(-\frac{\sin\theta}{q}\big)
+\frac{\cos\theta\tan(\theta_{\rm ma})}{q}\sec^2(\theta-\theta_{\rm ma})\\
&=-\frac{1}{\sqrt{q^{2}-c^{2}}}\sec^2(\theta-\theta_{\rm ma})\sin(\theta -\theta_{\rm ma}),\\[1mm]
\frac{\partial \lambda_{1}}{\partial v}&=\frac{\partial \lambda_{1}}{\partial \theta}
\frac{\partial \theta}{\partial v}+\frac{\partial \lambda_{1}}{\partial \theta_{\rm ma}}\frac{\partial \theta_{\rm ma}}{\partial v}
=\sec^2(\theta-\theta_{\rm ma})\frac{\cos\theta}{q}
+\frac{\sin\theta\tan(\theta_{\rm ma})}{q}\sec^2(\theta-\theta_{\rm ma})\\
&=\frac{1}{\sqrt{q^{2}-c^{2}}}\sec^2(\theta-\theta_{\rm ma})\cos(\theta -\theta_{\rm ma}),\\[1mm]
\frac{\partial \lambda_{1}}{\partial p}&=\frac{\partial \lambda_{1}}{\partial\theta_{\rm ma} }\frac{\partial \theta_{\rm ma}}{\partial p}
=-\frac{\gamma}{2\rho c^{2}}\sec^{2}(\theta-\theta_{\rm ma})\tan(\theta_{\rm ma}),\\[1mm]
\frac{\partial \lambda_{1}}{\partial \rho}&=\frac{\partial \lambda_{1}}{\partial\theta_{\rm ma} }\frac{\partial \theta_{\rm ma}}{\partial \rho}
=\frac{1}{2\rho}\sec^{2}(\theta-\theta_{\rm ma})\tan(\theta_{\rm ma}),
\end{split}
\end{eqnarray*}
and clearly 
$\frac{\partial \lambda_{j}}{\partial Z}=0.$
This completes the proof.
\end{proof}

\smallskip
$\textbf{Proof}\ \textbf{of}\ \textbf{Lemma}\ \ref{lem:2.1}$.
We consider Case $j=1$, since Case $j=5$
can be done in the same way.
By Lemma \ref{lem:10.1} and direct computation, we have
 \begin{eqnarray*}
\begin{split}
\nabla_{U}\lambda_{1}\cdot\tilde{r}_{1}&=-\frac{\partial \lambda_{1}}{\partial u}\tan(\theta-\theta_{\rm ma})
+\frac{\partial \lambda_{1}}{\partial v}
-\frac{\partial \lambda_{1}}{\partial p} \rho q\sec(\theta-\theta_{\rm ma})\sin(\theta_{\rm ma})\\
&\ \ \ -\frac{\partial \lambda_{1}}{\partial \rho}\frac{\rho q}{c^{2}}\sec(\theta-\theta_{\rm ma})\sin(\theta_{\rm ma})\\
&=\frac{1}{\sqrt{q^{2}-c^{2}}}\sec^{2}(\theta-\theta_{\rm ma})\sin(\theta -\theta_{\rm ma})\tan(\theta-\theta_{\rm ma})\\
&\ \ \ +\frac{1}{\sqrt{q^{2}-c^{2}}}\sec^{2}(\theta-\theta_{\rm ma})\cos(\theta -\theta_{\rm ma})\\
&\ \ \ +\frac{\gamma}{2\rho c^{2}}\sec^{2}(\theta-\theta_{\rm ma})\tan(\theta_{\rm ma})\,
   \rho q\sec(\theta-\theta_{\rm ma})\sin(\theta_{\rm ma})\\
&\ \ \ -\frac{1}{2\rho}\sec^2(\theta-\theta_{\rm ma})\tan(\theta_{\rm ma})\,\frac{\rho q}{c^{2}}\sec(\theta-\theta_{\rm ma})\sin(\theta_{\rm ma})\\
&=\frac{\gamma+1}{\sqrt{q^{2}-c^{2}}}\sec^{3}(\theta-\theta_{\rm ma})>0.
\end{split}
\end{eqnarray*}
Similarly, for $j=5$,
$$
\nabla_{U}\lambda_{5}\cdot\tilde{r}_{5}
=\frac{\gamma-1}{\sqrt{q^{2}-c^{2}}}\sec^{3}(\theta+\theta_{\rm ma})>0.
$$
In a similar way, we can prove
\begin{eqnarray*}
\begin{split}
\nabla_{U}\lambda_{j}\cdot\tilde{r}_{j}=0, \qquad j=2, 3, 4.
\end{split}
\end{eqnarray*}
This completes the proof.

\bigskip
$\textbf{Acknowledgements}.$
The research of Gui-Qiang Chen was supported in part by
	the UK EPSRC Award to the EPSRC Centre for Doctoral Training
	in PDEs (EP/L015811/1) and
	the Royal Society--Wolfson Research Merit Award (UK).
Yongqian Zhang was supported in part
by NSFC Project 11421061, NSFC Project 11031001, NSFC Project 11121101,
the 111 Project B08018 (China),
and  by Natural Science Foundation of Shanghai 15ZR1403900.

\bigskip
$\textbf{Conflict of Interest}.$  The authors declare that they have no conflict of interest.

\bigskip

\end{document}